\numberwithin{equation}{section}
\numberwithin{figure}{section}
\theoremstyle{plain}
\newtheorem{thm}{\protect\theoremname}[section]
\theoremstyle{definition}
\newtheorem{defn}[thm]{\protect\definitionname}
\theoremstyle{remark}
\newtheorem{rem}[thm]{\protect\remarkname}
\theoremstyle{definition}
\newtheorem{example}[thm]{\protect\examplename}
\theoremstyle{plain}
\newtheorem{lem}[thm]{\protect\lemmaname}
\theoremstyle{plain}
\newtheorem{prop}[thm]{\protect\propositionname}
\theoremstyle{plain}
\newtheorem{cor}[thm]{\protect\corollaryname}
\newcommand{\kah}{\rm{K\ddot{a}hler}}
\newcommand{\dbar}{\bar{\partial}}
\newcommand{\ddbar}{\partial\bar{\partial}}
\newcommand{\MA}{\rm{Monge-Amp\`{e}re}}
\newcommand{\pdv}{\partial}
\newcommand{\iover}{\frac{\sqrt{-1}}{2}}
\def\R{\mathbb R}
\def\C{\mathbb C}
\def\Z{\mathbb Z}
\def\N{\mathbb N}
\def\La{\Lambda}
\def\<{\langle}
\def\>{\rangle}
\providecommand{\corollaryname}{Corollary}
\providecommand{\definitionname}{Definition}
\providecommand{\examplename}{Example}
\providecommand{\lemmaname}{Lemma}
\providecommand{\propositionname}{Proposition}
\providecommand{\remarkname}{Remark}
\providecommand{\theoremname}{Theorem}
\begin{document}
\title{On a fully nonlinear elliptic equation with differential forms}
\author{Hao Fang}
\email{hao-fang@uiowa.edu}
\author{Biao Ma}
\email{biaoma@bicmr.pku.edu.cn}
\begin{abstract}
We introduce a fully nonlinear PDE with a differential form, which
unifies several important equations in $\kah$ geometry including
Monge-Ampère equations, $J$-equations, inverse $\sigma_{k}$ equations,
and the deformed Hermitian Yang-Mills (dHYM) equation. We pose some
natural positivity conditions on $\Lambda$, and prove analytical
and algebraic criterions for the solvability of the equation. Our
results generalize previous works of G.Chen, J.Song, Datar-Pingali
and others. As an application, we prove a conjecture of Collins-Jacob-Yau
for the dHYM equation with small global phase. 
\end{abstract}

\maketitle
\tableofcontents{}

\section{Introduction}

In this paper, we study a general form of fully non-linear partial
differential equations of Monge-Ampère type on $\kah$ manifolds.

Let $(M,\omega_{0})$ be a $\kah$ manifold of dimension $n$, and
let $[\omega_{0}]$ be the corresponding $\kah$ class. We fix a closed
differential form of the following format:
\[
\Lambda=\sum_{k=1}^{n}\Lambda^{[k]},
\]
where for each $1\leq k\leq n$, $\Lambda^{[k]}$ is a real $(k,k)$-form.
Hereby we use $\alpha^{[k]}$ to denote the $(k,k)$ component of
a differential form $\alpha$. Let $\omega=\omega_{0}+i\partial\bar{\partial}u\in[\omega_{0}]$
be a $\kah$ metric. We denote 
\[
\Omega=\Omega(\omega)=\exp\omega=\sum_{k=0}^{n}\frac{\omega^{k}}{k!}.
\]
In this paper, we study the following partial differential equation
of function $u$: 
\begin{equation}
\kappa\Omega^{[n]}=(\Lambda\wedge\Omega)^{[n]},\label{eq:eq with Berezin integ}
\end{equation}
where $\kappa$ is a positive constant such that $\kappa\int_{M}\exp\omega_{0}=\int_{M}\Lambda\wedge\exp\omega_{0}.$

When $\Lambda$ has only the positive $(n,n)$ component, it is thus
a volume form of $M$. (\ref{eq:eq with Berezin integ}) is the well-known
complex Monge-Ampère equation, first completely solved by Yau \cite{yau1978ricci}.

For future convenience, we write $\Lambda^{[n]}$ as $\frac{f\rho^{n}}{n!}$,
where $f$ is a smooth function on $M$and $\rho$ is a fixed background
$\kah$ metric. Then (\ref{eq:eq with Berezin integ}) can be expanded
as:

\begin{equation}
\kappa\frac{\omega^{n}}{n!}=\sum_{k=1}^{n-1}\Lambda^{[k]}\wedge\frac{\omega^{n-k}}{(n-k)!}+f\frac{\rho^{n}}{n!}.\label{eq:equation with f}
\end{equation}

In addition to the original Monge-Ampère equation, other special cases
of equation (\ref{eq:equation with f}) include:
\begin{enumerate}
\item When $\Lambda=\rho$ is a $\kah$ form, (\ref{eq:eq with Berezin integ})
is the $J$-equation, which was first introduced by Donaldson \cite{donaldson1999moment}
and has been extensively studied by many authors. See \cite{donaldson1999moment,chen2000lower,weinkove2004convergence,song2008convergence,collins2017convergence,chen2021j,Song2020NakaiMoishezonCF}
and references therein. 
\item When $\Lambda$ is $\rho^{k}$ or $\sum_{i=k}^{n-1}c_{k}\rho^{k}$,
where $\rho$ is a $\kah$ form and $c_{k}\geq0$, (\ref{eq:eq with Berezin integ})
is the inverse $\sigma_{k}$-equation, which was first raised in Fang-Lai-Ma
\cite{Fang-Lai-Ma}. An incomplete list of works on these equations
includes \cite{Fang-Lai-Ma,Guan2014Second-order,guan2015class,collins2017convergence,szekelyhidi2018fully,datar2021numerical}. 
\item Given a specific global phase $\theta>0$ and $\Lambda=\sin\theta\cos\rho-\cos\theta\sin\rho$,
where $\rho$ is a Kähler form, (\ref{eq:eq with Berezin integ})
is the deformed Hermitian Yang-Mills (dHYM) equation. dHYM equation
was introduced in Marino-Minasian-Moore-Strominger \cite{marino2000nonlinear}
and Leung-Yau-Zaslow \cite{leung2000special}\textbf{ }and has been
extensively studied; See \cite{jacob2017special,collins20201,pingali2022deformed,chen2021j,chu2021nakai,collins2021moment,fu2021deformed,huang2022deformed,lin2022deformed}
and references therein. See also Section \ref{sec:Application-to-Hypercritical}
for details. 
\end{enumerate}
PDEs in the general form of (\ref{eq:eq with Berezin integ}) are
inspired by above-mentioned works, which have roots in rich geometric
and mathematical physics background, and are important in the field
of fully non-linear PDEs with significant geometric implications.
Known results indicate that the existence of unique smooth solutions
can be interpreted as analytical and algebraic properties of the underlying
manifold. Therefore, it is our intention to systematically explore
more general forms of such PDEs. In particular, we would like to study
necessary and sufficient conditions that ensure the existence of solutions.

We realize from past works that, aside from the unknown Kähler metric
to be solved, given geometric data often involve another Kähler form
or its powers. The exact format of (\ref{eq:eq with Berezin integ})
suggests some point-wise positivity requirement for the differential
form $\Lambda.$ In addition, the important fact of ellipticity of
Monge-Ampère type equations poses similar requirements. One of our
main goals is to find natural positivity conditions under which necessary
a priori estimates can be established and deep connections between
PDE and geometry can be reconstructed. Inspired by Gao Chen's work\textbf{
}\cite{chen2021j}, we also consider (\ref{eq:eq with Berezin integ})
in a bit more general form, in which we allow $\Lambda^{[n]}$ to
be slightly negative at some points. 

For convenience of further discussion, we decompose $\Lambda$ as
follows 
\[
\Lambda=\mathring{\Lambda}+\Lambda^{[n]},
\]
and discuss constraints on $\mathring{\Lambda}$ and $\Lambda^{[n]}$
separately. 
\begin{defn}
\label{def:UP}We call a real even closed differential form $\mathring{\Lambda}$\emph{
on M $k$-uniformly positive }($k$-UP) if there exists an integer
$1\leq k\le n$, a reference $\kah$ metric $\rho$ of $M$, and a
uniform constant $m>0$ such that $\Lambda^{[l]}=0$ for $l<k$ ;
and 
\begin{equation}
\mathring{\Lambda}-m\frac{\rho^{k}}{k!}\geq0,\label{eq:uniformly positive}
\end{equation}
which (\ref{eq:uniformly positive}) means that each $(l,l)$-component
of the left hand side is a positive form for $1\leq l\leq n$. See
Definition \ref{def:An-element-positive forms}. 
\end{defn}

Notice that Definition \ref{def:UP} is independent of the choice
of reference metric $\rho$. An $n$-UP form $\Lambda$ is a positive
volume form, which is required for solving the classic Monge-Ampère
equation. $J$-equation is a special case where $\Lambda$ is $1$-UP.
Furthermore, positive linear combination of powers of a given Kähler
form is uniformly positive. Therefore, inverse $\sigma_{k}$ equations
also fall into this category. However, as uniform positivity is an
open condition, it is far more general than these special cases. In
future sections, we will also give more general and technical conditions
under which various conclusions hold.

Since the classic Monge-Ampère equation is well known in the field,
we will discuss only cases when $\mathring{\Lambda}$ is $k$-UP for
some $1\leq k\leq n-1$. When we consider $\Lambda^{[n]},$ in spirit
of the work of G. Chen \cite{chen2021j}, we pose a slightly weaker
condition. 
\begin{defn}
Given $1\leq k_{0}\leq n-1$, $m>0$, and a reference $\kah$ metric
$\rho$, we call $(n,n)$-form $\alpha$ \emph{an almost positive
volume form} with respect to $(k_{0},m,\rho)$, if $\int_{M}\alpha\geq0$
and there exists $\epsilon=\epsilon(n,m,\kappa,k_{0},\omega_{0},\rho)>0$
such that 
\begin{equation}
\frac{\alpha}{\rho^{n}/n!}>-\epsilon.\label{eq:H2-1}
\end{equation}
\end{defn}

\begin{rem}
We will choose a specific constant $\epsilon<\min\left\{ \frac{m}{4n+2}\gamma_{\min}(\frac{2\kappa}{m},1,n,k_{0}),\frac{\kappa\int_{M}\omega_{0}^{n}}{2\int_{M}\rho^{n}}\right\} ,$where
$\gamma_{\min}$ is a positive number, defined\label{rem:We-will-chooseremark}
later in (\ref{eq:gamm_min}) in the rest of the paper.
\end{rem}

We state the following hypothesis on $\Lambda$, which will be a key
assumption for our paper. 
\begin{defn}
$\Lambda=\mathring{\Lambda}+\Lambda^{[n]}$ is called to satisfy the
condition \textbf{H1}, if and only if $\mathring{\Lambda}$ is $k_{0}$-uniformly
positive for some $1\leq k_{0}\leq n-1$ with a reference metric $\rho$
and a uniform constant $m>0$; and $\Lambda^{[n]}$ is almost positive
with respect to a specific $\epsilon$(as in Remark \ref{rem:We-will-chooseremark})
and $(k_{0},m,\rho)$. 
\end{defn}

In order to state our main analytical result, we define the following
concept based on previous works (\cite{song2008convergence,Fang-Lai-Ma,Guan2014Second-order,szekelyhidi2018fully}
). 
\begin{defn}
A $\kah$ form $\omega\in[\omega_{0}]$ is called to satisfy the \emph{cone
condition} for equation (\ref{eq:equation with f}) if and only if
pointwisely it satisfies the following 
\begin{equation}
\kappa(\exp\omega)^{[n-1]}-\left(\Lambda\wedge\exp\omega\right)^{[n-1]}>0;\label{eq:cone condition}
\end{equation}
Or equivalently, 
\begin{equation}
\frac{\kappa\omega^{n-1}}{(n-1)!}-\sum_{k=1}^{n-1}\frac{\Lambda^{[k]}\wedge\omega^{n-k-1}}{(n-k-1)!}>0,\label{eq:cone condition expanded}
\end{equation}
which means that the left hand side of (\ref{eq:cone condition})
or \ref{eq:cone condition expanded}) is a strictly positive $(n-1,n-1)$
form. We also call $\omega$ a \emph{subsolution} of the equation
(\ref{eq:eq with Berezin integ}) or (\ref{eq:equation with f}) if
(\ref{eq:cone condition}) holds everywhere. 
\end{defn}

In order to state our main geometrical result, inspired by the works
of \cite{lejmi2015j,szekelyhidi2018fully},\textbf{ }we propose the
following numerical positivity conditions: 
\begin{defn}
\label{def:Lambda-kappa}Let $[\Lambda]$ be the cohomology class
of $\La$. Let $\kappa>0$ be a constant. Let $[\alpha]$ be a $\kah$
class. We call $[\alpha]$ a \emph{$([\Lambda],\kappa)$-positive}
class if 
\begin{equation}
[\exp\alpha]\cdot[\kappa-\Lambda]\cdot[M]\geq0,\label{eq:positive class n}
\end{equation}
and for any subvariety $Y$ of $M$ with $\dim Y<n$ it holds that
\begin{equation}
[\exp\alpha]\cdot[\kappa-\Lambda]\cdot[Y]>0.\label{eq:positive class}
\end{equation}
\end{defn}

With key definitions ready, we state our main theorems, which consist
2 parts. The first is an analytic criterion for the existence of solution
to (\ref{eq:equation with f}). 
\begin{thm}
\label{thm:Let--beanlytic}Let $M$ be a connected compact Kähler
manifold of dimension $n$ with a fixed Kähler class $[\omega_{0}]$.
Suppose that $\Lambda$ is a closed real differential form satisfying
\textbf{H1}. Then, there exists a smooth solution of (\ref{eq:equation with f})
if and only if there exists a smooth subsolution of (\ref{eq:equation with f}). 
\end{thm}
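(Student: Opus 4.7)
The forward implication is the easier direction: at a smooth solution $\omega$ of (\ref{eq:eq with Berezin integ}), the linearization of the equation with respect to K\"ahler potentials has principal symbol proportional to the $(n-1,n-1)$-form appearing in (\ref{eq:cone condition}), and an admissibility argument in the spirit of Lejmi--Sz\'ekelyhidi forces this form to be strictly positive, so a smooth solution is automatically a smooth subsolution. The real content is the converse, which I would prove by the continuity method. Connect the trivially solvable case at $t=0$ to (\ref{eq:eq with Berezin integ}) at $t=1$ via
\[
F_t(u) := \kappa\,\Omega^{[n]} - t\,(\La\wedge\Omega)^{[n]} - (1-t)\,c_t\,\frac{\rho^n}{n!} = 0,
\]
where $c_t>0$ is fixed by the cohomological integrability condition and one requires $\omega = \omega_0 + i\partial\bar{\partial} u > 0$. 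A linear interpolation shows that the given subsolution $\omega_\star$ remains a subsolution of $F_t=0$ for every $t \in [0,1]$, so the cone condition, and hence ellipticity, is preserved along the path. Openness of the solvable set then follows from the implicit function theorem applied to $F_t$, whose linearization at a subsolution is an elliptic self-adjoint operator with trivial kernel modulo constants (absorbed by the normalization $c_t$).

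Closedness reduces to uniform $C^{k,\alpha}$ a priori bounds, independent of $t$, on solutions. I would follow the standard scheme: first a $C^0$ estimate, then a $C^1$ estimate, then a $C^2$ estimate, after which Evans--Krylov and Schauder theory upgrade to $C^{k,\alpha}$ for all $k$. The $C^0$ bound should come from an ABP or pluripotential-type argument, using the $k_0$-uniform positivity of $\mathring{\Lambda}$ to extract a definite positive contribution $m\,\rho^{k_0}/k_0!$; the smallness of $\epsilon$ in Remark \ref{rem:We-will-chooseremark} is what allows the possibly negative part of $\Lambda^{[n]}$ to be absorbed. The $C^1$ bound can be obtained either via a Dinew--Ko\l{}odziej-style blow-up reducing to a Liouville statement on $\C^n$, or by a direct maximum principle applied to $|\nabla u|^2 e^{-Au}$. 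For the $C^2$ bound one applies the maximum principle to a test function of the form $\log\lambda_{\max}(i\partial\bar{\partial} u) + h(|\nabla u|^2) + \psi(u - u_\star)$ and invokes the subsolution $\omega_\star$ \`a la Sz\'ekelyhidi--Guan to control the third-order error terms.

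The main obstacle will be this $C^2$ estimate. Two features distinguish (\ref{eq:eq with Berezin integ}) from the $J$-equation, inverse $\sigma_k$, and dHYM cases already treated in the literature: first, $\Lambda$ mixes different bidegrees, so the equation is not of the clean $\omega\mapsto f(\lambda_i)$ type and the concavity structure must be extracted by hand; second, $\Lambda^{[n]}$ is only almost positive, so a negative remainder of size $\epsilon$ appears at every stage and has to be absorbed. The central task is to decompose the contributions of each $\Lambda^{[k]}$, use $k_0$-UP to secure a positive eigenvalue direction of definite size $m$, and track carefully how $m$, $\kappa$, and $\epsilon$ interact to ensure that the bad terms are dominated by the good ones --- which is precisely why $\epsilon$ must be as small as the explicit bound in Remark \ref{rem:We-will-chooseremark}.
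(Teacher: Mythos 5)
Your overall strategy coincides with the paper's: the converse is proved by the continuity method along essentially the same path (the paper uses $\kappa\Omega_{t}^{[n]}=(t\mathring{\Lambda}\wedge\Omega_{t})^{[n]}+(tf+(1-t)\kappa_{0})P^{[n]}$, which is your $F_{t}$ with $c_{t}=\kappa_{0}$), starting from Yau's theorem at $t=0$, with openness from ellipticity at a subsolution, $C^{0}$ by the Alexandroff--Bakelman--Pucci argument of Błocki--Székelyhidi, $C^{2}$ by a maximum principle on $\log g_{1\bar{1}}-\phi(u)$ combined with a Guan--Székelyhidi-type subsolution proposition, $C^{1}$ by blow-up, and Evans--Krylov plus Schauder for closedness. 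You also correctly identify the two genuine difficulties (mixed bidegrees and the almost-negative $(n,n)$-part) and where the smallness of $\epsilon$ enters.

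The one step that is genuinely under-argued is the forward direction. You assert that a smooth solution is ``automatically'' a subsolution via a symbol/admissibility computation. When $\Lambda^{[n]}\geq0$ this is indeed soft (monotonicity of each $F_{k}$ gives $\lim_{t\to\infty}F(A+tB)<F(A)=\kappa$), but under \textbf{H1} the form $\Lambda^{[n]}$ is only \emph{almost} positive, and then $F$ is not a priori monotone in all directions: the negative contribution $f/\det(A+tB)$ can in principle push the limit above $\kappa$, so the cone condition at a solution is exactly what must be proved, not a consequence of ellipticity. The paper's Lemma \ref{lem:Datar-Pingali} handles this by a connectedness argument: at the first point where the cone condition degenerates one extracts, via the $k_{0}$-uniform positivity and the Newton--Maclaurin estimates of Lemma \ref{lem:H1''' inequality}, the lower bound $-f(p)\geq m\gamma_{\min}$, contradicting the quantitative hypothesis $|f|<m\gamma_{\min}$. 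This is precisely why $\epsilon$ in Remark \ref{rem:We-will-chooseremark} is taken smaller than $\frac{m}{4n+2}\gamma_{\min}$ --- the constant is consumed by the solution-implies-subsolution step (and again in the convexity estimate of Lemma \ref{lem:conv cont path}), not only by the $C^{0}$ estimate as your plan suggests. Your proposal would need to supply this quantitative lemma; without it both the forward implication and the preservation of ellipticity along the continuity path are unjustified in the almost-positive regime.
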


Special cases of Theorem \ref{thm:Let--beanlytic} have been established
by Song-Weinkove \cite{song2008convergence}, Fang-Lai-Ma \cite{Fang-Lai-Ma},
Guan \cite{Guan2014Second-order}, Guan-Sun \cite{guan2015class},
Szèkelyhidi \cite{szekelyhidi2018fully}, Chen \cite{chen2021j},
Datar-Pingali \cite{datar2021numerical}. 

\begin{rem}
We also establish the uniqueness result regarding (\ref{eq:equation with f}).
See Appendix \ref{sec:Functional-and-uniqueness}. In fact, solutions
will be shown to be the unique minimizer of a global functional $\mathcal{F}$
defined in (\ref{eq:global functional}). We will further explore
the variational structure of equation (\ref{eq:equation with f})
in the Appendix \ref{sec:Functional-and-uniqueness} and some future
works. 
\end{rem}

\begin{rem}
Following works of \cite{Guan2014Second-order,guan2015class,szekelyhidi2018fully},
it is likely that Theorem \ref{thm:Let--beanlytic} may be extended
to Hermitian manifolds, when similar subsolution definition may be
established. 
\end{rem}

Our second main theorem relates the solvability of equation (\ref{eq:equation with f})
to Definition \ref{def:Lambda-kappa}. 
\begin{thm}
\label{thm:numerical criterion}Let $M$ be a connected compact Kähler
manifold of dimension $n$ with a fixed Kähler class $[\omega_{0}]$.
Suppose that $\Lambda$ is a closed real differential form satisfying
\textbf{H1}. Then $[\omega_{0}]$ is \textbf{$([\Lambda],\kappa)$}-positive
if and only if there exists a smooth Kähler metric $\omega$ solves
(\ref{eq:equation with f}). 
\end{thm}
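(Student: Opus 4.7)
By Theorem \ref{thm:Let--beanlytic} it suffices to show that $[\omega_0]$ is $([\Lambda],\kappa)$-positive if and only if a smooth subsolution of (\ref{eq:equation with f}) exists in $[\omega_0]$. For the necessity direction I would assume a smooth solution $\omega\in[\omega_0]$ and note that the equality in (\ref{eq:positive class n}) holds a priori from the cohomological normalization $\kappa\int_M\exp\omega_0=\int_M\Lambda\wedge\exp\omega_0$. For a proper irreducible subvariety $Y$ of dimension $p$, I would establish pointwise strict positivity, as a $(p,p)$-form on $M$, of
\[
\kappa\frac{\omega^{p}}{p!}-\sum_{k=1}^{p}\frac{\Lambda^{[k]}\wedge\omega^{p-k}}{(p-k)!} \qquad \text{for every } 1\le p\le n-1.
\]
This is an algebraic statement to be proved by simultaneously diagonalizing $\omega$ and $\rho$ at each point, exploiting the $k_0$-UP lower bound on $\mathring\Lambda$, the cone condition at level $n-1$ that the smooth solution itself satisfies (a separate pointwise consequence of the PDE and hypothesis \textbf{H1}), and the smallness of the almost-positive correction $\Lambda^{[n]}$. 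Restricting to a smooth resolution of $Y$ and integrating then produces (\ref{eq:positive class}).

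The sufficiency direction is the substantive one; I would prove it by induction on $n=\dim M$, following the mass-concentration strategy pioneered by G.\ Chen for the $J$-equation and extended by Datar-Pingali. Introduce a one-parameter family of perturbed problems parameterized by $t\ge 0$, for example by replacing $\mathring\Lambda$ with $\mathring\Lambda-t\rho^{k_0}/k_0!$ and simultaneously adjusting $\kappa$ to preserve cohomological compatibility. For $t$ sufficiently large the $k_0$-UP hypothesis makes a smooth subsolution easy to produce. Let $t_*\ge 0$ be the infimum of $t$ admitting a smooth subsolution; the target is $t_*=0$. If $t_*>0$, extract from subsolutions with $t\downarrow t_*$ a weak limit, namely a closed positive $(1,1)$-current $T\in[\omega_0]$ with nontrivial polar set, and use Siu's decomposition together with Demailly-Paun type regularization to identify a proper analytic subvariety $Y\subset M$ on which the obstruction concentrates. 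Apply the inductive hypothesis to a resolution of $Y$, where the restricted class remains $([\Lambda|_Y],\kappa)$-positive by assumption, to produce a subsolution on $Y$, then glue and extend it back to $M$ for parameters slightly below $t_*$, contradicting the definition of $t_*$.

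The main obstacle will be the extraction-and-gluing step: uniformly controlling approximate subsolutions away from the degenerating locus, resolving singularities of $Y$, verifying that hypothesis \textbf{H1} is preserved under restriction to $Y$ so the inductive hypothesis is applicable, and patching the subsolution from $Y$ back to a neighborhood in $M$. The multi-degree structure of $\Lambda$ is an additional complication absent in the $J$-equation and inverse $\sigma_k$ settings: the cone condition couples components of different bidegree, so every step of the perturbation, extraction, and gluing must be compatible with both the $k_0$-UP lower bound on $\mathring\Lambda$ and the smallness of the almost-positive part $\Lambda^{[n]}$.
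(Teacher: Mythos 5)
Your necessity direction is essentially the paper's: a solution is automatically a subsolution (Lemma \ref{lem:Datar-Pingali}), and the pointwise positivity of $((\kappa-\Lambda)\wedge\exp\omega)^{[p]}$ restricted to $p$-planes (Lemma \ref{lem: submatrix and restricting to subspac}) integrates over subvarieties to give Corollary \ref{cor:converse}. The gap is in the sufficiency direction, and it is the central one: you never explain how a degenerating family produces a \emph{K\"ahler current satisfying the cone condition}. A weak limit of subsolutions as $t\downarrow t_*$ is just a closed positive current in $[\omega_0]$; the subsolution property is an open pointwise inequality that carries no quantitative information surviving weak limits, so there is no mechanism in your outline forcing the limit to be K\"ahler, to have positive Lelong numbers along any useful set, or to satisfy the cone condition after regularization. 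The actual mechanism (Chen's, adopted here) is to solve a \emph{lifted} fully nonlinear equation on the product $\hat{Y}\times\hat{Y}$ with the Demailly--P\u{a}un regularization parameter $s$ of the diagonal built into the right-hand side (equation (\ref{eq:product eq})), show mass concentration of $\boldsymbol{\omega}_{\boldsymbol{\tau}}^{d}$ on the diagonal as $s\to0$ (Proposition \ref{prop:concentration to T}), and extract the current as a fiber integral $\omega_{\boldsymbol{\tau}}$ whose cone condition follows from a Cauchy--Schwarz argument (Lemma \ref{lem:We lemma tech }); the strict gain $1-\varepsilon$ then comes from subtracting a small multiple of $\hat\omega_0$ before passing to the limit (Theorem \ref{thm:Mass}). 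This product construction is precisely why the paper introduces \textbf{H2}: the sum $\pi_1^*\hat\Lambda+\frac{1}{d}\pi_2^*\hat\rho$ on the product is \emph{not} $k$-UP for any $k$, so \textbf{H1} and Theorem \ref{thm:Let--beanlytic} do not apply there, and the entire analytic theory of Part 1 has to be redone under the weaker $\mathcal{O}$-UP hypothesis. Your proposal treats "H1 is preserved under restriction" as the only positivity issue and misses the product-manifold obstruction entirely.

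Two smaller but still substantive problems. First, your continuity family $\mathring\Lambda-t\rho^{k_0}/k_0!$ destroys the positivity of $\mathring\Lambda$ for $t>0$, so none of the existence or a priori machinery (which requires \textbf{H1}/\textbf{H2}) applies along the path; the paper instead enlarges the K\"ahler class, $\hat\omega_0=(1+Kt)\omega_0+\hat\epsilon t\varpi$, and slightly enlarges $\Lambda$, so that $t=1$ admits an obvious subsolution (Lemma \ref{lem:solution t equal 1}) while all hypotheses persist. Second, the induction is on $\dim Y$ for subvarieties of the fixed $M$ (Theorem \ref{thm:cone condition in a neighborhood }), not on $\dim M$: the inductive output must be a subsolution on a \emph{neighborhood} of $Y$ in $M$ (Lemma \ref{lem: construct a local extension} plus the Richberg gluing of Corollary \ref{cor:re max}), and on the resolution $\hat Y$ the pulled-back data degenerate along the exceptional locus, which is why the classes must be perturbed by $\varpi$ and the Lelong-positive potential $\phi_S$ before the induction can close.
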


Theorem \ref{thm:numerical criterion} generalizes several existing
works. In the context of $J$-equation, G. Chen \cite{chen2021j}
proved the equivalence of $J$-uniformly positivity and the solvability
of the $J$-equation; Later, J. Song \cite{Song2020NakaiMoishezonCF}
reduced the $J$-uniformly positivity to only $J$-positivity. In
the context of inverse $\sigma_{k}$ type equations, using different
methods, similar numerical results have been proved by Collins-Szèkelyhidi
\cite{collins2017convergence} for toric manifolds and Datar-Pingali
\cite{datar2021numerical} for projective manifolds. 

While many of past works are raised from strong geometric background,
our work aims at a general form of non-linear PDEs of Monge-Ampère
type on $\kah$ manifolds. In particular, our equation is one of the
few fully non-linear PDEs that involve general differential forms
of different degrees. It is hopeful that future works will reveal
more geometry of positive differential forms on complex manifolds,
which has not been studied much from the non-linear PDE point of view. 

As an application of our main results, we study supercritical deformed
Yang-Mills equations to get the following 
\begin{thm}
\label{thm:application to dhym}Let $M$ be a connected compact Kähler
manifold of dimension $n$. Let $[\omega_{0}]$ be a real $(1,1)$-cohomology
class and let $\rho$ be a Kähler form. Let 
\[
\cot\theta=\frac{\int_{M}\text{Re}(\omega+\sqrt{-1}\rho)^{n}}{\int_{M}\text{Im}(\omega+\sqrt{-1}\rho)^{n}}.
\]
If $\theta\in(0,\frac{\pi}{n-1}]$, then there exists a smooth solution
to the supercritical dHYM equation 
\begin{equation}
\text{Re}(\omega+\sqrt{-1}\rho)^{n}=\cot\theta\text{Im}(\omega+\sqrt{-1}\rho)^{n},\label{eq:dHYM eq-1}
\end{equation}
if and only if $[\omega_{0}-\cot\theta\rho]$ is Kähler and for any
analytic subvariety $V$ of dimension $d\leq n-1$, we have 
\begin{equation}
\int_{V}\left(\text{Re}(\omega_{0}+\sqrt{-1}\rho)^{d}-\cot\theta\text{Im}(\omega_{0}+\sqrt{-1}\rho)^{d}\right)>0.\label{eq:stability-1}
\end{equation}
\end{thm}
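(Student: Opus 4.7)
The plan is to reduce the dHYM equation (\ref{eq:dHYM eq-1}) to an instance of the master equation (\ref{eq:eq with Berezin integ}) by a linear change of the unknown Kähler form, verify hypothesis \textbf{H1} for the new data, and then invoke Theorem \ref{thm:numerical criterion} together with an algebraic dictionary between its cohomological positivity and the stability condition (\ref{eq:stability-1}). Assuming $[\omega_{0}-\cot\theta\,\rho]$ is Kähler, I write the unknown as $\omega=\omega'+\cot\theta\,\rho$ with $\omega'\in[\omega_{0}-\cot\theta\,\rho]$. The algebraic identity
\[
\omega+\sqrt{-1}\rho \;=\; \omega'+\tfrac{e^{\sqrt{-1}\theta}}{\sin\theta}\,\rho,
\]
combined with $\mathrm{Re}(\cdot)-\cot\theta\,\mathrm{Im}(\cdot)=-\sin^{-1}\theta\,\mathrm{Im}(e^{-\sqrt{-1}\theta}\,\cdot)$, lets me expand the dHYM equation binomially and collect powers of $\omega'$ and $\rho$, producing the equivalent equation
\[
\sin^{n+1}\theta\,\frac{\omega'^{\,n}}{n!} \;=\; \sum_{k=2}^{n}\sin\bigl((k-1)\theta\bigr)\sin^{n-k}\theta\,\frac{\rho^{k}}{k!}\wedge\frac{\omega'^{\,n-k}}{(n-k)!}.
\]
This is exactly (\ref{eq:equation with f}) in $\omega'$ with $\kappa=\sin^{n+1}\theta>0$, $\Lambda^{[1]}=0$, and $\Lambda^{[k]}=\sin((k-1)\theta)\sin^{n-k}\theta\,\rho^{k}/k!$ for $2\leq k\leq n$.

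For $n\geq 3$, the hypercritical bound $\theta\leq\pi/(n-1)$ places every angle $(k-1)\theta$ inside $(0,\pi]$ for $2\leq k\leq n$, so each $\sin((k-1)\theta)\geq 0$ and $\mathring\Lambda-m\,\rho^{2}/2\geq 0$ pointwise with $m=\sin^{n-1}\theta>0$; this gives $k_{0}=2$-uniform positivity with reference metric $\rho$. The top component $\Lambda^{[n]}=\sin((n-1)\theta)\rho^{n}/n!\geq 0$ is a positive (resp.\ vanishing at the endpoint) volume form, automatically satisfying the almost-positivity requirement of \textbf{H1}, where the boundary case $\theta=\pi/(n-1)$ is precisely the degeneration the relaxation was built for. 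In the special dimension $n=2$ one has $\mathring\Lambda\equiv 0$ and the reduced equation collapses to the complex Monge--Ampère equation $(\omega')^{2}=\rho^{2}/\sin^{2}\theta$, whose existence follows from Yau's theorem; the integrability constraint is forced by the cohomological definition of $\cot\theta$.

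The numerical criterion now translates cleanly. The same binomial identity applied at the cohomology level on any subvariety $V$ of dimension $d$ yields
\[
d!\int_{V}\bigl([\exp\omega_{0}']\cdot[\kappa-\Lambda]\bigr)^{[d]} \;=\; \sin^{n+1}\theta\int_{V}\Bigl(\mathrm{Re}(\omega_{0}+\sqrt{-1}\rho)^{d}-\cot\theta\,\mathrm{Im}(\omega_{0}+\sqrt{-1}\rho)^{d}\Bigr).
\]
Since $\sin^{n+1}\theta>0$, the $([\Lambda],\kappa)$-positivity of $[\omega_{0}']=[\omega_{0}-\cot\theta\,\rho]$ from Definition \ref{def:Lambda-kappa} is equivalent to (\ref{eq:stability-1}) on all proper subvarieties, with the top-degree equality holding automatically by the definition of $\cot\theta$. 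In the hypercritical range, $\cot\theta\geq 0$ (for $n\geq 3$), so a smooth Kähler $\omega'$ produced by Theorem \ref{thm:numerical criterion} yields $\omega=\omega'+\cot\theta\,\rho$ Kähler and solving (\ref{eq:dHYM eq-1}). Conversely, any smooth Kähler solution $\omega$ satisfies $\sum_{j}\arctan\lambda_{j}=\theta$ with each $\lambda_{j}>0$ and $\theta<\pi/2$; if some $\arctan\lambda_{j}$ equalled $\theta$, the remaining positive terms would have to vanish, contradicting $\rho>0$, so each $\arctan\lambda_{j}<\theta$ strictly, $\lambda_{j}<\tan\theta$, and $\omega'=\omega-\cot\theta\,\rho>0$ is a Kähler solution of the reduced equation.

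The main obstacle is the sign bookkeeping in the reduction: \textbf{H1} requires $\mathring\Lambda$ to dominate a positive multiple of $\rho^{k_{0}}/k_{0}!$, which amounts to common nonnegativity of the family $\{\sin((k-1)\theta)\}_{2\leq k\leq n}$, and the hypercritical bound $\theta\leq\pi/(n-1)$ is precisely the sharpest constraint ensuring this. The boundary degeneration at $\theta=\pi/(n-1)$, where the volume-form coefficient vanishes, and the surface case $n=2$, where $\mathring\Lambda\equiv 0$ and the equation becomes Monge--Ampère, are the two technicalities, absorbed respectively by the almost-positivity relaxation built into \textbf{H1} and by Yau's classical theorem.
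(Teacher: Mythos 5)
Your proof follows the paper's argument essentially verbatim: the same substitution $\hat\omega=\omega-\cot\theta\,\rho$, the same binomial expansion of $(\omega+\sqrt{-1}\rho)^{n}$ producing $\Lambda=\sum_{k=2}^{n}\frac{\sin((k-1)\theta)}{\sin\theta}\frac{(\rho/\sin\theta)^{k}}{k!}$ (your version differs only by an overall factor $\sin^{n}\theta$ absorbed into $\kappa$), the same verification of \textbf{H1} with $k_{0}=2$ from $\sin((k-1)\theta)\geq0$ on $(0,\frac{\pi}{n-1}]$, and the same appeal to Theorem \ref{thm:numerical criterion} via the degree-$d$ form of the identity; your explicit treatment of $n=2$ via Yau's theorem is a small addition the paper leaves implicit. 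The one blemish is the converse step, where the correct relation is $\sum_{j}\mathrm{arccot}\,\lambda_{j}=\theta$ with the $\lambda_{j}$ not assumed positive, giving $\lambda_{j}>\cot\theta$ directly rather than the $\arctan/\tan$ chain you wrote — this is the standard Collins--Jacob--Yau fact that the paper simply cites rather than reproves.
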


For supercritical dHYM equations, the equivalence of the existence
of a smooth solution to (\ref{eq:dHYM eq-1}) and the numerical condition
(\ref{eq:stability-1}) was conjectured by Collins-Jacob-Yau \cite{collins20201}
and confirmed by the work of G. Chen \cite{chen2021j}, Chu-Lee-Takahashi
\cite{chu2021nakai}, and A. Ballal \cite{Ballal10.1215/00192082-10417484}
under various stronger assumptions. See more details in Section \ref{sec:Application-to-Hypercritical}.
The original conjecture in \cite{collins20201} in its full generality
does not hold due to counterexamples constructed by Zhang \cite{zhang2023note}.
However, Theorem \ref{thm:application to dhym} confirms the original
conjecture when $\theta\in(0,\frac{\pi}{n-1}]$. Notice that for Zhang's
examples\cite{zhang2023note} , $\theta>\frac{\pi}{n-1}$. It is therefore
interesting to find the sharp range of $\theta$ in which the conjecture
of Collins-Jacob-Yau holds.

We make some comments on our proofs.

We have utilized numerous ideas and techniques of existing results,
many have been cited above. On the analytic side: We employ Yau’s
continuity method which has been successfully used by many to attack
Monge-Ampère type and other types of equations. We follow works of
many \cite{song2008convergence,Fang-Lai-Ma,Guan2014Second-order,collins2017convergence,szekelyhidi2018fully,chen2021j,datar2021numerical}
to derive \textit{a priori} estimates. We use Szèkelyhidi's argument
to derive $C^{0}$ estimate \cite{szekelyhidi2018fully}. We use arguments
in G. Chen \cite{chen2021j} and Datar-Pingali \cite{datar2021numerical}
to treat almost positive volume forms. The definition of subsolution
is inspired by similar concepts in Song-Weinkove \cite{song2008convergence},
Fang-Ma-Lai \cite{Fang-Lai-Ma}, Guan \cite{Guan2014Second-order},
and Szèkelyhidi \cite{szekelyhidi2018fully}. On the algebraic side:
The numerical condition in Definition \ref{def:Lambda-kappa} is inspired
by \cite{lejmi2015j,szekelyhidi2018fully}. We follow G. Chen's induction
method and overall approach in \cite{chen2021j} to tackle the numerical
criterion. We make use of Demailly-Păun's mass concentration technique
\cite{demailly2004numerical}. We follow J. Song's treatment of singular
subvarieties \cite{Song2020NakaiMoishezonCF}. 

We have made a conscious effort to examine existing methods and explore
their maximal applicability, which has lead us to the current formulation
of general equations and proper positivity conditions. As mentioned
earlier, our equation includes many existing important special cases.
We apply a new set of local notations and carry out detailed multi-linear
algebraic computation involving inverse $\sigma_{k}$ type quantities.

One key component of our proof is the definition of the uniform positivity
and the subsequent \textbf{H1} condition, which ensure that the important
concept of subsolutions can be defined and key a priori estimates
can be carried out. We would like to remark that the uniform positivity
condition can not be used directly to prove Theorem \ref{thm:numerical criterion}.
From the algebraic point of view, proper positivity conditions are
required to be preserved under two important geometric procedures
which are used to prove corresponding numerical results: the first
is a product manifold construction; the second is passing from a $\kah$
manifold to its sub-varieties. The uniform positivity condition can
be preserved under the second procedure, but unfortunately it is not
preserved under the product manifold procedure. In order to overcome
this difficulty, we introduce a more technical positivity condition,
called \textbf{H2}, which is a similar but more general version of
the \textbf{H1} condition and will be stated in later sections. \textbf{H2}
works under the product manifold procedure, but does not descend to
sub-varieties. We carefully establish all necessary algebraic and
analytic results that are needed in our proofs under these technical
assumptions by fully exploring the algebraic structure of differential
forms. The final write-up is technical but self-contained for the
convenience of readers. We believe that our main theorems, Theorems
\ref{thm:Let--beanlytic} and \ref{thm:numerical criterion}, are
cleanest in format, still incorporate all known special cases and
will be of most use in future applications. 

Another key component of our proof is a new geometric PDE (\ref{eq:product eq})
on the product manifold, which is simpler and more flexible than the
one used by Chen \cite{chen2021j}, even in the J-equation case. Under
our general setting, the new equation is compatible with proper cone
conditions, and still carries essential algebraic information. See
Example \ref{exa:non-trivial example} for further elaboration in
a simple yet illuminating case.

There are several future research directions of interest. 

First of all, our results may be extended further. The general form
of dHYM equations do not satisfy any of our positivity assumptions,
while its special cases have been studied successfully. In \cite{LIN2023110038},
Lin also studied the convexity of inverse $\sigma_{k}$ type equations
with some negative coefficients. This may indicate that some weaker
positivity conditions may lead to existence results. 

Similar to the J-equation and dHYM equations ( \cite{donaldson1999moment,collins2019stability,collins2021moment}),
there is also a proper moment map interpretation of our equation,
which involves an infinite dimensional symplectic space. Such point
of view is helpful to consider further generalizations and applications.
We would like to address this topic in a future work.

From the PDE point of view, our current approach, as well as several
important previous works, heavily depends on the continuity method.
It is interesting to explore corresponding geometric flows, which
has been successfully used in the setting of J-equation and dHYM equations
but missing in the more general setting. See\cite{chen2000lower,weinkove2004convergence,song2008convergence,Fang-Lai-Ma}
and \cite{collins20201,fu2021deformed,chen2021j,chu2021nakai,huang2022deformed,lin2022deformed,pingali2022deformed,collins2021moment,takahashi2020tan}
for an incomplete list. Overall, the parabolic approach will expose
finer geometric information.

From the geometry point of view, as mentioned earlier, one may explore
the geometry of properly defined positive differential forms using
non-linear PDEs, which will be a new research direction. Also mentioned
earlier, some of our results is likely to be extended to the Hermitian
setting. This may further open applications in the field of special
geometry and mathematical physics. From another prospective, indicated
by works of Datar-Pingali \cite{datar2021numerical}, Chu-Lee-Takahashi
\cite{chu2021nakai} and a recent lecture note of G. Chen \cite{chen2022lecture},
one may explore the more restrictive setting where $M$ is projective.
Studies of generalized Hodge conjecture explore fine distinction between
$\kah$ and projective manifolds. Our general form of PDE may be of
help.

The rest of the paper is organized as follows: In Part 1, we establish
main analytical results of this paper; In Part 2, we prove our main
algebraic result. We give more details: In Section 2, we state our
main technical analytical results; In Section 3, we describe our local
setup and establish proper notations; In Section 4, when $\Lambda^{[n]}$
is non-negative, we compute variations of our non-linear differential
operators and derive its ellipticity and concavity; In Section 5,
we discuss the cone condition and related properties; In Section 6,
we extend results of previous sections to include the almost positive
$\Lambda^{[n]}$ ; In Section 7, we establish proper a priori estimate
and use the continuity method to prove our main analytical results;
In Section 8, we set up notations and make technical preparations
for Part 2; In Section 9, we initiate the induction argument to prove
Theorem \ref{thm:numerical criterion}; In Section 10, we prove the
key mass concentration theorem; In Section 11, we complete the proof
of Theorem \ref{thm:numerical criterion}; In Section 12, we study
dHYM equations and prove Theorem \ref{thm:application to dhym}. 

\subsection*{Acknowledgements}

We thank Jian Song and Jianchun Chu for helpful discussions. We thank
Gang Tian and Xiaohua Zhu for comments and suggestions. We thank Vamsi
Pritham Pingali and Ved Datar for their interest and comments that
are helpful.

\part{Analytical Criterion}

\section{Assumptions and main results}

In this part we prove Theorem \ref{thm:Let--beanlytic}, the first
of our main theorems, by establishing a stronger result.

We begin introduce a positivity assumption, called \textbf{H2,} which
is weaker than \textbf{H1} and allows us to work with differential
forms which do not satisfy $k$-uniform positivity. In particular,
this new positivity condition can be applied to product manifolds,
which will be crucial in later proofs.

We start by defining a pointwise structure on $M$. Let $\mathcal{T}_{p}M$
be the holomorphic tangent space of $M$ at $p.$ We fix a reference
$\kah$ metric as $\rho$ and for future convenience, define 
\begin{equation}
P:=\exp\rho=\sum_{k=0}^{n}\frac{\rho^{k}}{k!}.\label{eq:P=00003Dexprho}
\end{equation}
As before, we define $\mathring{\Lambda}=\Lambda-\Lambda^{[n]}$. 
\begin{defn}
\emph{\label{def:A-labeled-orthogonal}A labeled orthogonal splitting}
at $p\in M$ with respect to $\rho$ is a tuple 
\[
\mathcal{O}_{p}=(n_{p},\mathbf{d}_{p},\mathbf{V}_{p},\mathbf{k}_{p}),
\]
where $n_{p}\in\Z_{+}$, $\mathbf{d}_{p}=(d_{1},\cdots,d_{n_{p}})\in\Z_{\geq0}^{n_{p}}$
, $\mathbf{k}_{p}=(k_{1},\cdots,k_{n_{p}})\in\Z_{\geq0}^{n_{p}}$
and $\mathbf{V}_{p}=\{\mathcal{V}_{i}:\text{linear subspaces in }\mathcal{T}_{p}M\}$
such that 
\begin{enumerate}
\item $d_{i}=\dim_{\C}\mathcal{V}_{i}$ and $1\leq k_{i}\leq d_{i}-1$; 
\item $\mathcal{V}_{i}$ are mutually orthogonal with respect to $\rho$
and $\mathcal{T}_{p}M=\oplus_{i=1}^{n_{p}}\mathcal{V}_{i}.$ 
\end{enumerate}
Given $\mathcal{O}_{p}$ at $p$, we denote $\iota_{i},\pi_{i}$ be
the standard embedding and orthogonal projection for each $\mathcal{V}_{i}$,
respectively. Denote 
\begin{equation}
\rho_{i}=\pi_{i}^{*}\iota_{i}^{*}\rho.\label{eq:rho_i}
\end{equation}
If at each $p\in M$, there is a labeled orthogonal splitting $\mathcal{O}_{p}$,
we call $\mathcal{O}:=\{\mathcal{O}_{p}:p\in M\}$ a labeled orthogonal
splitting on $M$. 
\end{defn}

We propose the following 
\begin{defn}
\label{def:O-uniform positive}Suppose that there exists a labeled
orthogonal splitting $\mathcal{O}=\{(n_{p},\mathbf{d}_{p},\mathbf{V}_{p},\mathbf{k}_{p}):p\in M\}$
with respect to some reference $\kah$ metric $\rho$. $\mathring{\Lambda}$
is called $\mathcal{O}$-\emph{uniformly positive} (\emph{$\mathcal{O}$}-UP)
if there exists a uniform constant $m>0$ such that at each point
$p$, we have 
\begin{equation}
\mathring{\Lambda}-m\sum_{i=1}^{n_{p}}\frac{\rho_{i}^{k_{i}}}{k_{i}!}\geq0,\label{eq:H1 positivity}
\end{equation}
which means the $(l,l)$-component of the left hand side is positive
(See Definition \ref{def:An-element-positive forms}) for $l=1,\cdots,n-1$. 
\end{defn}

$\mathcal{O}$-uniform positivity is a weaker condition compared to
$k$-UP, and the later corresponds to $\{(1,n,\mathcal{T}_{p}M,k)\}$-uniform
positivity. However, as $\mathcal{O}$ depends on the choice of $\rho$,\emph{
$\mathcal{O}$}-UP property also depends on the choice of $\rho$.
Moreover, the $\mathcal{O}$-UP property is not preserved when descending
to a smooth subvariety, which is why we assume $k$-UP in Theorem
\ref{thm:numerical criterion}.

For $(n,n)$-form $\Lambda^{[n]}$, as discussed in the introduction,
we allow it to be slightly negative:
\begin{defn}
Given a labeled orthogonal splitting $\mathcal{O}=\{(n_{p},\mathbf{d}_{p},\mathbf{V}_{p},\mathbf{k}_{p}):p\in M\}$
with respect to $\rho$, we call $(n,n)$-form $\alpha$ \emph{an
almost positive volume form or almost positive} with respect to $(\mathcal{O},m,\rho)$,
if $\int_{M}\alpha\geq0$ and there exists $\epsilon=\epsilon(n,m,\kappa,n_{p},\mathbf{d}_{p},\mathbf{k}_{p},\omega_{0},\rho)>0$
such that 
\begin{equation}
\frac{\alpha}{\rho^{n}/n!}|_{p}>-\epsilon.\label{eq:fmin}
\end{equation}
\end{defn}

\begin{rem}
We will chose the precise number $\epsilon=\min\left\{ \frac{m}{4n+2}\gamma_{\min}(\frac{2\kappa}{m},n_{p},\mathbf{d}_{p},\mathbf{k}_{p}),\frac{\kappa\int_{M}\omega_{0}^{n}}{2\int_{M}\rho^{n}}\right\} $,
where $\gamma_{\min}$ is a positive number defined in (\ref{eq:gamm_min}).
Notice that $\epsilon$ can be chosen independent of $p$, since variables
$\mathbf{d}_{p},\mathbf{k}_{p}$ in $\gamma_{\min}$ only takes finitely
many values for all possible local labeled orthogonal splitting. 
\end{rem}

We impose the following hypothesis on $\Lambda$, which can be decomposed
into $\mathring{\Lambda}+\Lambda^{[n]}.$ 
\begin{defn}
We call $\Lambda=\mathring{\Lambda}+\Lambda^{[n]}$ satisfies the
condition \textbf{H2}, if $\mathring{\Lambda}$ is $\mathcal{O}$-uniformly
positive for some uniform constant $m$, and $\Lambda^{[n]}$ is almost
positive with respect to a specific $\epsilon$ and $(\mathcal{O},m,\rho)$;
Furthermore, there exists a constant $C_{H2}=C_{H2}(\Lambda,M)$ such
that for any $p\in M$, $\xi\in\mathcal{T}_{p}M$ with $\|\xi\|_{\rho}\leq1$,
and each $1\leq k\leq n$, we have 
\begin{align}
-C_{H2}\left(\Lambda^{[k]}+\sum_{l\in\boldsymbol{l}_{k}}\rho_{1}^{l_{1}}\cdots\rho_{n_{p}}^{l_{n_{p}}}\right) & \leq\text{Re}\left(\nabla_{\xi}\Lambda^{[k]}\right)\leq C_{H2}\left(\Lambda^{[k]}+\sum_{l\in\boldsymbol{l}_{k}}\rho_{1}^{l_{1}}\cdots\rho_{n_{p}}^{l_{n_{p}}}\right),\label{eq:assump1}\\
-C_{H2}\left(\Lambda^{[k]}+\sum_{l\in\boldsymbol{l}_{k}}\rho_{1}^{l_{1}}\cdots\rho_{n_{p}}^{l_{n_{p}}}\right) & \leq\nabla_{\xi\bar{\xi}}^{2}\Lambda^{[k]}\leq C_{H2}\left(\Lambda^{[k]}+\sum_{l\in\boldsymbol{l}_{k}}\rho_{1}^{l_{1}}\cdots\rho_{n_{p}}^{l_{n_{p}}}\right),\label{eq:assump 2}
\end{align}
where $\boldsymbol{l}_{k}=\{(l_{1},\cdots,l_{n_{p}}):\sum_{i}l_{i}=k,\ l_{i}=0\ \text{or}\ l_{i}\geq k_{i}\}$
and $\nabla$ is the Levi-Civita connection of $\rho$. 
\end{defn}

\begin{rem}
\label{rem:If-,-then h1}If $k\geq\max_{1\leq i\leq n_{p}}\{n-d_{i}+k_{i}\}$,
then (\ref{eq:assump1}) and (\ref{eq:assump 2}) hold automatically,
since $\rho^{k}\leq c_{0}(n_{p},k)\sum_{l\in\boldsymbol{l}_{k}}\rho_{1}^{l_{1}}\cdots\rho_{n_{p}}^{l_{n_{p}}}$
(see proof in Lemma \ref{lem:spplict 2}) and we may choose $C_{H2}=(\|\Lambda\|_{C^{2}}+1)\cdot\sup_{p}c_{0}(n_{p},k)$. 
\end{rem}

\begin{rem}
\textbf{H1 }is a stronger condition comparing to \textbf{H2}. First,
the $k_{0}$-UP\textbf{ }condition is equivalent to the $\mathcal{O}$-UP
condition for $\mathcal{O}=\{(1,n,\mathcal{T}_{p}M,k_{0}):p\in M\}$.
Second, since $\Lambda^{[k]}=0$ for $k<k_{0}$, (\ref{eq:assump1})
and (\ref{eq:assump 2}) hold for $k<k_{0}$. By Remark \ref{rem:If-,-then h1},
(\ref{eq:assump1}) and (\ref{eq:assump 2}) hold for $k\geq k_{0}$. 
\end{rem}

Comparing to \textbf{H1}, condition \textbf{H2} allows more general
choices of geometric data. However, condition \textbf{H2} may be more
difficult to verify. A crucial fact is following: 
\begin{example}
\label{exa:example}Let $\{(M_{i},\rho_{i},\mathcal{O}_{i})\}_{i=1}^{l}$
be several $\kah$ manifolds, each with labeled orthogonal splitting
\emph{$\mathcal{O}_{i}$}. Let $\Lambda_{i}$ be positive differential
forms on $M_{i}$ satisfying \textbf{H2} for some uniform constant
$m$, respectively. Let $\mathcal{M}:=\prod_{i=1}^{l}M_{i}$ be the
product manifold and $\text{pr}_{i}$ be the canonical projection
from $\mathcal{M}$ to $M_{i}$. Let $\rho=\sum_{i=1}^{l}\text{pr}_{i}^{*}\rho_{i}$.
Then $\mathcal{M}$ has a natural labeled orthogonal splitting at
each $\mathbf{p}=(p_{1},\cdots,p_{l})\in\mathcal{M}$ as 
\[
\mathcal{O}_{(p_{1},\cdots,p_{l})}=\left(\sum_{i=1}^{l}n_{p_{i}},(\mathbf{d}_{p_{1}},\cdots,\mathbf{d}_{p_{l}}),\cup_{i=1}^{l}\mathbf{V}_{p_{i}},(\mathbf{k}_{p_{1}},\cdots,\mathbf{k}_{p_{l}})\right).
\]
Let 
\[
\boldsymbol{\Lambda}:=\sum_{i=1}^{l}\text{pr}_{i}^{*}\Lambda_{i}.
\]
Then $\mathbf{\Lambda}$ satisfies $\mathcal{O}$-uniformly positivity.
Note the $\nabla_{\xi}\text{pr}_{i}^{*}\Lambda_{i}=\nabla_{\xi^{\top}}\text{pr}_{i}^{*}\Lambda_{i}$
where $\xi^{\top}$ is the tangential part of $\xi$ to $M_{i}$.
Thus, $\text{pr}_{i}\Lambda_{i}$ satisfies (\ref{eq:assump1}) and
(\ref{eq:assump 2}), and so does $\boldsymbol{\Lambda}$ (with $C_{H2}=\max\{C_{H2}(\Lambda_{i},M_{i})\}$).
On the other hand, if each $\Lambda_{i}$ is $k_{0i}$-uniformly positive
for some $k_{0i}\geq2$, i.e. $\Lambda_{i}^{[k_{0i}]}\geq m\rho_{i}^{k_{0i}}/k_{0i}!$
on $M_{i}$, one does not expect $\boldsymbol{\Lambda}\geq m\rho^{k}/k!$
for some $k\geq2$. Therefore, \textbf{H1 }does not necessarily hold
on $\mathcal{M}$.

We are now ready to state the main theorem of this part. 
\end{example}

\begin{thm}
\label{thm:Let--beanlytic-1}Let $M$ be a connected compact $\kah$
manifold of dimension $n$ with a fixed $\kah$ class $[\omega_{0}]$.
Suppose $\Lambda$ is a closed real differential form satisfying \textbf{H2}.
Then, there is a smooth solution of (\ref{eq:equation with f}) if
and only if there is a smooth subsolution of (\ref{eq:equation with f}). 
\end{thm}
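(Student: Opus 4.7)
The plan is to apply Yau's continuity method, following the template of \cite{song2008convergence,Fang-Lai-Ma,szekelyhidi2018fully,chen2021j,datar2021numerical} and adapting each step to the hypothesis \textbf{H2}. The ``only if'' direction is essentially automatic: the linearization of $\omega\mapsto(\kappa\exp\omega-\Lambda\wedge\exp\omega)^{[n]}$ at a Kähler representative $\omega$ sends $v$ to $i\partial\bar\partial v\wedge\bigl(\kappa(\exp\omega)^{[n-1]}-(\Lambda\wedge\exp\omega)^{[n-1]}\bigr)$, so ellipticity at a genuine solution forces the bracketed $(n-1,n-1)$-form to be strictly positive, i.e.\ $\omega$ is a subsolution. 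Under \textbf{H2} this positivity can be extracted directly from the equation using the blockwise lower bound on $\mathring{\Lambda}$, in the spirit of the J-equation computation $\kappa-\sum_{i\neq j}\lambda_i=\lambda_j>0$. So the main content is the ``if'' direction.

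For ``if'', let $\underline{\omega}=\omega_0+i\partial\bar\partial\underline{u}$ be the given subsolution. I would set up a one-parameter family $\omega_t=\omega_0+i\partial\bar\partial u_t$, $t\in[0,1]$, interpolating between an equation trivially solved by (a small perturbation of) $\underline{\omega}$ at $t=0$ and (\ref{eq:equation with f}) at $t=1$, with a $t$-dependent additive constant on the right-hand side enforcing integral compatibility at each $t$. Since the cone condition (\ref{eq:cone condition}) is open in $\omega$ and the perturbation of the right-hand side does not affect it, $\underline{\omega}$ remains a subsolution along the whole family; each $\omega_t$ produced by the continuity method is itself a subsolution by the ``only if'' observation, so ellipticity of the linearization is preserved. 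Openness at each $t$ then follows from the implicit function theorem and the Fredholm alternative on the self-adjoint second-order elliptic operator
\[
v\longmapsto\bigl(\kappa(\exp\omega_t)^{[n-1]}-(\Lambda\wedge\exp\omega_t)^{[n-1]}\bigr)\wedge i\partial\bar\partial v.
\]

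Closedness reduces to uniform a priori estimates along the family, which I would establish in the standard chain: (i) a uniform $C^0$ bound via Sz\'ekelyhidi's auxiliary Monge-Amp\`ere argument \cite{szekelyhidi2018fully}, with the almost-positive correction $\Lambda^{[n]}$ handled as in Chen \cite{chen2021j} and Datar-Pingali \cite{datar2021numerical} through the explicit smallness choice of $\epsilon$; (ii) a uniform second-order bound $|i\partial\bar\partial u_t|_\rho\le C$ via the maximum principle applied to a test function of the form $\log\lambda_{\max}(i\partial\bar\partial u_t)+\phi(|\nabla u_t|_\rho^2)+\psi(u_t-\underline{u})$, where the subsolution term $\psi(u_t-\underline{u})$ produces the coercive linear functional that absorbs the third-order bad terms; (iii) a gradient bound obtained from $C^0$ and $C^2$ by a Błocki-type argument; (iv) an Evans--Krylov step upgrading to $C^{2,\alpha}$ using the concavity of the operator to be established in Sections~4 and~6 under \textbf{H2}, followed by standard Schauder bootstrap to $C^{k,\alpha}$ for every $k$.

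The main obstacle is the $C^2$ estimate under the relaxed \textbf{H2} hypothesis. Under \textbf{H1} one has a single isotropic pointwise lower bound $\mathring{\Lambda}\ge m\rho^{k_0}/k_0!$ that dominates the commutator and curvature error terms in the maximum-principle calculation; under \textbf{H2} the lower bound is only the anisotropic blockwise sum $m\sum_{i}\rho_i^{k_i}/k_i!$, and so the absorption of bad terms must be carried out block by block. The precise derivative bounds (\ref{eq:assump1})--(\ref{eq:assump 2}) on $\nabla\Lambda^{[k]}$ and $\nabla^2\Lambda^{[k]}$ in terms of the same anisotropic family are calibrated exactly to enable this blockwise absorption, which is what makes them the right formulation; their necessity on product manifolds (Example \ref{exa:example}) is what justifies the entire shape of \textbf{H2}. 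Once the four estimates are established, passing to the limit $t\to 1$ via Arzel\`a--Ascoli together with the higher regularity estimates produces the desired smooth $\omega\in[\omega_0]$ solving (\ref{eq:equation with f}), completing the continuity method.
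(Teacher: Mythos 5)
Your outline is essentially the paper's proof: the same continuity method, the same chain of estimates ($C^{0}$ by the Alexandroff--Bakelman--Pucci argument of Proposition \ref{prop:C0 estimate}, $C^{2}$ by maximum principle with the subsolution supplying coercivity via Proposition \ref{prop: Fang-Lai-Ma1}, gradient by blow-up, then Evans--Krylov and Schauder), and the same diagnosis that the anisotropic bounds (\ref{eq:assump1})--(\ref{eq:assump 2}) exist precisely so that the absorption of error terms in the $C^{2}$ estimate can be done block by block.

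Two points deserve correction. First, the ``only if'' direction is not ``essentially automatic,'' and the sentence ``ellipticity at a genuine solution forces the bracketed $(n-1,n-1)$-form to be strictly positive'' is circular: nothing guarantees ellipticity at a solution a priori. In the paper this is Lemma \ref{lem:Datar-Pingali}, whose proof requires a continuity argument from a point where $\Lambda^{[n]}\geq0$ together with the quantitative lower bound of Lemma \ref{lem:H1''' inequality} producing the constant $\gamma_{\min}$ of (\ref{eq:gamm_min}); the admissible negativity $\epsilon$ of $\Lambda^{[n]}$ in \textbf{H2} is calibrated exactly so that this argument closes. The lemma is load-bearing: it also gives openness of the continuity path and the strict monotonicity and convexity of $F$ on the cone (Lemmas \ref{lem:mononton with f} and \ref{lem:conv cont path}) that your $C^{2}$ computation silently uses, so it cannot be dismissed as a J-equation-style identity. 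Second, your continuity path is underspecified: a ``$t$-dependent additive constant'' does not make the subsolution solve the $t=0$ equation, and if you instead force the $t=0$ right-hand side to equal whatever the subsolution produces, you must re-verify that the deformed data satisfy \textbf{H2} (in particular the almost-positivity of the volume-form part) uniformly along the path. The paper avoids this by deforming $\Lambda_{t}=t\mathring{\Lambda}+(tf+(1-t)\kappa_{0})P^{[n]}$ as in (\ref{eq:contin path}) and solving the $t=0$ equation by Yau's theorem; this keeps the uniform condition \textbf{H2'} for all $t$ and keeps $\omega_{\text{sub}}$ a subsolution for all $t$, since the cone condition only involves the components of $\Lambda$ of degree at most $n-1$.
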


From discussions above, it is clear that Theorem \ref{thm:Let--beanlytic}
is a direct consequence of Theorem \ref{thm:Let--beanlytic-1}. The
rest of this part is devoted to prove Theorem \ref{thm:Let--beanlytic-1}.
It is organized as follows. In Section \ref{sec:Preliminary-set-up},
we introduce necessary notations and definitions. In Section \ref{sec:Ellipticity-and-convexity},
we compute variations of the local functional and verify its ellipticity
and convexity. In Section \ref{sec:Cone-condition}, we study the
cone condition and state several equivalent expressions. In Section
\ref{sec:Equations-with-negative}, we extend results of previous
Sections to PDEs with almost positive volume forms. In Section \ref{sec:Continuity-method},
we use the continuity method to prove Theorem \ref{thm:Let--beanlytic-1}
by establishing key a priori estimates.

\section{Preliminary set-up\label{sec:Preliminary-set-up}}

In this section, we introduce some notations and discuss some basic
properties of our equation (\ref{eq:equation with f}).

\subsection{Positivity of differential forms}

We recall several definitions of positivity for $(k,k)$-forms from
Demailly \cite{demailly2012complex}, III,1,A. Our presentation is
slightly different due to our choice of notations. 
\begin{defn}
\label{def:An-element-positive forms} $u\in\bigwedge^{k,k}T_{p}^{*}M$
(resp. $\bigwedge^{k,k}T_{p}M$) is said to be a \emph{strongly positive}
\emph{form (resp. vector)} if it is of the form 
\[
u=\sum_{s\in I}\lambda_{s}\sqrt{-1}^{k}\alpha_{s,1}\wedge\overline{\alpha}_{s,2}\wedge\cdots\wedge\alpha_{s,k}\wedge\overline{\alpha}_{s,k},
\]
where $\alpha_{s,i}\in\bigwedge^{1,0}T_{p}^{*}M$ (resp. $\bigwedge^{0,1}T_{p}M$)
and $\lambda_{s}\geq0$.

An element $u\in\bigwedge^{k,k}T_{p}^{*}M$ (resp. $\bigwedge^{k,k}T_{p}M$)
is a\emph{ positive form (resp. vector) }if 
\[
\<u,v\>\geq0
\]
for any strongly positive $v\in\bigwedge^{k,k}T_{p}M$ (resp. $\bigwedge^{k,k}T_{p}^{*}M$).
Here $\<\cdot,\cdot\>$ is the canonical pairing (complex linear)
between $\bigwedge^{*}T_{p}M$ and $\bigwedge^{*}T_{p}^{*}M$. We
denote $\alpha\geq\beta$ (resp. $\alpha\leq\beta$) if for each $k=0,\cdots,n$,
$\alpha^{[k]}-\beta^{[k]}$ (resp. $\beta^{[k]}-\alpha^{[k]}$) is
a positive $(k,k)$-form/vector.

We denote $\alpha\ge_{s}\beta$ (resp. $\alpha\leq_{s}\beta$) if
$\alpha-\beta$ (resp. $\beta-\alpha$) is a strongly positive form/vector. 
\end{defn}

We list several direct conclusions from Definition \ref{def:An-element-positive forms}
and compare these to our uniform positivity concept. A strongly positive
form (vector) is also positive. The converse is true for $k=0,1,n-1,n$;
However it is false for $k=2,\cdots,n-2$ if $n\geq4.$ Also, the
cone of positive $(k,k)$-forms is the dual cone of strongly positive
$(k,k)$-vectors. It is obvious that both positivity and strongly
positivity are invariant under coordinate change. If $\rho$ is a
$\kah$ form, then $\rho^{k}$ is a strongly positive $(k,k)$-form.
Moreover, $\rho^{k}$ is $k$-uniformly positive by \ref{eq:uniformly positive}.
A uniformly positive form is positive, but not necessarily strongly
positive. For instance, pick a weakly positive but not strongly positive
$(k,k)$-form $\alpha$. Then $\rho^{k}+\alpha$ is uniformly positive
but not strongly positive.

The following lemma shows that $\exp\omega$ is strongly positive
if $\omega$ is a non-negative $(1,1)$-form. The same argument applies
to non-negative $(1,1)$-tangent vectors. 
\begin{lem}
\label{lem:exp storng positive}Let $A$ be a non-negative Hermitian
matrix. Let $\omega=A_{i\bar{j}}\frac{\sqrt{-1}}{2}dz^{i}\wedge d\bar{z}^{j}$.
Then $\exp\omega$ is a strongly positive form. Moreover, if both
$A$,$B$ are non-negative Hermitian matrix, $\omega'=B_{i\bar{j}}\frac{\sqrt{-1}}{2}dz^{i}\wedge d\bar{z}^{j}$
and $A\geq B$, then $\exp\omega\geq_{s}\exp\omega'$ as a strongly
positive form. 
\end{lem}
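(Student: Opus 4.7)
The plan is to reduce to the diagonal case via a unitary change of coordinates and then read off strong positivity from the binomial/multinomial expansion. Since $A$ is non-negative Hermitian, there is a unitary matrix $U$ so that the linear change of variables $w = U z$ diagonalizes $A$; in the new coordinates
\[
\omega = \sum_{i=1}^{n} \lambda_i\, \eta_i, \qquad \eta_i := \frac{\sqrt{-1}}{2}\, dw^i \wedge d\bar{w}^i, \qquad \lambda_i \geq 0.
\]
Because $\eta_i \wedge \eta_i = 0$, the wedge powers telescope:
\[
\frac{\omega^k}{k!} = \sum_{i_1 < \cdots < i_k} \lambda_{i_1} \cdots \lambda_{i_k}\, \eta_{i_1} \wedge \cdots \wedge \eta_{i_k}.
\]
Each $\eta_i = \sqrt{-1}\,(\tfrac{1}{\sqrt{2}}\,dw^i) \wedge \overline{(\tfrac{1}{\sqrt{2}}\,dw^i)}$ is strongly positive by definition, and the wedge product of finitely many strongly positive $(1,1)$-forms is again strongly positive (expand each factor as a non-negative combination of $\sqrt{-1}\,\alpha\wedge\bar{\alpha}$ terms and distribute). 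Hence every $(k,k)$-component of $\exp\omega$ is a non-negative linear combination of strongly positive forms, and so is strongly positive. Since strong positivity is preserved under complex linear (in particular unitary) coordinate changes, this conclusion transfers back to the original $z$-coordinates.

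For the monotonicity statement, I would write $\omega = \omega' + \zeta$ with $\zeta := \omega - \omega' = (A-B)_{i\bar j} \tfrac{\sqrt{-1}}{2}\,dz^i \wedge d\bar{z}^j$; the hypothesis $A \geq B$ says $\zeta$ is itself a strongly positive $(1,1)$-form (diagonalize $A-B$ as above). Since all $(1,1)$-forms commute under wedge, binomial expansion gives, for each $k$,
\[
\frac{\omega^k}{k!} - \frac{(\omega')^k}{k!} = \sum_{j=0}^{k-1} \frac{1}{j!\,(k-j)!}\, (\omega')^{j} \wedge \zeta^{k-j}.
\]
Each summand is a wedge of strongly positive $(1,1)$-forms, hence strongly positive by the argument above, and the sum is strongly positive with non-negative weights. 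This gives $\exp\omega - \exp\omega' \ge_s 0$ componentwise, which is exactly $\exp\omega \geq_s \exp\omega'$ in the notation of Definition \ref{def:An-element-positive forms}.

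There is no real obstacle here; the only point that needs a brief justification is the fact that wedge products of strongly positive $(1,1)$-forms are strongly positive, which follows directly from the definition by distributing the wedge over the defining non-negative combinations. Everything else is linear algebra in the diagonalizing frame.
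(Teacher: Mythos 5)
Your proof is correct, but the second half takes a different route from the paper. For the monotonicity claim the paper simultaneously normalizes the pair: after reducing to the column space of $A$ it puts $A=\mathrm{Id}$ and $B=\mathrm{diag}(\lambda_1,\dots,\lambda_n)$ with $0\le\lambda_i\le1$ by a single congruence, and then reads off $\exp\omega-\exp\omega'=\sum_I(1-\prod_{i\in I}\lambda_i)\prod_{i\in I}\tfrac{\sqrt{-1}}{2}dz^i\wedge d\bar z^i$ with visibly non-negative coefficients. You instead write $\omega=\omega'+\zeta$ with $\zeta$ a non-negative $(1,1)$-form, expand binomially, and invoke closure of the strongly positive cone under wedge products. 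Your version avoids the simultaneous diagonalization entirely (and with it the paper's slightly informal handling of singular $A$), and it isolates the one fact that both arguments really rest on — that a wedge product of strongly positive forms is strongly positive — which you correctly flag and justify by distributing the defining non-negative combinations; the paper uses the same fact implicitly for the monomials $\prod_{i\in I}\tfrac{\sqrt{-1}}{2}dz^i\wedge d\bar z^i$. The paper's computation is more explicit but less flexible; yours gives the cleaner general statement that $\exp$ is monotone along any strongly positive increment.
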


\begin{proof}
We may assume that $A$ is non-singular; otherwise, we just restrict
to the column space of $A$. Since strongly positivity is invariant
under linear transform, we may assume that $A$ is the identity matrix.
Furthermore, we may assume that $B=\sum_{i=1}^{n}\lambda_{i}\frac{\sqrt{-1}}{2}dz^{i}\wedge d\bar{z}^{i}$.
Then, clearly, $A\geq B$ which implies $0\leq\lambda_{i}\leq1$.
We then compute
\[
\exp\omega=1+\sum_{k=1}^{n}\sum_{|I|=k}\prod_{i\in I}\frac{\sqrt{-1}}{2}dz^{i}\wedge d\bar{z}^{i},
\]
\[
\exp\omega-\exp\omega'=\sum_{I}(1-\prod_{i\in I}\lambda_{i})\prod_{i\in I}\frac{\sqrt{-1}}{2}dz^{i}\wedge d\bar{z}^{i},
\]
where $I$ runs over all ordered subsets of $\{1,\cdots,n\}$. It
is clear that the coefficient $1-\prod_{i\in I}\lambda_{i}$ is always
non-negative, which proves that $\exp\omega\geq_{s}\exp\omega'$ is
strongly positive. 
\end{proof}

\subsection{Point-wise setup}

We use $\Gamma_{n\times n},\Gamma_{n\times n}^{+},\overline{\Gamma_{n\times n}^{+}}$
to denote the set of all Hermitian matrices, the set of positive definite
Hermitian matrices, and the set of non-negative Hermitian matrices,
respectively.

For $p\in M,$ we pick a local normal coordinate near $p$ with respect
to $\rho.$ Therefore, $\rho=\frac{\sqrt{-1}}{2}\sum_{i,j}\delta_{i\bar{j}}dz^{i}\wedge d\bar{z}^{j}$
at $p$. We may write 
\[
\omega=\frac{\sqrt{-1}}{2}\sum_{i,j}A_{i\bar{j}}dz^{i}\wedge d\bar{z}^{j},
\]
where $A=(A_{i\bar{j}})$ is a positive definite Hermitian matrix.
Let $(A^{\bar{j}i}$) be the inverse matrix of $A$; i.e. $A_{i\bar{j}}A^{\bar{j}k}=\delta_{i}^{k}.$
We define the following local functional 
\begin{equation}
F(A)=F(A,\Lambda):=\frac{(\Lambda\wedge\Omega)^{[n]}}{\Omega^{[n]}}.\label{eq:definition of F}
\end{equation}
A coordinate change preserves $\frac{(\Lambda\wedge\Omega)^{[n]}}{\Omega^{[n]}}$,
and hence $F$ is invariant under a coordinate change. Equation (\ref{eq:eq with Berezin integ})
can be re-written as 
\begin{equation}
F(A)=\kappa.\label{eq:equation by F}
\end{equation}

\begin{defn}
\label{def:We-define-a}Notations as above. We define a canonical
$(1,1)$-vector $\chi\in\bigwedge^{1,1}T_{p}M$ as
\begin{equation}
\chi=2\sqrt{-1}\sum_{i,j}A^{\bar{j}i}\frac{\partial}{\partial\bar{z}^{j}}\wedge\frac{\partial}{\partial z^{i}}.\label{eq:def for chi}
\end{equation}
$\chi$ is a strongly positive $(1,1)$-vector which induces a Hermitian
metric on $T^{*}M$ that is dual to $\omega$.
\end{defn}

Let $\<\cdot,\cdot\>$ be the complex bi-linear pairing $\bigwedge^{*}(T_{p}^{*}M)\times\bigwedge^{*}(T_{p}M)\to\mathbb{C}.$
A direct computation shows that 
\begin{align}
\<\frac{\omega^{k}}{k!},\frac{\chi^{k}}{k!}\>=\frac{n!}{k!(n-k)!},\ \ \<\frac{\rho^{k}}{k!},\frac{\chi^{k}}{k!}\>=\sigma_{k}(A^{-1}),\label{eq:basic formula}
\end{align}
where $\sigma_{k}(\cdot)$ is the $k$-th elementary symmetric function
of eigenvalues of a Hermitian matrix.

With a labeled orthogonal splitting $\mathcal{O}_{p}=\{n_{p},\mathbf{d}_{p},\{\mathcal{V}_{i}\},\mathbf{k}_{p}\}$
at $p$, we first fix a normal coordinate $\{z^{i}\}$ at $p$ of
$\rho$ such that $\{\sqrt{2}\frac{\pdv}{\pdv z^{i}}\}$ restricts
to a unitary basis on each $\mathcal{V}_{i}$. Suppose that $\rho_{i}=\pi_{i}^{*}\iota_{i}^{*}\rho$
with respect to $\mathcal{O}_{p}$. We may write under this frame
\[
A^{-1}=\left(\begin{array}{cccc}
A^{-1}|_{\mathcal{V}_{1}} & * & * & *\\*
* & A^{-1}|_{\mathcal{V}_{2}} & * & *\\*
* & * & \ddots & *\\*
* & * & * & A^{-1}|_{\mathcal{V}_{n_{p}}}
\end{array}\right),
\]
where $A_{i}\in\Gamma_{d_{i}\times d_{i}}$ . Then 
\begin{equation}
\<\frac{\rho_{i}^{k}}{k!},\frac{\chi^{k}}{k!}\>=\<\iota_{i}^{*}\frac{\rho^{k}}{k!},(\pi_{i})_{*}\frac{\chi^{k}}{k!}\>=\sigma_{k}(A^{-1}|_{\mathcal{V}_{i}}).\label{eq:basic formula for rho_i}
\end{equation}

To future convenience, we define, for $k\leq n,$ 
\begin{equation}
F_{k}(A)=\frac{n!\Lambda^{[k]}\wedge\omega^{n-k}}{(n-k)!\omega^{n}}.\label{eq:F_k}
\end{equation}
Denote $\exp\chi=\sum_{k=0}^{n}\frac{\chi^{k}}{k!}.$ The following
lemma shows that $F_{k}$ and $F$ can be represented by $\chi$. 
\begin{lem}
\label{lem:rewrite}Notations as above. We have 
\begin{equation}
F_{k}(A)=\<\Lambda^{[k]},\frac{\chi^{k}}{k!}\>.\label{eq:Fk inverse exp}
\end{equation}
\begin{equation}
F(A)=\<\Lambda,\exp\chi\>.\label{eq:F inverse exp}
\end{equation}
\end{lem}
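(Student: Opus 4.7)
The plan is to reduce both identities to a single pointwise multilinear-algebra duality. Since $F_k(A)$, $F(A)$, and the pairings on the right-hand sides are all defined pointwise, I fix $p \in M$ and work in a $\rho$-normal holomorphic coordinate at $p$ in which the Hermitian matrix $A = (A_{i\bar{j}})$ is unitarily diagonalized with positive eigenvalues $\lambda_1, \ldots, \lambda_n$. Setting $e^i := \frac{\sqrt{-1}}{2} dz^i\wedge d\bar{z}^i$ and $f_i := 2\sqrt{-1}\,\frac{\partial}{\partial\bar{z}^i}\wedge\frac{\partial}{\partial z^i}$, we have $\omega = \sum_i \lambda_i\, e^i$ and $\chi = \sum_i \lambda_i^{-1}\, f_i$. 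A short index computation using the Leibniz rule $\<a\wedge b, c\wedge d\> = \<a,c\>\<b,d\> - \<a,d\>\<b,c\>$ yields $\<e^i, f_j\> = \delta^i_j$, and by multilinearity $\<e^I, f_J\> = \delta^I_J$ for ordered multi-indices $I, J$.

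The key step is to establish the duality: for every $(k,k)$-form $\alpha$ at $p$,
\[
\alpha \wedge \frac{\omega^{n-k}}{(n-k)!} \;=\; \<\alpha,\tfrac{\chi^k}{k!}\>\cdot \frac{\omega^n}{n!}.
\]
Both sides are $\C$-linear in $\alpha$, so it suffices to verify this on the basis $\{ dz^I\wedge d\bar{z}^J : |I| = |J| = k,\ I, J \text{ increasing}\}$. For $I \neq J$, wedging $dz^I\wedge d\bar{z}^J$ with any purely diagonal monomial in $\omega^{n-k}$ cannot produce the top form $dz^1\wedge d\bar{z}^1\wedge\cdots\wedge dz^n\wedge d\bar{z}^n$, since the missing index slots will not match, so the left-hand side vanishes; simultaneously, the pairing with $\chi^k/k!$ vanishes because $\chi^k/k!$ is a sum of purely diagonal vectors $f_I$. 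For $I = J$, $dz^I\wedge d\bar{z}^I$ is proportional to $e^I$ by a sign and a power of $\sqrt{-1}/2$, and both sides reduce to the same scalar $\prod_{i\in I}\lambda_i^{-1}\cdot \omega^n/n!$ after tracking these factors, consistent with the identities $\<\tfrac{\omega^k}{k!},\tfrac{\chi^k}{k!}\> = \binom{n}{k}$ and $\<\tfrac{\rho^k}{k!},\tfrac{\chi^k}{k!}\> = \sigma_k(A^{-1})$ already recorded in (\ref{eq:basic formula}).

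Identity (\ref{eq:Fk inverse exp}) follows immediately by applying the duality to $\alpha = \Lambda^{[k]}$ and dividing by $\omega^n/n!$. For (\ref{eq:F inverse exp}), I expand $(\Lambda\wedge\Omega)^{[n]} = \sum_{k=0}^n \Lambda^{[k]}\wedge \omega^{n-k}/(n-k)!$ and $\Omega^{[n]} = \omega^n/n!$ to get $F(A) = \sum_{k=0}^n F_k(A)$. Substituting (\ref{eq:Fk inverse exp}) and noting that $\<\Lambda^{[k]}, \chi^l/l!\> = 0$ for $k \neq l$ on degree grounds, the sum recombines as $\<\Lambda, \exp\chi\>$.

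No step is a genuine obstacle; the entire argument is multilinear bookkeeping. The only care required is in tracking the numerical factors $\sqrt{-1}/2$ and $2\sqrt{-1}$ in the definitions of $\omega$ and $\chi$, which are chosen precisely so that $\<e^i, f_j\>$ is the Kronecker delta and so that no extraneous constants appear upon taking wedge powers.
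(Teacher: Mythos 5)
Your proof is correct, but it takes a genuinely different route from the paper's. The paper works in an arbitrary coordinate frame, expands $\Lambda^{[k]}\wedge\omega^{n-k}/(n-k)!$ in terms of the minors $\Lambda_{I,\bar{J}}$ and $A_{I^{c},\bar{J^{c}}}$, and then proves the complementary-minor (Jacobi) identity $\frac{A_{I^{c},\bar{J^{c}}}}{\det A}\epsilon_{J,J^{c}}^{I,I^{c}}=A^{\bar{J},I}$ by block row/column operations plus a perturbation argument when the relevant block is singular; the identity (\ref{eq:Fk inverse exp}) then drops out by summing over $I,J$. You instead observe that both sides of (\ref{eq:Fk inverse exp}) are coordinate-invariant scalars, unitarily diagonalize $A$ at the point, and verify the duality $\alpha\wedge\frac{\omega^{n-k}}{(n-k)!}=\<\alpha,\frac{\chi^{k}}{k!}\>\frac{\omega^{n}}{n!}$ on the monomial basis, where everything reduces to products of eigenvalues — so the minor identity is never needed. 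Your argument is shorter and more conceptual (it is the same diagonalization trick the paper itself uses later in the proof of Proposition \ref{prop: F_k convexity}); what the paper's computation buys is the explicit general-coordinate formula $F_{k}(A)=\sum_{I,J}\Lambda_{I,\bar{J}}A^{\bar{J},I}$ and the block-inversion technique, which is explicitly reused in the proof of Lemma \ref{rem:elementary linear agb}. The only point worth tightening in your write-up is the phrase "by multilinearity $\<e^{I},f_{J}\>=\delta_{J}^{I}$": this follows from the determinant expansion of the canonical pairing on wedge products, not from multilinearity alone, though the conclusion is consistent with the paper's formula (\ref{eq:basic formula}).
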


\begin{proof}
It is enough to prove (\ref{eq:Fk inverse exp})) for $k\leq n-1$.
For two ordered index sets $I=\{i_{1}<i_{2}<\cdots<i_{k}\},\ J=\{j_{1}<\cdots<j_{k}\}$,
we denote 
\begin{align*}
\sqrt{-1}^{k^{2}}dz^{I}\wedge d\bar{z}^{J} & =\sqrt{-1}^{k}dz^{i_{1}}\wedge d\bar{z}^{j_{1}}\wedge\cdots\wedge dz^{i_{k}}\wedge d\bar{z}^{j_{k}},\\
\sqrt{-1}^{k^{2}}\frac{\pdv}{\pdv\bar{z}^{J}}\wedge\frac{\pdv}{\pdv z^{I}} & =\sqrt{-1}^{k}\frac{\pdv}{\pdv\bar{z}^{j_{1}}}\wedge\frac{\pdv}{\pdv z^{i_{1}}}\wedge\cdots\wedge\frac{\pdv}{\pdv\bar{z}^{j_{k}}}\wedge\frac{\pdv}{\pdv z^{i_{k}}}.
\end{align*}
Thus, we have the following 
\begin{equation}
\Lambda^{[k]}=\frac{\sqrt{-1}^{k^{2}}}{2^{k}}\sum_{|I|=|J|=k}\Lambda_{I,\bar{J}}dz^{I}\wedge d\bar{z}^{J},\label{eq:add3}
\end{equation}
and 
\begin{equation}
\frac{\omega^{k}}{k!}=\frac{\sqrt{-1}^{k^{2}}}{2^{k}}\sum_{|I|=|J|=k}A_{I,\bar{J}}dz^{I}\wedge d\bar{z}^{J},\ \frac{\chi^{k}}{k!}=2^{k}\sqrt{-1}^{k^{2}}\sum_{|I|=|J|=k}A^{\bar{J},I}\frac{\pdv}{\pdv\bar{z}^{J}}\wedge\frac{\pdv}{\pdv z^{I}},\label{eq:add4}
\end{equation}
where 
\[
A_{I,\bar{J}}:=\det\left(\begin{array}{cccc}
A_{i_{1}\overline{j_{1}}} & A_{i_{1}\overline{j_{2}}} & \cdots & A_{i_{1}\overline{j_{k}}}\\
A_{i_{2}\overline{j_{1}}} & A_{i_{2}\overline{j_{2}}} & \cdots & A_{i_{2}\overline{j_{k}}}\\
\vdots & \vdots & \ddots & \vdots\\
A_{i_{k}\overline{j_{1}}} & A_{i_{k}\overline{j_{2}}} & \ldots & A_{i_{k}\overline{j_{k}}}
\end{array}\right),\ A^{\bar{J},I}:=\det\left(\begin{array}{cccc}
A^{\overline{j_{1}}i_{1}} & A^{\overline{j_{1}}i_{2}} & \cdots & A^{\overline{j_{1}}i_{k}}\\
A^{\overline{j_{2}}i_{1}} & A^{\overline{j_{2}}i_{2}} & \cdots & A^{\overline{j_{2}}i_{k}}\\
\vdots & \vdots & \ddots & \vdots\\
A^{\overline{j_{k}}i_{1}} & A^{\overline{j_{k}}i_{2}} & \ldots & A^{\overline{j_{k}}i_{k}}
\end{array}\right).
\]
As a consequence of (\ref{eq:add3}) and (\ref{eq:add4}), we have
\[
\Lambda^{[k]}\wedge\frac{\omega^{n-k}}{(n-k)!}=\sum_{|I|=|J|=k}\epsilon_{J,J^{c}}^{I,I^{c}}\Lambda_{I,\bar{J}}A_{I^{c},\bar{J^{c}}}\frac{\rho^{n}}{n!}
\]
where index sets $I^{c}$, $J^{c}$ are ordered complement of $I$
and $J$; and $\epsilon_{J,J^{c}}^{I,I^{c}}=\frac{\sqrt{-1}^{n^{2}}dz^{I}\wedge dz^{I^{c}}\wedge d\bar{z}^{J}\wedge d\bar{z}^{J^{c}}}{2^{n}\rho^{n}/n!}$.

We make the following claim: 
\begin{equation}
\frac{A_{I^{c},\bar{J^{c}}}}{\det A}\epsilon_{J,J^{c}}^{I,I^{c}}=A^{\bar{J},I}.\label{eq:ele lin ag}
\end{equation}
After proper row and column permutations, we may assume without loss
of generality that $I=\{1,2,\cdots,k\}$ and $J=\{1,\cdots,k\}$.
We decompose as follows 
\[
A=\left(\begin{array}{cc}
A_{1} & C'\\
C & A_{2}
\end{array}\right),\ A^{-1}=\left(\begin{array}{cc}
B_{1} & D'\\
D & B_{2}
\end{array}\right),
\]
where $A_{1}$ is an $(n-k)\times(n-k)$ matrix and $B_{1}$ is a
$k\times k$ matrix. Since the sign change of the permutation is $\epsilon_{J,J^{c}}^{I,I^{c}}$,
in order to prove (\ref{eq:ele lin ag}), it suffices to show $\det(A_{1})=\det A\cdot\det B_{2}$.
Suppose first that $A_{1}$ is nonsingular. Let $E=-CA_{1}^{-1}$,
$E'=-A_{1}^{-1}C'$. Since 
\[
\left(\begin{array}{cc}
I & 0\\
E & I
\end{array}\right)A\left(\begin{array}{cc}
I & E'\\
0 & I
\end{array}\right)=\left(\begin{array}{cc}
A_{1} & 0\\
0 & A_{2}'
\end{array}\right).
\]
We may write 
\[
\left(\begin{array}{cc}
I & -E'\\
0 & I
\end{array}\right)A^{-1}\left(\begin{array}{cc}
I & 0\\
-E & I
\end{array}\right)=\left(\begin{array}{cc}
\tilde{B_{1}} & \tilde{D}'\\
\tilde{D} & B_{2}
\end{array}\right).
\]
Thus 
\begin{equation}
\left(\begin{array}{cc}
\tilde{B_{1}} & \tilde{D}'\\
\tilde{D} & B_{2}
\end{array}\right)\left(\begin{array}{cc}
A_{1} & 0\\
0 & A_{2}'
\end{array}\right)=\left(\begin{array}{cc}
I & 0\\
0 & I
\end{array}\right).\label{eq:elem lin ag}
\end{equation}
(\ref{eq:elem lin ag}) implies 
\[
B_{2}A_{2}'=I.
\]
Hence $\det B_{2}\det A_{2}'=1$. Notice that $\det A=\det A_{1}\det A_{2}'$.
Therefore, $\det(A_{1})=\det(A)\det(B_{2})$. We have proved the claim
(\ref{eq:ele lin ag}) for non-singular $A_{1}$. If $A_{1}$ is singular,
we consider the perturbation $A_{\epsilon}=A+\epsilon I$ for suitable
$\epsilon$ such that both $A_{1}$ and $A$ are non-singular. Then
the claim follows from the continuity of the inversion and determinant
functions. We have now established (\ref{eq:ele lin ag}).

To finish the proof of our lemma, we observe that from (\ref{eq:add3})
\begin{equation}
\left\langle \Lambda^{[k]},\left(2^{k}\sqrt{-1}^{k^{2}}\frac{\pdv}{\pdv\bar{z}^{J}}\wedge\frac{\pdv}{\pdv z^{I}}\right)\right\rangle =\Lambda_{I,\bar{J}}.\label{eq:La_IJ}
\end{equation}
Thus, by (\ref{eq:ele lin ag}), 
\begin{align}
\<\Lambda^{[k]},\chi^{k}/k!\> & =\<\Lambda^{[k]},A^{\bar{J},I}2^{k}\sqrt{-1}^{k^{2}}(\frac{\partial}{\partial\bar{z}^{J}}\wedge\frac{\pdv}{\pdv z^{I}})\>\label{eq:F_k dfe 2}\\
 & =\left(\sum_{|I|=|J|=k}\Lambda_{I,\bar{J}}A^{\bar{J},I}\right)\nonumber \\
 & =\sum_{|I|=|J|=k}\Lambda_{I,\bar{J}}\frac{A_{I^{c},\bar{J^{c}}}}{\det A}\epsilon_{J,J^{c}}^{I,I^{c}}\nonumber \\
 & =F_{k}(A).\nonumber 
\end{align}
We have finished the proof. 
\end{proof}
For future use, we record the following linear algebraic fact. 
\begin{lem}
\label{rem:elementary linear agb}Given Hermitian matrices $V,H$,
and $A=\left(\begin{array}{cc}
H & D\\
D^{\dagger} & V
\end{array}\right)$ which are invertible, then 
\begin{equation}
A^{-1}=\left(\begin{array}{cc}
\hat{H}^{-1} & -\hat{H}{}^{-1}DV^{-1}\\
-V^{-1}D^{\dagger}\hat{H}^{-1} & V^{-1}+V^{-1}D^{\dagger}\hat{H}^{-1}DV^{-1}
\end{array}\right),\label{eq:alg fact}
\end{equation}
where $\hat{H}=\left(H-DV^{-1}D^{\dagger}\right)$, $\hat{V}=V-D^{\dagger}H^{-1}D$,
and $V^{-1}+V^{-1}D^{\dagger}\hat{H}^{-1}DV^{-1}=\hat{V}^{-1}$. Furthermore,
if $\hat{H}>0$ then 
\begin{equation}
A^{-1}\geq\left(\begin{array}{cc}
0 & 0\\
0 & V^{-1}
\end{array}\right).\label{lem:Alg2}
\end{equation}
\end{lem}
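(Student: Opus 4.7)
The plan is to establish the inverse formula via a block LDU factorization of $A$, and then deduce both the alternative expression featuring $\hat{V}^{-1}$ and the positivity estimate from the same decomposition.

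First I would write down the identity
$$A = \begin{pmatrix} I & DV^{-1} \\ 0 & I \end{pmatrix} \begin{pmatrix} \hat{H} & 0 \\ 0 & V \end{pmatrix} \begin{pmatrix} I & 0 \\ V^{-1}D^{\dagger} & I \end{pmatrix},$$
which one verifies by direct multiplication using $\hat{H} = H - DV^{-1}D^{\dagger}$. Since the outer two factors are unipotent with trivial explicit inverses, invertibility of $A$ and $V$ forces invertibility of $\hat{H}$, and inverting each of the three factors yields
$$A^{-1} = \begin{pmatrix} I & 0 \\ -V^{-1}D^{\dagger} & I \end{pmatrix} \begin{pmatrix} \hat{H}^{-1} & 0 \\ 0 & V^{-1} \end{pmatrix} \begin{pmatrix} I & -DV^{-1} \\ 0 & I \end{pmatrix}.$$
Multiplying out the three blocks recovers exactly the formula (\ref{eq:alg fact}).

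Next, for the Woodbury-type identity $V^{-1} + V^{-1}D^{\dagger}\hat{H}^{-1}DV^{-1} = \hat{V}^{-1}$, I would run the same procedure with the roles of $H$ and $V$ interchanged. The symmetric decomposition
$$A = \begin{pmatrix} I & 0 \\ D^{\dagger}H^{-1} & I \end{pmatrix} \begin{pmatrix} H & 0 \\ 0 & \hat{V} \end{pmatrix} \begin{pmatrix} I & H^{-1}D \\ 0 & I \end{pmatrix}$$
produces a second explicit formula for $A^{-1}$ whose $(2,2)$-block is manifestly $\hat{V}^{-1}$. Comparing this with the $(2,2)$-block $V^{-1} + V^{-1}D^{\dagger}\hat{H}^{-1}DV^{-1}$ obtained from the first decomposition yields the asserted identity.

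Finally, for the inequality (\ref{lem:Alg2}), I would observe the factorization
$$A^{-1} - \begin{pmatrix} 0 & 0 \\ 0 & V^{-1} \end{pmatrix} = \begin{pmatrix} I \\ -V^{-1}D^{\dagger} \end{pmatrix} \hat{H}^{-1} \begin{pmatrix} I & -DV^{-1} \end{pmatrix},$$
which is immediate from the block entries already computed in (\ref{eq:alg fact}). The right-hand side has the form $M^{\dagger}\hat{H}^{-1}M$ with $M = \begin{pmatrix} I & -DV^{-1} \end{pmatrix}$, hence is positive semi-definite whenever $\hat{H} > 0$. There is no conceptual obstacle: the entire argument is bookkeeping. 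The only point demanding care is the correct placement of Hermitian conjugates, and verifying that the alternative block decomposition genuinely produces $\hat{V}^{-1}$ in the bottom-right slot so that the two forms of $A^{-1}$ can be legitimately compared.
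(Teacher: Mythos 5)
Your proposal is correct and follows essentially the same route as the paper: the block LDU factorization is exactly the "row and column operations" the paper invokes, the swapped decomposition is its "by symmetry" step yielding $\hat{V}^{-1}$ in the $(2,2)$-slot, and your congruence $M^{\dagger}\hat{H}^{-1}M\geq 0$ with $M=(I,\ -DV^{-1})$ is the same positivity argument the paper writes via the matrix $C=-\hat{H}^{-1}DV^{-1}$. All conjugate placements check out, so nothing further is needed.
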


\begin{proof}
(\ref{eq:alg fact}) can be proved by column and row operation as
in the proof of Lemma \ref{lem:rewrite}. By symmetry, we have also
the following 
\[
A^{-1}=\left(\begin{array}{cc}
H^{-1}+H^{-1}D\hat{V}^{-1}D^{\dagger}H^{-1} & -H^{-1}D\hat{V}^{-1}\\
-\hat{V}^{-1}D^{\dagger}H^{-1} & \hat{V}^{-1}
\end{array}\right).
\]
For (\ref{lem:Alg2}), let $C=-\hat{H}{}^{-1}DV^{-1}=-H^{-1}D\hat{V}^{-1}$.
We have 
\begin{align*}
A^{-1}-\left(\begin{array}{cc}
0 & 0\\
0 & V^{-1}
\end{array}\right) & =\left(\begin{array}{cc}
\hat{H}^{-1} & C\\
C^{\dagger} & C^{\dagger}\hat{H}C
\end{array}\right)\\
 & =\left(\begin{array}{cc}
I & 0\\
C^{\dagger} & I
\end{array}\right)\left(\begin{array}{cc}
\hat{H}^{-1} & 0\\
0 & 0
\end{array}\right)\left(\begin{array}{cc}
I & C\\
0 & I
\end{array}\right).
\end{align*}
Since $\left(\begin{array}{cc}
\hat{H}^{-1} & 0\\
0 & 0
\end{array}\right)\geq0$, $A^{-1}\geq\left(\begin{array}{cc}
0 & 0\\
0 & V^{-1}
\end{array}\right)$. 
\end{proof}

\section{Ellipticity and convexity\label{sec:Ellipticity-and-convexity}}

In this section, we discuss the monotonicity and convexity of $F$,
when certain positivity condition on $\Lambda$ holds. First, we compute
the first and second variations of $F(A)$ using notations set earlier.
We use abbreviations $F^{i\bar{j}}$, $F^{i\bar{j},r\bar{s}}$, $F_{k}^{i\bar{j}}$,
and $F_{k}^{i\bar{j},r\bar{s}}$ to represent $\frac{\pdv F}{\pdv A^{i\bar{j}}}$,
$\frac{\pdv^{2}F}{\pdv A^{i\bar{j}}\pdv A^{r\bar{s}}}$, $\frac{\pdv F_{k}}{\pdv A^{i\bar{j}}}$,
and $\frac{\pdv^{2}F_{k}}{\pdv A^{i\bar{j}}\pdv A^{r\bar{s}}}$, respectively. 
\begin{lem}
Notations as above. 
\begin{equation}
F_{k}^{i\bar{j}}=-\sum_{ab}A^{\bar{a}i}A^{\bar{j}b}\<\Lambda^{[k]},\frac{\chi^{k-1}}{(k-1)!}\wedge2\sqrt{-1}\frac{\partial}{\partial\bar{z}^{a}}\wedge\frac{\partial}{\partial z^{b}}\>.\label{eq:lem of comp}
\end{equation}
\begin{align}
F_{k}^{i\bar{j},r\bar{s}} & =\sum_{a,b,c,d}A^{\bar{a}i}A^{\bar{j}b}A^{\bar{c}r}A^{\bar{s}d}\<\Lambda^{[k]},\left(2\sqrt{-1}\right)^{2}\frac{\chi^{k-2}}{(k-2)!}\wedge\frac{\partial}{\partial\bar{z}^{a}}\wedge\frac{\partial}{\partial z^{b}}\wedge\frac{\partial}{\partial\bar{z}^{c}}\wedge\frac{\partial}{\partial z^{d}}\>\label{eq:lem of comp-1}\\
 & +\sum_{a,b,c,d}\left(A^{\bar{a}r}A^{\bar{s}i}A^{\bar{j}b}+A^{\bar{a}i}A^{\bar{j}r}A^{\bar{s}b}\right)\<\Lambda^{[k]},\left(2\sqrt{-1}\frac{\chi^{k-1}}{(k-1)!}\frac{\partial}{\partial\bar{z}^{a}}\wedge\frac{\partial}{\partial z^{b}}\right)\>.\nonumber 
\end{align}
\end{lem}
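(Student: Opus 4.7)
\medskip
\noindent\textbf{Proof plan.} The starting point is Lemma \ref{lem:rewrite}, which expresses $F_{k}(A)$ as the intrinsic pairing $\langle \Lambda^{[k]}, \chi^{k}/k!\rangle$. Since $\Lambda^{[k]}$ does not depend on $A$, the whole computation reduces by the chain rule to differentiating $\chi^{k}/k!$, and further to differentiating the single vector
\[
\chi = 2\sqrt{-1}\sum_{i,j} A^{\bar j i}\,\frac{\partial}{\partial \bar z^{j}}\wedge\frac{\partial}{\partial z^{i}}.
\]
The standard identity $\partial A^{\bar j i}/\partial A_{r\bar s} = -A^{\bar j r} A^{\bar s i}$ for the derivative of the inverse of a Hermitian matrix then yields
\[
\frac{\partial \chi}{\partial A_{r\bar s}} = -\,2\sqrt{-1}\sum_{c,d} A^{\bar c r} A^{\bar s d}\,\frac{\partial}{\partial \bar z^{c}}\wedge\frac{\partial}{\partial z^{d}}.
\]
Because $\chi$ is a $(1,1)$-vector, hence an even-degree element of the exterior algebra, powers of $\chi$ can be permuted freely under wedge, so $\partial(\chi^{k}/k!)/\partial A_{r\bar s} = (\chi^{k-1}/(k-1)!)\wedge(\partial\chi/\partial A_{r\bar s})$. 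Substituting into $F_{k}=\langle\Lambda^{[k]},\chi^{k}/k!\rangle$ and relabelling indices produces (\ref{eq:lem of comp}) directly.

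For the second derivative, I would differentiate (\ref{eq:lem of comp}) with respect to $A_{r\bar s}$ by the product rule. Three groups of terms arise: (i) the derivative of the prefactor $A^{\bar a i}$, (ii) the derivative of the prefactor $A^{\bar j b}$, and (iii) the derivative of $\chi^{k-1}/(k-1)!$. Applying the inverse-derivative identity to (i) and (ii) and combining the resulting minus sign with the outer minus sign already present in (\ref{eq:lem of comp}) reproduces, after summing and relabelling, the two terms $A^{\bar a r} A^{\bar s i} A^{\bar j b}$ and $A^{\bar a i} A^{\bar j r} A^{\bar s b}$ on the last line of (\ref{eq:lem of comp-1}). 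Group (iii) contributes $(\chi^{k-2}/(k-2)!)\wedge(\partial\chi/\partial A_{r\bar s})$ wedged with the original factor $2\sqrt{-1}\frac{\partial}{\partial\bar z^{a}}\wedge\frac{\partial}{\partial z^{b}}$; after collecting both sign flips, this is exactly the first term of (\ref{eq:lem of comp-1}), with coefficient $A^{\bar a i} A^{\bar j b} A^{\bar c r} A^{\bar s d}$.

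The whole argument is a routine chain-rule computation, so the only real obstacle is careful bookkeeping: tracking the two independent minus signs from the inverse-matrix derivative, performing the index relabellings consistently, and verifying that the vectors $2\sqrt{-1}\,\frac{\partial}{\partial\bar z^{a}}\wedge\frac{\partial}{\partial z^{b}}$ — being even-degree elements of $\bigwedge^{\ast}T_{p}M$ — commute past the powers of $\chi$ and past each other without introducing signs. Once these sign and ordering conventions are pinned down, both identities (\ref{eq:lem of comp}) and (\ref{eq:lem of comp-1}) follow mechanically.
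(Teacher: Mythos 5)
Your proposal is correct and follows essentially the same route as the paper: both start from the representation $F_{k}(A)=\langle\Lambda^{[k]},\chi^{k}/k!\rangle$ of Lemma \ref{lem:rewrite} and reduce everything to the chain rule together with the inverse-matrix derivative $\partial A^{\bar{j}i}/\partial A_{r\bar{s}}=-A^{\bar{j}r}A^{\bar{s}i}$, with the second-derivative terms split in the same way (one group from differentiating the inverse-matrix prefactors, one from differentiating $\chi^{k-1}$). The only difference is cosmetic bookkeeping — the paper differentiates first in the entries of $A^{-1}$ and then converts, while you apply the product rule to the formula for $F_{k}^{i\bar{j}}$ directly — and both yield the stated identities.
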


\begin{proof}
Recall $\chi=2\sqrt{-1}A^{\bar{j}i}\frac{\pdv}{\pdv\bar{z}^{j}}\wedge\frac{\pdv}{\pdv z^{i}}$.
We use Lemma \ref{lem:rewrite} to find 
\[
\frac{\pdv F_{k}}{\pdv A^{\bar{j}i}}=\<\Lambda^{[k]},\frac{\chi^{k-1}}{(k-1)!}\wedge2\sqrt{-1}\frac{\partial}{\partial\bar{z}^{j}}\wedge\frac{\partial}{\partial z^{i}}\>.
\]
Therefore 
\begin{align}
F_{k}^{i\bar{j}} & =\frac{\pdv A^{\bar{a}b}}{\pdv A_{i\bar{j}}}\cdot\frac{\pdv F_{k}}{\pdv A^{\bar{a}b}}\label{eq:comp Fij}\\
 & =-\sum_{a,b}A^{\bar{a}i}A^{\bar{j}b}\<\Lambda^{[k]},\frac{\chi^{k-1}}{(k-1)!}\wedge2\sqrt{-1}\frac{\partial}{\partial\bar{z}^{a}}\wedge\frac{\partial}{\partial z^{b}}\>.\nonumber 
\end{align}
Furthermore, 
\begin{align}
F_{k}^{i\bar{j},r\bar{s}} & =\sum_{a,b,c,d}A^{\bar{a}i}A^{\bar{j}b}A^{\bar{c}r}A^{\bar{s}d}\<\Lambda^{[k]},\left(\frac{1}{k!}\frac{\pdv^{2}\chi^{k}}{\pdv A^{\bar{c}d}\pdv A^{\bar{a}b}}\right)\>\label{eq:Compu Fijrs}\\
 & \ +\sum_{a,b,c,d}\left(A^{\bar{a}r}A^{\bar{s}i}A^{\bar{j}b}+A^{\bar{a}i}A^{\bar{j}r}A^{\bar{s}b}\right)\<\Lambda^{[k]},\left(\frac{\pdv}{\pdv A^{\bar{a}b}}\left(\frac{\chi^{k}}{k!}\right)\right)\>\nonumber \\
 & =\sum_{a,b,c,d}A^{\bar{a}i}A^{\bar{j}b}A^{\bar{c}r}A^{\bar{s}d}\<\Lambda^{[k]},\left(\left(2\sqrt{-1}\right)^{2}\frac{\chi^{k-2}}{(k-2)!}\right)\frac{\partial}{\partial\bar{z}^{a}}\wedge\frac{\partial}{\partial z^{b}}\wedge\frac{\partial}{\partial\bar{z}^{c}}\wedge\frac{\partial}{\partial z^{d}}\>\nonumber \\
 & +\sum_{a,b,c,d}\left(A^{\bar{a}r}A^{\bar{s}i}A^{\bar{j}b}+A^{\bar{a}i}A^{\bar{j}r}A^{\bar{s}b}\right)\<\Lambda^{[k]},2\sqrt{-1}\frac{\chi^{k-1}}{(k-1)!}\wedge\frac{\partial}{\partial\bar{z}^{a}}\wedge\frac{\partial}{\partial z^{b}}\>.\nonumber 
\end{align}
\end{proof}
\begin{lem}
\label{lem:F_k monot}Notations as above. We assume that $\Lambda$
satisfies condition \textbf{H2}. For any covector $b=b_{i}dz^{i}\not=0$,
we have 
\begin{equation}
-F_{k}^{i\bar{j}}b_{i}\overline{b_{j}}\geq0.\label{eq:Fijbibj}
\end{equation}
Moreover, 
\begin{equation}
-\sum_{k=1}^{n-1}F_{k}^{i\bar{j}}b_{i}\overline{b}_{j}>0.\label{eq:Fijbibj-1}
\end{equation}
\end{lem}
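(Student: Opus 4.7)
The plan is to express $-F_k^{i\bar j}b_i\bar b_j$ as the pairing of the positive form $\Lambda^{[k]}$ against an explicit strongly positive $(k,k)$-vector in $\bigwedge^{k,k}T_pM$, and then to extract both inequalities from the structure of condition \textbf{H2}. Set $v^a:=\sum_i A^{\bar a i}b_i$; since $A^{-1}$ is invertible and $b\ne 0$, also $v\ne 0$. Using the Hermitian symmetry $\overline{A^{\bar b j}}=A^{\bar j b}$, the factor $\bar b_j$ in (\ref{eq:lem of comp}) contracts to $\bar v^b$, yielding
\[
-F_k^{i\bar j}b_i\bar b_j=\Bigl\langle\Lambda^{[k]},\,\frac{\chi^{k-1}}{(k-1)!}\wedge\eta_b\Bigr\rangle,\qquad \eta_b:=2\sqrt{-1}\,v^a\bar v^b\,\frac{\partial}{\partial\bar z^a}\wedge\frac{\partial}{\partial z^b}.
\]
Writing $\eta_b=2\sqrt{-1}\,\bar X\wedge X$ with $X:=\bar v^b\,\partial/\partial z^b$ exhibits $\eta_b$ as a strongly positive $(1,1)$-vector; combined with the strong positivity of $\chi$, this makes $\chi^{k-1}/(k-1)!\wedge\eta_b$ a strongly positive $(k,k)$-vector.

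For the non-strict inequality (\ref{eq:Fijbibj}), the $\mathcal O$-UP part of \textbf{H2} forces each component $\Lambda^{[k]}$ to be a positive $(k,k)$-form (it dominates the non-negative $(k,k)$-component of $m\sum_i\rho_i^{k_i}/k_i!$). The pairing of a positive form with a strongly positive vector is non-negative, giving $-F_k^{i\bar j}b_i\bar b_j\ge 0$.

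For the strict inequality (\ref{eq:Fijbibj-1}), the componentwise bound $\mathring\Lambda-m\sum_i\rho_i^{k_i}/k_i!\ge 0$ from \textbf{H2} yields
\[
-\sum_{k=1}^{n-1}F_k^{i\bar j}b_i\bar b_j\ \ge\ m\sum_{i=1}^{n_p}\Bigl\langle\tfrac{\rho_i^{k_i}}{k_i!},\,\tfrac{\chi^{k_i-1}}{(k_i-1)!}\wedge\eta_b\Bigr\rangle.
\]
To evaluate the $i$-th term, I apply (\ref{eq:basic formula for rho_i}) with $\chi$ replaced by $\chi+t\eta_b$ (corresponding to the Hermitian matrix $A^{-1}+tB$ with $B^{\bar a b}:=v^a\bar v^b$) and differentiate at $t=0$:
\[
\Bigl\langle\tfrac{\rho_i^{k_i}}{k_i!},\tfrac{\chi^{k_i-1}}{(k_i-1)!}\wedge\eta_b\Bigr\rangle=\frac{d}{dt}\Big|_{t=0}\sigma_{k_i}\!\bigl(A^{-1}|_{\mathcal V_i}+t\,v_{(i)}v_{(i)}^{*}\bigr),
\]
where $v_{(i)}$ is the projection of $v$ onto $\mathcal V_i$ and $B|_{\mathcal V_i}=v_{(i)}v_{(i)}^{*}$ is a rank-one PSD matrix. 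Since $A^{-1}|_{\mathcal V_i}$ is a principal submatrix of a positive-definite Hermitian matrix and hence positive definite, a short diagonalization argument shows that this derivative is non-negative and is strictly positive precisely when $v_{(i)}\ne 0$.

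The punchline is that $v\ne 0$ together with the direct-sum decomposition $T_pM=\bigoplus_i\mathcal V_i$ forces some $v_{(i)}\ne 0$, which delivers the strict positivity in (\ref{eq:Fijbibj-1}). The one delicate point is this last step: a priori nothing prevents $v$ from being concentrated outside a given $\mathcal V_{i_0}$, making that particular term vanish, so the argument must exploit that the sum runs over the full splitting. The rest of the proof is routine bookkeeping: the sign/index chase identifying $\eta_b$ as $2\sqrt{-1}\,\bar X\wedge X$, the standard strong/weak positivity calculus, and the elementary monotonicity of $\sigma_k$ against rank-one PSD perturbations.
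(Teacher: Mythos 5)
Your proof is correct and follows essentially the same route as the paper: rewrite $-F_k^{i\bar j}b_i\bar b_j$ as the pairing of $\Lambda^{[k]}$ with the strongly positive vector $2\sqrt{-1}\,\overline{b^\sharp}\wedge b^\sharp\wedge\chi^{k-1}/(k-1)!$, get (\ref{eq:Fijbibj}) from positivity of $\Lambda^{[k]}$, and get (\ref{eq:Fijbibj-1}) from the $\mathcal O$-UP lower bound together with the fact that some projection $(\pi_i)_*(b^\sharp)$ is nonzero. Your only deviation is cosmetic: the paper bounds $\rho_i^{k_i}$ below by a multiple of $(\pi_i^*\iota_i^*\omega)^{k_i}$, whereas you evaluate the same term as $\frac{d}{dt}\big|_{t=0}\sigma_{k_i}(A^{-1}|_{\mathcal V_i}+t\,v_{(i)}v_{(i)}^{*})$; both hinge on the identical key point.
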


\begin{proof}
Let $b^{\sharp}=\bar{b_{i}}A^{\bar{i}a}\frac{\pdv}{\pdv z^{a}}$ be
the dual of $b$ raised by $\omega$. Then by (\ref{eq:lem of comp})
\begin{align}
-F_{k}^{i\bar{j}}b_{i}\overline{b_{j}} & =\<\Lambda^{[k]},2\sqrt{-1}\overline{b^{\sharp}}\wedge b^{\sharp}\wedge\exp\chi\>.\label{eq:Fijbibj3}\\
 & \geq0.\nonumber 
\end{align}
We have used the fact that $2\sqrt{-1}\overline{b^{\sharp}}\wedge b^{\sharp}\wedge\exp\chi$
is strongly positive.

By condition \textbf{H2}, we may choose a subspace $\mathcal{V}_{i}\subset\mathcal{T}_{p}M$
such that $(\pi_{i})_{*}(b^{\sharp})\not=0$ and $\Lambda^{[k_{i}]}\geq m\frac{\left(\rho_{i}\right)^{k_{i}}}{k_{i}!}.$Then
\begin{equation}
\Lambda^{[k_{i}]}\geq m\frac{\rho_{i}^{k_{i}}}{k_{i}!}\geq m'(p)\frac{(\pi_{i}^{*}\iota_{i}^{*}\omega)^{k_{i}}}{k_{i}!},\label{eq:fijbibj3}
\end{equation}
where $m'(p)$ is a positive constant at $p$. Then 
\begin{align}
\<\Lambda^{[k_{i}]},2\sqrt{-1}\frac{\overline{b^{\sharp}}\wedge b^{\sharp}\wedge\chi^{k_{i}-1}}{(k_{i}-1)!} & \>\geq m'(p)\<\frac{(\pi_{i}^{*}\iota_{i}^{*}\omega)^{k_{i}}}{k_{i}!},2\sqrt{-1}\frac{\overline{b^{\sharp}}\wedge b^{\sharp}\wedge\chi^{k_{i}-1}}{(k_{i}-1)!}\>>0.\label{eq:strict monoton}
\end{align}
\end{proof}
\begin{prop}
\label{prop: F_k convexity}Notations as above. If $\Lambda$ satisfies
condition \textbf{H2,} for any local complex $n\times n$ matrix $B_{i\bar{j}}$,
\begin{equation}
\sum_{i,j,r,s}\left(\frac{\pdv^{2}F_{k}(A)}{\pdv A_{i\bar{j}}\pdv A_{r\bar{s}}}+\frac{\pdv F_{k}(A)}{\pdv A_{i\bar{s}}}A^{\bar{j}r}\right)B_{i\bar{j}}\overline{B_{s\bar{r}}}\geq0.\label{eq:ineq}
\end{equation}
\end{prop}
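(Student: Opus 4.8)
The plan is to reduce \eqref{eq:ineq} to the strong positivity of an explicit $(k,k)$-vector, and then to prove that positivity by completing squares after normalizing the metric.

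First I would note that, by the chain rule $\partial A^{\bar a b}/\partial A_{i\bar j}=-A^{\bar a i}A^{\bar j b}$, the derivatives $\partial F_{k}/\partial A_{i\bar j}$ and $\partial^{2}F_{k}/\partial A_{i\bar j}\partial A_{r\bar s}$ are exactly the quantities $F_{k}^{i\bar j}$ and $F_{k}^{i\bar j,r\bar s}$ given by \eqref{eq:lem of comp} and \eqref{eq:lem of comp-1}. For an arbitrary $n\times n$ matrix $M$ write $\chi_{M}:=2\sqrt{-1}\sum_{a,b}M^{\bar b a}\frac{\partial}{\partial\bar z^{b}}\wedge\frac{\partial}{\partial z^{a}}$, the $(1,1)$-vector built from $M$ as in \eqref{eq:def for chi}, so that $\chi=\chi_{A^{-1}}$ and $\overline{\chi_{M}}=\chi_{M^{\dagger}}$. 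Substituting \eqref{eq:lem of comp}, \eqref{eq:lem of comp-1} into the left side of \eqref{eq:ineq} and contracting with $B_{i\bar j}\overline{B_{s\bar r}}$, the index bookkeeping gives: the first summand of \eqref{eq:lem of comp-1} produces $\big\langle\Lambda^{[k]},\frac{\chi^{k-2}}{(k-2)!}\wedge\beta\wedge\overline{\beta}\big\rangle$ with $\beta:=\chi_{A^{-1}BA^{-1}}$; the two halves of the second summand of \eqref{eq:lem of comp-1} produce $\big\langle\Lambda^{[k]},\frac{\chi^{k-1}}{(k-1)!}\wedge\chi_{A^{-1}B^{\dagger}A^{-1}BA^{-1}}\big\rangle$ and $\big\langle\Lambda^{[k]},\frac{\chi^{k-1}}{(k-1)!}\wedge\chi_{A^{-1}BA^{-1}B^{\dagger}A^{-1}}\big\rangle$; and the last of these is cancelled exactly by the term $F_{k}^{i\bar s}A^{\bar j r}B_{i\bar j}\overline{B_{s\bar r}}$ coming from \eqref{eq:lem of comp}. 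Hence the left side of \eqref{eq:ineq} equals $\langle\Lambda^{[k]},W\rangle$ with
\[
W:=\frac{\chi^{k-2}}{(k-2)!}\wedge\beta\wedge\overline{\beta}+\frac{\chi^{k-1}}{(k-1)!}\wedge\gamma,\qquad\gamma:=\chi_{A^{-1}B^{\dagger}A^{-1}BA^{-1}},
\]
and the coefficient matrix $A^{-1}B^{\dagger}A^{-1}BA^{-1}=(A^{-1/2}BA^{-1})^{\dagger}(A^{-1/2}BA^{-1})$ of $\gamma$ is positive semidefinite.

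Now, under \textbf{H2} together with the standing hypothesis $\Lambda^{[n]}\ge0$ of this section, each component $\Lambda^{[k]}$ is a positive $(k,k)$-form: the $\mathcal{O}$-uniform positivity of $\mathring{\Lambda}$ forces $\Lambda^{[k]}\ge m\sum_{i:\,k_{i}=k}\rho_{i}^{k}/k_{i}!\ge0$ for $1\le k\le n-1$. By Definition \ref{def:An-element-positive forms} it therefore suffices to show that $W$ is a strongly positive $(k,k)$-vector. Since strong positivity is invariant under complex-linear coordinate changes, I would work in coordinates at $p$ with $A=\mathrm{Id}$; then $\chi=2\sqrt{-1}\sum_{i}\partial_{\bar z^{i}}\wedge\partial_{z^{i}}$, $\beta=\chi_{B}$, $\gamma=\chi_{B^{\dagger}B}$. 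Writing $e_{i}:=2\sqrt{-1}\,\partial_{\bar z^{i}}\wedge\partial_{z^{i}}$, so that the $\prod_{i\in S}e_{i}$ ($S$ a set of distinct indices) are strongly positive and $\chi^{m}/m!=\sum_{|S|=m}\prod_{i\in S}e_{i}$, the claim is that $W$ is a nonnegative linear combination of the $\prod_{s\in S}e_{s}$ with $|S|=k$. When $B$ is Hermitian, a unitary rotation diagonalizes it, $B=\operatorname{diag}(b_{1},\dots,b_{n})$, and expanding the wedge products collapses $W$ to the perfect-square form $W=\sum_{|S|=k}\big(\sum_{s\in S}b_{s}\big)^{2}\prod_{s\in S}e_{s}$, which is manifestly strongly positive. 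For a general complex $B$ one performs the same expansion; the mixed monomials $\partial_{\bar z^{a}}\wedge\partial_{z^{b}}$ ($a\ne b$) arising from $\beta\wedge\overline{\beta}$ cancel against the cross-terms arising from $\frac{\chi^{k-1}}{(k-1)!}\wedge\chi_{B^{\dagger}B}$ — this is forced by $\gamma$ having coefficient matrix the Gram matrix $B^{\dagger}B$ of the columns of $B$, not an arbitrary Hermitian matrix — and what remains is again a nonnegative combination of the $\prod_{s\in S}e_{s}$.

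The step I expect to be the main obstacle is precisely this last verification for non-Hermitian $B$: the clean ``square of a sum'' identity valid in the Hermitian case fails term by term, so one must keep careful track of the interference between the off-diagonal part of $\beta\wedge\overline{\beta}$ and the off-diagonal entries of $B^{\dagger}B$ and check that no negative coefficient survives. The other ingredients — the chain-rule identification of the $A_{i\bar j}$-derivatives of $F_{k}$, and the bookkeeping isolating $W$ (especially the exact cancellation of the $F_{k}^{i\bar s}A^{\bar j r}$ term against one half of the second summand of \eqref{eq:lem of comp-1}) — are routine once one works systematically with the pairing between differential forms and vectors.
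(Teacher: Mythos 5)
Your reduction of the left-hand side of (\ref{eq:ineq}) to $\<\Lambda^{[k]},W\>$ is exactly the paper's reduction to $\<\Lambda^{[k]},\Theta_{k}(B,B)\>$ in (\ref{eq:Theta_k})--(\ref{eq:inte Fijrs}) (the only discrepancy is whether the cross term carries the coefficient matrix $CAC^{\dagger}$ or $C^{\dagger}AC$ with $C=A^{-1}BA^{-1}$; both are positive semidefinite, so this is immaterial), and your treatment of Hermitian $B$ --- normalize $A=\mathrm{Id}$, diagonalize $B$, and collect the coefficient of each strongly positive diagonal monomial into the perfect square $(\sum_{i\in J}a_{i})^{2}$ --- is also precisely the paper's. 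Your observation that \textbf{H2} forces each $\Lambda^{[k]}$, $1\leq k\leq n-1$, to be a positive $(k,k)$-form is correct and is what the paper uses.

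The genuine gap is the case of a general complex $B$, which you flag as the main obstacle and do not resolve. Moreover, the target you set yourself there is both stronger than necessary and doubtful: for non-Hermitian $B$ the $(k,k)$-vector $W=\Theta_{k}(B,B)$ acquires off-diagonal components, and there is no reason to expect it to remain a nonnegative combination of the diagonal monomials, nor even to be strongly positive --- only the scalar $\<\Lambda^{[k]},W\>$ needs to be nonnegative. The paper's missing ingredient is polarization: the left-hand side of (\ref{eq:ineq}) is a Hermitian sesquilinear form $h(B,B)$ in $B$ (Hermitian because of the symmetry $F_{k}^{i\bar{j},r\bar{s}}=F_{k}^{r\bar{s},i\bar{j}}$ and the reality of $F_{k}$ as a function of the Hermitian matrix $A$), so writing $B=B^{R}+\sqrt{-1}B^{I}$ with $B^{R}=\frac{1}{2}(B+B^{\dagger})$ and $B^{I}=\frac{1}{2\sqrt{-1}}(B-B^{\dagger})$ both Hermitian, one gets $h(B,B)=h(B^{R},B^{R})+h(B^{I},B^{I})$, and each summand is nonnegative by the Hermitian case you already established. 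This short algebraic step replaces the term-by-term cancellation analysis you anticipate; without it (or an equivalent device) the proof is incomplete at exactly the point you identified.
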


\begin{proof}
Define the matrix $C$ and the corresponding element 
\begin{align}
C^{\bar{a}b} & :=\sum_{i,j}A^{\bar{a}i}B_{i\bar{j}}A^{\bar{j}b},\label{eq:noname}\\
\zeta & :=2\sqrt{-1}\sum_{a,b}C^{\bar{a}b}\frac{\partial}{\partial\bar{z}^{a}}\wedge\frac{\partial}{\partial z^{b}}.\label{eq:zeta}
\end{align}
In addition, we use $C^{\dagger}$ to denote the adjoint matrix $\left(C^{\dagger}\right)^{\bar{a}b}:=\overline{C^{\bar{b}a}}$.
Define 
\begin{align}
\left(D\right)^{\bar{a}b}:=\sum_{i,j,r,s}A^{\bar{a}i}A^{\bar{s}b}A^{\bar{j}r}B_{i\bar{j}}\overline{B_{s\bar{r}}} & =\sum_{r,s}C^{\bar{a}r}A_{r\bar{s}}\overline{C^{\bar{b}s}}.\label{eq:D noname}
\end{align}
Then, 
\begin{align}
\sum_{i,j,r,s}A^{\bar{a}r}A^{\bar{s}i}A^{\bar{j}b}B_{i\bar{j}}\overline{B_{s\bar{r}}} & =\overline{C^{\bar{r}a}}A_{r\bar{s}}C^{\bar{s}b}=(D^{\dagger})^{\bar{a}b},\label{eq:xompu}\\
\sum_{i,j,r,s}A^{\bar{a}i}A^{\bar{j}b}A^{\bar{c}r}A^{\bar{s}d}B_{i\bar{j}}\overline{B_{s\bar{r}}} & =C^{\bar{a}b}\overline{C^{\bar{d}c}}.\label{eq:compute C}
\end{align}
We define 
\begin{equation}
\xi:=2\sqrt{-1}\sum_{a,b}D^{\bar{a}b}\frac{\partial}{\partial\bar{z}^{a}}\wedge\frac{\partial}{\partial z^{b}}.\label{eq:xi din}
\end{equation}
Therefore 
\begin{align}
\sum_{i,j,r,s}\left(F_{k}^{i\bar{j},r\bar{s}}+\frac{\pdv F_{k}}{\pdv A_{i\bar{s}}}A^{\bar{j}r}\right)B_{i\bar{j}}\overline{B_{s\bar{r}}} & =\<\Lambda^{[k]},\frac{\chi^{k-2}}{(k-2)!}\wedge\bar{\zeta}\wedge\zeta\>\label{eq:prop f convex}\\
 & +\<\Lambda^{[k]},\frac{\chi^{k-1}}{(k-1)!}\wedge\xi\>.\nonumber 
\end{align}
Define 
\begin{equation}
\Theta_{k}(B,B)=\frac{\chi^{k-2}}{(k-2)!}\wedge(2\sqrt{-1}\bar{\zeta}\wedge\zeta)+\frac{\chi^{k-1}}{(k-1)!}\wedge\xi.\label{eq:Theta_k}
\end{equation}
From (\ref{eq:prop f convex}), we see that 
\begin{equation}
\sum_{i,j,r,s}\left(F_{k}^{i\bar{j},r\bar{s}}+\frac{\pdv F_{k}}{\pdv A_{i\bar{s}}}A^{\bar{j}r}\right)B_{i\bar{j}}\overline{B_{s\bar{r}}}=\<\Lambda^{[k]},\Theta_{k}(B,B)\>.\label{eq:inte Fijrs}
\end{equation}
Now, it suffices to check the (strongly) positivity of $\Theta_{k}(B,B)$.
Since the positivity is invariant under coordinate change, we may
change the local coordinate such that at a point $p$, it holds 
\begin{equation}
\chi=2\sqrt{-1}\sum_{i}^{n}\frac{\partial}{\partial\bar{z}^{i}}\wedge\frac{\partial}{\partial z^{i}},\ \ \zeta=2\sqrt{-1}\sum_{i=1}^{n}a_{i}\frac{\partial}{\partial\bar{z}^{i}}\wedge\frac{\partial}{\partial z^{i}}.\label{eq:another coor xhi zeta}
\end{equation}
Therefore, by (\ref{eq:another coor xhi zeta}), 
\begin{equation}
\frac{\chi^{k-2}}{(k-2)!}\wedge\bar{\zeta}\wedge\zeta=\sum_{|J|=k-2}\sum_{i,j\not\in J}a_{i}a_{j}2^{k}\sqrt{-1}^{k^{2}}\frac{\partial}{\partial\bar{z}^{J}}\wedge\frac{\pdv}{\pdv\bar{z}^{i}}\wedge\frac{\pdv}{\pdv\bar{z}^{j}}\wedge\frac{\pdv}{\pdv z^{J}}\wedge\frac{\pdv}{\pdv z^{i}}\wedge\frac{\pdv}{\pdv z^{j}},\label{eq:zeta 2}
\end{equation}
\begin{equation}
\frac{\chi^{k-1}}{(k-1)!}\wedge\left(\xi\right)=\sum_{|J|=k-1}\sum_{i\not\in J}a_{i}^{2}2^{k}\sqrt{-1}^{k^{2}}\frac{\partial}{\partial\bar{z}^{J}}\wedge\frac{\pdv}{\pdv\bar{z}^{i}}\wedge\frac{\pdv}{\pdv z^{J}}\wedge\frac{\pdv}{\pdv z^{i}}.\label{eq:xi 2}
\end{equation}
Let $J$ be any ordered subset of $\{1,\cdots,n\}$ such that $|J|=k$.
The coefficient of $\sqrt{-1}^{k^{2}}2^{k}\frac{\partial}{\partial\bar{z}^{J}}\wedge\frac{\pdv}{\pdv z^{J}}$
in $\Theta_{k}(B,B)$ is given by 
\[
\sum_{i\in J}a_{i}^{2}+\sum_{i\in J}\sum_{j\not=i,j\in J}a_{i}a_{j}=\left(\sum_{i\in J}a_{i}\right)^{2}.
\]
Since each $\sqrt{-1}^{k^{2}}2^{k}\frac{\partial}{\partial\bar{z}^{J}}\wedge\frac{\pdv}{\pdv z^{J}}$
is strongly positive, we conclude that $\Theta_{k}(B,B)$ is strongly
positive.

For any general complex matrix $B$, we consider the matrix decomposition
\[
B=B^{R}+\sqrt{-1}B^{I},
\]
where 
\[
B^{R}=\frac{B+B^{\dagger}}{2},\ B^{I}=\frac{B-B^{\dagger}}{2\sqrt{-1}}.
\]
Thus, $B^{R}$ and $B^{I}$ are both Hermitian. The Hermitian bilinear
form $h(B,B)=\<\Lambda^{[k]},\Theta_{k}(B,B)\>$ is given by 
\begin{align*}
h(B,B) & =h(B^{R},B^{R})+h(B^{I},B^{I})+\sqrt{-1}\left(h(B^{I},B^{R})-h(B^{R},B^{I})\right)\\
 & =h(B^{R},B^{R})+h(B^{I},B^{I}).
\end{align*}
Hence $h$ is positive definite. Therefore, we have finished the proof. 
\end{proof}
Now, we establish the ellipticity and convexity of $F$. In this section,
we consider the simpler case when $\Lambda^{[n]}=f\frac{\rho^{n}}{n!}\geq0.$ 
\begin{cor}
\label{cor: convexity for f bigger than 0}Notations as above. Suppose
that $\Lambda^{[n]}\geq0$ at some point $p$. Then at $p$, $F(A)$
is a strictly decreasing function in $A$, i.e. 
\begin{equation}
F(A+B)<F(A)\label{eq:monotonicity f non neg}
\end{equation}
for any non-zero semi-positive Hermitian matrix $B$. Furthermore,
for any complex matrix $B_{i\bar{j}}$, we have 
\[
\sum_{i,j,r,s}\left(F^{i\bar{j},r\bar{s}}+F^{i\bar{s}}A^{\bar{j}r}\right)B_{i\bar{j}}\overline{B_{s\bar{r}}}\geq0.
\]
In particular, $F$ is a strictly convex function in $\Gamma_{n\times n}^{+}$. 
\end{cor}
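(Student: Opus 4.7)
The plan is to decompose $F=\sum_{k=1}^{n}F_{k}$ and apply Lemma~\ref{lem:F_k monot} and Proposition~\ref{prop: F_k convexity} term by term, observing that the hypothesis $\Lambda^{[n]}\ge 0$ at $p$ exactly fills the $k=n$ gap left by those two results (both of which rest only on \textbf{H2}, i.e.\ on the positivity of $\mathring\Lambda$).

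For strict monotonicity, the key pointwise statement is that the Hermitian matrix $-F^{i\bar j}$ is strictly positive definite at $p$. Indeed, Lemma~\ref{lem:F_k monot} gives $-\sum_{k=1}^{n-1}F_{k}^{i\bar j}b_{i}\overline{b_{j}}>0$ for $b\ne 0$, while formula~(\ref{eq:lem of comp}) yields
\[
-F_{n}^{i\bar j}b_{i}\overline{b_{j}}\;=\;\bigl\langle \Lambda^{[n]},\,2\sqrt{-1}\,\overline{b^{\sharp}}\wedge b^{\sharp}\wedge \tfrac{\chi^{n-1}}{(n-1)!}\bigr\rangle\;\ge\;0,
\]
since $\Lambda^{[n]}\ge 0$ at $p$ and the tested vector is strongly positive. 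For any non-zero $B\in\overline{\Gamma_{n\times n}^{+}}$, spectrally decomposing $B=\sum_{\alpha}\lambda_{\alpha}b^{(\alpha)}\otimes\overline{b^{(\alpha)}}$ with $\lambda_{\alpha}\ge 0$ (not all zero) and $b^{(\alpha)}\ne 0$ gives, for every $t\in[0,1]$,
\[
F^{i\bar j}(A+tB)\,B_{i\bar j}\;=\;\sum_{\alpha}\lambda_{\alpha}\,F^{i\bar j}(A+tB)\,b^{(\alpha)}_{i}\overline{b^{(\alpha)}_{j}}\;<\;0,
\]
because $A+tB$ stays in $\Gamma_{n\times n}^{+}$ while the pointwise hypothesis on $\Lambda^{[n]}$ is unchanged. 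Integrating from $0$ to $1$ yields $F(A+B)<F(A)$.

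For the convexity estimate, I apply Proposition~\ref{prop: F_k convexity} to each $k=1,\dots,n$: its proof only pairs the positive $(k,k)$-form $\Lambda^{[k]}$ against the strongly positive vector $\Theta_{k}(B,B)$, and for $k=n$ this positivity is exactly our hypothesis. Summing yields
\[
\sum_{i,j,r,s}\bigl(F^{i\bar j,r\bar s}+F^{i\bar s}A^{\bar j r}\bigr)B_{i\bar j}\overline{B_{s\bar r}}\;\ge\;0.
\]
To promote this to strict convexity, set $G:=(-F^{i\bar s})$ and $H:=(A^{\bar j r})$; both are Hermitian positive definite, by the previous step and by $A\in\Gamma_{n\times n}^{+}$, respectively. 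Identifying $B$ with its vectorization in $\mathbb{C}^{n^{2}}$, the auxiliary form
\[
Q(B)\;:=\;G^{is}H^{jr}B_{i\bar j}\overline{B_{s\bar r}}
\]
equals the quadratic form of the Kronecker product $G\otimes H$, which is positive definite as the tensor product of two positive definite Hermitian matrices. Hence $F^{i\bar j,r\bar s}B_{i\bar j}\overline{B_{s\bar r}}\ge Q(B)>0$ for every non-zero Hermitian $B$, proving strict convexity. No step poses a serious obstacle; the only care needed is to verify that under $\Lambda^{[n]}(p)\ge 0$ both Lemma~\ref{lem:F_k monot} and Proposition~\ref{prop: F_k convexity} extend immediately to $k=n$, and to recognize the Kronecker-product positivity behind the strict inequality.
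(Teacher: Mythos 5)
Your proof is correct and follows essentially the same route as the paper: decompose $F=\sum_k F_k$, invoke Lemma \ref{lem:F_k monot} and Proposition \ref{prop: F_k convexity} for $k\le n-1$, and observe that the $k=n$ term contributes with the right sign precisely because $\Lambda^{[n]}(p)\ge 0$. You merely make explicit two steps the paper leaves implicit, namely integrating $F^{i\bar j}(A+tB)B_{i\bar j}$ over $t\in[0,1]$ to get $F(A+B)<F(A)$ and the Kronecker-product positivity behind $-F^{i\bar s}A^{\bar j r}B_{i\bar j}\overline{B_{s\bar r}}>0$.
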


\begin{proof}
Notice that 
\[
F^{i\bar{j}}=\sum_{k=1}^{n-1}F_{k}^{i\bar{j}}-\frac{f}{\det A}A^{\bar{j}i}
\]
Thus, (\ref{eq:monotonicity f non neg}) follows from Lemma \ref{lem:F_k monot}
and assumption \textbf{H2}.

For convexity, by Proposition \ref{prop: F_k convexity} 
\begin{align*}
\sum\left(F^{i\bar{j},r\bar{s}}+F^{i\bar{s}}A^{\bar{j}r}\right)B_{i\bar{j}}\overline{B_{s\bar{r}}} & =\sum_{k=1}^{n-1}\<\Lambda^{[k]},\Theta_{k}(B,B)\>+\frac{f}{\det A}A^{\bar{s}r}A^{\bar{j}i}B_{i\bar{j}}\overline{B_{s\bar{r}}}\\
 & \geq0.
\end{align*}
Here, we have used the lower bound for $f.$ By Lemma \ref{lem:F_k monot}
and assumption \textbf{H2}, $-F^{i\bar{s}}A^{\bar{j}r}B_{i\bar{j}}\overline{B_{s\bar{r}}}>0$
for non-zero $B$. Thus $F$ is strictly convex. 
\end{proof}

\section{Cone condition \label{sec:Cone-condition}}

We have introduced the concept of cone condition/subsolution in (\ref{eq:cone condition}).
In this section, we state more criteria for subsolutions and prove
some properties that will be used later. We will focus on the local
cone condition near a point $p\in M$. 
\begin{defn}
Let 
\[
\mathcal{C}_{\Lambda}^{\kappa}:=\{\omega:\omega\ \text{is\ }\kah;\ (\ref{eq:cone condition})\text{ holds}\}.
\]
At a point $p\in M$, let 
\[
\mathcal{C}_{\Lambda}^{\kappa}(p):=\{\omega:\omega\text{ is a positive }(1,1)\text{-form};\ (\ref{eq:cone condition})\text{ holds at }p\}.
\]
In a local coordinates at $p$, where $\omega=\frac{\sqrt{-1}}{2}\sum A_{i\bar{j}}dz^{i}\wedge d\bar{z}^{j}$,
we say the matrix $A\in\mathcal{C}_{\Lambda}^{\kappa}(p)$ if $\omega$
satisfies (\ref{eq:cone condition}) at $p$. By (\ref{eq:cone condition}),
$\mathcal{C}_{\Lambda}^{\kappa}(p)$ can be viewed as an open set
in $\Gamma_{n\times n}^{+}$.
\end{defn}

The cone condition (\ref{eq:cone condition}) in the study of $J$-equation
was first explored in Song-Weinkove \cite{song2008convergence}. Later
Fang-Lai-Ma \cite{Fang-Lai-Ma} extended the discussion to inverse
$\sigma_{k}$ type equations. The notion of subsolution for a class
of fully nonlinear equations was introduced by Guan \cite{Guan2014Second-order}.
See also Székelyhidi \cite{szekelyhidi2018fully}.

We have the following criterions for subsolutions. Here we do not
assume the sign of $\Lambda^{[n]}$. 
\begin{prop}
\label{prop:crite for Sub }Notations as above. Suppose that $\underline{A}$
is a positive Hermitian matrix. The followings are equivalent 
\begin{enumerate}
\item \label{enu:.TFAE}$\underline{A}\in\mathcal{C}_{\Lambda}^{\kappa}(p)$. 
\item \label{enu:TFAE 2}There is a constant $R=R(\underline{A},\Lambda,n,|f(p)|)$
s.t. if $B$ is a non-negative Hermitian matrix satisfying 
\[
F(\underline{A}+B)=\kappa,
\]
then 
\begin{equation}
|B|\le R.\label{eq:boundedness of B}
\end{equation}
Here $|B|=\left(\sum_{i,j}|B_{i\bar{j}}|^{2}\right)^{\frac{1}{2}}.$ 
\item \label{enu:TFAE 3}For any non-zero semi-positive Hermitian matrix
$B$ the following holds 
\begin{equation}
\lim_{t\to\infty}F(\underline{A}+tB)<\kappa.\label{eq:subsol cri eq}
\end{equation}
\end{enumerate}
\end{prop}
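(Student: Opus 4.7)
The plan is to establish the cycle $(1)\Rightarrow(3)\Rightarrow(2)\Rightarrow(1)$, driven by a polynomial expansion of $F$ along rays $\underline{A}+tB$. Setting $\beta:=\tfrac{\sqrt{-1}}{2}B_{i\bar{j}}dz^{i}\wedge d\bar{z}^{j}$ and $r:=\operatorname{rank}(B)$, commutativity of $\underline{\omega}$ and $\beta$ as $(1,1)$-forms, combined with $\beta^{k}=0$ for $k>r$, yields the key identity
\[
[\kappa-F(\underline{A}+tB)]\,\Omega_{t}^{[n]}=\sum_{k=0}^{r}\frac{t^{k}}{k!}\beta^{k}\wedge G_{k}(\underline{\omega}),\qquad G_{k}(\omega):=\kappa(\exp\omega)^{[n-k]}-(\Lambda\wedge\exp\omega)^{[n-k]},
\]
with $\Omega_{t}^{[n]}=\sum_{k=0}^{r}\tfrac{t^{k}}{k!}\beta^{k}\wedge\underline{\omega}^{n-k}/(n-k)!$. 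Since $G_{1}$ is precisely the cone form of (\ref{eq:cone condition}), condition (\ref{enu:.TFAE}) reads $G_{1}(\underline{\omega})>0$. Dividing and letting $t\to\infty$,
\[
\lim_{t\to\infty}[\kappa-F(\underline{A}+tB)]=\frac{\beta^{r}\wedge G_{r}(\underline{\omega})}{\beta^{r}\wedge\underline{\omega}^{n-r}/(n-r)!},
\]
whose denominator is strictly positive.

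For $(1)\Rightarrow(3)$, the rank-one case is immediate from the cone condition. For $r\ge 2$, I apply Lemma \ref{rem:elementary linear agb} iteratively to $A_{t}^{-1}$ to obtain $\chi_{t}\to\chi_{\infty}$, a strongly positive $(1,1)$-vector supported on $V_{\perp}:=\ker B$; Lemma \ref{lem:rewrite} then identifies $\lim_{t\to\infty}F(\underline{A}+tB)=F^{V_{\perp}}(\underline{V})$ as the analogous functional on $V_{\perp}$ built from $\Lambda|_{V_{\perp}}$ and $\underline{\omega}|_{V_{\perp}}$. Combining with the limit formula and the minor identity (\ref{eq:ele lin ag}) yields the algebraic identity
\[
\kappa-F^{V_{\perp}}(\underline{V})=\frac{\beta^{r}\wedge G_{r}(\underline{\omega})}{\beta^{r}\wedge\underline{\omega}^{n-r}/(n-r)!}.
\]
The remaining step — deducing strict positivity from cone positivity on $M$ — I carry out by pairing $G_{1}(\underline{\omega})$ against strongly positive $(n-1,n-1)$-vectors of the form $\chi'^{\,r-1}\wedge\chi_{\infty}^{n-r}/(n-1)!$ with $\chi'$ dual to $\beta$, and tracking coefficients multilinearly.

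For $(3)\Rightarrow(2)$, suppose $B_{n}\ge 0$ satisfies $F(\underline{A}+B_{n})=\kappa$ with $|B_{n}|\to\infty$. Rescale $\tilde{B}_{n}:=B_{n}/|B_{n}|$ and pass to a rank-stabilized subsequence so that $\tilde{B}_{n}\to B_{\infty}\ne 0$ of rank $r$. Uniformity of the polynomial ratio at fixed rank, combined with $|B_{n}|\to\infty$, forces $F(\underline{A}+|B_{n}|\tilde{B}_{n})\to\lim_{t\to\infty}F(\underline{A}+tB_{\infty})<\kappa$ by (3), contradicting $F=\kappa$.

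For $(2)\Rightarrow(1)$, I argue contrapositively. Failure of the cone condition produces, via the duality between positive $(n-1,n-1)$-forms and rank-one positive $(1,1)$-forms, a rank-one positive $\beta$ with $\beta\wedge G_{1}(\underline{\omega})\le 0$. In the degenerate case $\beta\wedge G_{1}(\underline{\omega})=0$ and $F(\underline{A})=\kappa$, the rank-one identity gives $F(\underline{A}+tB)=\kappa$ for all $t\ge 0$, supplying unbounded solutions; the remaining sub-cases are reduced to this degenerate situation by combining higher-rank perturbations with the identity above to build unbounded solution families. The principal technical obstacle is the multilinear descent step in $(1)\Rightarrow(3)$ for $r\ge 2$, where positivity of the cone form $G_{1}$ on $M$ must be translated into the scalar inequality $F^{V_{\perp}}(\underline{V})<\kappa$ using Lemmas \ref{lem:rewrite}, \ref{rem:elementary linear agb}, and the minor identity (\ref{eq:ele lin ag}).
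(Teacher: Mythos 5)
Your key identity $[\kappa-F(\underline{A}+tB)]\,\Omega_{t}^{[n]}=\sum_{k=0}^{r}\tfrac{t^{k}}{k!}\beta^{k}\wedge G_{k}(\underline{\omega})$ is correct and is exactly the paper's (\ref{eq:eqution in subsol}) extended to general rank, and your Moore--Penrose limit $\chi_{t}\to\chi_{\infty}$ matches Lemmas \ref{lem:moore-penrose  inverse} and \ref{lem: submatrix and restricting to subspac}. The genuine gap is the step you yourself flag as the principal obstacle: deducing $\beta^{r}\wedge G_{r}(\underline{\omega})>0$ for $r\ge2$ from $G_{1}(\underline{\omega})>0$ by a purely multilinear manipulation. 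That implication is false as linear algebra. Take $n=3$, $\kappa=1$, $\underline{\omega}=\tfrac{\sqrt{-1}}{2}\sum_{i}dz^{i}\wedge d\bar{z}^{i}$, $\Lambda=\Lambda^{[1]}=\sum_{i}\lambda_{i}\tfrac{\sqrt{-1}}{2}dz^{i}\wedge d\bar{z}^{i}$ with $(\lambda_{1},\lambda_{2},\lambda_{3})=(-10,-10,6)$: the components of $G_{1}$ have coefficients $1-(\lambda_{i}+\lambda_{j})\in\{21,5,5\}$, so $G_{1}>0$, yet for $\beta=\tfrac{\sqrt{-1}}{2}(dz^{1}\wedge d\bar{z}^{1}+dz^{2}\wedge d\bar{z}^{2})$ one gets $\beta^{2}\wedge G_{2}=2(1-\lambda_{3})\,dV<0$ and hence $\lim_{t\to\infty}F(\underline{A}+tB)=6>\kappa$. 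Note also that your proposed test vectors $\chi'^{\,r-1}\wedge\chi_{\infty}^{n-r}$ pair \emph{positively} with $G_{1}$ in this example, so tracking their coefficients cannot detect the failure. What rescues the proposition is the standing hypothesis that $\mathring{\Lambda}\ge0$ (H1/H2), which this $\Lambda$ violates: positivity of each $\Lambda^{[k]}$, $k\le n-1$, gives the monotonicity of Lemma \ref{lem:F_k monot}, whence $\lim_{t}F(\underline{A}+tB)\le\lim_{t}F(\underline{A}+t\lambda b^{\dagger}b)$ for a positive eigenpair $(\lambda,b)$ of $B$, reducing (3) to the rank-one case you already handled. This eigenvector reduction is precisely how the paper argues (Lemma \ref{lem: submatrix and restricting to subspac}(3), and the passage from $b'$ to general $B$ in its proof of $(1)\Rightarrow(2)$); any correct proof must invoke this positivity somewhere, and yours never does.

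Two further soft spots. In $(3)\Rightarrow(2)$ the rank of $\tilde{B}_{n}$ can drop in the limit even along a rank-stabilized subsequence (e.g. $B_{n}=\mathrm{diag}(n^{2},n,0,\dots)$), so $F(\underline{A}+|B_{n}|\tilde{B}_{n})$ need not converge to $\lim_{t}F(\underline{A}+tB_{\infty})$; monotonicity again repairs this, and the paper's quantitative $(1)\Rightarrow(2)$ (inequality (\ref{eq:quantitative sub}) together with explicit control of the $f(p)/\det$ term) avoids the compactness argument entirely. In $(2)\Rightarrow(1)$ the sub-case $\beta\wedge G_{1}(\underline{\omega})=0$ with $F(\underline{A})<\kappa$ yields $F(\underline{A}+tB)<\kappa$ for all $t$, so there are no solutions at all along that ray and "building unbounded solution families" there is not routine. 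The paper closes the cycle with $(3)\Rightarrow(1)$ instead, which falls out of the rank-one identity upon dividing by $t$ and letting $t\to\infty$; I would recommend that orientation of the cycle.
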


\begin{proof}
Pick a local unit covector $b=\sum_{i=1}^{n}b_{i}dz^{i}$ such that
$\|b\|_{\rho}=1.$ We may identify $b$ with an $1\times n$ matrix
$(b_{1},\cdots,b_{n})$. Define $\beta=\frac{\sqrt{-1}}{2}\sum_{i,j}b_{i}\bar{b}_{j}dz^{i}\wedge d\bar{z}^{j}$.
We have the following identity 
\begin{align}
\frac{\left(\omega+t\beta\right)^{n}}{n!}\left(\kappa-F(\underline{A}+tb^{\dagger}b)\right) & =t\left(\kappa\Omega-\Lambda\wedge\Omega\right)^{[n-1]}\wedge\beta+\left(\kappa-F(\underline{A})\right)\frac{\omega^{n}}{n!}.\label{eq:eqution in subsol}
\end{align}
$(\ref{enu:.TFAE})\Rightarrow(\ref{enu:TFAE 2})$. Suppose that $\underline{A}\in\mathcal{C}_{\Lambda}^{\kappa}$.
Then there is a positive $\delta=\delta(\underline{A},\Lambda)$ s.t.
\[
\left(\kappa\Omega-\Lambda\wedge\Omega\right)^{[n-1]}\wedge\beta\geq\delta P^{[n-1]}.
\]
Then from (\ref{eq:eqution in subsol}), 
\begin{equation}
\frac{\left(\omega+t\beta\right)^{n}}{n!}\left(\kappa-F(\underline{A}+tb^{\dagger}b)\right)\geq t\delta\|b\|_{\rho}^{2}P^{[n]}+O(1).\label{eq:inter eq}
\end{equation}
Here $O(1)$ represents a $(n,n)$-form with bounded norm with respect
to $\rho$, depending on $\underline{A}$ and $\Lambda$ but not $b$.
As 
\begin{equation}
F(\underline{A}+tb^{\dagger}b)=\sum_{k=1}^{n-1}F_{k}(\underline{A}+tb^{\dagger}b)+\frac{f(p)}{\det(\underline{A}+tb^{\dagger}b)},\label{eq:F(A+tb)}
\end{equation}
By Lemma \ref{lem:F_k monot}, the first term in the right hand side
of (\ref{eq:F(A+tb)}) is decreasing in $t$ and is non-negative.
The second term is bounded and approaches 0 when $t\to\infty$. Thus
$\lim_{t\to\infty}F(A+tb^{\dagger}b)$ exists. Hence from (\ref{eq:inter eq}),
\begin{align*}
t\delta P^{[n]}+O(1) & \leq\left(t\frac{\omega^{n-1}}{(n-1)!}\wedge\beta\right)\left(\kappa-F(\underline{A}+tb^{\dagger}b)\right)+O(1)\\
 & =t\|b\|_{\omega}\det\underline{A}\left(\kappa-F(\underline{A}+tb^{\dagger}b)\right)P^{[n]}+O(1)\\
 & \leq t\lambda_{max}\det\underline{A}\left(\kappa-F(\underline{A}+tb^{\dagger}b)\right)P^{[n]}+O(1).
\end{align*}
Here $\lambda_{max}$ is the maximal eigenvalue of $\underline{A}$.
Therefore, there is a $R=R(\underline{A},\Lambda)$ such that if $t>R$,
then 
\begin{equation}
\kappa-F(\underline{A}+tb^{\dagger}b)\geq\frac{\delta}{2\lambda_{max}\det\underline{A}}.\label{eq:quantitative sub}
\end{equation}

For any non-zero semi-positive Hermitian matrix $B$ with $\|B\|=\sqrt{\sum_{i,j}|B_{i\bar{j}}|^{2}}=N$,
let $\lambda$ be the biggest non-zero eigenvalue of $B$ and $b'=(b_{1}',\cdots,b_{n}')$
be the unit eigenvector corresponding to $\lambda$. Notice $\lambda\ge\frac{N}{n}$
and 
\[
B-\frac{N}{2n}(b')^{\dagger}b'\geq0.
\]
Suppose that $N/(2n)\geq R$. 
\begin{equation}
\kappa-F(\underline{A}+\frac{N}{2n}(b')^{\dagger}b')\geq\frac{\delta}{2\lambda_{max}\det\underline{A}}.\label{eq:lem cone-1}
\end{equation}
Now 
\begin{align}
F(\underline{A}+B) & =\sum_{k=1}^{n-1}F_{k}(\underline{A}+B)+\frac{f(p)}{\det(\underline{A}+B)}\label{eq:lem cone}\\
 & \leq F(\underline{A}+\frac{N}{2n}(b')^{\dagger}b')+|f(p)|\cdot\left|\frac{1}{\det(\underline{A}+B)}-\frac{1}{\det(\underline{A}+\frac{N}{2n}(b')^{\dagger}b')}\right|.\nonumber 
\end{align}
Take $N$ large enough so that $\det(\underline{A}+\frac{N}{2n}(b')^{\dagger}b')^{-1}<\frac{\delta}{4\lambda_{max}(\det\underline{A})(|f(p)|+1)}$.
Then by (\ref{eq:lem cone-1}) and (\ref{eq:lem cone}), 
\[
\kappa-F(\underline{A}+B)>\frac{\delta}{4\lambda_{max}\det\underline{A}}.
\]
We then have a contradiction. We have proved (\ref{eq:boundedness of B}).

$(\ref{enu:TFAE 2})\Rightarrow(\ref{enu:TFAE 3})$ is obvious since
$F_{k}(\underline{A}+tB)$ is strictly monotonic decreasing and $\frac{f}{\det(\underline{A}+tB)}$
tends to $0$ as $t\to\infty$.

$(\ref{enu:TFAE 3})\Rightarrow(\ref{enu:.TFAE})$. If $\lim_{t\to\infty}F(\underline{A}+tB)<\kappa$,
we can test each $B=(B_{i\bar{j}})=(\delta_{i\bar{j}})$ in (\ref{eq:eqution in subsol})
to see that each $\left(\kappa\Omega-\Lambda\wedge\Omega\right)^{[n-1]}\wedge(\iover dz^{i}\wedge d\bar{z}^{i})$
is positive. Thus, $\left(\kappa\Omega-\Lambda\wedge\Omega\right)^{[n-1]}$
is positive, which implies $\underline{A}\in\mathcal{C}_{\Lambda}^{\kappa}(p)$. 
\end{proof}
\begin{rem}
The criterion (2) in Proposition \ref{prop:crite for Sub } is the
definition of subsolution given by Székelyhidi \cite{szekelyhidi2018fully}.\textbf{
}The equivalence of (2) and (3) is suggested in \cite{szekelyhidi2018fully}
Remark 8. 
\end{rem}

\begin{defn}
Let $B$ be a non-zero semi-positive Hermitian matrix. We define
\begin{equation}
F_{\Lambda}(A:B):=\lim_{t\to\infty}F(A+tB,\Lambda),\label{eq:F(A:B)}
\end{equation}
\begin{equation}
\mathcal{P}_{\Lambda}(A):=\max_{B\in\overline{\Gamma_{n\times n}^{+}},\|B\|=1}\lim_{t\to+\infty}F(A+tB,\Lambda).\label{eq:P(A)}
\end{equation}
Clearly, $A\in\mathcal{C}_{\Lambda}^{\kappa}(p)$ if and only if for
any non-zero $B\in\overline{\Gamma_{n\times n}^{+}}$, $F_{\Lambda}(A:B)<\kappa$.
Equivalently, $A\in\mathcal{C}_{\Lambda}^{\kappa}(p)$ if and only
if 
\[
\mathcal{P}_{\Lambda}(A)<\kappa.
\]
\end{defn}

We use the notation $\mathcal{P}_{\Lambda}(\omega)=\mathcal{P}_{\La}(A)$
if $\omega$ is represented by matrix $A$ in a local coordinate.
There is another perspective where $F_{\Lambda}(A:B)$ and $\mathcal{P}_{\Lambda}(A)$
are given by the restrictions of $F$ on subspaces, which relies on
the the general inverse matrix in the sense of Moore-Penrose \cite{moore1920reciprocal,penrose1955generalized}. 
\begin{defn}
Suppose that $A$ is a $n\times n$ matrix. A Moore-Penrose inverse
matrix $A^{-1}$ is the unique matrix that satisfies the following
conditions: 
\begin{enumerate}
\item $AA^{-1}A=A$, $A^{-1}AA^{-1}=A^{-1}.$ 
\item Both $AA^{-1}$ and $A^{-1}A$ are Hermitian matrices. 
\end{enumerate}
\end{defn}

For any matrix, the Moore-Penrose inverse of $A$ exists and can be
constructed via singular value decomposition. The following lemma
\ref{lem:moore-penrose  inverse} will be used to relate $F_{\Lambda}(A:B)$
with a Moore-Penrose inverse. 
\begin{lem}
\label{lem:moore-penrose  inverse}Let $V\geq0$ be a Hermitian matrix
of rank $r$. Let $\mathcal{V}$ be the linear spaces spaced by the
non-zero eigenvectors of $V$ and let $\mathcal{H}$ be the orthogonal
complement of $\mathcal{V}$. Let $\Pi_{\mathcal{H}}$ be the orthogonal
projection matrix to the subspace $\mathcal{H}\subset\mathcal{T}_{p}M$.
Denote 
\begin{equation}
(A|_{\mathcal{H}})^{-1}:=\Pi_{\mathcal{H}}A\Pi_{\mathcal{H}}.\label{eq:general inverse AH}
\end{equation}
 Then
\begin{equation}
\lim_{t\to+\infty}\left(A+tV\right)^{-1}=\left(A|_{\mathcal{H}}\right)^{-1},\label{eq:add15}
\end{equation}
\end{lem}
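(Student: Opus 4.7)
The plan is to reduce the problem to block matrix form and apply the explicit inversion formula of Lemma \ref{rem:elementary linear agb}. Choose a unitary frame of $\mathcal{T}_p M$ adapted to the orthogonal splitting $\mathcal{T}_p M = \mathcal{V}\oplus\mathcal{H}$. In this frame, $V$ becomes block-diagonal, with a positive-definite block $V_0$ acting on $\mathcal{V}$ and zero on $\mathcal{H}$, while $A$ decomposes as
\[
A \;=\; \begin{pmatrix} A_{11} & A_{12} \\ A_{12}^{\dagger} & A_{22} \end{pmatrix},
\]
the first block corresponding to $\mathcal{V}$ and the second to $\mathcal{H}$. Since $A$ is positive definite (it represents the K\"ahler metric $\omega$), the principal submatrix $A_{22}$ is invertible.

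Next I would apply the block inversion formula (\ref{eq:alg fact}) to
\[
A+tV \;=\; \begin{pmatrix} A_{11}+tV_0 & A_{12} \\ A_{12}^{\dagger} & A_{22} \end{pmatrix},
\]
with $H_t := A_{11}+tV_0$, $W := A_{22}$, $D := A_{12}$ in the notation of that lemma. The associated Schur complement is
\[
\hat H_t \;=\; H_t - DW^{-1}D^{\dagger} \;=\; A_{11}+tV_0 - A_{12}A_{22}^{-1}A_{12}^{\dagger}.
\]
The crux of the proof is then the estimate $\hat H_t^{-1}\to 0$ as $t\to\infty$: since $V_0$ is positive definite on $\mathcal{V}$, one has $\hat H_t \geq \tfrac{t}{2}\lambda_{\min}(V_0)\,\mathrm{Id}_{\mathcal{V}}$ in the operator order for all sufficiently large $t$, so $\|\hat H_t^{-1}\| = O(t^{-1})$. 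This is really the only quantitative step.

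With $\hat H_t^{-1}\to 0$ in hand, reading off each block of (\ref{eq:alg fact}) gives that the $(1,1)$, $(1,2)$ and $(2,1)$ blocks of $(A+tV)^{-1}$ all vanish in the limit, while the $(2,2)$ block $W^{-1}+W^{-1}D^{\dagger}\hat H_t^{-1}DW^{-1}$ converges to $A_{22}^{-1}$. Therefore
\[
\lim_{t\to+\infty}(A+tV)^{-1} \;=\; \begin{pmatrix} 0 & 0 \\ 0 & A_{22}^{-1} \end{pmatrix},
\]
which is precisely the extension by zero of the inverse of the compression $\Pi_{\mathcal{H}} A\,\Pi_{\mathcal{H}}$, i.e.\ the Moore--Penrose inverse $(A|_{\mathcal{H}})^{-1}$ of the statement. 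Although we worked in an adapted frame, the final expression is manifestly basis-independent, since it is characterized intrinsically through $\Pi_{\mathcal{H}}$; the main (and essentially only) obstacle is the Schur complement estimate, which I have noted above.
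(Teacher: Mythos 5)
Your proof is correct and follows essentially the same route as the paper's: diagonalize in a unitary frame adapted to $\mathcal{V}\oplus\mathcal{H}$, apply the block inversion formula of Lemma \ref{rem:elementary linear agb}, and observe that the Schur complement $\hat H_t = A_{11}+tV_0 - A_{12}A_{22}^{-1}A_{12}^{\dagger}$ has inverse tending to $0$. Your explicit operator bound $\hat H_t \geq \tfrac{t}{2}\lambda_{\min}(V_0)\,\mathrm{Id}$ simply makes quantitative the step the paper states as "$(\hat A_1(t))^{-1}\to 0$ uniformly."
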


\begin{rem}
The inverse on the right hand side of (\ref{eq:add15}) is in the
sense of Moore-Penrose and we denote it as $(A|_{\mathcal{H}})^{-1}$.
\end{rem}

\begin{proof}
Since Moore-Penrose inverse is invariant under unitary transform,
we may choose a unitary basis $\{e_{i}\}$ so that $e_{1},\cdots,e_{r}$
spans $\mathcal{V}$. Then 
\[
V=\sum_{i=1}^{r}V_{i\bar{i}}e_{i}e_{i}^{\dagger},\ A=\sum_{i,j}A_{i\bar{j}}e_{i}e_{j}^{\dagger}.
\]
Then, under this basis, we have 
\[
V=\left(\begin{array}{cc}
v & 0\\
0 & 0
\end{array}\right),\ \Pi_{\mathcal{H}}=\left(\begin{array}{cc}
0 & 0\\
0 & I_{n-r}
\end{array}\right)
\]
where $v$ is a $r\times r$ positive definite matrix and $I_{n-r}$
is an identity matrix. We may write
\[
A+tV=\left(\begin{array}{cc}
A_{1}+tv & C\\
C^{\dagger} & A_{2}
\end{array}\right).
\]
The inverse is given by
\[
(A+tV)^{-1}=\left(\begin{array}{cc}
(\hat{A}_{1}(t))^{-1} & -(\hat{A_{1}}(t))^{-1}CA_{2}^{-1}\\
-A_{2}^{-1}C^{\dagger}(\hat{A_{1}}(t))^{-1} & A_{2}^{-1}+A_{2}^{-1}C^{\dagger}(\hat{A_{1}}(t))^{-1}CA_{2}^{-1}
\end{array}\right)
\]
where $\hat{A}_{1}(t)=A_{1}+tv-CA_{2}^{-1}C^{\dagger}$. As $t\to+\infty$,
$\left(\hat{A}_{1}(t)\right)^{-1}\to0$ uniformly. Thus,
\[
\lim_{t\to+\infty}(A+tV)^{-1}=\left(\begin{array}{cc}
0 & 0\\
0 & A_{2}^{-1}
\end{array}\right).
\]
Notice that $\Pi_{\mathcal{H}}A\Pi_{\mathcal{H}}=\left(\begin{array}{cc}
0 & 0\\
0 & A_{2}
\end{array}\right)$. Hence, we easily see that $(\Pi_{\mathcal{H}}A\Pi_{\mathcal{H}})^{-1}=\lim_{t\to+\infty}(A+tV)^{-1}$.
\end{proof}
Given an orthogonal splitting $\mathcal{T}_{p}M=\mathcal{H}\oplus\mathcal{H}^{\perp}$
with respect to $\rho$. Following the notation in Lemma \ref{lem:moore-penrose  inverse},
we denote 
\begin{equation}
\chi_{\mathcal{H}}:=\sum_{i,j\geq d+1}(A|_{\mathcal{H}})^{\bar{j}i}2\sqrt{-1}\frac{\pdv}{\pdv\bar{z}^{j}}\wedge\frac{\pdv}{\pdv z^{i}}.\label{eq:chi_A chi_H}
\end{equation}
Then $\chi_{\mathcal{H}}\in\bigwedge^{1,1}\mathcal{H}$ is dual to
$\omega|_{\mathcal{H}}$. Now we characterize $\mathcal{P}_{\Lambda}$
using subspaces.
\begin{lem}
\label{lem: submatrix and restricting to subspac}Notations as above,
the following statements are true:
\begin{enumerate}
\item \label{enu:if-,-then}If $B\geq0$ and its all the non-zero eigenvectors
span $\mathcal{H}^{\perp}$, then 
\[
F_{\Lambda}(A:B)=\<\Lambda,\exp\chi_{\mathcal{H}}\>.
\]
\item \label{enu:} 
\[
\mathcal{P}_{\Lambda}(A)=\max_{\mathcal{H}\subset\mathcal{T}_{p}M:\dim\mathcal{H}=d\leq n-1}\<\Lambda,\exp\chi_{\mathcal{H}}\>.
\]
\item \label{enu:-1}
\[
\mathcal{P}_{\Lambda}(A)=\max_{\mathcal{H}\subset\mathcal{T}_{p}M:\dim\mathcal{H}=n-1}\<\Lambda,\exp\chi_{\mathcal{H}}\>.
\]
\item \label{enu:for-any--dimensional}If $\omega\in\mathcal{C}_{\Lambda}^{\kappa}(p)$,
then for any $d$-dimensional subspace $\mathcal{H}$ of $\mathcal{T}_{p}M$
with $d\leq n-1$, we have 
\[
\left((\kappa-\Lambda)\wedge\exp\omega\right)^{[d]}|_{\mathcal{H}}>0,
\]
as a positive $(d,d)$-form.
\end{enumerate}
\end{lem}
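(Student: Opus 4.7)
The plan is to treat the four parts as a chain, each built on the previous ones. For (1), starting from (\ref{eq:F inverse exp}) I would observe that $F(A+tB)=\<\Lambda,\exp\chi_{A+tB}\>$, where $\chi_{A+tB}$ is the $(1,1)$-vector built from the inverse of $A+tB$ as in Definition \ref{def:We-define-a}. Lemma \ref{lem:moore-penrose  inverse} gives $(A+tB)^{-1}\to (A|_{\mathcal{H}})^{-1}$ in the Moore--Penrose sense as $t\to\infty$, hence $\chi_{A+tB}\to\chi_{\mathcal{H}}$; continuity of the pairing against the closed form $\Lambda$ then yields (1). For (2), by definition $\mathcal{P}_{\Lambda}(A)$ is the maximum of $F_{\Lambda}(A:B)$ over non-zero $B\geq 0$, and every such $B$ has its non-zero eigenvectors spanning some $\mathcal{H}^{\perp}$ with $\dim\mathcal{H}\leq n-1$. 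By (1) the limit depends only on $\mathcal{H}$; maximizing over the compact Grassmannian of such subspaces gives (2), with the supremum attained.

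For (3), the crux is a monotonicity claim: if $\mathcal{H}_1\subset\mathcal{H}_2$ with both of dimension at most $n-1$, then $\<\Lambda,\exp\chi_{\mathcal{H}_1}\>\leq\<\Lambda,\exp\chi_{\mathcal{H}_2}\>$. To prove it, I would write $\Pi_{\mathcal{H}_2}A\Pi_{\mathcal{H}_2}$ in block form with $\mathcal{H}_1$ as one block and apply the consequence (\ref{lem:Alg2}) of Lemma \ref{rem:elementary linear agb}; this yields $(A|_{\mathcal{H}_2})^{-1}\geq (A|_{\mathcal{H}_1})^{-1}$ as Hermitian matrices on $\mathcal{H}_2$ (extending the latter by zero), so $\chi_{\mathcal{H}_2}-\chi_{\mathcal{H}_1}\geq 0$ as a non-negative $(1,1)$-vector. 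Lemma \ref{lem:exp storng positive} then upgrades this to $\exp\chi_{\mathcal{H}_2}\geq_{s}\exp\chi_{\mathcal{H}_1}$. Under hypothesis \textbf{H2}, each $\mathring{\Lambda}^{[k]}$ with $1\leq k\leq n-1$ is a positive $(k,k)$-form, and since $\dim\mathcal{H}_i\leq n-1$ we have $\chi_{\mathcal{H}_i}^{n}=0$, so the potentially non-positive $\Lambda^{[n]}$ contributes nothing to the pairing. Thus the pairing is monotone in $\mathcal{H}$, and extending any lower-dimensional $\mathcal{H}$ to a hyperplane shows that the supremum in (2) is already attained on the hyperplane stratum, proving (3).

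For (4), I combine Proposition \ref{prop:crite for Sub } with (2) and (3): $\omega\in\mathcal{C}_{\Lambda}^{\kappa}(p)$ is equivalent to $\mathcal{P}_{\Lambda}(\omega)<\kappa$, which by (2)--(3) means $\<\Lambda,\exp\chi_{\mathcal{H}}\>< \kappa$ for every $\mathcal{H}$ with $\dim\mathcal{H}\leq n-1$. For a fixed $d$-dimensional $\mathcal{H}$, expanding via the restriction identity
\[
\<\Lambda^{[k]},\chi_{\mathcal{H}}^{k}/k!\>=\frac{d!\,(\Lambda^{[k]}\wedge\omega^{d-k})|_{\mathcal{H}}}{(d-k)!\,\omega^{d}|_{\mathcal{H}}}\qquad(k\leq d)
\]
and multiplying through by the positive top form $\omega^{d}|_{\mathcal{H}}/d!$ converts the strict inequality $\kappa-\<\Lambda,\exp\chi_{\mathcal{H}}\>> 0$ into $((\kappa-\Lambda)\wedge\exp\omega)^{[d]}|_{\mathcal{H}}>0$ on $\mathcal{H}$, which is the claim.

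The main obstacle I anticipate is the monotonicity step in (3): one must carefully track how the block Moore--Penrose inverses on nested subspaces compare, and verify that the ambient-space extensions make sense so that the comparison is really a pointwise inequality of Hermitian matrices on the larger block. This is precisely the content of Lemma \ref{rem:elementary linear agb}, but one must apply it to the correct partition with $\mathcal{H}_1$ as the bottom-right block. Once this monotonicity is secured, parts (1), (2) and (4) are essentially formal manipulations of the pairing $\<\Lambda,\exp\chi\>$ together with the identification of limits via Lemma \ref{lem:moore-penrose  inverse}.
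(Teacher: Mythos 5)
Your proposal is correct, and parts (\ref{enu:if-,-then}), (\ref{enu:}) and (\ref{enu:for-any--dimensional}) follow the paper's route exactly (the paper simply says these follow from Lemma \ref{lem:moore-penrose  inverse} and the definitions). Where you genuinely diverge is part (\ref{enu:-1}): the paper deduces it in one line from the monotonicity of $F$ \emph{before} passing to the limit --- since $B\geq\lambda b^{\dagger}b$ for a positive eigenpair $(\lambda,b)$, Lemma \ref{lem:F_k monot} gives $F(A+tB)\leq F(A+t\lambda b^{\dagger}b)$ for each $t$, so the maximum in (\ref{eq:P(A)}) is attained on rank-one matrices, i.e.\ on hyperplanes. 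You instead prove monotonicity of $\mathcal{H}\mapsto\<\Lambda,\exp\chi_{\mathcal{H}}\>$ directly on the limiting objects, via the Schur-complement inequality (\ref{lem:Alg2}) applied with $\mathcal{H}_{1}$ as the bottom-right block, upgraded by Lemma \ref{lem:exp storng positive} to $\exp\chi_{\mathcal{H}_{2}}\geq_{s}\exp\chi_{\mathcal{H}_{1}}$. The two arguments rest on the same linear algebra (the paper itself uses exactly this block-inverse comparison in Lemma \ref{lem:mononton with f}), but yours has the small advantage of making explicit why the possibly negative $\Lambda^{[n]}$ is harmless here ($\chi_{\mathcal{H}}^{n}=0$ for $\dim\mathcal{H}\leq n-1$), a point the paper's appeal to "monotonicity of $F$" glosses over since full monotonicity of $F$ requires $\Lambda^{[n]}\geq0$. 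Your more detailed derivation of (\ref{enu:for-any--dimensional}) via the restriction identity is also a correct expansion of what the paper leaves implicit.
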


\begin{proof}
(\ref{enu:if-,-then}) follows from the definition of $F(A:B)$ and
Lemma \ref{lem:moore-penrose  inverse}.

(\ref{enu:}) follows immediately from (\ref{enu:if-,-then}).

(\ref{enu:-1}) follows from the monotonicity of $F$. In fact, $F(A)$
is monotonic in $A$, which implies $\lim_{t\to+\infty}F(A+tB,\Lambda)\leq\lim_{t\to\infty}F(A+tb^{\dagger}b)$
where $b$ is a unit eigenvector of $B$ with positive eigenvalue.
Thus, we may restrict to rank one Hermitian matrix when computing
$\mathcal{P}_{\Lambda}(A)$ in (\ref{eq:P(A)}).

(\ref{enu:for-any--dimensional}) follows from (\ref{enu:}) immediately.
\end{proof}
The following corollary gives the easy part of Theorem \ref{thm:numerical criterion}.
\begin{cor}
\label{cor:converse}Notations as above. If $\omega\in\mathcal{C}_{\Lambda}^{\kappa}$,
for any $d$-dimensional subvariety $Z\subset M$ and $d\leq n-1$,
we have 
\begin{equation}
\int_{Z}\left(\kappa-\Lambda\right)\wedge\exp\omega>0.\label{eq:converse to numeric criterion}
\end{equation}
Furthermore, if $\omega\in[\omega_{0}]$ solves (\ref{eq:equation with f}),
then $[\omega_{0}]$ is $([\Lambda],\kappa)$-positive. 
\end{cor}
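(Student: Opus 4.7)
My plan is to reduce both assertions to the pointwise inequality in Lemma \ref{lem: submatrix and restricting to subspac}(\ref{enu:for-any--dimensional}) and then integrate. For the first assertion, fix an irreducible analytic subvariety $Z \subset M$ of dimension $d \leq n-1$ and decompose $Z = Z_{\mathrm{reg}} \sqcup Z_{\mathrm{sing}}$. Since $Z_{\mathrm{sing}}$ is a proper analytic subset, its real $2d$-dimensional Hausdorff measure vanishes, so $\int_Z \alpha = \int_{Z_{\mathrm{reg}}} \iota^{*} \alpha$ for any smooth form $\alpha$ on $M$, where $\iota\colon Z_{\mathrm{reg}} \hookrightarrow M$ is the inclusion. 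At every $p \in Z_{\mathrm{reg}}$, the holomorphic tangent space $\mathcal{T}_p Z_{\mathrm{reg}}$ is a $d$-dimensional complex subspace of $\mathcal{T}_p M$, and part (\ref{enu:for-any--dimensional}) of the lemma, applied with $\mathcal{H} = \mathcal{T}_p Z_{\mathrm{reg}}$, shows that $\iota^{*}\bigl((\kappa - \Lambda)\wedge\exp\omega\bigr)^{[d]}$ is a pointwise strictly positive $(d,d)$-form on $Z_{\mathrm{reg}}$. Integrating over the non-empty open set $Z_{\mathrm{reg}}$ then yields the claimed strict inequality.

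For the ``furthermore'' part, let $\omega = \omega_0 + i\partial\bar\partial u$ be a smooth solution and let $Y$ be any analytic subvariety. Since $[\omega] = [\omega_0]$ and $\Lambda$ is closed, cohomological invariance gives $[\exp\omega_0]\cdot[\kappa - \Lambda]\cdot[Y] = \int_Y (\kappa - \Lambda)\wedge\exp\omega$. If $\dim Y = n$, the equation forces the pointwise identity $\bigl((\kappa - \Lambda)\wedge\exp\omega\bigr)^{[n]} \equiv 0$, so the intersection number is zero, which satisfies (\ref{eq:positive class n}). If $\dim Y = d < n$, the first assertion supplies the strict inequality, provided we know that $\omega$ itself lies in $\mathcal{C}_\Lambda^\kappa$.

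Hence the crux is to show that a smooth solution $\omega$ is automatically a subsolution. The argument I have in mind uses the strict monotonicity of $F$: by Corollary \ref{cor: convexity for f bigger than 0}, for any nonzero $B \in \overline{\Gamma_{n\times n}^{+}}$ we have $F(A + B) < F(A) = \kappa$; iterating the monotonicity in $t$ yields $\lim_{t\to\infty} F(A + tB) \leq F(A + B) < \kappa$, whereupon Proposition \ref{prop:crite for Sub }(\ref{enu:TFAE 3}) delivers $A \in \mathcal{C}_\Lambda^\kappa(p)$ at every $p$. The technical point, and the main obstacle, is that Corollary \ref{cor: convexity for f bigger than 0} is stated under the assumption $\Lambda^{[n]} \geq 0$, while condition \textbf{H2} only guarantees that $\Lambda^{[n]}$ is almost positive. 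I therefore expect the real work to be the extension of the strict monotonicity of $F$ to this almost positive setting, which is precisely the task undertaken in Section \ref{sec:Equations-with-negative}; once that extension is available, the argument sketched above completes the proof.
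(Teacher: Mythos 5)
Your proposal follows essentially the same route as the paper: the first assertion is proved exactly as in the text (restrict to $Z_{\mathrm{reg}}$, apply Lemma \ref{lem: submatrix and restricting to subspac}(\ref{enu:for-any--dimensional}) pointwise, integrate), and the second assertion reduces, as in the paper, to the fact that a solution is automatically a subsolution, which is Lemma \ref{lem:Datar-Pingali}. One small caveat: the mechanism you sketch for that lemma---extending \emph{unconditional} strict monotonicity of $F$ to the almost-positive setting---is not what the paper does and is not actually available there (Lemma \ref{lem:mononton with f} gives monotonicity only at points where the cone condition already holds); the paper instead runs a connectedness argument along a path from a point with $f\geq 0$ and rules out a first degeneration of the cone condition via Lemma \ref{lem:H1''' inequality}. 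Since you explicitly defer that step to Section \ref{sec:Equations-with-negative}, this does not affect the correctness of your proof of the corollary itself.
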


\begin{proof}
For any $d$-dimensional subvariety $Z\subset M$ with its regular
component as $Z_{reg},$ $Z\backslash Z_{\text{reg}}$ is of zero
measure. Then by Lemma \ref{lem: submatrix and restricting to subspac}
(\ref{enu:for-any--dimensional}) $\left(\left(\kappa-\Lambda|_{Z}\right)\wedge\exp\left(\omega|_{Z}\right)\right)^{[d]}>0$
for any point $p\in Z_{\text{reg}}$. Hence (\ref{eq:converse to numeric criterion})
is established.

We will prove that any solution $\omega$ to (\ref{eq:equation with f})
is a subsolution in Lemma \ref{lem:Datar-Pingali}. Thus, by (\ref{eq:converse to numeric criterion}),
we conclude that $[\omega]=[\omega_{0}]$ is $([\Lambda],\kappa)$-positive. 
\end{proof}
For future use, we collect some properties of \emph{$\mathcal{P}_{\Lambda}(A)$
in the following lemma.} 
\begin{lem}
Notations as above. \label{lem:P_La property}The following properties
of $\mathcal{P}_{\Lambda}(A)$ hold: 
\begin{enumerate}
\item \label{enu:-is-a}$\mathcal{P}_{\Lambda}(A)$ is a continuous convex
function in $\Gamma_{n\times n}^{+}$. 
\item \label{enu:-is-decreasing}$\mathcal{P}_{\Lambda}(A)$ is decreasing
in $A$, i.e. $\mathcal{P}_{\Lambda}(A')\leq\mathcal{P}_{\Lambda}(A)$
if $A'-A\geq0$. 
\item \label{enu:-is-continuous}$\mathcal{P}_{\Lambda}(A)$ is continuous
in $\La$. 
\item \label{enu:-is-increasing}$\mathcal{P}_{\Lambda}(A)$ is increasing
in $\Lambda$, i.e. $\mathcal{P}_{\Lambda}(A)\leq\mathcal{P}_{\Lambda'}(A)$
if $\Lambda'-\Lambda\geq0.$ 
\item \emph{\label{enu:-as-a}$\mathcal{P}_{\Lambda}(\omega)$ as a function
of $p$ on $M$ is continuous.} 
\item \label{enu:-is-sublinear}$\mathcal{P}_{\Lambda}(A)$ is sub-linear
in $\Lambda$, i.e. $\mathcal{P}_{\Lambda+\Lambda'}(A)\leq\mathcal{P}_{\Lambda}(A)+\mathcal{P}_{\Lambda'}(A)$. 
\item \label{enu:-if} $\mathcal{P}_{\Lambda}(A)=\mathcal{P}_{\mathring{\Lambda}}(A)$,
for $\mathring{\Lambda}=\Lambda-\Lambda^{[n]}$, 
\item \emph{\label{enu:If--and}Suppose that $\Lambda$ satisfies }\textbf{H2}\emph{.
If $\{A_{l}\}\subset\Gamma_{n\times n}^{+}$ and $A_{l}\to A_{\infty}\in\pdv\Gamma_{n\times n}^{+}$
as $l\to\infty$, then $\lim_{l\to\infty}\mathcal{P}_{\Lambda}(A_{l})=\infty.$} 
\end{enumerate}
\end{lem}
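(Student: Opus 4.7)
The plan is to use the intrinsic formula
\[
\mathcal{P}_{\La}(A)\;=\;\max_{\mathcal{H}\subset\mathcal{T}_pM,\,\dim\mathcal{H}=n-1}\ \<\La,\exp\chi_{\mathcal{H}}\>
\]
from Lemma \ref{lem: submatrix and restricting to subspac}(\ref{enu:-1}), together with the ellipticity and convexity results for $F_k$. Parts (\ref{enu:-is-a})--(\ref{enu:-if}) are routine once this representation is in hand. Continuity and convexity of each $A\mapsto\<\La,\exp\chi_{\mathcal{H}}\>$ (the latter by applying Proposition \ref{prop: F_k convexity} degree-by-degree on the subspace $\mathcal{H}$, using that only components $\La^{[k]}$ with $k\leq n-1$ contribute) yield (\ref{enu:-is-a}) upon taking the maximum over the compact Grassmannian. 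Monotonicity of $F_k$ (Lemma \ref{lem:F_k monot}) applied subspace-by-subspace gives (\ref{enu:-is-decreasing}); continuity and monotonicity in $\La$ (using $\exp\chi_{\mathcal{H}}\geq_s 0$ from Lemma \ref{lem:exp storng positive}) together with sub-additivity of $\max$ give (\ref{enu:-is-continuous}), (\ref{enu:-is-increasing}), (\ref{enu:-is-sublinear}); combining with continuity of $A$ and $\La$ in $p$ yields (\ref{enu:-as-a}); and (\ref{enu:-if}) follows because $\exp\chi_{\mathcal{H}}$ has bidegrees bounded by $(n-1,n-1)$, so $\La^{[n]}$ cannot pair with it.

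The substance lies in (\ref{enu:If--and}). Let $w_l$ be a unit eigenvector of $A_l$ with smallest eigenvalue $\la_l^{\min}\to 0$. Extract a subsequence with $w_l\to w$, so $A_\infty w=0$ and $\|w\|=1$, and choose $i^*$ with $\pi_{i^*}(w)\neq 0$; then $\alpha_l:=\|\pi_{i^*}(w_l)\|^2$ stays bounded below by a positive constant for large $l$. Condition \textbf{H2} gives $\mathring{\La}\geq m\sum_i\rho_i^{k_i}/k_i!$, and pairing with the strongly positive form $\exp\chi_{\mathcal{H}_l}$ reduces the problem — via the subspace analogue of (\ref{eq:basic formula for rho_i}) supplied by Lemma \ref{lem:moore-penrose  inverse} — to proving
\[
\sigma_{k_{i^*}}\!\bigl(B_l|_{\mathcal{V}_{i^*}}\bigr)\;\to\;+\infty,
\]
where $B_l=(A_l|_{\mathcal{H}_l})^{-1}$ is the Moore--Penrose inverse. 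The subspace $\mathcal{H}_l$ is dictated by the labeled splitting: if $n_p=1$, so $\mathcal{V}_{i^*}=\mathcal{T}_pM$, I take any hyperplane $\mathcal{H}_l\ni w_l$; if $n_p\geq 2$, the constraint $d_i\geq k_i+1\geq 2$ in Definition \ref{def:A-labeled-orthogonal} forces $d_{i^*}\leq n-2$, so one can take $\mathcal{H}_l$ of dimension $n-1$ containing both $\mathcal{V}_{i^*}$ and $w_l$.

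Since $w_l\in\mathcal{H}_l$ one has $B_l w_l = (\la_l^{\min})^{-1}w_l$. Writing $\pi_{i^*}(w_l)=\alpha_l w_l+\beta_l u_l$ with $u_l\perp w_l$ and $u_l\in\mathcal{H}_l$, a Rayleigh estimate — exploiting the vanishing $\<B_l u_l, w_l\> =(\la_l^{\min})^{-1}\<u_l, w_l\> = 0$ — yields
\[
\lambda_{\max}\!\bigl(B_l|_{\mathcal{V}_{i^*}}\bigr)\;\geq\;\alpha_l\,(\la_l^{\min})^{-1}\;\to\;+\infty.
\]
The remaining eigenvalues of $B_l|_{\mathcal{V}_{i^*}}$ are bounded below by $1/\lambda_{\max}(A_l)\geq 1/C$: when $n_p\geq 2$ because $\mathcal{V}_{i^*}\subset\mathcal{H}_l$ avoids $\ker B_l = \mathcal{H}_l^{\perp}$; when $n_p=1$ because the unique zero eigenvalue of $B_l$ lies on $\mathcal{H}_l^{\perp}$ while the $n-1$ positive eigenvalues are bounded below, and $k_{i^*}\leq n-1$ leaves room for a non-vanishing term in $\sigma_{k_{i^*}}(B_l)$ pairing the large eigenvalue with $k_{i^*}-1$ bounded-below ones. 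The main obstacle is arranging $\mathcal{H}_l$ so that both $w_l\in\mathcal{H}_l$ (for the blow-up direction) and $\mathcal{V}_{i^*}\cap\mathcal{H}_l^{\perp}=\{0\}$ (so that $B_l|_{\mathcal{V}_{i^*}}$ is non-degenerate apart from the inserted large eigenvalue) hold simultaneously; the structural constraint $d_i\geq k_i+1$ in Definition \ref{def:A-labeled-orthogonal} is precisely what makes this feasible and explains why $\mathcal{O}$-uniform positivity is adapted to this boundary limit.
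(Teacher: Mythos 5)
Your proposal is correct. Items (\ref{enu:-is-a})--(\ref{enu:-if}) are handled essentially as in the paper (the paper derives convexity/monotonicity directly from Corollary \ref{cor: convexity for f bigger than 0} and proves continuity in $\La$ by dominating $\La-\La'$ with $\exp(\epsilon\rho)-1$; your subspace-by-subspace version via Lemma \ref{lem: submatrix and restricting to subspac}(\ref{enu:-1}) is an equivalent routine route). The substantive divergence is in (\ref{enu:If--and}). The paper first proves $\<\La,\exp\chi_{A_l}\>\to\infty$ by combining $\mathcal{O}$-uniform positivity with Newton--Maclaurin and the weighted AM--GM bound $\sum_i\sigma_{k_i}(A_l^{-1}|_{\mathcal{V}_i})\gtrsim\det(A_l^{-1})^{1/\sum d_i/k_i}$, and then reduces $\mathcal{P}_{\La}$ to a single fixed hyperplane containing $\ker A_\infty$, asserting that the same estimate applies there. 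Your argument instead chooses an $l$-dependent hyperplane $\mathcal{H}_l$ containing the entire block $\mathcal{V}_{i^*}$ together with the collapsing eigenvector $w_l$, and extracts the blow-up from a single Rayleigh quotient $\lambda_{\max}(B_l|_{\mathcal{V}_{i^*}})\geq\alpha_l(\la_l^{\min})^{-1}$ combined with the uniform lower bound $B_l|_{\mathcal{H}_l}\geq\lambda_{\max}(A_l)^{-1}\mathrm{Id}$ for the remaining eigenvalues. This buys something real: the paper's determinant estimate does not transfer verbatim to the compressed Moore--Penrose inverse (whose determinant vanishes and whose blocks need not align with the $\mathcal{V}_i$ inside $\mathcal{H}$), whereas your choice of $\mathcal{H}_l\supset\mathcal{V}_{i^*}$, made possible precisely by the constraint $k_i\leq d_i-1$ (hence $d_{i^*}\leq n-2$ when $n_p\geq2$), keeps one whole block nondegenerate and makes the limit explicit; the case $n_p=1$ is covered by the observation that $k_{i^*}\leq n-1$ leaves $k_{i^*}-1\leq n-2$ bounded-below eigenvalues to pair with the blowing-up one. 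The only detail left implicit is the standard passage from the subsequence on which $w_l\to w$ back to the full sequence (argue by contradiction with a bounded subsequence), which is routine.
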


\begin{proof}
(\ref{enu:-is-a}) follows from the convexity of $F$ on $\Gamma_{n\times n}^{+}$.
(\ref{enu:-is-decreasing}) follows from the monotonicity of $F$.
See Corollary \ref{cor: convexity for f bigger than 0}.

For (\ref{enu:-is-continuous}), let $\Lambda$ and $\Lambda'$ be
two differential forms. We may choose $\epsilon^{n}=n!\|\Lambda-\Lambda'\|_{\rho}$
so that 
\[
-\exp\epsilon\rho+1\leq\Lambda-\Lambda'\leq\exp\epsilon\rho-1.
\]
By Lemma \ref{lem: submatrix and restricting to subspac} (\ref{enu:-1}),
we have 
\begin{align}
|\mathcal{P}_{\Lambda}(A)-\mathcal{P}_{\Lambda'}(A)| & \leq\max_{\mathcal{H}\subset T^{1,0}M:\dim\mathcal{H}=n-1}\left|\<\Lambda-\Lambda',\exp\chi_{\mathcal{H}}\>\right|\label{eq:Difference P continuity}\\
 & \leq\max_{\mathcal{H}\subset T^{1,0}M:\dim\mathcal{H}=n-1}\<\exp\left(\epsilon\rho\right)-1,\exp\left(\chi_{\mathcal{H}}\right)\>\nonumber \\
 & \leq\<\exp\left(\epsilon\rho\right)-1,\exp\chi\>.\nonumber 
\end{align}
Then, as $\|\Lambda-\Lambda'\|_{\rho}\to0$, $\epsilon\to0$, it holds
that $|\mathcal{P}_{\Lambda}(A)-\mathcal{P}_{\Lambda'}(A)|\to0$ as
well. Hence, $\mathcal{P}_{\Lambda}(A)$ is continuous in $\Lambda$.

(\ref{enu:-is-increasing}) follows from Lemma \ref{lem: submatrix and restricting to subspac}
(\ref{enu:if-,-then}). (\ref{enu:-as-a}) follows from (\ref{enu:-is-a})
and (\ref{enu:-is-continuous}).

For (\ref{enu:-is-sublinear}), we have the following 
\begin{align*}
\mathcal{P}_{\Lambda+\Lambda'}(A) & =\max_{\mathcal{H}\subset T^{1,0}M:\dim\mathcal{H}=n-1}\<\Lambda+\Lambda',\exp\chi_{\mathcal{H}}\>\\
 & \leq\mathcal{P}_{\Lambda}(A)+\mathcal{P}_{\Lambda'}(A).
\end{align*}

For (\ref{enu:-if}), we assume that $\Lambda^{[n]}(p)=f(p)P^{[n]}$.
Then (\ref{enu:-if}) follows from the fact that $\lim_{t\to\infty}\frac{f(p)}{\det(A+tB)}=0$
for any $B\geq0$ and $B\not=0$.

For (\ref{enu:If--and}), note we may assume that $\Lambda\geq0$
since $\mathcal{P}_{\mathring{\Lambda}}(A)=\mathcal{P}_{\Lambda}$
if $\mathring{\Lambda}=\Lambda-\Lambda^{[n]}$ by (\ref{enu:-if}).
Let $\chi_{A_{l}}=(A_{l})^{\bar{ji}}2\sqrt{-1}\frac{\pdv}{\pdv\bar{z}^{j}}\wedge\frac{\pdv}{\pdv z^{i}}$.
We first prove that 
\begin{equation}
\lim_{l\to\infty}\<\Lambda,\exp\chi_{A_{l}}\>=\infty\label{eq:enu if and}
\end{equation}
as $A_{l}\to A_{\infty}\in\pdv\Gamma_{n\times n}^{+}$. By \textbf{H2},
\begin{align}
\<\Lambda,\exp\chi_{A_{i}}\> & \geq m\<\sum_{i=1}^{n_{p}}\frac{\rho_{i}^{k_{i}}}{k_{i}!},\exp\chi_{A_{l}}\>\label{eq:to boundary}\\
 & \geq m\sum_{i=1}^{n_{p}}\sigma_{k_{i}}(A_{l}^{-1}|_{\mathcal{V}_{i}})\nonumber \\
 & \geq mC(n,\mathbf{d}_{p},\mathbf{k}_{p})\det(A_{l}^{-1})^{\frac{1}{\sum_{i=1}^{n_{p}}\frac{d_{i}}{k_{i}}}}.\nonumber 
\end{align}
We have used Newton-Maclaurin inequality and the mean value inequality
in the last inequality. (\ref{eq:enu if and}) follows from the fact
that $\lim_{l\to\infty}\det(A_{l}^{-1})=\infty.$ To prove (\ref{enu:If--and}),
we choose a hyperplane $\mathcal{H}$ which contains the kernel of
$A_{\infty}$. Then, Lemma \ref{lem: submatrix and restricting to subspac}
(\ref{enu:-1}) and (\ref{eq:enu if and}) implies (\ref{enu:If--and}). 
\end{proof}

\section{Equations with almost positive volume forms\label{sec:Equations-with-negative}}

With the cone condition in place, we extend results of Section \ref{sec:Ellipticity-and-convexity}
to include cases where $\Lambda$ contains an almost positive volume
form. Many ideas are from works of Chen \cite{chen2021j} and Datar-Pingali
\cite{datar2021numerical}. For simplicity, we write 
\begin{equation}
\Lambda^{[n]}=fP^{[n]}.\label{eq:fP}
\end{equation}
In this section, we allow $f$ to be almost positive.

The following lemma implies that a solution to (\ref{eq:equation with f})
is also a subsolution if \textbf{H2 }is satisfied. 
\begin{lem}
\label{lem:Datar-Pingali}Assume that $\mathring{\Lambda}$ is $\mathcal{O}$-uniformly
positive with constant $m$.\textbf{ }Suppose that for some $p_{0}\in M$,
$f(p_{0})\geq0$. For any $p\in M$, let 
\begin{equation}
\gamma_{\min}(\frac{\kappa}{m},n_{p},\mathbf{d}_{p},\mathbf{k}_{p}):=\frac{\min_{i}\left\{ \binom{d_{i}}{k_{i}-1}\binom{d_{i}}{k_{i}}^{\frac{1}{k_{i}}-1}\right\} \prod_{i=1}^{n_{p}}\binom{d_{i}}{k_{i}}^{\frac{d_{i}}{k_{i}}}}{\max_{i}\{d_{i}\binom{d_{i}}{k_{i}}^{\frac{1}{k_{i}}}\}(\frac{\kappa}{m})^{\sum_{i=1}^{n_{p}}\frac{d_{i}}{k_{i}}}\cdot\left(\sum_{i=1}^{n_{p}}\left(\frac{\kappa}{m}\right)^{\frac{1}{k_{i}}}\right)}.\label{eq:gamm_min}
\end{equation}
If $\omega$ is a solution to (\ref{eq:equation with f}), and for
all $p\in M$, $f(p)>-m\gamma_{\min}(\frac{\kappa}{m},n_{p},\mathbf{d}_{p},\mathbf{k}_{p})$,
then $\omega\in\mathcal{C}_{\Lambda}^{\kappa}$. 
\end{lem}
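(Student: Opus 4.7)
The proof proceeds by contradiction. Suppose $\omega$ solves (\ref{eq:equation with f}) but fails the cone condition at some point $p\in M$; write $A$ for the Hermitian matrix representing $\omega$ at $p$ in $\rho$-normal coordinates, so that $F(A)=\kappa$. By Proposition \ref{prop:crite for Sub } combined with Lemma \ref{lem: submatrix and restricting to subspac}\eqref{enu:-1}, there exists a hyperplane $\mathcal{H}\subset\mathcal{T}_{p}M$ of dimension $n-1$ with $\langle\Lambda,\exp\chi_{\mathcal{H}}\rangle\geq\kappa$. Since $\exp\chi_{\mathcal{H}}$ has no $(n,n)$-piece, $\Lambda^{[n]}$ drops out, and combining with the equation in the form $\sum_{k=1}^{n-1}F_k(A)=\kappa-f(p)/\det A$ yields
\begin{equation}\label{eq:planbound}
\sum_{k=1}^{n-1}\bigl[F_k(A)-F_k(A|_{\mathcal{H}})\bigr]\;\leq\;-\frac{f(p)}{\det A}.
\end{equation}
In particular $f(p)<0$, and the desired contradiction will follow once I establish the matching strict lower bound
\begin{equation}\label{eq:plangoal}
\sum_{k=1}^{n-1}\bigl[F_k(A)-F_k(A|_{\mathcal{H}})\bigr]\;>\;\frac{m\,\gamma_{\min}}{\det A}.
\end{equation}

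To prove \eqref{eq:plangoal} I proceed in two sub-steps. First, the $\mathcal{O}$-uniform positivity $\mathring{\Lambda}\geq m\sum_i\rho_i^{k_i}/k_i!$ and Lemma \ref{lem:rewrite} give $\sum_{k=1}^{n-1}F_k(A)\geq m\sum_i\sigma_{k_i}(A^{-1}|_{\mathcal{V}_i})$, while the left side equals $\kappa-f(p)/\det A\leq\kappa+m\gamma_{\min}/\det A$. Applying Newton-Maclaurin and AM-GM blockwise (in the spirit of the proof of Lemma \ref{lem:P_La property}\eqref{enu:If--and}) yields quantitative upper bounds on each $\sigma_{k_i}(A^{-1}|_{\mathcal{V}_i})$ in terms of $\kappa/m$, and hence a lower bound on $\det A$. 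Second, Lemma \ref{lem:exp storng positive} applied to the inequality $\chi_A\geq\chi_{\mathcal{H}}$ (a direct consequence of (\ref{lem:Alg2})) shows $\exp\chi_A\geq_{s}\exp\chi_{\mathcal{H}}$, so pairing against the uniform positivity bound produces
\[
\sum_{k=1}^{n-1}\bigl[F_k(A)-F_k(A|_{\mathcal{H}})\bigr]\;\geq\; m\sum_i\Bigl[\sigma_{k_i}(A^{-1}|_{\mathcal{V}_i})-\sigma_{k_i}\bigl((A|_{\mathcal{H}})^{-1}|_{\mathcal{V}_i}\bigr)\Bigr].
\]
Since $\mathcal{H}$ has codimension one, at least one block $\mathcal{V}_{i_{0}}$ is transversal to $\mathcal{H}$, and the Schur-complement identity (\ref{eq:alg fact}) from Lemma \ref{rem:elementary linear agb} expresses the corresponding block difference as a strictly positive quantity involving the ``missing'' transversal eigenvalue. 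Using the blockwise $\det$-controls from the first sub-step to bound this quantity below precisely reproduces the constant $\gamma_{\min}$ from (\ref{eq:gamm_min}), giving \eqref{eq:plangoal} and contradicting \eqref{eq:planbound}.

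The main obstacle is this last quantitative gap estimate: every factor in the explicit formula (\ref{eq:gamm_min})---the binomial coefficients $\binom{d_i}{k_i-1}$, $\binom{d_i}{k_i}^{1/k_i-1}$ and $\prod_i\binom{d_i}{k_i}^{d_i/k_i}$, the factor $\max_i\{d_i\binom{d_i}{k_i}^{1/k_i}\}$ in the denominator, and the $(\kappa/m)$-powers $(\kappa/m)^{\sum_i d_i/k_i}$ and $\sum_i(\kappa/m)^{1/k_i}$---has to emerge naturally from the simultaneous application of blockwise Newton-Maclaurin, AM-GM between blocks, Schur-complement expansion of $(A|_{\mathcal{H}})^{-1}|_{\mathcal{V}_i}$, and extremization over hyperplanes $\mathcal{H}$ and transversal blocks $\mathcal{V}_{i_{0}}$. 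Verifying that this multilinear bookkeeping assembles into precisely $\gamma_{\min}$ is the technical heart of the argument; the hypothesis $f(p_0)\geq 0$ enters only as a compatibility/normalization check ensuring that the global integral identity for $\kappa$ is consistent with the local lower bound so constructed.
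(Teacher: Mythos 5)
There is a genuine gap, and it sits exactly where you locate the ``technical heart.'' The quantitative lower bound you need in \eqref{eq:plangoal} is the content of the paper's Lemma \ref{lem:H1''' inequality}, whose proof is only valid under the hypothesis $\mathcal{P}_{\Lambda}(A)\leq\kappa$ \emph{at the point in question}: the constant $\gamma_{\min}$ emerges from the bounds $s_{i}=\sigma_{k_{i}}(\tilde{A}_{i}^{-1})^{1/k_{i}}\leq(\kappa/m)^{1/k_{i}}$ on the restricted blocks, and those bounds come from Lemma \ref{lem: induced cone}, i.e.\ from the cone condition, not from the equation. You instead work at an arbitrary point $p$ where the cone condition fails, so $\mathcal{P}_{\Lambda}(A)$ may be strictly larger than $\kappa$ there and no such control on the $s_{i}$ is available. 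Your proposed substitute --- bounding $\sum_{i}\sigma_{k_{i}}(A^{-1}|_{\mathcal{V}_{i}})$ from the equation via $\sum_{k}F_{k}(A)=\kappa-f(p)/\det A$ --- controls the \emph{unrestricted} block quantities, with an extra $\gamma_{\min}/\det A$ term, and there is no argument that the resulting multilinear bookkeeping assembles into the specific constant $\gamma_{\min}$ of (\ref{eq:gamm_min}); you explicitly defer this verification, but it is precisely the step that fails without the cone condition in hand.

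The missing idea is the connectedness/continuity argument that the paper uses, and it explains the hypothesis $f(p_{0})\geq0$ that you dismiss as a ``normalization check.'' Since $f(p_{0})\geq0$ and $F(A(p_{0}))=\kappa$, strict monotonicity of $\sum_{k}F_{k}$ forces $\mathcal{P}_{\Lambda}(A(p_{0}))<\kappa$, so the cone condition holds at $p_{0}$. One then joins $p_{0}$ to any other point by a path and looks at the \emph{first} parameter $s_{0}$ where the cone condition degenerates; at $p=\gamma(s_{0})$ one has $\mathcal{P}_{\Lambda}(A)=\kappa$ exactly (not $>\kappa$), which is what legitimizes applying Lemma \ref{lem: induced cone} and Lemma \ref{lem:H1''' inequality} there. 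Combining the identity $\exp\chi-\exp\chi_{\infty}=2\sqrt{-1}\,\xi\wedge\bar{\xi}\wedge\exp\chi_{\infty}$ with (\ref{eq:lemma prod sp}) then yields $-f(p)/\det A\geq m\gamma_{\min}/\det A$, contradicting $|f(p)|<m\gamma_{\min}$. Your overall skeleton (solution identity minus degenerate-hyperplane identity, then a $\gamma_{\min}$ lower bound on the gap) matches the paper's, but without the first-degeneracy-point device the key estimate is applied outside its domain of validity, so the argument as written does not close.
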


We first state some lemmas before proving Lemma \ref{lem:Datar-Pingali}. 
\begin{lem}
\label{lem: quotient k-hessian-1}Notations as above. Let $A$ be
a positive Hermitian matrix. Then the following statements hold. 
\begin{enumerate}
\item If $k\geq l\geq0$, $r\geq s\geq0$, the function 
\begin{equation}
A\mapsto\frac{\sigma_{l}(A)}{\sigma_{k}(A)}\label{eq:decreasing sigma k}
\end{equation}
is decreasing in $A$. 
\item Let $\mathcal{H}$ be a linear subspace with codimension $r$. Then
\begin{equation}
\frac{\sigma_{l-r}(A|_{\mathcal{H}})}{\sigma_{k-r}(A|_{\mathcal{H}})}\leq\frac{\sigma_{l}(A)}{\sigma_{k}(A)}.\label{eq:decreasing}
\end{equation}
\item Let $T_{k-1}(A)$ be the linearized operator of $\sigma_{k}(A)$,
i.e. $\<T_{k-1}(A),B\>=\frac{d}{dt}\sigma_{k}(A+tB)|_{t=0}$. Then
\begin{equation}
\sigma_{r-1}(A)T_{k-1}(A)\geq\sigma_{k-1}(A)T_{r-1}(A),\label{eq:lem T_k}
\end{equation}
if $r\geq k$. 
\end{enumerate}
\end{lem}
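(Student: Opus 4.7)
The plan is to derive all three parts from Newton's inequality for the elementary symmetric functions of positive real numbers. Parts (1) and (3) will be treated in parallel by diagonalization, and part (2) will follow from (1) via a perturbation-and-limit argument using the orthogonal projection onto $\mathcal{H}^\perp$.

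First I would fix a unitary frame diagonalizing $A$ at the point of interest, with eigenvalues $\lambda = (\lambda_1, \ldots, \lambda_n) > 0$, and write $\sigma_j^{(i)}$ for the $j$-th elementary symmetric function in the $n-1$ variables obtained from $\lambda$ by deleting $\lambda_i$. In this frame each $T_{k-1}(A)$ is diagonal with entries $\sigma_{k-1}^{(i)}$, so both matrix inequalities in (1) and (3) decouple into scalar inequalities indexed by $i$. Using the elementary recursion $\sigma_j(\lambda) = \sigma_j^{(i)} + \lambda_i \sigma_{j-1}^{(i)}$, the pointwise content of (3) collapses, after the $\lambda_i^0$ terms cancel, to $\lambda_i\bigl(\sigma_{k-1}^{(i)}\sigma_{r-2}^{(i)} - \sigma_{k-2}^{(i)}\sigma_{r-1}^{(i)}\bigr) \geq 0$, while the matrix inequality $\sigma_l T_{k-1} \geq \sigma_k T_{l-1}$ underlying (1) collapses to $\sigma_l^{(i)}\sigma_{k-1}^{(i)} \geq \sigma_k^{(i)}\sigma_{l-1}^{(i)}$; each is an instance of Newton's inequality on the $n-1$ positive variables $\lambda|i$, in the form that $\tau_{j-1}/\tau_j$ is monotone increasing in $j$ iterated between the relevant indices ($l-1 \leq k-1$ for (1) and $k-1 \leq r-1$ for (3)). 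To upgrade (1) to the asserted monotonicity of $\sigma_l/\sigma_k$, I would observe that for $B \geq 0$, $\frac{d}{dt}\bigl[\sigma_l(A+tB)/\sigma_k(A+tB)\bigr]_{t=0} = \langle \sigma_k T_{l-1} - \sigma_l T_{k-1}, B\rangle/\sigma_k^2 \leq 0$.

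For part (2), I would apply (1) to the one-parameter family $A_t := A + t\Pi$ with $t \geq 0$, where $\Pi$ is the orthogonal projection onto $\mathcal{H}^\perp$. Since $A_t \geq A$, part (1) yields $\sigma_l(A_t)/\sigma_k(A_t) \leq \sigma_l(A)/\sigma_k(A)$ for every $t \geq 0$. In a unitary basis adapted to $\mathcal{T}_pM = \mathcal{H} \oplus \mathcal{H}^\perp$ the principal $k \times k$ minors of $A_t$ are indexed by $S = S_1 \sqcup S_2$ with $S_1 \subset \mathcal{H}$ and $S_2 \subset \mathcal{H}^\perp$, and the Schur-complement identity $\det(A_t|_S) = \det(M_{22}+tI)\det\bigl(M_{11} - M_{12}(M_{22}+tI)^{-1}M_{21}\bigr)$ exhibits each such minor as a polynomial in $t$ of degree exactly $|S_2|$ whose leading coefficient is the principal minor $\det(A|_\mathcal{H}|_{S_1})$. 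Summing over $S$ and retaining only the $|S_2|=r$ contributions gives $\sigma_k(A_t) = t^r \sigma_{k-r}(A|_\mathcal{H}) + O(t^{r-1})$ whenever $k \geq r$, and the analogous expansion for $\sigma_l$; passing $t \to \infty$ in the ratio inequality then yields part (2). The sub-cases with $l < r$ are handled by the same bookkeeping together with the convention $\sigma_{l-r}(A|_\mathcal{H}) = 0$.

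The delicate point I expect will be the asymptotic identification in part (2): one must rule out any contribution from the off-diagonal block $C$ coupling $\mathcal{H}$ and $\mathcal{H}^\perp$ to the leading $t^r$ behavior of $\sigma_k(A_t)$. The Schur-complement identity resolves this cleanly because $(M_{22}+tI)^{-1} \to 0$ uniformly in $C$ as $t \to \infty$, but the bookkeeping over all principal minors should be stated carefully, matching the computation already carried out for the Moore-Penrose inverse in Lemma \ref{lem:moore-penrose  inverse}. Once that is in hand the rest of the argument is elementary and reduces entirely to the classical Newton-Maclaurin inequality.
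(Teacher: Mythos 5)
Your proposal is correct and follows essentially the same route as the paper: part (2) via the perturbation $A_t = A + t\Pi$ and the limit of the ratio, and part (3) by diagonalizing and reducing to a scalar quotient inequality. The only difference is that you supply a proof of part (1) (via the derivative identity and Newton's inequality on the deleted variables) where the paper simply cites it as standard, and you justify the $t\to\infty$ asymptotics in (2) more explicitly via Schur complements of principal minors; both additions are sound.
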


\begin{proof}
(1) is well known in the literature of quotient Hessian equations.
See for instance, \cite{spruck2005geometric}.

To prove (2), we may assume that $\mathcal{H}=\text{span}\{\frac{\pdv}{\pdv z^{1}},\cdots,\frac{\pdv}{\pdv z^{n-r}}\}$.
We define 
\begin{equation}
A_{t}=A+t\left(\begin{array}{cc}
0 & 0\\
0 & I_{r}
\end{array}\right)=\left(\begin{array}{cc}
A|_{\mathcal{H}} & C\\
C^{\dagger} & A|_{\mathcal{H}^{\perp}}+tI_{r}
\end{array}\right).\label{eq:A_t}
\end{equation}
Then $A_{t}\geq A$ and 
\begin{equation}
\sigma_{l}(A_{t})=t^{r}\sigma_{l-r}(A|_{\mathcal{H}})+o(t^{r}),\ \sigma_{k}(A_{t})=t^{r}\sigma_{k-r}(A|_{\mathcal{H}})+o(t^{r}).\label{eq:sigma A_t}
\end{equation}
Hence, 
\begin{equation}
\frac{\sigma_{l}(A)}{\sigma_{k}(A)}\ge\lim_{t\to\infty}\frac{\sigma_{l}(A_{t})}{\sigma_{k}(A_{t})}=\frac{\sigma_{l-r}(A|_{\mathcal{H}})}{\sigma_{k-r}(A|_{\mathcal{H}})}.\label{eq:qut simga A}
\end{equation}

To prove (3), we may assume that $A$ is diagonal after a unitary
transform. Write $A=\text{diag}\{\lambda_{1},\cdots,\lambda_{n}\}.$Then
\begin{equation}
\left(T_{k-1}(A)\right)^{i\bar{j}}=\sigma_{k-1}(A|i)\delta^{i\bar{j}}.\label{eq:Tij}
\end{equation}
Here, $(A|i)$ denote a matrix obtained by deleting $i$-th row and
$i$-th column of $A$. Thus, (\ref{eq:lem T_k}) is equivalent to
\begin{equation}
\sigma_{r-1}(A)\sigma_{k-1}(A|i)\geq\sigma_{k-1}(A)\sigma_{r-1}(A|i),\label{eq:T_k 1}
\end{equation}
for each $i$. If $\sigma_{r-1}(A|i)=0$, then (\ref{eq:T_k 1}) holds
trivially. Otherwise (\ref{eq:T_k 1}) is equivalent to 
\begin{equation}
\frac{\sigma_{k-1}(A|i)}{\sigma_{r-1}(A|i)}\geq\frac{\sigma_{k-1}(A)}{\sigma_{r-1}(A)}.\label{eq:T_k 2}
\end{equation}
By 1), $\frac{\sigma_{k-1}(A)}{\sigma_{r-1}(A)}$ is decreasing in
$A$. Thus (\ref{eq:T_k 2}) holds since $(A|i)\leq A$. 
\end{proof}
With a labeled orthogonal splitting $\mathcal{O}_{p}=\{n_{p},\mathbf{d}_{p},\{\mathcal{V}_{i}\},\mathbf{k}_{p}\}$
at $p$, we fix a normal coordinate $\{z^{i}\}$ at $p$ of $\rho$
such that $\{\sqrt{2}\frac{\pdv}{\pdv z^{i}}\}$ restricts to a unitary
frame on each $\mathcal{V}_{i}$. For any non-negative Hermitian matrix
$A$, we may write under this frame 
\begin{equation}
A=\left(\begin{array}{cccc}
A_{1} & * & * & *\\*
* & A_{2} & * & *\\*
* & * & \ddots & *\\*
* & * & * & A_{n_{p}}
\end{array}\right),\label{eq:A form}
\end{equation}
where $A_{i}\in\Gamma_{d_{i}\times d_{i}}$ . 
\begin{lem}
\label{lem:spplict 2}Notations as above. For any non-negative $A$,
Let 
\begin{equation}
A'=\left(\begin{array}{cccc}
A_{1} & 0 & 0 & 0\\
0 & A_{2} & 0 & 0\\
0 & 0 & \ddots & 0\\
0 & 0 & 0 & A_{n_{p}}
\end{array}\right).\label{eq:A' form}
\end{equation}
Let $\omega'=(A')_{i\bar{j}}\frac{\sqrt{-1}}{2}dz^{i}\wedge d\bar{z}^{j}.$
Then 
\begin{enumerate}
\item 
\[
\det(A)\leq\det(A')=\prod_{i=1}^{n_{p}}\det A_{i};
\]
\item 
\begin{align}
\sigma_{k}(A) & \leq\sigma_{k}(A')=\sum_{l\in\boldsymbol{l}_{k}}\prod_{j=1}^{n_{p}}\sigma_{l_{j}}(A_{j}),\label{eq:spplitct 2 2}
\end{align}
where $\boldsymbol{l}_{k}=\{(l_{1},\cdots,l_{n_{p}})\in\N:\sum_{j=1}^{n_{p}}l_{j}=k\}$; 
\item 
\begin{align}
T_{k-1}(A) & \leq2^{(k-1)(n_{p}-1)}T_{k-1}(A')=2^{(k-1)(n_{p}-1)}\text{diag}(T_{1},\cdots,T_{n_{p}}),\label{eq:spllict 3}
\end{align}
where 
\begin{equation}
T_{i}=\sum_{l\in\boldsymbol{l}_{k,i}}\prod_{j\not=i}\sigma_{l_{j}}(A_{j})T_{l_{i}-1}(A_{i}),\label{eq:spllit 4}
\end{equation}
and $\boldsymbol{l}_{k,i}=\{(l_{1},\cdots,l_{n_{p}})\in\N^{n_{p}}:\sum_{j=1}^{n_{p}}l_{j}=k,\ l_{i}\geq1\}$. 
\end{enumerate}
\end{lem}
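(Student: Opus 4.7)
The plan is to prove the three parts in sequence; the main technical obstacle is the matrix inequality in part (3).

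Part (1) is Fischer's inequality, which I would obtain by induction on $n_p$ via repeated Schur complement. For the base step $n_p = 2$, write $A = \begin{pmatrix} A_1 & D \\ D^\dagger & A_2 \end{pmatrix}$ and use $\det A = \det A_2 \cdot \det(A_1 - DA_2^{-1}D^\dagger) \le \det A_1 \det A_2$, since $DA_2^{-1}D^\dagger \ge 0$ and $\det$ is monotone on PSD matrices; the general $n_p$ case follows by grouping the last $n_p-1$ blocks and iterating.

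For part (2), the equality $\sigma_k(A') = \sum_{l\in\boldsymbol{l}_k} \prod_j \sigma_{l_j}(A_j)$ comes from comparing coefficients of $t^{n-k}$ in the factorization $\det(tI+A') = \prod_j \det(tI+A_j)$. For the inequality, expand $\sigma_k(A) = \sum_{|I|=k}\det A_{II}$ as a sum over principal $k$-submatrices, split each index set $I = \bigsqcup_j I_j$ according to the block structure, and apply part (1) to each $A_{II}$ (which inherits the block structure of $A$) to get $\det A_{II} \le \prod_j \det(A_j)_{I_j I_j}$; summing and regrouping by $(|I_1|,\ldots,|I_{n_p}|) = (l_1,\ldots,l_{n_p})$ yields the result.

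For part (3), the equality and the block-diagonal structure of $T_{k-1}(A')$ follow by differentiating (2) in the entries of $A_i$; off-block-diagonal entries vanish because a principal minor of the block-diagonal matrix $A'$ extending across distinct blocks factors through a rectangular determinant. The matrix inequality I would handle by induction on $n_p$. The inductive step is based on a two-block estimate $T_{k-1}(A) \le 2^{k-1} T_{k-1}(A^{(1)})$, where $A^{(1)} := \mathrm{blockdiag}(A_1,\hat A)$ zeroes the off-diagonal blocks between block~$1$ and the rest. Granting this, $A^{(1)}$ is $2$-block-diagonal, so $T_{k-1}(A^{(1)})$ splits into two pieces; part (2) applied to $\hat A \le \hat A'$ bounds the block-$1$ piece directly by $T_1$, while the inductive hypothesis applied to $\hat A$ (which has $n_p-1$ blocks) bounds the ``rest'' piece by $2^{(k-1)(n_p-2)} T_{k-1}(A')|_{\mathrm{rest}}$; multiplying by the two-block factor $2^{k-1}$ then yields the claimed constant $2^{(k-1)(n_p-1)}$.

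The hard step is the two-block estimate $T_{k-1}(A) \le 2^{k-1} T_{k-1}(A')$ for $n_p = 2$. Here I would observe that $2A'-A = JAJ$ with $J = \mathrm{diag}(I_{d_1},-I_{d_2})$; since $JAJ$ is a unitary conjugate of $A$, it is PSD, so $A \le 2A'$. Combined with the homogeneity $T_{k-1}(cA) = c^{k-1}T_{k-1}(A)$ and the matrix-monotonicity of the Newton transformation $T_{k-1}$ under PSD increments---which follows from the non-negativity of the second differential $d^2\sigma_k(A)(H_1,H_2)$ for PSD $A,H_1,H_2$, i.e.\ the positivity of the associated mixed discriminant (an Alexandrov--Fenchel-type fact)---this yields $T_{k-1}(A) \le T_{k-1}(2A') = 2^{k-1} T_{k-1}(A')$, completing the two-block case.
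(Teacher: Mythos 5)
Your proposal is correct and follows essentially the same route as the paper: Fischer's inequality via Schur complements for (1), the principal-minor expansion plus (1) for (2), and the key observation $2A'-A=JAJ\geq0$ for (3). The only real difference is in how the final step of (3) is packaged: the paper iterates to $A\leq 2^{n_p-1}A'$ in one shot and transfers this to $T_{k-1}$ by pairing $\omega^{k-1}$ against the strongly positive form $\frac{\rho^{n-k}}{(n-k)!}\wedge\frac{\sqrt{-1}}{2}b\wedge\bar{b}$ (Lemma \ref{lem:exp storng positive}), whereas you run a block-by-block induction and justify the matrix-monotonicity of $T_{k-1}$ on PSD matrices via nonnegativity of mixed discriminants --- which is the same positivity fact expressed in linear-algebra rather than differential-form language.
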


\begin{proof}
For claim (1), we first prove the case when $n_{p}=2$. We may write
\[
A=\left(\begin{array}{cc}
A_{1} & C\\
C^{\dagger} & A_{2}
\end{array}\right).
\]
Therefore $\det(A)=\det\left(A_{1}-C^{\dagger}A_{2}^{-1}C\right)\det A_{2}$
if $A_{2}$ is invertible. If $A_{2}$ is not invertible, we replace
$A$with $A_{\epsilon}=A+\epsilon I$ and let $\epsilon\to0$. Then,
(1) follows from the continuity of the determinant function. The general
case follows from induction.

For claim (2), for any $1\leq i_{1}<\cdots<i_{k}\leq n$, by (1),
we have 
\begin{align}
\sigma_{k}(A) & =\sum_{1\leq i_{1}<i_{2}<\cdots<i_{k}\leq n}A\left(\begin{array}{cccc}
i_{1} & i_{2} & \cdots & i_{k}\\
i_{1} & i_{2} & \cdots & i_{k}
\end{array}\right)\nonumber \\
 & \leq\sum_{1\leq i_{1}<i_{2}<\cdots<i_{k}\leq n}A'\left(\begin{array}{cccc}
i_{1} & i_{2} & \cdots & i_{k}\\
i_{1} & i_{2} & \cdots & i_{k}
\end{array}\right)\label{eq:sigmk A}\\
 & =\sigma_{k}(A').\nonumber 
\end{align}
Thus the inequality in (\ref{eq:spplitct 2 2}) holds. The equality
in (\ref{eq:spplitct 2 2}) can be calculated directly.

For claim (3), if $n_{p}=2$, we have 
\begin{equation}
2A'-A=\left(\begin{array}{cc}
2A_{1} & 0\\
0 & 2A_{2}
\end{array}\right)-\left(\begin{array}{cc}
A_{1} & C\\
C^{\dagger} & A_{2}
\end{array}\right)=\left(\begin{array}{cc}
A_{1} & -C\\
-C^{\dagger} & A_{2}
\end{array}\right)\ge0.\label{eq:nonnega diff}
\end{equation}
If $n_{p}\geq2$, by iteration, we have 
\begin{equation}
A\leq2^{n_{p}-1}A',\ \omega\leq_{s}2^{n_{p}-1}\omega',\ \frac{\omega^{k}}{k!}\leq_{s}2^{k(n_{p}-1)}\frac{\left(\omega'\right)^{k}}{k!}.\label{eq:-4}
\end{equation}
Now, for any \textbf{$b=b_{i}dz^{i}$}, by (\ref{eq:-4}), we have
\begin{align}
\left(T_{k-1}(A)\right)^{i\bar{j}}b_{i}\bar{b}_{j}\frac{\rho^{n}}{n!} & =\frac{\rho^{n-k}}{(n-k)!}\wedge\frac{\omega^{k-1}}{(k-1)!}\wedge\frac{\sqrt{-1}}{2}b\wedge\bar{b}\label{eq:-5-1}\\
 & \leq2^{(k-1)(n_{p}-1)}\frac{\rho^{n-k}}{(n-k)!}\wedge\frac{(\omega')^{k-1}}{(k-1)!}\wedge\frac{\sqrt{-1}}{2}b\wedge\bar{b}\nonumber \\
 & =2^{(k-1)(n_{p}-1)}T_{k-1}(A')^{i\bar{j}}b_{i}\bar{b}_{j}\frac{\rho^{n}}{n!}.\nonumber 
\end{align}
Thus, by (\ref{eq:-5-1}), 
\[
T_{k-1}(A)\leq2^{(k-1)(n_{p}-1)}T_{k-1}(A').
\]
(\ref{eq:spllit 4}) can be verified by direct calculation. 
\end{proof}
\begin{lem}
\label{lem: induced cone}Suppose $\mathring{\Lambda}$ satisfies
the $\mathcal{O}$-UP condition. Let $b$ be a covector in $\mathcal{T}_{p}^{*}M$.
Let $\mathcal{B}=\{\zeta\in\mathcal{T}_{p}M:\<b,\zeta\>=0\}$ be a
complex hyperplane. Denote 
\begin{equation}
\chi_{\mathcal{B}}:=(A|_{\mathcal{B}})^{\bar{j}i}2\sqrt{-1}\frac{\pdv}{\pdv\bar{z}^{j}}\wedge\frac{\pdv}{\pdv z^{i}}.\label{eq:chi_B}
\end{equation}
If at $p$, $\mathcal{P}_{\Lambda}(A)\leq\kappa$, then we have 
\begin{equation}
\sum_{i=1}^{n_{p}}\<\frac{\rho_{i}^{k_{i}}}{k_{i}!},\exp\chi_{\mathcal{B}}\>\leq\frac{\kappa}{m}.\label{eq:induced cone h1}
\end{equation}
Furthermore, 
\begin{equation}
\sum_{i=1}^{n_{p}}\<\frac{\rho_{i}^{k_{i}}}{k_{i}!},\exp\chi\>\leq\frac{n\kappa}{m}.\label{eq:induced cone h2}
\end{equation}
\end{lem}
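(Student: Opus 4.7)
The strategy is to derive both inequalities from the hypothesis $\mathcal{P}_{\Lambda}(A)\leq\kappa$ by combining the hyperplane characterization of $\mathcal{P}_{\Lambda}$ in Lemma~\ref{lem: submatrix and restricting to subspac}\,(\ref{enu:-1}) with the $\mathcal{O}$-uniform positivity of $\mathring{\Lambda}$.

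For (\ref{eq:induced cone h1}), I would first apply Lemma~\ref{lem: submatrix and restricting to subspac}\,(\ref{enu:-1}), which gives $\<\Lambda,\exp\chi_{\mathcal{H}}\>\leq\kappa$ for every complex hyperplane $\mathcal{H}\subset\mathcal{T}_pM$. Because $\chi_{\mathcal{H}}$ lives in $\bigwedge^{1,1}\mathcal{H}$, the exponential $\exp\chi_{\mathcal{H}}$ has vanishing $(n,n)$-component, so $\Lambda^{[n]}$ does not contribute and the pairing reduces to $\<\mathring{\Lambda},\exp\chi_{\mathcal{H}}\>$. The restriction $A|_{\mathcal{H}}$ is positive definite, hence by Lemma~\ref{lem:exp storng positive} the vector $\exp\chi_{\mathcal{H}}$ is strongly positive. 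The $\mathcal{O}$-UP condition $\mathring{\Lambda}-m\sum_i\rho_i^{k_i}/k_i!\ge 0$ (Definition~\ref{def:O-uniform positive}) then pairs non-negatively with $\exp\chi_{\mathcal{H}}$, yielding
\[
\kappa\ \geq\ \<\mathring{\Lambda},\exp\chi_{\mathcal{H}}\>\ \geq\ m\sum_{i=1}^{n_p}\left\<\frac{\rho_i^{k_i}}{k_i!},\exp\chi_{\mathcal{H}}\right\>.
\]
Setting $\mathcal{H}=\mathcal{B}$ and dividing by $m$ gives (\ref{eq:induced cone h1}).

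For (\ref{eq:induced cone h2}), the plan is to average (\ref{eq:induced cone h1}) over a carefully chosen family of hyperplanes. I would pick a unitary basis $\{e_\alpha\}_{\alpha=1}^n$ of $\mathcal{T}_pM$ that diagonalizes $A$, with $A=\mathrm{diag}(\lambda_1,\ldots,\lambda_n)$, and set $\mathcal{B}_\alpha=e_\alpha^{\perp}$. In this basis,
\[
\chi=2\sqrt{-1}\sum_{\alpha=1}^{n}\lambda_\alpha^{-1}\,\frac{\partial}{\partial\bar z^\alpha}\wedge\frac{\partial}{\partial z^\alpha},\qquad\chi_{\mathcal{B}_\alpha}=2\sqrt{-1}\sum_{\beta\neq\alpha}\lambda_\beta^{-1}\,\frac{\partial}{\partial\bar z^\beta}\wedge\frac{\partial}{\partial z^\beta},
\]
and the summands commute pairwise as degree-two elements. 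Expanding the $k$-th powers and counting how often each $k$-subset $J\subset\{1,\ldots,n\}$ appears on the left yields the pointwise identity
\[
\sum_{\alpha=1}^{n}\frac{\chi_{\mathcal{B}_\alpha}^{\,k}}{k!}\ =\ (n-k)\,\frac{\chi^{k}}{k!},\qquad 0\leq k\leq n,
\]
since $J$ contributes once for every $\alpha\notin J$. Applying (\ref{eq:induced cone h1}) to each $\mathcal{B}_\alpha$, summing over $\alpha$, and using that only the bidegree-$(k_i,k_i)$ piece of $\exp\chi_{\mathcal{B}_\alpha}$ pairs non-trivially with $\rho_i^{k_i}/k_i!$, I obtain
\[
\sum_{i=1}^{n_p}(n-k_i)\,\left\<\frac{\rho_i^{k_i}}{k_i!},\exp\chi\right\>\ \leq\ \frac{n\kappa}{m}.
\]
Definition~\ref{def:A-labeled-orthogonal} gives $1\leq k_i\leq d_i-1\leq n-1$, so each coefficient $n-k_i\geq 1$, and (\ref{eq:induced cone h2}) follows by discarding these factors.

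The only nontrivial point is the averaging identity, which relies crucially on diagonalizing $A$ first: in a generic unitary basis $\sum_\alpha\chi_{\mathcal{B}_\alpha}$ loses the off-diagonal entries of $A^{-1}$ and fails to equal $(n-1)\chi$, so the identity breaks. The clean feature here is that no coordination with the splitting $\{\mathcal{V}_i\}$ is required; the splitting enters only through the scalar pairings with $\rho_i^{k_i}/k_i!$, which are basis-invariant.
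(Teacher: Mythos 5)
Your proof is correct and follows essentially the same route as the paper: part one is the paper's argument with $\Lambda'=\sum_i\rho_i^{k_i}/k_i!$ unrolled, and part two uses the same averaging over the coordinate hyperplanes of a diagonalizing basis, where your exact identity $\sum_\alpha\chi_{\mathcal{B}_\alpha}^{k}/k!=(n-k)\chi^{k}/k!$ is just the sharp form of the paper's inequality $\chi^{k}/k!\leq_s\sum_j\chi_{\mathcal{B}_j}^{k}/k!$. Your version even retains the factors $n-k_i\geq1$, giving a marginally stronger bound before discarding them.
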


\begin{proof}
Let $\Lambda'=\sum_{i=1}^{n_{p}}(\exp\rho_{i})^{[k_{i}]}$. By the
$\mathcal{O}$-UP condition, at $p$, 
\[
\mathring{\Lambda}\geq m\Lambda'.
\]
As a result, we have 
\begin{equation}
\mathcal{P}_{\Lambda'}(A)\leq\frac{\kappa}{m}.\label{eq:coone condi}
\end{equation}
Let $B_{i\bar{j}}=b_{i}\bar{b}_{j}$. Then, 
\begin{equation}
F_{\Lambda'}(A:B)\leq\mathcal{P}_{\Lambda'}(A)\le\frac{\kappa}{m}.\label{eq:cone reduced form}
\end{equation}
By Lemma \ref{lem: submatrix and restricting to subspac}, (\ref{eq:cone reduced form})
implies (\ref{eq:induced cone h1}).

Choose a normal coordinate of $\omega$ at $p$. Let $\mathcal{B}_{i}=\{\xi:dz^{i}(\xi)=0\}$.
Then 
\[
\chi=\sum_{i=1}^{n}\frac{\sqrt{-1}}{2}dz^{i}\wedge d\bar{z}^{i},\ \chi_{\mathcal{B}_{j}}=\sum_{i\not=j}\frac{\sqrt{-1}}{2}dz^{i}\wedge d\bar{z}^{i}.
\]
Then, for $k\leq n-1$, 
\[
\frac{\chi^{k}}{k!}\leq_{s}\sum_{j=1}^{n}\frac{\chi_{\mathcal{B}_{j}}^{k}}{k!}.
\]
Thus, 
\begin{equation}
\sum_{i=1}^{n_{p}}\<\frac{\rho_{i}^{k_{i}}}{k_{i}!},\exp\chi\>\leq\sum_{j=1}^{n}\sum_{i=1}^{n_{p}}\<\frac{\rho_{i}^{k_{i}}}{k_{i}!},\exp\chi_{\mathcal{B}_{j}}\>\leq\frac{n\kappa}{m}.\label{eq:requred etech}
\end{equation}
We have proved \ref{eq:induced cone h2}. 
\end{proof}
The following lemma gives an explicit estimate of $-F^{i\bar{j}}$
when the cone condition holds. 
\begin{lem}
\label{lem:H1''' inequality}Suppose $\mathring{\Lambda}$ satisfies
$\mathcal{O}$-UP and for all $p\in M$, $f(p)>-m\gamma_{\min}(\frac{\kappa}{m},n_{p},\mathbf{d}_{p},\mathbf{k}_{p})$.
Let $b$ be a covector in $\mathcal{T}_{p}^{*}M$. Let $\mathcal{B}$
and $\chi_{\mathcal{B}}$ be given as in Lemma \ref{lem: induced cone}.
Let $\xi\in\mathcal{T}_{p}M$. If at $p$, $\mathcal{P}_{\Lambda}(A)\leq\kappa$
, then we have 
\begin{equation}
\<\mathring{\Lambda},2\sqrt{-1}\xi\wedge\bar{\xi}\wedge\exp\chi_{\mathcal{B}}\>\geq m\gamma_{\min}(\frac{\kappa}{m},n_{p},\mathbf{d}_{p},\mathbf{k}_{p})\frac{|\<\xi,b\>|^{2}}{\left(\det A\right)\|b\|_{\omega}^{2}}.\label{eq:lemma prod sp}
\end{equation}
\end{lem}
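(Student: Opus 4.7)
The plan is to translate the wedge-product lower bound into a Newton-Maclaurin estimate on each subspace $\mathcal{V}_i$, with the cone condition providing the upper bound on $\sigma_{k_i}$. By the $\mathcal{O}$-UP hypothesis, $\mathring{\Lambda}\geq m\sum_{i=1}^{n_p}\rho_i^{k_i}/k_i!$, so it suffices to bound from below each
\[
R_i(\xi):=\left\langle \frac{\rho_i^{k_i}}{k_i!},\ 2\sqrt{-1}\,\xi\wedge\bar\xi\wedge\exp\chi_{\mathcal{B}}\right\rangle.
\]
Matching bi-degrees leaves only the $\chi_{\mathcal{B}}^{k_i-1}/(k_i-1)!$ term. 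In a $\rho$-unitary frame adapted to the splitting $\oplus_i\mathcal{V}_i$ (with $b^{\sharp}$, the $\omega$-dual of $b$, aligned to a coordinate direction), the coordinate computation behind Lemma \ref{lem:rewrite} identifies
\[
R_i(\xi) = T_{k_i-1}(M_i)\bigl(\pi_i\xi,\,\overline{\pi_i\xi}\bigr),
\]
where $M_i := \pi_i\,(A|_{\mathcal{B}})^{-1}\pi_i^{*}$ is the Moore-Penrose pseudo-inverse of $A|_{\mathcal{B}}$ restricted to $\mathcal{V}_i$ (compare Lemma \ref{lem:moore-penrose  inverse} and (\ref{eq:basic formula for rho_i})).

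Next, by Lemma \ref{lem: induced cone} the cone condition $\mathcal{P}_{\Lambda}(A)\leq\kappa$ gives $\sum_i\sigma_{k_i}(M_i)\leq\kappa/m$, hence each $\sigma_{k_i}(M_i)\leq\kappa/m$. Maclaurin's inequality (Lemma \ref{lem: quotient k-hessian-1}(\ref{eq:decreasing sigma k})) then yields $\det(M_i)\leq\binom{d_i}{k_i}^{-d_i/k_i}(\kappa/m)^{d_i/k_i}$. Applying Lemma \ref{lem: quotient k-hessian-1}(\ref{eq:lem T_k}) with $(r,k)=(d_i,k_i)$ produces
\[
T_{k_i-1}(M_i)\ \geq\ \frac{\sigma_{k_i-1}(M_i)}{\sigma_{d_i-1}(M_i)}\,\det(M_i)\,M_i^{-1},
\]
and combining with further Maclaurin comparisons of $\sigma_{k_i-1}/\sigma_{d_i-1}$ and the Schur-complement identity $\det(A|_{\mathcal{B}})=(\det A)\,\|b\|_{\omega}^{2}/|b|_{\rho}^{2}$ from Lemma \ref{rem:elementary linear agb}, this gives
\[
R_i(\xi)\ \geq\ \frac{m\gamma_i\,|\langle\pi_i\xi,\,\pi_i b^{\sharp}\rangle|^{2}}{(\det A)\|b\|_{\omega}^{2}},
\]
with $\gamma_i$ built from $\binom{d_i}{k_i-1}$, $\binom{d_i}{k_i}^{d_i/k_i}$, and $(\kappa/m)^{d_i/k_i}$. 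Summing over $i$, and using $\|b^{\sharp}\|_{\omega}^{2}\leq\sum_i\|\pi_i b^{\sharp}\|_{\omega}^{2}$ together with the $\sum_i(\kappa/m)^{1/k_i}$ factor from the denominator, reassembles exactly $m\gamma_{\min}$ as defined in (\ref{eq:gamm_min}).

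The main obstacle is the Hermitian cross terms in $2\sqrt{-1}\xi\wedge\bar\xi$: writing $\xi=\xi_{\mathcal{B}}+\lambda\,b^{\sharp}/\|b^{\sharp}\|_{\omega}$, the off-diagonal terms $\lambda\,\xi_{\mathcal{B}}\wedge\overline{b^{\sharp}}+\bar\lambda\,b^{\sharp}\wedge\bar\xi_{\mathcal{B}}$ contribute to $R_i(\xi)$, while the target lower bound involves only $|\lambda|^{2}=|\langle\xi,b\rangle|^{2}/\|b\|_{\omega}^{4}$. Because the form $Q(\xi):=\langle\mathring{\Lambda},\,2\sqrt{-1}\xi\wedge\bar\xi\wedge\exp\chi_{\mathcal{B}}\rangle$ is positive semi-definite (the wedge is strongly positive by Lemma \ref{lem:exp storng positive}, and $\mathring{\Lambda}$ is a positive form), one absorbs these cross terms by passing to the Schur complement of the matrix of $Q$ on the decomposition $\mathcal{T}_p M=\mathcal{B}\oplus\mathcal{B}^{\perp}_{\omega}$; the Newton-Maclaurin estimates above are what controls this Schur complement on the transverse line. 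The hypothesis $f>-m\gamma_{\min}$ is used via $\mathcal{P}_{\Lambda}=\mathcal{P}_{\mathring{\Lambda}}$ (Lemma \ref{lem:P_La property}(\ref{enu:-if})) to ensure the cone condition on $\Lambda$ translates to the required constraint on $\mathring{\Lambda}$. Matching the several Maclaurin exponents $d_i/k_i$ and $1/k_i$ so as to reproduce precisely the explicit form of $\gamma_{\min}$ in (\ref{eq:gamm_min}) is the most delicate bookkeeping step.
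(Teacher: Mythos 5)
Your proposal follows essentially the same route as the paper's proof: reduce via $\mathcal{O}$-UP to the forms $\rho_i^{k_i}/k_i!$, identify the pairing with $T_{k_i-1}$ of the compressions of $(A|_{\mathcal{B}})^{-1}$ to the $\mathcal{V}_i$ (the paper's $\tilde A_i^{-1}$, built in an adapted unitary frame with $b$ along a coordinate direction), control $\sigma_{k_i}$ by the cone condition through Lemma \ref{lem: induced cone}, and combine the quotient-Hessian comparison (\ref{eq:lem T_k}), Newton--Maclaurin, Lemma \ref{lem:spplict 2}, and the Schur-complement determinant identity to assemble $\gamma_{\min}$; the cross-term issue you flag is resolved in the paper by the block-diagonal form (\ref{eq:-34}) of $T_{d_i-1}(\tilde A_i^{-1})$, which is the same mechanism you describe. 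The plan is correct and matches the paper's argument.
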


\begin{proof}
If $\<b,\xi\>=0$ then (\ref{eq:lemma prod sp}) holds trivially.
Otherwise, we may assume by rescaling that $\|b\|_{\rho}^{2}=2$ and
$|\<b,\xi\>|=1$. We may choose a normal coordinate $\{z^{i}\}$ of
$\rho$ at $p$ such that 
\begin{enumerate}
\item $dz^{1}|_{p}=b$; 
\item $\{\sqrt{2}e_{j}^{i}\}_{j=1}^{d_{i}}$ is an orthornormal frame of
$\mathcal{V}_{i}$ with respect to $\rho_{i}$ ; 
\item $\frac{\pdv}{\pdv z^{\sum_{l=1}^{i-1}(d_{l}-1)+n_{p}+j}}=e_{j+1}^{i}$
for $j=1,\cdots,d_{i}-1$; 
\item $e_{1}^{i}=\sum_{j=1}^{n_{p}}\alpha^{ij}\frac{\pdv}{\pdv z^{j}}$
and $\alpha^{-1}=(\alpha^{ij})$ is a unitary matrix of dimension
$n_{p}$. 
\end{enumerate}
The construction of the coordinate can be done as follows: First,
we construct the unitary frame. Take $dz^{1}|_{p}=b$. In each $\mathcal{V}_{i}$,
let $\tilde{e}_{1}^{i}=(\pi_{i})_{*}\frac{\pdv}{\pdv z^{1}}|_{p}$.
If $\tilde{e}_{1}^{i}=0$, we pick any norm $\sqrt{1/2}$ vector in
$\mathcal{V}_{i}$ to be $e_{1}^{i}$; if $\tilde{e}_{1}^{i}\not=0$,
let $e_{1}^{i}=\frac{\tilde{e}_{1}^{i}}{\sqrt{2}\|\tilde{e}_{1}^{i}\|_{\rho}}$.
We then pick the rest vectors so that $\{\sqrt{2}e_{j}^{i}\}_{j=1}^{d_{i}}$
is a unitary frame of $\mathcal{V}_{i}$ with respect to $\rho_{i}$.
At $p$, we choose $\frac{\pdv}{\pdv z^{2}}|_{p},\cdots,\frac{\pdv}{\pdv z^{n_{p}}}|_{p}$
together with $\frac{\pdv}{\pdv z^{1}}|_{p}$ to span the space $\text{span}\{e_{1}^{i}\}_{i=1}^{n_{p}}$.
Let $\frac{\pdv}{\pdv z^{\sum_{l=1}^{i-1}(d_{l}-1)+n_{p}+j}}|_{p}=e_{j+1}^{i}$
for $j=1,\cdots,d_{i}-1$. Then we extend $\{z^{i}\}$ to be a normal
coordinate.

By our choice of coordinate, $\mathcal{B}=\text{span}\{\frac{\pdv}{\pdv z^{2}},\cdots,\frac{\pdv}{\pdv z^{n}}\}$.
Use $(A|1)$ to denote the matrix obtained by deleting 1st row and
1st column of $A$. We may write

\begin{equation}
\chi=2\sqrt{-1}\sum_{i,j}A^{\bar{j}i}\frac{\pdv}{\pdv\bar{z}^{j}}\wedge\frac{\pdv}{\pdv z^{i}},\ \chi_{\mathcal{B}}=2\sqrt{-1}\sum_{i,j>1}(A|1)^{\bar{j}i}\frac{\pdv}{\pdv\bar{z}^{j}}\wedge\frac{\pdv}{\pdv z^{i}},\label{eq:def chi}
\end{equation}
where $((A|1)^{\bar{j}i})$ is the inverse matrix of $(A|1)$. Let
$\{\check{e}_{k}^{j}\}\subset\bigwedge^{1,0}T_{p}^{*}M$ be the dual
frame of $\{e_{j}^{k}\}$, $k=1,2,\cdots,n_{p}$. Therefore, we have
the following decompositions: 
\[
A=\left(\begin{array}{cc}
a & q\\
q^{\dagger} & (A|1)
\end{array}\right),A^{-1}=\left(\begin{array}{cc}
\hat{a}^{-1} & \hat{q}\\
\hat{q}^{\dagger} & (A|1)^{-1}+\hat{q}^{\dagger}\hat{a}\hat{q}
\end{array}\right),
\]
where $a=A_{1\bar{1}}$, 
\begin{equation}
\hat{a}=a-q(A|1)^{-1}q^{\dagger}=\frac{1}{A^{\bar{1}1}}=\frac{1}{\|b\|_{\omega}^{2}},\label{eq:def hat a}
\end{equation}
and $\hat{q}=-\hat{a}^{-1}q(A|1)^{-1}$. We may write 
\begin{equation}
(A|1)^{-1}=\left(\begin{array}{ccccc}
V & Q_{01} & Q_{02} & \ldots & Q_{1n_{p}}\\
Q_{01}^{\dagger} & \hat{A}_{1}^{-1} & Q_{12} & \ldots & Q_{1n_{p}}\\
Q_{02}^{\dagger} & Q_{12}^{\dagger} & \hat{A}_{2}^{-1} & \cdots & Q_{2n_{p}}\\
\vdots & \vdots & \vdots & \ddots & \vdots\\
Q_{0n_{p}}^{\dagger} & Q_{1n_{p}}^{\dagger} & Q_{2n_{p}}^{\dagger} & \ldots & \hat{A}_{n_{p}}^{-1}
\end{array}\right),\label{eq:A reistrcit}
\end{equation}
where $V$ is a matrix of dimension $n_{p}-1$. For $l=1,2,\cdots,d_{1}$,
we have 
\begin{align}
\frac{\rho_{1}^{l}}{l!} & =\sum_{|I|=l,I\subset\{n_{p}+1,\cdots,n_{p}+d_{1}-1\}}\frac{\sqrt{-1}^{l^{2}}}{2^{l}}dz^{I}\wedge d\bar{z}^{I}\label{eq:expansio of rho}\\
 & +\frac{\sqrt{-1}}{2}\check{e}_{1}^{1}\wedge\overline{\check{e}}_{1}^{1}\wedge\sum_{|I|=l-1,I\subset\{n_{p}+1,\cdots,n_{p}+d_{1}-1\}}\frac{\sqrt{-1}^{(l-1)^{2}}}{2^{l-1}}dz^{I}\wedge d\bar{z}^{I}.\nonumber 
\end{align}
Let $\alpha_{ij}=\overline{\alpha^{ji}}$. Define $\alpha_{i}':=(\alpha_{2i},\cdots,\alpha_{n_{p}i})$,
and 
\begin{equation}
\tilde{A}_{i}^{-1}:=\left(\begin{array}{ccc}
0 & 0 & 0\\
0 & \left(\alpha_{i}'\right)^{\dagger}V\alpha_{i}' & (\alpha_{i}')^{\dagger}Q_{0i}\\
0 & Q_{0i}^{\dagger}\alpha_{i}' & \hat{A}_{i}^{-1}
\end{array}\right).\label{eq:tidld A_i}
\end{equation}
Notice 
\begin{equation}
\check{e}_{i}^{1}\wedge\overline{\check{e}}_{i}^{1}=\sum_{l,j=2}^{n_{p}}\alpha_{li}\overline{\alpha_{ji}}dz^{l}\wedge d\bar{z}^{j}+\text{terms with }dz^{1}\text{ or }d\bar{z}^{1}.\label{eq:expan}
\end{equation}
Thus by (\ref{eq:expansio of rho}), (\ref{eq:expan}), (\ref{eq:tidld A_i}),
and (\ref{eq:basic formula}), $\<\frac{\rho_{1}^{l}}{l!},\frac{\chi_{\mathcal{B}}^{l}}{l!}\>=\sigma_{l}(\tilde{A}_{1}^{-1})$
and similarly 
\begin{equation}
\<\frac{\rho_{i}^{l}}{l!},\frac{\chi_{\mathcal{B}}^{l}}{l!}\>=\sigma_{l}(\tilde{A}_{i}^{-1}).\label{eq:rho_i chi_B}
\end{equation}
Let 
\begin{equation}
\tilde{\xi}=\<\check{e}_{1}^{1},\xi\>e_{1}^{1}+\sum_{i=n_{p}+1}^{n_{p}+d_{1}-1}\xi^{i}\frac{\pdv}{\pdv z^{i}}.\label{eq:tild exi}
\end{equation}
Then, by (\ref{eq:lem T_k}), we have
\begin{align}
\<\frac{\rho_{1}^{k_{1}}}{k_{1}!},2\sqrt{-1}\xi\wedge\bar{\xi}\wedge\exp\chi_{\mathcal{B}}\> & =\<T_{k_{1}-1}(\tilde{A}_{1}^{-1}),\tilde{\xi}^{\dagger}\tilde{\xi}\>\label{eq:-33}\\
 & \geq\frac{\sigma_{k_{1}-1}(\tilde{A}_{1}^{-1})}{\sigma_{d_{1}-1}(\tilde{A}_{1}^{-1})}\<T_{d_{1}-1}(\tilde{A}_{1}^{-1}),\tilde{\xi}^{\dagger}\tilde{\xi}\>.\nonumber 
\end{align}
Notice that
\begin{equation}
T_{d_{1}-1}(\tilde{A}_{1}^{-1})=\left(\begin{array}{cc}
\sigma_{d_{1}-1}(\tilde{A}_{1}^{-1}) & 0\\
0 & T'
\end{array}\right),\label{eq:-34}
\end{equation}
where $T'\geq0$ is a non-negative Hermitian matrix. Thus, from (\ref{eq:-33})
and (\ref{eq:-34}), we have
\begin{align}
\<\frac{\rho_{1}^{k_{1}}}{k_{1}!},2\sqrt{-1}\xi\wedge\bar{\xi}\wedge\exp\chi_{\mathcal{B}}\> & =\geq\frac{\sigma_{k_{1}-1}(\tilde{A}_{1}^{-1})}{\sigma_{d_{1}-1}(\tilde{A}_{1}^{-1})}\sigma_{d_{1}-1}(\tilde{A}_{1}^{-1})|\<\check{e}_{1}^{1},\xi\>|^{2}\label{eq:chi_B ineq}\\
 & =\sigma_{k_{1}-1}(\tilde{A}_{1}^{-1})|\<\check{e}_{1}^{1},\xi\>|^{2}.\nonumber 
\end{align}
Apply the same argument on each $\mathcal{V}_{i}$ to obtain 
\begin{equation}
\<\Lambda',2\sqrt{-1}\xi\wedge\bar{\xi}\wedge\exp\chi_{\mathcal{B}}\>\geq\sum_{i=1}^{n_{p}}\sigma_{k_{i}-1}(\tilde{A}_{i}^{-1})|\<\check{e}_{1}^{i},\xi\>|^{2},\label{eq:similar arg}
\end{equation}
where $\Lambda'=\sum_{i=1}^{n_{p}}(\exp\rho_{i})^{[k_{i}]}.$ On the
other hand, by applying Lemma \ref{lem:spplict 2} (2), we have 
\begin{equation}
\sigma_{n-1}\left((A|1)^{-1}\right)\leq\sum_{i=1}^{n_{p}}\sigma_{d_{i}-1}(\tilde{A}_{1}^{-1})\prod_{j\not=i}\sigma_{d_{j}}(\tilde{A}_{j}^{-1}).\label{eq:simga_n-1}
\end{equation}
Let $s_{i}=\sigma_{k_{i}}(\tilde{A}_{i}^{-1})^{\frac{1}{k_{i}}}$.
Then by Newton-Maclaurin inequality, 
\begin{equation}
\sigma_{k_{i}-1}(\tilde{A}_{i}^{-1})\geq\binom{d_{i}}{k_{i}-1}\binom{d_{i}}{k_{i}}^{\frac{1}{k_{i}}-1}s_{i}^{k_{i}-1},\label{eq:NM eq}
\end{equation}
\begin{equation}
\sum_{i=1}^{n_{p}}\sigma_{d_{i}-1}(\tilde{A}_{i}^{-1})\prod_{j\not=i}\sigma_{d_{j}}(\tilde{A}_{j}^{-1})\leq\frac{\max_{i}\{d_{i}\binom{d_{i}}{k_{i}}^{\frac{1}{k_{i}}}\}}{\prod_{i=1}^{n_{p}}\binom{d_{i}}{k_{i}}^{\frac{d_{i}}{k_{i}}}}\prod_{i=1}^{n_{p}}s_{i}^{d_{i}}\sum_{j=1}^{n_{p}}s_{j}^{-1}.\label{eq:NM eq 2}
\end{equation}
Thus, by (\ref{eq:similar arg}), (\ref{eq:simga_n-1}), (\ref{eq:NM eq}),
and (\ref{eq:NM eq 2}), we get 
\begin{align}
\<\Lambda',2\sqrt{-1}\xi\wedge\bar{\xi}\wedge\exp\chi_{\mathcal{B}}\> & \geq\sigma_{n-1}((A|1))^{-1}\cdot\frac{\sum_{i=1}^{n_{p}}\sigma_{k_{i}-1}(\tilde{A}_{i}^{-1})|\<\check{e}_{1}^{i},\xi\>|^{2}}{\sum_{i=1}^{n_{p}}\sigma_{d_{i}-1}(\tilde{A}_{1}^{-1})\prod_{j\not=i}\sigma_{d_{j}}(\tilde{A}_{j}^{-1})}\label{eq:pre est}\\
 & =\det(A)^{-1}\hat{a}\cdot\frac{\sum_{i=1}^{n_{p}}\sigma_{k_{i}-1}(\tilde{A}_{i}^{-1})|\<\check{e}_{1}^{i},\xi\>|^{2}}{\sum_{i=1}^{n_{p}}\sigma_{d_{i}-1}(\tilde{A}_{1}^{-1})\prod_{j\not=i}\sigma_{d_{j}}(\tilde{A}_{j}^{-1})}\nonumber \\
 & \geq c_{0}(\mathcal{O}_{p})\cdot\hat{a}\cdot\det(A)^{-1}\cdot\frac{\sum_{i=1}^{n_{p}}s_{i}^{k_{i}-1}|\<\check{e}_{1}^{i},\xi\>|^{2}}{\prod_{i=1}^{n_{p}}s_{i}^{d_{i}}\sum_{i=1}^{n_{p}}s_{i}^{-1}},\nonumber 
\end{align}
where 
\begin{equation}
c_{0}(\mathcal{O}_{p})=\frac{\min_{i}\left\{ \binom{d_{i}}{k_{i}-1}\binom{d_{i}}{k_{i}}^{\frac{1}{k_{i}}-1}\right\} \prod_{i=1}^{n_{p}}\binom{d_{i}}{k_{i}}^{\frac{d_{i}}{k_{i}}}}{\max_{i}\{d_{i}\binom{d_{i}}{k_{i}}^{\frac{1}{k_{i}}}\}}.\label{eq:c_0 O}
\end{equation}

On the other hand, by the cone condition and Lemma \ref{lem: induced cone},
we have $s_{i}\leq(\frac{\kappa}{m})^{\frac{1}{k_{i}}}$. Since $\<\xi,b\>=1$
and $b\in\text{span}\{\check{e}_{1}^{i}\}_{i=1}^{n_{p}}$, we have
$\sum_{i=1}^{n_{p}}|\<\check{e}_{1}^{i},\xi\>|^{2}\geq1$. Define
a function 
\begin{equation}
\gamma(s_{1},\cdots,s_{n_{p}}):=c_{0}(\mathcal{O}_{p})\frac{\sum_{i=1}^{n_{p}}s_{i}^{k_{i}-1}|\<\check{e}_{1}^{i},\xi\>|^{2}}{\prod_{i=1}^{n_{p}}s_{i}^{d_{i}}\sum_{i=1}^{n_{p}}s_{i}^{-1}}.\label{eq:gamm s1}
\end{equation}
Then $\gamma$ is decreasing in each $s_{i}$. Since $s_{i}\leq(\frac{\kappa}{m})^{\frac{1}{k_{i}}}$,
we have 
\begin{align}
\gamma & \geq c_{0}\frac{\sum_{i=1}^{n_{p}}\left(\frac{\kappa}{m}\right)^{1-\frac{1}{k_{i}}}|\<\check{e}_{1}^{i},\xi\>|^{2}}{(\frac{\kappa}{m})^{\sum_{i=1}^{n_{p}}\frac{d_{i}}{k_{i}}}\sum_{i=1}^{n_{p}}\left(\frac{\kappa}{m}\right)^{-\frac{1}{k_{i}}}}\label{eq:gamma ineq}\\
 & \geq c_{0}\frac{\min\{\left(\frac{\kappa}{m}\right)^{1-\frac{1}{k_{i}}}\}}{(\frac{\kappa}{m})^{\sum_{i=1}^{n_{p}}\frac{d_{i}}{k_{i}}}\sum_{i=1}^{n_{p}}\left(\frac{\kappa}{m}\right)^{-\frac{1}{k_{i}}}}\nonumber \\
 & =\gamma_{\min}(\frac{\kappa}{m},n_{p},\mathbf{d}_{p},\mathbf{k}_{p}).\nonumber 
\end{align}
Thus, by (\ref{eq:pre est}) and (\ref{eq:gamma ineq}), we have 
\begin{equation}
\<\Lambda',2\sqrt{-1}\xi\wedge\bar{\xi}\wedge\exp\chi_{\mathcal{B}}\>\geq\hat{a}\cdot\det A^{-1}\cdot\gamma_{\min}.\label{eq:Lem prod sp 2}
\end{equation}
Notice $m\Lambda'\leq\mathring{\Lambda}$ and $\hat{a}=\frac{1}{\|b\|_{\omega}^{2}}.$
Hence, (\ref{eq:lemma prod sp}) follows from (\ref{eq:Lem prod sp 2})
immediately. 
\end{proof}
An immediate consequence of Lemma \ref{lem:H1''' inequality} is the
monotonicity of $F(A)$. 
\begin{lem}
\label{lem:mononton with f}Suppose $\mathring{\Lambda}$ satisfies
$\mathcal{O}$-UP and for all $p\in M$, $f(p)>-m\gamma_{\min}(\frac{\kappa}{m},n_{p},\mathbf{d}_{p},\mathbf{k}_{p})$.
If $\omega$ satisfies the cone condition (\ref{eq:cone condition})
at a point $p$, then 
\begin{equation}
-F^{i\bar{j}}(A)b_{i}\bar{b_{j}}\geq-\frac{1}{2}\sum_{k=1}^{n-1}F_{k}^{i\bar{j}}b_{i}\bar{b}_{j}>0,\label{eq:first order mono}
\end{equation}
for any non-zero covector $b=b_{i}dz^{i}$. As a result, $F(A)$ is
strictly decreasing in $\mathcal{C}_{\Lambda}^{\kappa}$. 
\end{lem}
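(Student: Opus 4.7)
The plan is to split $F$ into the ``cone'' contribution $\sum_{k=1}^{n-1} F_k(A)$ and the Monge--Amp\`ere contribution $f(p)/\det A$, and then show that under the cone condition the former provides a strictly positive quadratic form in $b$ which dominates any negativity coming from $f$. From Lemma \ref{lem:rewrite} we have $F(A) = \sum_{k=1}^{n-1} F_k(A) + f(p)/\det A$, so differentiating and using $\pdv(\det A)/\pdv A_{i\bar j} = A^{\bar j i}\det A$ gives
\[
-F^{i\bar j}(A)\,b_i\overline{b_j} \;=\; -\sum_{k=1}^{n-1} F_k^{i\bar j}(A)\,b_i\overline{b_j} \;+\; \frac{f(p)}{\det A}\,\|b\|_\omega^2,
\]
where $\|b\|_\omega^2 := A^{\bar j i}b_i\overline{b_j}$.

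The core estimate lower-bounds the cone contribution. Using the identity derived in the proof of Lemma \ref{lem:F_k monot}, with $b^\sharp = \overline{b_i}A^{\bar i a}\frac{\pdv}{\pdv z^a}$ the $\omega$-dual of $b$,
\[
-\sum_{k=1}^{n-1} F_k^{i\bar j}\,b_i\overline{b_j} \;=\; \<\mathring{\Lambda},\, 2\sqrt{-1}\,\overline{b^\sharp}\wedge b^\sharp \wedge \exp\chi\>.
\]
Setting $\mathcal{B} = \ker b \subset \mathcal{T}_p M$, Lemma \ref{rem:elementary linear agb} gives $A^{-1} \geq \mathrm{diag}(0, (A|_{\mathcal{B}})^{-1})$ in coordinates adapted to the splitting $\mathbb{C}\cdot b^\sharp \oplus \mathcal{B}$, whence $\chi \geq \chi_{\mathcal{B}}$ as $(1,1)$-tangent vectors, and Lemma \ref{lem:exp storng positive} upgrades this to $\exp\chi \geq_s \exp\chi_{\mathcal{B}}$. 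Since $\mathring{\Lambda}$ is $\mathcal{O}$-UP, hence pointwise positive, and $2\sqrt{-1}\,\overline{b^\sharp}\wedge b^\sharp$ is strongly positive, we can drop $\exp\chi$ in favor of $\exp\chi_{\mathcal{B}}$. The cone condition $A \in \mathcal{C}_\Lambda^\kappa(p)$ then lets us invoke Lemma \ref{lem:H1''' inequality} with $\xi = b^\sharp$, and since $|\<b^\sharp, b\>|^2 = \|b\|_\omega^4$, we obtain
\[
-\sum_{k=1}^{n-1} F_k^{i\bar j}\,b_i\overline{b_j} \;\geq\; \frac{m\,\gamma_{\min}\bigl(\tfrac{\kappa}{m},n_p,\mathbf{d}_p,\mathbf{k}_p\bigr)}{\det A}\,\|b\|_\omega^2.
\]

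Combining the decomposition with this bound and the hypothesis $f(p) > -m\gamma_{\min}$, the Monge--Amp\`ere term $(f/\det A)\|b\|_\omega^2$ can absorb only a controlled fraction of the positive contribution, producing the stated inequality $-F^{i\bar j}b_i\overline{b_j} \geq -\tfrac{1}{2}\sum_{k=1}^{n-1} F_k^{i\bar j}b_i\overline{b_j}$; strict positivity of the right-hand side is then exactly (\ref{eq:Fijbibj-1}) of Lemma \ref{lem:F_k monot}. The corollary that $F$ is strictly decreasing on $\mathcal{C}_\Lambda^\kappa$ follows because $\mathcal{C}_\Lambda^\kappa(p)$ is preserved under addition of non-negative Hermitian matrices by Lemma \ref{lem:P_La property}(\ref{enu:-is-decreasing}), so for $B \geq 0$, $B \neq 0$ the segment $A + tB$, $t \in [0,1]$, remains in the cone; spectrally decomposing $B$ into rank-one pieces and applying the first inequality to $\tfrac{d}{dt}F(A+tB) = F^{i\bar j}(A+tB)B_{i\bar j}$ then forces $F(A+B) < F(A)$. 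The most delicate point is the quantitative reduction $\chi \to \chi_{\mathcal{B}}$ together with the precise balance between the almost-positive bound on $f$ and the cone-based lower bound, both of which share the scaling $\|b\|_\omega^2/\det A$; it is this scale match that makes the hypothesis on $f$ the right one.
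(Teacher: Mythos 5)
Your proposal is correct and follows essentially the same route as the paper: the same decomposition of $-F^{i\bar j}b_i\overline{b_j}$ into the cone part and the term $\frac{f(p)}{\det A}\|b\|_\omega^2$, the same reduction $\exp\chi\geq_s\exp\chi_{\mathcal B}$ via Lemmas \ref{rem:elementary linear agb} and \ref{lem:exp storng positive}, and the same application of Lemma \ref{lem:H1''' inequality} with $\xi=b^\sharp$. The only addition is your explicit justification of the ``strictly decreasing'' conclusion (which the paper leaves implicit), and you share with the paper the same slight gloss in the final absorption step, where obtaining the factor $\tfrac12$ really uses $f\geq-\tfrac{m}{2}\gamma_{\min}$ rather than the stated $f>-m\gamma_{\min}$.
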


\begin{proof}
From (\ref{eq:lem of comp}), we have 
\begin{equation}
-F_{k}^{i\bar{j}}(A)b_{i}\bar{b}_{j}=\<\Lambda^{[k]},2\sqrt{-1}b^{\sharp}\wedge\bar{b^{\sharp}}\wedge\exp\chi\>.\label{eq:mon with f}
\end{equation}
Let $\mathcal{B}=\{\xi\in\mathcal{T}_{p}M:b(\xi)=0\}$. Since $A^{-1}|_{\mathcal{B}}\geq\left(A|_{\mathcal{B}}\right)^{-1}$
by Lemmas \ref{rem:elementary linear agb} and \ref{lem:exp storng positive},
$\exp\chi\geq_{s}\exp\chi_{\mathcal{B}}$ . Thus, if we take $\xi=b^{\sharp}$
in (\ref{eq:lemma prod sp}), then we have 
\begin{equation}
-\sum_{k=1}^{n-1}F_{k}^{i\bar{j}}b_{i}\bar{b}_{j}\geq\<\Lambda^{[k]},2\sqrt{-1}b^{\sharp}\wedge\bar{b^{\sharp}}\wedge\exp\chi_{\mathcal{B}}\>\geq m\gamma_{\min}(\frac{\kappa}{m},n_{p},\mathbf{d}_{p},\mathbf{k}_{p})\frac{\|b\|_{\omega}^{2}}{\det A}.\label{eq:deri ineq}
\end{equation}
Direct computation shows that 
\begin{align*}
-F^{i\bar{j}}b_{i}\bar{b}_{j} & =-\sum_{k=1}^{n-1}F_{k}^{i\bar{j}}b_{i}\bar{b}_{j}+\frac{f(p)}{\det A}A^{\bar{j}i}b_{i}\overline{b_{j}}
\end{align*}
Then, from Lemma \ref{lem:H1''' inequality}, we have 
\begin{align}
-F^{i\bar{j}}b_{i}\bar{b}_{j} & \geq-\frac{1}{2}\sum_{k=1}^{n-1}F_{k}^{i\bar{j}}b_{i}\bar{b}_{j}+\left(\frac{m}{2}\gamma_{\min}(\frac{\kappa}{m},n_{p},\mathbf{d}_{p},\mathbf{k}_{p})+f(p)\right)\frac{\|b\|_{\omega}^{2}}{\det A}\label{eq:interm monot with f}\\
 & \ge-\frac{1}{2}\sum_{k=1}^{n-1}F_{k}^{i\bar{j}}b_{i}\bar{b}_{j}>0.\nonumber 
\end{align}
\end{proof}
Now we prove Lemma \ref{lem:Datar-Pingali}. 
\begin{proof}[Proof of Lemma \ref{lem:Datar-Pingali}]
We consider a generic point $p_{1}\in\mathcal{M}$. If $f(p_{1})\geq0$,
the cone condition holds automatically. Thus, we may assume $f(p_{1})<0.$
Let $\gamma(s),$$s\in[0,1]$ be a curve connecting $p_{0}$ and $p_{1}$.
Let $s_{0}=\min\{s\in(0,1],\ \omega(\gamma(s))\notin\mathcal{C}_{\Lambda}^{\kappa}\}$
and let $p=\gamma(s_{0})$. By Lemma \ref{lem: submatrix and restricting to subspac},
the degeneracy of cone condition implies that we can find a rank 1
Hermitian matrix $B=b^{\dagger}b$ such that 
\begin{equation}
F_{\Lambda}(A:B)=\mathcal{P}_{\Lambda}(A)=\kappa.\label{eq:eq:F(a:B) equation}
\end{equation}
At $p$, we pick a normal coordinate of $\rho$ as in Lemma \ref{lem:H1''' inequality}.
Let 
\begin{equation}
\chi=2\sqrt{-1}\sum_{i,j}A^{\bar{j}i}\frac{\pdv}{\pdv\bar{z}^{j}}\wedge\frac{\pdv}{\pdv z^{i}},\ \chi_{\infty}=2\sqrt{-1}\sum_{i,j>1}(A|1)^{\bar{j}i}\frac{\pdv}{\pdv\bar{z}^{j}}\wedge\frac{\pdv}{\pdv z^{i}},\label{eq:def chi ag}
\end{equation}
where $((A|1)^{\bar{j}i})$ is the inverse matrix of $(A|1)$. From
(\ref{eq:eq:F(a:B) equation}), $\<\Lambda,\exp(\chi_{\infty})\>=\kappa.$
From equation (\ref{eq:equation with f}), $\<\Lambda,\exp(\chi)\>=\kappa.$
Thus, we have 
\[
\sum_{k=1}^{n-1}\frac{\<\Lambda^{[k]},\left(\chi^{k}-\chi_{\infty}^{k}\right)\>}{k!}+f(p)\<\frac{\rho^{n}}{n!},\frac{\chi^{n}}{n!}\>=0.
\]
Hence 
\begin{align}
-f(p)\<\frac{\rho^{n}}{n!},\frac{\chi^{n}}{n!}\> & =\sum_{k=1}^{n-1}\frac{\<\Lambda^{[k]},\left(\chi^{k}-\chi_{\infty}^{k}\right)\>}{k!}.\label{eq:DP lem 2 prod}
\end{align}
We may write 
\begin{equation}
A=\left(\begin{array}{cc}
a & Q\\
Q^{\dagger} & (A|1)
\end{array}\right),A^{-1}=\left(\begin{array}{cc}
\hat{a}^{-1} & \hat{Q}\\
\hat{Q}^{\dagger} & (A|1)^{-1}+\hat{Q}^{\dagger}\hat{a}\hat{Q}
\end{array}\right),\label{eq:a rew}
\end{equation}
where $\hat{a}=a-Q(A|1)^{-1}Q^{\dagger}=1/\|dz^{1}\|_{\omega}^{2}$
and $\hat{Q}=-\hat{a}^{-1}Q(A|1)^{-1}$. Let 
\begin{equation}
\xi=\frac{1}{\sqrt{\hat{a}}}\frac{\pdv}{\pdv z^{1}}+\sqrt{\hat{a}}\sum_{i=2}^{n}\hat{Q}_{i}\frac{\pdv}{\pdv z^{i}}.\label{eq:xi def d}
\end{equation}
Then 
\begin{align}
\chi & =2\sqrt{-1}\left(\xi\wedge\bar{\xi}+(A|1)^{\bar{j}i}\frac{\pdv}{\pdv\bar{z}^{j}}\wedge\frac{\pdv}{\pdv z^{i}}\right),\label{eq:chi eq}
\end{align}
Thus, we have 
\begin{equation}
\exp\chi-\exp\chi_{\infty}=2\sqrt{-1}\xi\wedge\bar{\xi}\wedge\exp\chi_{\infty}.\label{eq:DP prod 3}
\end{equation}
By (\ref{eq:DP lem 2 prod}), (\ref{eq:DP prod 3}), and Lemma \ref{lem:H1''' inequality},
we have 
\begin{align}
-\frac{f(p)}{\det A} & \geq\frac{1}{\det A}\frac{|\<\xi,dz^{1}\>|^{2}}{\|dz^{1}\|_{\omega}^{2}}m\gamma_{\min}\label{eq:fp lower bound}\\
 & =\frac{1}{\det A}\cdot m\gamma_{\min}.\nonumber 
\end{align}
However, it is impossible as $|f(p)|<m\gamma_{\min}$. We have finished
the proof. 

Finally, we show that $F$ is strict convex in $\mathcal{C}_{\Lambda}^{\kappa}(p)$.
\end{proof}
\begin{lem}
\label{lem:conv cont path} Notations as above. If $\mathring{\Lambda}$
satisfies $\mathcal{O}$-UP then 
\begin{equation}
\sum_{r,s,i,j}\left(F^{i\bar{j},r\bar{s}}(A)+F^{i\bar{s}}(A)A^{\bar{j}r}\right)B_{i\bar{j}}\overline{B_{s\bar{r}}}\geq\frac{f(p)}{\det A}A^{\bar{s}r}A^{\bar{j}i}B_{i\bar{j}}\overline{B_{s\bar{r}}}.\label{eq:convexity in continuous path}
\end{equation}
 Suppose further that for all $p\in M$,$f(p)>-\frac{m}{2n+1}\gamma_{\min}(\frac{\kappa}{m},n_{p},\mathbf{d}_{p},\mathbf{k}_{p}).$
Then,  $F$ is a strictly convex function in $\mathcal{C}_{\Lambda}^{\kappa}$.
\end{lem}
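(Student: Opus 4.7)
The plan is to obtain (\ref{eq:convexity in continuous path}) by a direct algebraic computation that separates $F=\sum_{k=1}^{n-1}F_k+\frac{f(p)}{\det A}$ and reduces the $F_k$-part to Proposition \ref{prop: F_k convexity}, and then to upgrade non-negativity to strict convexity in $\mathcal{C}_\Lambda^\kappa$ by invoking the cone-condition bound (\ref{eq:deri ineq}) from the proof of Lemma \ref{lem:H1''' inequality}. First I would compute the first and second derivatives of $\frac{f}{\det A}$ using the standard identities $\partial_{A_{i\bar j}}\log\det A=A^{\bar j i}$ and $\partial_{A_{r\bar s}}A^{\bar j i}=-A^{\bar j r}A^{\bar s i}$, which yields a second derivative containing the two contributions $\frac{f}{\det A}(A^{\bar j r}A^{\bar s i}+A^{\bar j i}A^{\bar s r})$. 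The key algebraic observation is that in the combination $F^{i\bar j,r\bar s}+F^{i\bar s}A^{\bar j r}$ the cross term $-\frac{f}{\det A}A^{\bar s i}A^{\bar j r}$ coming from $F^{i\bar s}$ cancels exactly one of these, leaving
\[
F^{i\bar j,r\bar s}+F^{i\bar s}A^{\bar j r}=\sum_{k=1}^{n-1}\bigl(F_k^{i\bar j,r\bar s}+F_k^{i\bar s}A^{\bar j r}\bigr)+\frac{f}{\det A}\,A^{\bar j i}A^{\bar s r}.
\]
Contracting with $B_{i\bar j}\overline{B_{s\bar r}}$ and applying Proposition \ref{prop: F_k convexity} to each $k$-summand gives (\ref{eq:convexity in continuous path}), since $A^{\bar j i}A^{\bar s r}B_{i\bar j}\overline{B_{s\bar r}}=|\text{tr}(A^{-1}B)|^2$ precisely matches the right-hand side.

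For the strict convexity statement I would restrict to nonzero Hermitian $B$, so that $A+tB$ is Hermitian along real $t$. Using $\overline{B_{s\bar r}}=B_{r\bar s}$, introducing $\tau:=\text{tr}(A^{-1}B)\in\mathbb{R}$, and letting $E:=BA^{-1}B$ (Hermitian non-negative, nonzero iff $B\neq 0$), the second variation decomposes as
\[
\sum F^{i\bar j,r\bar s}B_{i\bar j}B_{r\bar s}=\sum_{k=1}^{n-1}(\mathcal{Q}_k+\mathcal{R}_k)+\frac{f}{\det A}\bigl(\tau^2+\text{tr}(A^{-1}BA^{-1}B)\bigr),
\]
where $\mathcal{Q}_k:=\sum(F_k^{i\bar j,r\bar s}+F_k^{i\bar s}A^{\bar j r})B_{i\bar j}B_{r\bar s}\geq 0$ by Proposition \ref{prop: F_k convexity} and $\mathcal{R}_k:=-\sum F_k^{i\bar s}E_{i\bar s}\geq 0$ by Lemma \ref{lem:F_k monot}. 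Diagonalizing $E$ and applying the bound (\ref{eq:deri ineq}) eigenvector by eigenvector yields $\sum_{k=1}^{n-1}\mathcal{R}_k\geq\frac{m\gamma_{\min}}{\det A}\text{tr}(A^{-1}BA^{-1}B)$.

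The main obstacle is absorbing the potentially negative contribution $\frac{f}{\det A}\tau^2$ into this positive lower bound. The essential ingredient is the Cauchy--Schwarz estimate
\[
\tau^2=(\text{tr}\,X)^2\leq n\,\text{tr}(X^2)=n\,\text{tr}(A^{-1}BA^{-1}B)
\]
for the Hermitian matrix $X:=A^{-1/2}BA^{-1/2}$ with real eigenvalues, which follows from $(\sum\mu_i)^2\leq n\sum\mu_i^2$. When $f<0$ this yields $f\tau^2\geq nf\,\text{tr}(A^{-1}BA^{-1}B)$, and assembling all estimates produces
\[
\sum F^{i\bar j,r\bar s}B_{i\bar j}B_{r\bar s}\geq\frac{m\gamma_{\min}+(n+1)f}{\det A}\,\text{tr}(A^{-1}BA^{-1}B),
\]
while the case $f\geq 0$ is already immediate from $\sum_k\mathcal{R}_k>0$. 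Since $B\neq 0$ forces $X\neq 0$ and hence $\text{tr}(X^2)>0$, and since $\frac{1}{2n+1}<\frac{1}{n+1}$ so the hypothesis $f>-\frac{m}{2n+1}\gamma_{\min}$ implies $m\gamma_{\min}+(n+1)f>0$, the right-hand side is strictly positive, proving $F$ is strictly convex on $\mathcal{C}_\Lambda^\kappa$.
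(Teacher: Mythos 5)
Your proposal is correct and follows essentially the same route as the paper: inequality (\ref{eq:convexity in continuous path}) comes from the algebraic identity behind Corollary \ref{cor: convexity for f bigger than 0} together with Proposition \ref{prop: F_k convexity}, and strict convexity is obtained by combining the cone-condition lower bound (\ref{eq:deri ineq}) on $-\sum_k F_k^{i\bar j}$ with the Cauchy--Schwarz estimate $|\mathrm{tr}(A^{-1}B)|^{2}\leq n\,\mathrm{tr}(A^{-1}BA^{-1}B)$ to absorb the negative $f$-term, exactly as in the paper's (\ref{eq:strc cov-1})--(\ref{eq:strc conv 2-1}). Your bookkeeping even yields the slightly less restrictive threshold $f>-\frac{m}{n+1}\gamma_{\min}$, which is implied by the stated hypothesis $f>-\frac{m}{2n+1}\gamma_{\min}$.
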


\begin{proof}
 In a local normal coordinate of $\rho$ at $p$, we may assume that
$A=\text{diag}\{\lambda_{1},\cdots,\lambda_{n}\}$. By Proposition
\ref{prop: F_k convexity}, 
\begin{align}
\sum_{i,j,r,s}\left(F^{i\bar{j},r\bar{s}}+F^{i\bar{s}}A^{\bar{j}r}\right)B_{i\bar{j}}\overline{B_{s\bar{r}}} & \geq\frac{f(p)}{\det A}\left(A^{\bar{s}r}A^{\bar{j}i}\right)B_{i\bar{j}}\overline{B_{s\bar{r}}}\label{eq:convex with f}\\
 & =\frac{f(p)}{\det A}\left|\sum_{j}\frac{B_{j\bar{j}}}{\lambda_{j}}\right|^{2}.\nonumber 
\end{align}
By (\ref{eq:interm monot with f}), we have 
\begin{align}
-\sum_{i,j,r,s}F^{i\bar{s}}(A)A^{\bar{j}r}B_{i\bar{j}}\overline{B_{s\bar{r}}} & \geq\frac{1}{\det A}\left(\frac{m}{2}\gamma_{\min}(\frac{\kappa}{m},n_{p},\mathbf{d}_{p},\mathbf{k}_{p})+f(p)\right)\sum_{i,j}\frac{|B_{i\bar{j}}|^{2}}{\lambda_{i}\lambda_{j}}.\label{eq:strc cov-1}
\end{align}
Then by (\ref{eq:strc cov-1}), Cauchy inequality, and the assumption
on $f$, we have
\begin{align}
 & \sum_{i,j,r,s}\left(F^{i\bar{j},r\bar{s}}+\frac{1}{2}F^{i\bar{s}}A^{\bar{j}r}\right)B_{i\bar{j}}\overline{B_{s\bar{r}}}\label{eq:strc conv 2-1}\\
 & \geq\frac{1}{\det A}\sum_{j}\frac{|B_{j\bar{j}}|^{2}}{\lambda_{j}^{2}}\left(\frac{m}{2}\gamma_{\min}(\frac{\kappa}{m},n_{p},\mathbf{d}_{p},\mathbf{k}_{p})-(n+\frac{1}{2})|f(p)|\right)\nonumber \\
 & \geq0.\nonumber 
\end{align}
Since $F^{i\bar{j}}$ is strictly monotone in $\mathcal{C}_{\Lambda}^{\kappa}$
by Lemma \ref{lem:mononton with f}, by (\ref{eq:strc conv 2-1})
$F^{i\bar{j},r\bar{s}}$ is positive definite in $\mathcal{C}_{\Lambda}^{\kappa}$.
Thus, $F$ is strictly convex in $\mathcal{C}_{\Lambda}^{\kappa}$.
\end{proof}

\section{Continuity method\label{sec:Continuity-method}}

In this section, we prove the existence of a solution to (\ref{eq:equation with f})
assuming the existence of a subsolution $\omega_{\text{sub}}\in[\omega_{0}].$
The main result of this section is the following 
\begin{thm}
\label{thm:Analytic theorem}Let $M$ be a connected compact Kähler
manifold. Suppose $\Lambda$ satisfies \textbf{H2}. If there is a
Kähler metric $\omega_{\text{sub}}\in[\omega_{0}]$ satisfies the
cone condition (\ref{eq:cone condition}), then there exists a unique
smooth solution to equation (\ref{eq:equation with f}). 
\end{thm}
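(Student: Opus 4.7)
The plan is to prove Theorem \ref{thm:Analytic theorem} via Yau's continuity method, adapted to the presence of the form $\Lambda$ and the subsolution $\omega_{\text{sub}}$. I would introduce a one-parameter family of equations connecting the subsolution to the target PDE, for instance
\[
\kappa_t \frac{\omega_t^n}{n!} = \Lambda_t \wedge \exp(\omega_t)^{[n]},
\]
where $\Lambda_t$ interpolates between $\mathring{\Lambda} + \Lambda^{[n]}$ at $t=1$ and a simple form at $t=0$ for which $\omega_{\text{sub}}$ (or a close variant) is already a solution, with $\kappa_t$ adjusted so that the cohomological normalization is preserved. One must arrange the path so that $\omega_{\text{sub}}$ remains a subsolution of each intermediate equation; this is straightforward by convexity of $\mathcal{C}^\kappa_{\Lambda}$ established through $\mathcal{P}_\Lambda$ in Lemma \ref{lem:P_La property}. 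Let $T \subset [0,1]$ denote the set of $t$ for which a smooth $\omega_t \in [\omega_0]$ with the subsolution property exists; the goal is to show $T = [0,1]$.

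Openness of $T$ follows from the implicit function theorem applied in suitable Hölder spaces. Lemma \ref{lem:mononton with f} gives strict ellipticity of the linearization $-F^{i\bar j}\partial_i\partial_{\bar j}$ at any subsolution, and the operator has no kernel on the space of mean-zero functions by the strong maximum principle combined with the normalization $\int_M u = 0$; a standard Fredholm argument then produces a local family $\omega_t$.

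Closedness requires a priori estimates uniform in $t$. I would proceed in the now standard sequence: first a $C^0$ bound via Székelyhidi's ABP/blow-up argument \cite{szekelyhidi2018fully}, which uses only the cone condition and the monotonicity/ellipticity of $F$, both of which we have under \textbf{H2}; then a $C^2$ estimate obtained by applying the maximum principle to a quantity of the form $\log\lambda_{\max}(A) + \phi(|\nabla u|^2) + \psi(u)$, where $\phi,\psi$ are constructed from the subsolution as in Chen \cite{chen2021j}, Datar-Pingali \cite{datar2021numerical}, and Székelyhidi \cite{szekelyhidi2018fully}. The subsolution condition, through Lemma \ref{lem:H1''' inequality} and the strict inequality in (\ref{eq:first order mono}), supplies the extra positive term needed to absorb the bad terms coming from $\nabla^2 \Lambda^{[k]}$, which are controlled by the two-sided bounds in (\ref{eq:assump1})--(\ref{eq:assump 2}) of condition \textbf{H2}. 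A $C^1$ bound follows either by interpolation from $C^0$ and $C^2$, or by a Blocki-type gradient estimate. Concavity provided by Lemma \ref{lem:conv cont path} together with Evans--Krylov then upgrades to $C^{2,\alpha}$, and Schauder bootstrapping yields $C^\infty$ estimates.

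The main obstacle I anticipate is the second-order estimate. The difficulty is twofold: (i) $F$ is concave only in the region $\mathcal{C}^\kappa_\Lambda$, so one must know a priori that the solutions remain subsolutions along the continuity path, which is why preserving the cone condition in the choice of path and showing it persists under limits is essential; and (ii) the ``third-order'' terms generated by differentiating the coefficients $\Lambda^{[k]}$ are not directly dominated by the linearized operator, and their control requires the structural bounds in \textbf{H2} together with the quantitative first-order coercivity from Lemma \ref{lem:H1''' inequality}. Once both ingredients are in place, the argument proceeds as in the $J$-equation and generalized inverse $\sigma_k$ literature. Uniqueness follows immediately from strict convexity of $F$ in $\mathcal{C}^\kappa_\Lambda$ (Lemma \ref{lem:conv cont path}) via a standard comparison argument at the maximum of $u-v$ for two solutions.
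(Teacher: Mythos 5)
Your proposal follows essentially the same route as the paper: a continuity path deforming $\Lambda$ (the paper takes $\Lambda_t=t\mathring{\Lambda}+(tf+(1-t)\kappa_0)P^{[n]}$ so that $t=0$ is a Monge--Amp\`ere equation solved by Yau's theorem), openness via ellipticity of the linearization at subsolutions, the ABP $C^0$ bound, a maximum-principle $C^2$ estimate powered by the subsolution coercivity of Proposition \ref{prop: Fang-Lai-Ma1} and the \textbf{H2} derivative bounds, a blow-up gradient estimate, and Evans--Krylov plus Schauder for closedness. The only departures are cosmetic: your uniqueness argument via convexity and the strong maximum principle replaces the paper's global functional $\mathcal{F}$ in Appendix \ref{sec:Functional-and-uniqueness}, and both are valid.
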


\begin{rem}
Since a solution itself is a subsolution by Lemma \ref{lem:Datar-Pingali},
Theorem \ref{thm:Let--beanlytic} and Theorem \ref{thm:Let--beanlytic-1}
are immediate corollaries of Theorem \ref{thm:Analytic theorem}.
The uniqueness will be addressed in Appendix \ref{sec:Functional-and-uniqueness}. 
\end{rem}

To prove Theorem \ref{thm:Analytic theorem}, we use the continuity
method. Consider the following continuity path depending on the parameter
$t\in[0,1]$. Consider $\Omega_{t}=\exp\omega_{t}$ which solves the
PDE 
\begin{equation}
\kappa(\Omega_{t})^{[n]}=\left(t\mathring{\Lambda}\wedge\Omega_{t}\right)^{[n]}+(tf+\left(1-t\right)\kappa_{0})P^{[n]},\label{eq:contin path}
\end{equation}
where the constant $\kappa_{0}$ is chosen such that 
\[
\kappa[\omega_{0}]^{n}=\kappa_{0}[\rho]^{n}.
\]
Denote $f_{t}(p):=tf(p)+(1-t)\kappa_{0}$, and $\Lambda_{t}=t\mathring{\Lambda}+f_{t}P^{[n]}$,
then we may rewrite (\ref{eq:contin path}) as
\begin{equation}
\kappa(\Omega_{t})^{[n]}=\left(\Lambda_{t}\wedge\Omega_{t}\right)^{[n]}.\label{eq:new cont path}
\end{equation}
Let 
\[
\mathbf{I}=\{t\in[0,1]:(\ref{eq:contin path})\ \text{has a smooth solution}\}.
\]
Then, from Yau's theorem \cite{yau1978ricci}, there is a smooth $\kah$
metric $\hat{\omega}_{0}$ which solves (\ref{eq:new cont path})
at $t=0$. Without confusion, we replace $\omega_{0}$ by $\hat{\omega}_{0}$
and consider $\omega_{t}=\omega_{0}+i\ddbar\varphi_{t}$ which solves
(\ref{eq:contin path}) for $t\in\mathbf{I}\subset[0,1]$. Notice
the linearization of (\ref{eq:contin path}) is 
\begin{equation}
\left(\kappa\Omega_{t}-t\Lambda\wedge\Omega_{t}\right)^{[n-1]}\wedge i\ddbar u=(t\mathring{\Lambda}\wedge\Omega_{t})^{[n]}+\left(f(p)-\kappa_{0}\right)P^{[n]}.\label{eq:add31}
\end{equation}
By Lemma \ref{lem:Datar-Pingali}, $\left(\kappa\Omega_{t}-t\Lambda\wedge\Omega_{t}\right)^{[n-1]}>0$;
Therefore, (\ref{eq:add31}) is strictly elliptic, which implies the
openness of $\mathbf{I}$. If $\Lambda$ satisfies \textbf{H2}, then
for $t>0$, $\Lambda_{t}$ satisfies \textbf{H2 }with respect to proper
positive constants. In fact, $f_{t}(p)\geq0$ if $t\leq\frac{1}{2}$,
and since $m_{t}\geq\frac{m}{2}$ for $t\geq\frac{1}{2}$, we have
\[
f_{t}(p)\geq-\min\left\{ \frac{m_{t}}{2n+1}\gamma_{\min}(\frac{\kappa}{m_{t}},n_{p},\mathbf{d}_{p},\mathbf{k}_{p}),\frac{\kappa_{0}}{2}\right\} .
\]
Thus, if $\Lambda$ satisfies \textbf{H2,} $\Lambda_{t}$ satisfies
the following \textbf{H2' }condition. 
\begin{defn}
We say $\Lambda$ satisfies \textbf{H2' }if $\mathring{\Lambda}$
satisfies \textbf{H2 }with some uniform constant $m>0$, and $\Lambda^{[n]}$
is almost positive with respect to $(\mathcal{O},2m,\rho)$, i.e.
for any $p\in M$,
\[
\frac{\Lambda^{[n]}}{P^{[n]}}(p)\geq-\min\left\{ \frac{m}{2n+1}\gamma_{\min}(\frac{\kappa}{m},n_{p},\mathbf{d}_{p},\mathbf{k}_{p}),\frac{\kappa_{0}}{2}\right\} .
\]
\end{defn}

\begin{rem}
Readers may check that all arguments in section \ref{sec:Equations-with-negative}
are valid if condition \textbf{H2'} is assumed.
\end{rem}

In the following, by replacing $\Lambda$ with $\Lambda_{t}$, we
will suppress the subscript $t$ and derive a priori estimates for
equation (\ref{eq:equation with f}) assuming \textbf{H2'}. 

We proceed to prove a priori $C^{0}$-estimate of equation (\ref{eq:equation with f}).
The idea of using Alexandroff-Bakelman-Pucci type estimates based
on \cite{blocki2005uniform} follows \cite{szekelyhidi2018fully}.
Let 
\begin{equation}
\omega=\omega_{0}+i\ddbar\varphi\label{eq:-5}
\end{equation}
be a solution to (\ref{eq:equation with f}). Suppose that in the
$\kah$ class $[\omega_{0}]$ there is a $\omega_{\text{sub}}\in\mathcal{C}_{\Lambda}^{\kappa}$.
We denote 
\begin{equation}
\omega_{\text{sub}}=\omega_{0}+i\ddbar\varphi_{\text{sub}}\label{eq:-6}
\end{equation}
Let $u=\varphi-\varphi_{\text{sub}}$. 
\begin{prop}
\label{prop:C0 estimate}Suppose that $\Lambda$ satisfies \textbf{H2'}.
If $\omega=\omega_{\text{sub}}+i\ddbar u$ is a solution to (\ref{eq:equation with f})
and $\sup u=0$, then there is a constant $C$ depends on $n$, $M$,
$\Lambda$, $\omega_{\text{sub}}$, $\rho$ s.t. 
\[
\sup_{M}|u|<C.
\]
\end{prop}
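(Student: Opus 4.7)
The plan is to carry out an Alexandroff--Bakelman--Pucci (ABP) argument, in the spirit of B\l ocki \cite{blocki2005uniform} and Sz\'ekelyhidi \cite{szekelyhidi2018fully}. Normalize $\sup_M u = 0$ and set $L := -\inf_M u$; the goal is to rule out $L$ being arbitrarily large. Let $p_0 \in M$ attain the minimum of $u$, and choose a holomorphic coordinate ball $B_1(0)$ centered at $p_0$ in which the background metric $\rho$ is comparable to $\rho_0 := \tfrac{\sqrt{-1}}{2}\sum dz^i \wedge d\bar z^i$. Because the pointwise cone condition is an open condition (Section \ref{sec:Cone-condition}) and $\omega_{\text{sub}}$ is a smooth strict subsolution, one can fix a small $\epsilon > 0$, depending only on $\omega_{\text{sub}}, \Lambda, \rho$, so that $\tilde\omega_{\text{sub}} := \omega_{\text{sub}} - \epsilon\rho_0 \in \mathcal{C}_\Lambda^\kappa(p)$ for every $p \in B_1(0)$.

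The main step is to apply B\l ocki's version of the ABP inequality to a suitable auxiliary function built from $v(z) := u(z) + \epsilon|z|^2$. After a standard barrier/translation argument exploiting that $\sup_M u = 0$ is attained away from $p_0$ (cf.\ \cite[Lemma 9 and the $C^0$ estimate]{szekelyhidi2018fully}), one obtains a contact set $P \subset B_1(0)$ on which $D^2 v \geq 0$ (as a real symmetric form), together with a measure estimate of the shape
$$
c_n\Bigl(\tfrac{L}{C_0}\Bigr)^{2n} \leq \int_P \det(D^2 v) \, d\mathrm{vol}.
$$

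At every $y \in P$ the complex Hessian satisfies $v_{i\bar j}(y) \geq 0$, so writing $B := (v_{i\bar j}) \geq 0$ we have the pointwise decomposition $\omega(y) = \tilde\omega_{\text{sub}}(y) + B$. Since $\tilde\omega_{\text{sub}}(y) \in \mathcal{C}_\Lambda^\kappa(y)$ and $F(\omega) = \kappa$, Proposition \ref{prop:crite for Sub }(\ref{enu:TFAE 2}) supplies a uniform bound $|B(y)| \leq R$, with $R$ depending only on $\omega_{\text{sub}}, \Lambda, n, \sup|f|$. Combined with B\l ocki's pointwise inequality $\det(D^2 v) \leq 2^{2n}(\det v_{i\bar j})^2$, valid wherever $v_{i\bar j} \geq 0$, this forces $\det(D^2 v) \leq C$ on all of $P$. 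Inserting this into the ABP bound yields $c_n(L/C_0)^{2n} \leq C \cdot \mathrm{vol}(B_1(0))$, which is impossible for $L$ sufficiently large, yielding the desired contradiction.

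The main obstacle is producing the ABP inequality with an explicit dependence on $L$ on the left-hand side; this is handled by Sz\'ekelyhidi's barrier construction, which requires choosing the correct translate/perturbation of $v$ so that $v$ is forced to be very negative at $p_0$ compared to its boundary values on an appropriate ball, with quantitative control proportional to $L$. A secondary subtlety is ensuring that the constant $R$ in Proposition \ref{prop:crite for Sub }(\ref{enu:TFAE 2}) is uniform as $y$ varies over $B_1(0)$; this follows from compactness of $M$, smoothness of $\omega_{\text{sub}}, \Lambda, \rho$, and the openness of $\mathcal{C}_\Lambda^\kappa$ established in Section \ref{sec:Cone-condition}. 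Once these are in place, the argument is standard.
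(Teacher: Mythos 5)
Your overall strategy (ABP at the minimum point, plus the subsolution criterion of Proposition \ref{prop:crite for Sub }(\ref{enu:TFAE 2}) to bound $\det(D^2 v)$ on the contact set) is the right one and matches the paper, but there is a genuine gap in the one step you flagged as "the main obstacle": the inequality
\[
c_n\Bigl(\tfrac{L}{C_0}\Bigr)^{2n}\ \leq\ \int_P\det(D^2v)\,d\mathrm{vol}
\]
is not available. To put $L$ on the left-hand side of the ABP inequality you would need to know that $v$ on $\partial B_1(0)$ exceeds $v(p_0)=-L$ by an amount comparable to $L$; but the normalization $\sup_M u=0$ is a global statement on $M$, and nothing prevents $u$ from being within $O(1)$ of its minimum on the entire coordinate ball $B_1(p_0)$, with the supremum attained far away. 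In that situation the gradient image of the contact set only contains a ball of radius $O(1)$, not $O(L)$, and no "translate/perturbation of $v$" can fix this --- the barrier construction you attribute to Sz\'ekelyhidi does not produce this stronger inequality; his argument (and the paper's Proposition \ref{prop:C0 estimate}) uses the fixed-radius version $c(n)a^{2n}\leq\int_W\det(D^2w)$ with $a$ a small constant independent of $L$.

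The missing ingredient is the a priori $L^1$ bound $\|u\|_{L^1(\rho^n)}\leq C$, which the paper derives first from $\Delta_\rho u>-\mathrm{tr}_\rho\,\omega_{\text{sub}}>-C$ and the Green's function representation. With that in hand, the dependence on $L$ enters through the \emph{volume} of the contact set rather than the radius of the gradient image: on $W$ one has $|Dw|<a$ and hence $w\leq -L+a$, so $\mathrm{vol}(W)\leq \|w\|_{L^1}/(L-a)\leq C/(L-a)$; combining this with the lower bound $c(n)a^{2n}\leq C\,\mathrm{vol}(W)$ coming from ABP and the Hessian bound gives $L\leq C(1/a^{2n}+1)$. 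Your proposal omits the $L^1$ estimate entirely, and without it the contradiction cannot be closed. The remaining ingredients of your argument (openness of the cone, uniformity of $R$ by compactness, B\l ocki's inequality $\det(D^2v)\leq 2^{2n}(\det v_{i\bar j})^2$) are all correct and are exactly what the paper uses.
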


Suppose that $\underline{A}\in\mathcal{C}_{\Lambda}^{\kappa}(p)$.
Since the $\mathcal{C}_{\Lambda}^{\kappa}(p)$ is open, $\text{dist}(\underline{A},\pdv\mathcal{C}_{\Lambda}^{\kappa}(p))>0$.
Thus there is a $0<r<\text{dist}(\underline{A},\mathcal{C}_{\Lambda}^{\kappa}(p))$
s.t. the radius $r$ ball $\mathcal{B}_{r}(\underline{A})$ in Hermitian
matrix space is contained in $\mathcal{C}_{\Lambda}^{\kappa}(p)$
. As a result, we have 
\begin{lem}
If $\underline{A}\in\mathcal{C}_{\Lambda}^{\kappa}(p)$, then there
is a constant $r=r(\underline{A},\Lambda)$ s.t. $\underline{A}-r\text{Id}\in\mathcal{C}_{\Lambda}^{\kappa}(p)$. 
\end{lem}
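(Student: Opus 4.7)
The plan is a one-line openness argument that essentially just unwinds the paragraph immediately preceding the lemma. The cone condition (\ref{eq:cone condition}) is a strict pointwise inequality on a continuous function of $A$, so $\mathcal{C}_{\Lambda}^{\kappa}(p)$ is an open subset of $\Gamma_{n\times n}^{+}$; moreover, as noted in the line before the lemma, there is some radius $r_{0} = r_{0}(\underline{A},\Lambda) > 0$ such that the Hermitian-matrix ball $\mathcal{B}_{r_{0}}(\underline{A})$ is contained in $\mathcal{C}_{\Lambda}^{\kappa}(p)$. After possibly shrinking $r_{0}$, we may assume $\mathcal{B}_{r_{0}}(\underline{A}) \subset \Gamma_{n\times n}^{+}$ as well, which is allowed because $\underline{A}$ is positive definite and $\Gamma_{n\times n}^{+}$ is an open subset of the space of Hermitian matrices.

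The idea then is simply to choose $r > 0$ small enough that $\|r\,\mathrm{Id}\| < r_{0}$ in whichever matrix norm is being used (for example, take $r < r_{0}/\sqrt{n}$ for the Frobenius norm, or $r < r_{0}$ for the operator norm — any fixed finite-dimensional norm works since all such norms are equivalent). With this choice, $\underline{A} - r\,\mathrm{Id}$ lies in $\mathcal{B}_{r_{0}}(\underline{A})$ and therefore in $\mathcal{C}_{\Lambda}^{\kappa}(p)$, which is exactly the claim. The constant $r$ depends on $\underline{A}$ (through $r_{0}$ and through the minimal eigenvalue of $\underline{A}$, controlling positivity) and on $\Lambda$ (through the defining inequality of the cone condition), as asserted.

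There is essentially no obstacle here: the lemma is a direct consequence of the openness remarked in the line above it, combined with the trivial fact that $-\mathrm{Id}$ is a fixed direction in the Hermitian space whose scalar multiples sweep out a ray. The only mild subtlety is consistency in the choice of norm used to define $\mathcal{B}_{r_{0}}$, but this is inessential. Since the step following the lemma appears to use this fact as a technical input for $C^{0}$ estimates (providing a uniformly positive ``gap'' from the cone boundary in the $-\mathrm{Id}$ direction), the proof need not establish any quantitative dependence beyond what the openness already guarantees.
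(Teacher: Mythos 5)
Your proof is correct and is essentially the same as the paper's: the lemma is stated as an immediate consequence of the openness of $\mathcal{C}_{\Lambda}^{\kappa}(p)$ noted in the preceding paragraph, exactly as you argue. Your additional remark that one must also keep $\underline{A}-r\,\mathrm{Id}$ positive definite is a sensible (if implicit in the paper) precaution and does not change the argument.
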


\begin{proof}[Proof of Proposition \ref{prop:C0 estimate}]
Since $\omega$ is positive, we have 
\begin{equation}
\Delta_{\rho}u>-\text{tr}_{\rho}\omega_{\text{sub}}>-C(\omega_{\text{sub}},\rho).\label{eq:Delta u}
\end{equation}
Thus, we can use Green function representation to obtain that $\|u\|_{L^{1}(\rho^{n})}<C(\omega_{\text{sub}},\rho)$.

Let $L=-\inf_{M}u$ and assume that $L$ is achieved at $x_{0}$.
Pick a normal coordinate of $\rho$ at $x_{0}$. After a proper rescaling
of $\rho,$ we may assume that the chosen coordinate exists in the
unit ball $B_{1}(0)\subset\C^{n}$. For $x\in B_{1}(0)$, we pick
a uniform $r=r(\omega_{\text{sub}},\Lambda)$ s.t. $\omega_{\text{sub}}-r\sqrt{-1}\ddbar|x|^{2}$
belongs to $\mathcal{C}_{\Lambda}^{\kappa}(x)$ for all $x\in B_{1}(0)$.
Let $a>0$ and $a<r/2$. Let $w=u+a|x|^{2}$. Note $w>-L+a$ on $\pdv B_{1}(0)$.
Define the following set: 
\[
W=\{x\in B_{1}(0):|Dw(x)|<a,\ w(y)\ge w(x)+Dw(x)\cdot(y-x)\}.
\]
We use Alexandroff-Bakelman-Pucci maximum principle (Gilbarg-Trudinger
Lemma 9.2) to claim that 
\[
B_{a}(0)\subset Dw(W).
\]
In $W$, we have $D^{2}w\geq0$ and hence 
\begin{equation}
c(n)a^{2n}\leq\int_{W}\det(D^{2}w)\leq2^{2n}\int_{W}\left(\det w_{i\bar{j}}\right)^{2}.\label{eq:vol of P}
\end{equation}
As $D^{2}w\geq0$ in $W$, $D^{2}u\geq-2a\text{Id}_{2n}$ which implies
that $u_{i\bar{j}}+a\delta_{i\bar{j}}\geq0$ as a Hermitian matrix.
Since $a<r/2$, $\omega_{\text{sub}}-a\sqrt{-1}\ddbar|x|^{2}\in\mathcal{C}_{\Lambda}^{\kappa}(x)$,
we apply Proposition \ref{prop:crite for Sub } to $\omega_{\text{sub}}-a\sqrt{-1}\ddbar|x|^{2}$
to conclude that $|w_{i\bar{j}}|<R(\Lambda,\omega_{\text{sub}})$.
By (\ref{eq:vol of P}), we have 
\begin{equation}
c(n)a^{2n}\leq C(\Lambda,\omega_{\text{sub}},n)\text{vol}(W).\label{eq:add33}
\end{equation}
On the other hand, it is obvious that
\begin{equation}
\text{vol}(W)\leq\frac{\|w\|_{L^{1}}}{|L-a|}\leq\frac{C(\omega_{\text{sub}},\rho)}{|L-a|}.\label{eq:add32}
\end{equation}
Thus by (\ref{eq:add33}) and (\ref{eq:add32}),
\[
L<C(\Lambda,\omega_{\text{sub}},\rho,n)\left(\frac{1}{a^{2n}}+1\right)<C(\Lambda,\omega_{\text{sub}},\rho,n).
\]
\end{proof}
Next, we state a $C^{2}$-estimate for solutions to (\ref{eq:contin path}). 
\begin{prop}
\label{prop:C2 estimate} Let $\omega_{\text{sub}}\in\mathcal{C}_{\Lambda}^{\kappa}$.
Let $u=\varphi-\varphi_{\text{sub}}$ where $\varphi,\varphi_{\text{sub}}$
are given in (\ref{eq:-5}),(\ref{eq:-6}) and $\omega=\omega_{\text{sub}}+\sqrt{-1}\ddbar u$
solves (\ref{eq:equation with f}). If $\Lambda$ satisfies \textbf{H2'},
and $F^{i\bar{j},r\bar{s}},F^{i\bar{j}}$ satisfies inequality (\ref{eq:convexity in continuous path})
then it holds that 
\[
|\ddbar u|_{\rho}<C.
\]
where $C$ depends on $\|u\|_{C^{0}},M,\Lambda,\omega_{\text{sub}}$,$n,m,\kappa,C_{H2}$,
$\rho$. 
\end{prop}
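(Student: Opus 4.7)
The plan is to reduce the estimate on $|\ddbar u|_\rho$ to a one-sided bound on the largest eigenvalue $\lambda_1$ of the complex Hessian matrix $(A_{i\bar j})$ of $\omega$ with respect to $\rho$. Since $\omega = \omega_{\text{sub}} + \sqrt{-1}\ddbar u$ is a positive $(1,1)$-form and $\omega_{\text{sub}}$ is fixed, an upper bound $\lambda_1 \leq C$ yields a two-sided bound on all entries of $u_{i\bar j}$, and then, by standard arguments using the bound on $\Delta_\rho u$ and the PDE (\ref{eq:equation with f}) itself, a full bound on $|\ddbar u|_\rho$. The $C^0$-estimate of Proposition \ref{prop:C0 estimate} is in hand, so we may freely use $\|u\|_{C^0}\leq C$ below.

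Following the now-standard Hou--Ma--Wu / Sz\'ekelyhidi strategy, I would consider the test function
\begin{equation*}
Q(p,\xi) = \log\bigl(A_{i\bar j}\xi^i\bar\xi^j\bigr) + \phi(|\nabla u|^2_\rho) + N\psi(u),
\end{equation*}
where $\xi$ ranges over unit $(1,0)$ vectors, $\phi,\psi$ are auxiliary real functions to be chosen (typically $\phi(s) = -\frac{1}{2}\log(2K-s)$ and $\psi(u) = e^{-Bu}$ or similar), and $N,B,K$ are large constants to be fixed. By compactness, $Q$ attains its maximum at some $(p_0,\xi_0)$; choose a normal coordinate for $\rho$ at $p_0$ in which $(A_{i\bar j})$ is diagonal with $\lambda_1=A_{1\bar 1}\geq \cdots\geq \lambda_n$ and $\xi_0 = \pdv/\pdv z^1$. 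Then at $p_0$ one may locally replace $\log(A_{i\bar j}\xi^i\bar\xi^j)$ by $\log A_{1\bar 1}$, modulo controllable lower-order terms. Apply the linearized operator $L = -F^{i\bar j}\nabla_i\nabla_{\bar j}$ (strictly elliptic by Lemma \ref{lem:mononton with f}) to $Q$, differentiate the equation $F(A)=\kappa$ once and twice in the direction $\pdv_1\bar\pdv_1$, and use the convexity inequality (\ref{eq:convexity in continuous path}) to absorb the second-order terms from $\log\lambda_1$. The \textbf{H2'} bounds (\ref{eq:assump1})--(\ref{eq:assump 2}) on $\nabla\Lambda^{[k]}$ and $\nabla^2\Lambda^{[k]}$ feed into these derivatives and produce only terms that can be bounded by $C_{H2}$ times the already-appearing elliptic quantities.

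The heart of the argument is controlling the ``bad'' cubic terms $-\frac{F^{i\bar i}|A_{1\bar 1,i}|^2}{\lambda_1^2}$ produced by $\log\lambda_1$. I would split into two cases using a small parameter $\delta>0$. In the \emph{non-degenerate case} $\lambda_n \geq \delta\lambda_1$, uniform ellipticity gives a uniform lower bound on every $-F^{i\bar i}$ (via Lemma \ref{lem:H1''' inequality} with $\chi_{\mathcal B}$ coming from the kernel of $\xi^{\sharp}$), and the convexity of $F$ exactly matches the bad terms in the standard way. In the \emph{degenerate case} $\lambda_n<\delta\lambda_1$, the subsolution enters decisively: by Proposition \ref{prop:crite for Sub } and the quantitative gap (\ref{eq:quantitative sub}), $\mathcal{P}_\Lambda(A_{\text{sub}}) < \kappa - \eta$ for some $\eta=\eta(\omega_{\text{sub}},\Lambda)>0$, which produces a definite negative contribution of size $\eta \sum_i (-F^{i\bar i})\lambda_i$ in the expression $L(N\psi(u))$ after a standard integration-by-parts/identity trick. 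Choosing $N$ large enough absorbs all remaining bad terms.

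The main obstacle, as always in this framework, is the third-order term in the degenerate case: because $F$ here is only \emph{convex} (rather than concave, as in the inverse $\sigma_k$ formulation), one must be careful with signs, and because $F$ depends on the full matrix $A$ (not only on its eigenvalues), the extra mixed terms $F^{i\bar j,r\bar s}$ with $i\neq j$ or $r\neq s$ must be handled through Proposition \ref{prop: F_k convexity} rather than the purely diagonal concavity identity used for symmetric polynomials. The key observation that makes the scheme work is that the Hermitian form $\Theta_k(B,B)$ in (\ref{eq:Theta_k}) is \emph{strongly positive}, so one can contract against $\exp\chi_{\mathcal B}$ restricted to the hyperplane $\xi_0^\perp$ and invoke Lemma \ref{lem:H1''' inequality} to get an explicit quantitative lower bound in terms of $m\gamma_{\min}$, independent of $\lambda_1$. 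Once this is set up, the remainder of the proof is the routine choice of constants $N,B,K,\delta$ to close the maximum principle inequality.
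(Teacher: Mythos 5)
Your overall strategy (maximum principle applied to a quantity built from $\log\lambda_{1}$, convexity of $F$ to absorb the differentiated equation, the subsolution to produce the decisive negative term) is the right one, but the specific route you choose diverges from the paper's, and the divergence is exactly where the gaps sit. The paper's proof uses the simpler test function $H=\log g_{1\bar{1}}-\phi(u)$ with $\phi$ a function of $u$ alone (no gradient term), runs the maximum principle using Lemma \ref{lem:ddbar F}, the convexity inequality (\ref{eq:convexity in continuous path}), and the subsolution inequality of Proposition \ref{prop: Fang-Lai-Ma1} (namely $F^{i\bar{j}}u_{,i\bar{j}}\geq\mu(1-\sum_{i}F^{i\bar{i}})$ once $|\ddbar u|>N$), and deliberately settles for the gradient-dependent bound $|\ddbar u|\leq C(1+|\nabla u|^{2})$ of (\ref{eq:ddbar u}); the gradient is then eliminated by the blow-up argument of Collins--Jacob--Yau. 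There is no case analysis on the eigenvalues at all, and no Hou--Ma--Wu gradient term in the test function.

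Your variant has two concrete problems. First, in your ``non-degenerate case'' you assert that uniform ellipticity gives a uniform lower bound on every $-F^{i\bar{i}}$; this is false for the operators in this paper. The quantitative lower bound actually available, (\ref{eq:deri ineq}), reads $-\sum_{k}F_{k}^{i\bar{j}}b_{i}\bar{b}_{j}\geq m\gamma_{\min}\|b\|_{\omega}^{2}/\det A$, and for $b=dz^{1}$ the right-hand side behaves like $(\lambda_{1}\det A)^{-1}$: the cone condition bounds $\det A$ from below (Lemma \ref{lem: induced cone}) but not from above, so these coefficients degenerate precisely in the regime $\lambda_{1}\to\infty$ that you need to rule out. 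Second, the degenerate-case absorption of the third-order terms in the Hou--Ma--Wu scheme rests on structural inequalities specific to $\sigma_{k}$-type operators (e.g.\ a bound of the form $F^{1\bar{1}}\geq\theta\sum_{i}F^{i\bar{i}}$ in the bad case); for the general operator $F(A)=\langle\Lambda,\exp\chi\rangle$ no such inequality is established anywhere in the paper, and your appeal to the strong positivity of $\Theta_{k}$ together with Lemma \ref{lem:H1''' inequality} does not substitute for it, again because those estimates carry the uncontrolled factor $1/\det A$. To repair the argument you must either prove the missing structural inequality for this class of operators, or drop the gradient term from the test function and accept the bound $|\ddbar u|\leq C(1+|\nabla u|^{2})$, closing with the blow-up argument as the paper does.
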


We first prove some technical lemmas. 
\begin{lem}
\label{lem: tech}Notations as above. We have 
\[
-F^{i\bar{j}}A_{i\bar{j}}\leq nF(A).
\]
\end{lem}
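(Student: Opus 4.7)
The plan is to reduce $-F^{i\bar j}A_{i\bar j}$ to a weighted sum $\sum k F_k(A)$ and then compare it to $nF(A) = n\sum F_k(A)$, using the positivity of the components $\Lambda^{[k]}$ for $k\le n-1$ that comes from the $\mathcal{O}$-UP hypothesis.

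First, I will use the explicit first-variation formula (\ref{eq:lem of comp}) to compute
\[
F_k^{i\bar j}A_{i\bar j}=-\sum_{a,b,i,j}A^{\bar a i}A^{\bar j b}A_{i\bar j}\Bigl\langle\Lambda^{[k]},\frac{\chi^{k-1}}{(k-1)!}\wedge 2\sqrt{-1}\frac{\partial}{\partial\bar z^{a}}\wedge\frac{\partial}{\partial z^{b}}\Bigr\rangle.
\]
Contracting the inverse matrices gives $\sum_{i,j}A^{\bar a i}A^{\bar j b}A_{i\bar j}=A^{\bar a b}$, and then $\sum_{a,b}A^{\bar a b}\,2\sqrt{-1}\,\tfrac{\partial}{\partial\bar z^a}\wedge\tfrac{\partial}{\partial z^b}=\chi$ by Definition~\ref{def:We-define-a}. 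Therefore
\[
F_k^{i\bar j}A_{i\bar j}=-\Bigl\langle\Lambda^{[k]},\frac{\chi^{k-1}}{(k-1)!}\wedge\chi\Bigr\rangle=-k\,\Bigl\langle\Lambda^{[k]},\frac{\chi^{k}}{k!}\Bigr\rangle=-kF_k(A),
\]
where the last equality uses Lemma~\ref{lem:rewrite}.

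Next I will handle the top component. Writing $\Lambda^{[n]}=fP^{[n]}$ gives the extra contribution $-\tfrac{f}{\det A}A^{\bar j i}$ in $F^{i\bar j}$, and since $\sum_{i,j}A^{\bar j i}A_{i\bar j}=n$, this piece contributes $-nf/\det A$ to $F^{i\bar j}A_{i\bar j}$. Combining with the previous step,
\[
-F^{i\bar j}A_{i\bar j}=\sum_{k=1}^{n-1}kF_k(A)+n\cdot\frac{f}{\det A}.
\]

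Finally I will compare this with $nF(A)=n\sum_{k=1}^{n-1}F_k(A)+n\cdot\frac{f}{\det A}$, giving
\[
nF(A)-\bigl(-F^{i\bar j}A_{i\bar j}\bigr)=\sum_{k=1}^{n-1}(n-k)F_k(A).
\]
The conclusion follows provided each $F_k(A)\ge 0$ for $1\le k\le n-1$. This is where the $\mathcal{O}$-uniform positivity of $\mathring{\Lambda}$ enters: by Definition~\ref{def:O-uniform positive} each component $\Lambda^{[k]}$ with $k\le n-1$ is a positive $(k,k)$-form, while $\chi^{k}/k!$ is strongly positive (Lemma~\ref{lem:exp storng positive} applied to the Hermitian dual $A^{-1}\ge 0$), so $F_k(A)=\langle\Lambda^{[k]},\chi^k/k!\rangle\ge 0$. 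No step poses a real obstacle; the only subtlety is making sure the $(n,n)$-part contributes identically to both sides and thus drops out of the comparison, which is why the inequality survives even when $f$ may be negative under \textbf{H2'}.
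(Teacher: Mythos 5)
Your proposal is correct and follows essentially the same route as the paper: the paper invokes the degree $-k$ homogeneity of $F_k$ to get $-F_k^{i\bar j}A_{i\bar j}=kF_k(A)$ (which you verify explicitly from the first-variation formula), notes the top-degree term contributes $nf/\det A$ to both sides, and then bounds $\sum_k kF_k\le n\sum_k F_k$ using $F_k\ge 0$. Your extra remark making the nonnegativity of each $F_k(A)=\langle\Lambda^{[k]},\chi^k/k!\rangle$ explicit is a correct justification of the step the paper leaves implicit.
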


\begin{proof}
Since each $F_{k}$ is homogeneous of degree $-k$, we have 
\begin{align*}
-\frac{\pdv F(A)}{\pdv A_{i\bar{j}}}A_{i\bar{j}} & =\sum_{k=1}^{n-1}kF_{k}(A)+\frac{nf}{\det A}\\
 & \leq\sum_{k=1}^{n-1}nF_{k}(A)+\frac{nf}{\det A}\\
 & =nF(A).
\end{align*}
\end{proof}
Let $\{z^{i}\}$ be a normal coordinate at $p$. Pick the direction
$\frac{\pdv}{\pdv z^{1}}$ and denote $\Lambda_{,1},\Lambda_{,1\bar{1}}$
to be the corresponding covariant derivatives of $\Lambda$ with respect
to the Chern connection of $\rho$. We denote 
\begin{align}
F_{,1\bar{1}} & =F(A,\Lambda_{,1\bar{1}}),\ F_{,\bar{1}}^{i\bar{j}}=\frac{\pdv F\left(A,\Lambda_{,\bar{1}}\right)}{\pdv A_{i\bar{j}}},\label{eq:F_11}\\
F_{k,1\bar{1}} & =F_{k}(A,\Lambda_{,1\bar{1}}),\ F_{k,\bar{1}}^{i\bar{j}}=\frac{\pdv F_{k}\left(A,\Lambda_{,\bar{1}}\right)}{\pdv A_{i\bar{j}}}.\label{eq:F_k11}
\end{align}

\begin{lem}
\label{lem:tech2} Notations as above. If $\Lambda$ satisfies \textbf{H2'},
and $F(A)\geq\kappa$, we have 
\begin{equation}
\left|F_{,1\bar{1}}\right|<C_{\ref{lem:tech2}}F(A),\label{eq:F_11 estimate}
\end{equation}
\begin{equation}
\left|2\text{Re}\left(F_{k,\bar{1}}^{i\bar{j}}B_{i\bar{j}}\right)\right|\leq\frac{C_{\ref{lem:tech2}}}{\epsilon}F(A)+C_{\ref{lem:tech2}}\epsilon F^{i\bar{s}}A^{r\bar{j}}B_{i\bar{j}}\overline{B_{s\bar{r}}},\label{eq:F_k,11 estimate}
\end{equation}
for some $C_{\ref{lem:tech2}}=C_{\ref{lem:tech2}}(\kappa,n,m,C_{H2})$. 
\end{lem}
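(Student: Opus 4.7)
The plan is to reduce both inequalities to pointwise consequences of the \textbf{H2'} bounds~\eqref{eq:assump1}--\eqref{eq:assump 2} via the representation formulas of Lemma~\ref{lem:rewrite} and~\eqref{eq:lem of comp}, and to control the auxiliary forms $\prod_i\rho_i^{l_i}$ appearing on the right of~\eqref{eq:assump 2} using the block Newton--Maclaurin estimates of Lemmas~\ref{lem: quotient k-hessian-1} and~\ref{lem:spplict 2}. Specifically, for~\eqref{eq:F_11 estimate} I would expand $F_{,1\bar 1}=\sum_{k=1}^{n}\langle\Lambda^{[k]}_{,1\bar 1},\chi^{k}/k!\rangle$; since $\chi^{k}/k!$ is strongly positive (Lemma~\ref{lem:exp storng positive}), the H2' estimate~\eqref{eq:assump 2} pairs off to
\[
|F_{,1\bar 1}|\le C_{H2}\sum_{k}F_{k}(A)+C_{H2}\sum_{k}\sum_{l\in\boldsymbol{l}_{k}}\Bigl\langle\prod_{i}\rho_{i}^{l_{i}},\tfrac{\chi^{k}}{k!}\Bigr\rangle.
\]
The first sum equals $F(A)-f/\det A$. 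For the second, the cone condition~\eqref{eq:cone condition} holds at the solution by Lemma~\ref{lem:Datar-Pingali}, whence Lemma~\ref{lem: induced cone} gives $\sigma_{k_{i}}(A^{-1}|_{\mathcal{V}_{i}})\le\kappa/m$; Newton--Maclaurin (Lemma~\ref{lem: quotient k-hessian-1}\,(1)) then extends this to every $\sigma_{l_{i}}(A^{-1}|_{\mathcal{V}_{i}})$ with $l_{i}\ge k_{i}$. Dominating each $\langle\prod_i\rho_{i}^{l_{i}},\chi^{k}/k!\rangle$ by products of these bounded $\sigma_{l_{i}}$'s via Lemma~\ref{lem:spplict 2} yields an absolute bound, which is turned into a multiple of $F(A)$ using $F(A)\ge\kappa$; the $f/\det A$ term is treated identically using $|f|<m\gamma_{\min}$.

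For~\eqref{eq:F_k,11 estimate}, differentiating~\eqref{eq:lem of comp} with $\Lambda$ replaced by $\Lambda_{,\bar 1}$ and contracting against $B$ gives $F_{k,\bar 1}^{i\bar j}B_{i\bar j}=-\langle\Lambda^{[k]}_{,\bar 1},\chi^{k-1}/(k-1)!\wedge\zeta\rangle$ with $\zeta$ and $C=A^{-1}BA^{-1}$ as in~\eqref{eq:noname}--\eqref{eq:zeta}. After majorising $\pm\mathrm{Re}(\Lambda^{[k]}_{,\bar 1})$ by $C_{H2}(\Lambda^{[k]}+Q_{k})$ through~\eqref{eq:assump1}, where $Q_{k}=\sum_{l\in\boldsymbol{l}_{k}}\prod_{i}\rho_{i}^{l_{i}}$, I apply a polarization / Cauchy--Schwarz inequality of the form
\[
\bigl|2\,\mathrm{Re}\langle P,\tfrac{\chi^{k-1}}{(k-1)!}\wedge\zeta\rangle\bigr|
\le\epsilon\,\langle P,\Theta_{k}(B,B)\rangle+\epsilon^{-1}\bigl\langle P,\tfrac{\chi^{k}}{k!}\bigr\rangle,\qquad P\ge 0,
\]
where $\Theta_{k}(B,B)$ is the strongly positive $(k,k)$-vector defined in~\eqref{eq:Theta_k}; this follows from the strong positivity of $\Theta_{k}$ applied to $\zeta$ shifted by $\chi$. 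Taking $P=\Lambda^{[k]}+Q_{k}$ and summing over $k$, formula~\eqref{eq:inte Fijrs} from Proposition~\ref{prop: F_k convexity} identifies the $\Lambda^{[k]}$-contribution of the $\epsilon$-term with $\sum(F_{k}^{i\bar j,r\bar s}+F_{k}^{i\bar s}A^{\bar j r})B_{i\bar j}\overline{B_{s\bar r}}$, which summed over $k$ is dominated by $C\,\bigl(-F^{i\bar s}A^{\bar j r}B_{i\bar j}\overline{B_{s\bar r}}\bigr)$ via Lemma~\ref{lem:conv cont path} together with the strict monotonicity of Lemma~\ref{lem:mononton with f}; the $\epsilon^{-1}$-term is bounded by $\epsilon^{-1}CF(A)$ exactly as in the previous paragraph, and the $Q_{k}$-contributions are absorbed analogously using the same Newton--Maclaurin bounds.

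The main obstacle is the bookkeeping in both parts to collect the $Q_{k}$-contributions on the correct side (positive vs.\ negative, linear vs.\ quadratic in $B$) so that the resulting constant depends only on $\kappa,n,m,C_{H2}$ as announced in the statement of Lemma~\ref{lem:tech2}. The block Newton--Maclaurin estimates of Section~\ref{sec:Equations-with-negative} are sharp enough to make this work, provided one tracks the combinatorics over $\boldsymbol{l}_{k}$ and $1\le k\le n$ with care, and provided the polarization inequality above is proved with the constant made explicit in terms of the positivity structure of $\Theta_{k}$.
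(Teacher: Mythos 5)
Your treatment of (\ref{eq:F_11 estimate}) is the paper's argument: pair the \textbf{H2'} bound (\ref{eq:assump 2}) against the strongly positive $\chi^{k}/k!$, control the auxiliary terms $\langle\prod_{i}\rho_{i}^{l_{i}},\chi^{k}/k!\rangle$ through Lemma \ref{lem: induced cone} and Newton--Maclaurin, and convert the resulting absolute constant into a multiple of $F(A)$ using $F(A)\geq\kappa$. One small slip: the $(n,n)$-term requires a bound on $|f_{,1\bar 1}|$ (which sits inside $C_{H2}$, cf.\ Remark \ref{rem:If-,-then h1}) together with the bound on $\det A^{-1}$ from the cone condition; the pointwise bound $|f|<m\gamma_{\min}$ on $f$ itself is not what is needed there.

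For (\ref{eq:F_k,11 estimate}) you polarize at the level of the pairing, producing $\epsilon\langle P,\Theta_{k}(B,B)\rangle+\epsilon^{-1}\langle P,\chi^{k}/k!\rangle$, whereas the paper first proves the Hermitian-form domination $|\mathrm{Re}(F_{k,\bar 1}^{i\bar j}b_{i}\bar b_{j})|\leq-C\,F^{i\bar j}b_{i}\bar b_{j}$ in (\ref{eq:ineqau inter tech}) and then applies Cauchy--Schwarz to the shifted matrix $A+\epsilon B$ in (\ref{eq:re inner est}); the two routes are equivalent in substance. However, one step of yours is misattributed and, as cited, does not close: you need the \emph{upper} bound $\sum_{k}\langle\Lambda^{[k]},\Theta_{k}(B,B)\rangle\leq C\bigl(-F^{i\bar s}A^{\bar j r}B_{i\bar j}\overline{B_{s\bar r}}\bigr)$, but Lemma \ref{lem:conv cont path} only gives a \emph{lower} bound on $\sum\bigl(F^{i\bar j,r\bar s}+F^{i\bar s}A^{\bar j r}\bigr)B_{i\bar j}\overline{B_{s\bar r}}$. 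The correct justification is the coefficient computation already in the proof of Proposition \ref{prop: F_k convexity}: in the simultaneously diagonalizing frame the $\partial_{\bar J}\wedge\partial_{J}$-coefficient of $\Theta_{k}(B,B)$ is $\bigl(\sum_{i\in J}a_{i}\bigr)^{2}\leq k\sum_{i\in J}a_{i}^{2}$, the latter being $k$ times the corresponding coefficient of $\chi^{k-1}/(k-1)!\wedge\xi$; pairing with $\Lambda^{[k]}\geq0$ and summing, Lemma \ref{lem:mononton with f} then converts $-\sum_{k}F_{k}^{i\bar s}A^{\bar j r}B_{i\bar j}\overline{B_{s\bar r}}$ into $-2F^{i\bar s}A^{\bar j r}B_{i\bar j}\overline{B_{s\bar r}}$. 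With that substitution (and the $B=B^{R}+\sqrt{-1}B^{I}$ decomposition for non-Hermitian $B$, which you omit) your argument goes through; note finally that the usable form of (\ref{eq:F_k,11 estimate}) --- the one actually fed into Lemma \ref{lem:ddbar F} --- carries $-C\epsilon F^{i\bar s}A^{r\bar j}B_{i\bar j}\overline{B_{s\bar r}}$ on the right, the displayed sign in the statement being a typo.
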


\begin{proof}
From the cone condition, Lemma \ref{lem: induced cone}, and Newton-Maclaurin
inequality, we have 
\begin{equation}
\<\frac{\rho_{i}^{l_{i}}}{l_{i}!},\exp\chi\>\leq\binom{d_{i}}{l_{i}}\binom{d_{i}}{k_{i}}^{-\frac{l_{i}}{k_{i}}}(\frac{n\kappa}{m})^{\frac{l_{i}}{k_{i}}}.\label{eq:use cone conditiopn}
\end{equation}
By \textbf{H2'}, there is a constant $C_{H2}$ s.t. for $k=1,\cdots,n-1$,
it holds 
\[
-C_{H2}\left(\Lambda^{[k]}+\sum_{l\in\boldsymbol{l}_{k}}\rho_{1}^{l_{1}}\cdots\rho_{n_{p}}^{l_{n_{p}}}\right)\leq\left(\Lambda^{[k]}\right)_{,1\bar{1}}\leq C_{H2}\left(\Lambda^{[k]}+\sum_{l\in\boldsymbol{l}_{k}}\rho_{1}^{l_{1}}\cdots\rho_{n_{p}}^{l_{n_{p}}}\right),
\]
where $\boldsymbol{l}_{k}=\{(l_{1},\cdots,l_{n_{p}}):\sum_{i}l_{i}=k,\ l_{i}=0\ \text{or}\ l_{i}\geq k_{i}\}$.
Hence 
\begin{align}
|F_{k,1\bar{1}}| & \leq C_{H2}\left(F_{k}+\sum_{l\in\boldsymbol{l}_{k}}\prod_{i=1}^{n_{p}}\<\frac{\rho_{i}^{l_{i}}}{l_{i}!},\exp\chi\>\right)\label{eq:est F_k11}\\
 & =C_{H2}\left(F_{k}+\sum_{l\in\boldsymbol{l}_{k}}C_{1}(\frac{\kappa}{m},n_{p},\mathbf{d}_{p},\mathbf{k}_{p})\right)\nonumber \\
 & \leq C_{H2}\left(F_{k}+C_{2}(\frac{\kappa}{m},n_{p},\mathbf{d}_{p},\mathbf{k}_{p})\right)\nonumber 
\end{align}
By the cone condition and Lemma \ref{lem: induced cone},

\begin{equation}
\det A^{-1}\leq C_{3}(m,\kappa,n_{p},\mathbf{d}_{p},\mathbf{k}_{p}).\label{eq:det est}
\end{equation}
By (\ref{eq:est F_k11}) and (\ref{eq:det est}), we have 
\begin{align}
\left|F_{,1\bar{1}}\right| & <\sum_{k}|F_{k,1\bar{1}}|+|f_{,1\bar{1}}|\frac{1}{\det A}\label{eq:F_11 est}\\
 & <C_{4}F(A),\nonumber 
\end{align}
for some $C_{4}=C_{4}(\kappa,m,n_{p},\mathbf{d}_{p},\mathbf{k}_{p},C_{H2})$.

We use \textbf{H2' }to deduce that 
\begin{equation}
\left|\text{Re}\left(F_{k,\bar{1}}^{i\bar{j}}b_{i}\bar{b}_{j}\right)\right|\leq C_{H2}\left(-F_{k}^{i\bar{j}}b_{i}\bar{b}_{j}+\<\sum_{l\in\boldsymbol{l}_{k}}\rho_{1}^{l_{1}}\cdots\rho_{n_{p}}^{l_{n_{p}}},\exp\chi\wedge2\sqrt{-1}\overline{b^{\sharp}}\wedge b^{\sharp}\>\right).\label{eq:new inter gra}
\end{equation}
From (\ref{eq:spllit 4}) in Lemma \ref{lem:spplict 2}, we have 
\begin{equation}
\<\sum_{l\in\boldsymbol{l}_{k}}\rho_{1}^{l_{1}}\cdots\rho_{n_{p}}^{l_{n_{p}}},\exp\chi\wedge2\sqrt{-1}\overline{b^{\sharp}}\wedge b^{\sharp}\>=\sum_{i=1}^{n_{p}}\sum_{l\in\boldsymbol{l}_{k,i}}\left(\prod_{j\not=i}\<\frac{\rho_{j}^{l_{j}}}{l_{j}!},e^{\chi}\>\right)\<\frac{\rho_{i}^{l_{i}}}{l_{i}!},e^{\chi}\wedge2\sqrt{-1}\overline{b^{\sharp}}\wedge b^{\sharp}\>,\label{eq:new inte grad 2}
\end{equation}
where $\boldsymbol{l}_{k,i}=\{(l_{1},\cdots,l_{n_{p}})\in\boldsymbol{l}_{k}:l_{i}\geq1\}$.
By (\ref{eq:lem T_k}) in Lemma \ref{lem: quotient k-hessian-1} and
Newton-Maclaurin inequality, for $l\geq k_{i}$ and any positive $d_{i}\times d_{i}$
Hermitian matrix $D$, we have 
\begin{align}
T_{l-1}(D) & \leq\frac{\sigma_{l-1}(D)}{\sigma_{k_{i}-1}(D)}T_{k_{i}-1}(D)\label{eq:from kenm T_k-1}\\
 & \leq C(d_{i},l,k_{i})\frac{\sigma_{k_{i}-1}(D)(\sigma_{k_{i}}(D))^{\frac{l-k_{i}}{k_{i}}}}{\sigma_{k_{i}-1}(D)}T_{k_{i}-1}(D)\nonumber \\
 & =C(d_{i},l,k_{i})(\sigma_{k_{i}}(D))^{\frac{l-k_{i}}{k_{i}}}T_{k_{i}-1}(D).\nonumber 
\end{align}
We apply (\ref{eq:from kenm T_k-1}) to $A^{-1}|_{\mathcal{V}_{i}}$
and use (\ref{eq:use cone conditiopn}) to obtain 
\begin{equation}
\<\frac{\rho_{i}^{l-1}}{(l-1)!},\exp\chi\wedge2\sqrt{-1}\overline{b^{\sharp}}\wedge b^{\sharp}\>\leq C_{5}(\frac{\kappa}{m},n_{p},\mathbf{d}_{p},\mathbf{k}_{p},l)\<\frac{\rho_{i}^{k_{i}}}{k_{i}!},\exp\chi\wedge2\sqrt{-1}\overline{b^{\sharp}}\wedge b^{\sharp}\>.\label{eq:intermideate tech 2-1}
\end{equation}
Thus, by (\ref{eq:new inter gra}), (\ref{eq:new inte grad 2}), and
(\ref{eq:intermideate tech 2-1}), we have 
\begin{align}
\left|\text{Re}\left(F_{k,\bar{1}}^{i\bar{j}}b_{i}\bar{b}_{j}\right)\right| & \leq C_{H2}\left(-F_{k}^{i\bar{j}}b_{i}\bar{b}_{j}+C_{6}(\frac{\kappa}{m},n_{p},\mathbf{d}_{p},\mathbf{k}_{p})\sum_{i=1}^{n_{p}}\<\frac{\rho_{i}^{k_{i}}}{k_{i}!},\exp\chi\wedge2\sqrt{-1}\overline{b^{\sharp}}\wedge b^{\sharp}\>\right)\label{eq:ineqau inter tech}\\
 & \leq-C_{7}F^{i\bar{j}}b_{i}\bar{b}_{j},\nonumber 
\end{align}
where $C_{7}=C_{7}(\kappa,m,n_{p},\mathbf{d}_{p},\mathbf{k}_{p},C_{H2})$.

Now for matrix $B$, by (\ref{eq:ineqau inter tech}) and mean value
inequality, 
\begin{align}
\left|\text{Re}\left(F_{,1}^{i\bar{s}}A^{r\bar{j}}(A_{i\bar{j}}+\epsilon B_{i\bar{j}})\left(\overline{A_{s\bar{r}}+\epsilon B_{s\bar{r}}}\right)\right)\right| & \leq-C_{9}F^{i\bar{s}}A^{r\bar{j}}(A_{i\bar{j}}+\epsilon B_{i\bar{j}})\left(\overline{A_{s\bar{r}}+\epsilon B_{s\bar{r}}}\right)\label{eq:re inner est}\\
 & \leq2C_{9}\left(F(A)-\epsilon^{2}F^{i\bar{s}}A^{r\bar{j}}B_{i\bar{j}}\overline{B_{s\bar{r}}}\right),\nonumber 
\end{align}
where $C_{9}=C_{9}(n,\kappa,m,n_{p},\mathbf{d}_{p},\mathbf{k}_{p},C_{H2})$.
As a result of (\ref{eq:re inner est}), we have 
\[
\left|2\text{Re}\left(F_{,\bar{1}}^{i\bar{j}}B_{i\bar{j}}\right)\right|\leq\frac{C_{10}}{\epsilon}F(A)-C_{10}\epsilon F^{i\bar{s}}A^{r\bar{j}}B_{i\bar{j}}\overline{B_{s\bar{r}}},
\]
where $C_{10}=C_{10}(n,\kappa,m,n_{p},\mathbf{d}_{p},\mathbf{k}_{p},C_{H2})$.

Finally, we choose 
\[
C_{\ref{lem:tech2}}=\max\{C_{2},C_{4},C_{10}:p\in M\}.
\]
Notice that for a fixed labeled\emph{ }orthogonal splitting $\mathcal{O}$,
the set $\{(n_{p},\mathbf{d}_{p},\mathbf{k}_{p}):p\in M\}$ is finite.
Thus $C_{\ref{lem:tech2}}$ has a uniform upper bound which only depends
on $\kappa,m,n$ and $C_{H2}$. 
\end{proof}
\begin{lem}
\label{lem:ddbar F}Notations as above. If $\Lambda$ satisfies \textbf{H2'},
and $F(A)\geq\kappa$, there is a constant $C_{\ref{lem:ddbar F}}$
depends on $C_{\ref{lem:tech2}}$, and the bisectional curvature $\text{Rm}_{\rho}$
of $\rho$ s.t. for any $\epsilon>0$
\begin{align*}
\pdv_{1\bar{1}}F(A) & \geq A_{1\bar{1}}F^{i\bar{j}}\left(\log A_{1\bar{1}}\right)_{,i\bar{j}}+A^{\bar{1}1}F^{i\bar{j}}A_{i\bar{1},1}\overline{A_{j\bar{1},1}}+F^{i\bar{j},r\bar{s}}A_{i\bar{j},1}A_{r\bar{s},\bar{1}}\\
 & +\epsilon F^{i\bar{s}}A^{\bar{j}r}A_{i\bar{j},1}\overline{A_{s\bar{r},1}}-\frac{C_{\ref{lem:ddbar F}}}{\epsilon}F(A)+C_{\ref{lem:ddbar F}}A_{1\bar{1}}\sum_{i}F^{i\bar{i}}.
\end{align*}
\end{lem}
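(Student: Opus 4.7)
The plan is to differentiate $F(A,\Lambda)$ twice in the $z^1$-direction via the chain rule, reorganize the resulting second-order $A$-term using a Kähler commutation together with a logarithmic identity, and finally control the contributions coming from derivatives of $\Lambda$ by invoking Lemma~\ref{lem:tech2}.

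First I would choose coordinates at $p$ that are normal for $\rho$ and simultaneously diagonalize $A(p)$, with $\partial/\partial z^1$ aligned along the maximal eigendirection of $A$ so that $A_{i\bar{j}}(p)=\lambda_i\delta_{ij}$ with $\lambda_1=A_{1\bar{1}}=\max_i\lambda_i$; in particular $A^{\bar{1}1}=A_{1\bar{1}}^{-1}$ at $p$. Applying the chain rule to $F=F(A,\Lambda)$, using the notation of (\ref{eq:F_11})--(\ref{eq:F_k11}) for derivatives in the $\Lambda$-slot, one obtains
\[
\partial_{1\bar{1}}F(A)=F^{i\bar{j}}A_{i\bar{j},1\bar{1}}+F^{i\bar{j},r\bar{s}}A_{i\bar{j},1}A_{r\bar{s},\bar{1}}+2\text{Re}\bigl(F_{,\bar{1}}^{i\bar{j}}A_{i\bar{j},1}\bigr)+F_{,1\bar{1}}.
\]

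Next I would rearrange the principal term $F^{i\bar{j}}A_{i\bar{j},1\bar{1}}$. The Kähler commutation formula for covariant derivatives of the $(1,1)$-tensor $A$ with respect to $\rho$ yields $A_{i\bar{j},1\bar{1}}=A_{1\bar{1},i\bar{j}}+R_{1\bar{1}i}{}^{k}A_{k\bar{j}}-R_{1\bar{1}}{}^{\bar{l}}{}_{\bar{j}}A_{i\bar{l}}$, where $R$ is the curvature of $\rho$. Contracting with $F^{i\bar{j}}$ and using that each $F^{i\bar{i}}\leq 0$ by Lemma~\ref{lem:F_k monot} together with $\lambda_i\leq A_{1\bar{1}}$, the curvature contribution is bounded below by $CA_{1\bar{1}}\sum_iF^{i\bar{i}}$, with $C$ depending only on $|\text{Rm}_\rho|_{\infty}$. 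For the remaining piece $F^{i\bar{j}}A_{1\bar{1},i\bar{j}}$, the pointwise identity
\[
A_{1\bar{1}}(\log A_{1\bar{1}})_{,i\bar{j}}=A_{1\bar{1},i\bar{j}}-A_{1\bar{1}}^{-1}A_{1\bar{1},i}A_{1\bar{1},\bar{j}},
\]
together with the first-order Kähler identity $A_{1\bar{1},i}=A_{i\bar{1},1}$ (from $d\omega=0$ in normal coordinates) and $A^{\bar{1}1}=A_{1\bar{1}}^{-1}$ at the diagonalized point, gives
\[
F^{i\bar{j}}A_{1\bar{1},i\bar{j}}=A_{1\bar{1}}F^{i\bar{j}}(\log A_{1\bar{1}})_{,i\bar{j}}+A^{\bar{1}1}F^{i\bar{j}}A_{i\bar{1},1}\overline{A_{j\bar{1},1}}.
\]

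Finally I would control the two $\Lambda$-derivative summands using Lemma~\ref{lem:tech2}. The bound (\ref{eq:F_11 estimate}) gives $|F_{,1\bar{1}}|\leq C_{\ref{lem:tech2}}F(A)$. Applying (\ref{eq:F_k,11 estimate}) with $B_{i\bar{j}}=A_{i\bar{j},1}$ and summing over $k$ yields, after taking real parts and rescaling $\epsilon$ to absorb the constant $C_{\ref{lem:tech2}}$,
\[
2\text{Re}\bigl(F_{,\bar{1}}^{i\bar{j}}A_{i\bar{j},1}\bigr)\geq-\frac{C}{\epsilon}F(A)+\epsilon\,F^{i\bar{s}}A^{\bar{j}r}A_{i\bar{j},1}\overline{A_{s\bar{r},1}},
\]
where the quartic term is $\leq 0$ by Lemma~\ref{lem:F_k monot}, matching the sign pattern in the claim. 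Combining the three displays with the unchanged convexity contribution $F^{i\bar{j},r\bar{s}}A_{i\bar{j},1}A_{r\bar{s},\bar{1}}$ and absorbing all absolute constants into a single $C_{\ref{lem:ddbar F}}$ depending on $C_{\ref{lem:tech2}}$ and $|\text{Rm}_\rho|_{\infty}$ concludes the proof. The main technical care is the curvature step: because $\sum_iF^{i\bar{i}}\leq 0$, the commutator error must be realized as $CA_{1\bar{1}}\sum_iF^{i\bar{i}}$, which genuinely requires both the diagonalization of $A(p)$ and the placement of $\partial/\partial z^1$ in the maximal eigenspace so that $\lambda_i\leq A_{1\bar{1}}$ holds for every $i$; without this choice one would only obtain an estimate in terms of $\text{tr}(A)\sum_iF^{i\bar{i}}$, which is too weak for the subsequent $C^{2}$-estimate of Proposition~\ref{prop:C2 estimate}.
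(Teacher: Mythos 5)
Your proof is correct and follows essentially the same route as the paper: the chain-rule expansion of $\pdv_{1\bar{1}}F$, the Kähler commutation to swap $A_{i\bar{j},1\bar{1}}$ for $A_{1\bar{1},i\bar{j}}$ plus curvature, the logarithmic identity producing the $A_{1\bar{1}}F^{i\bar{j}}(\log A_{1\bar{1}})_{,i\bar{j}}+A^{\bar{1}1}F^{i\bar{j}}A_{i\bar{1},1}\overline{A_{j\bar{1},1}}$ terms, and Lemma \ref{lem:tech2} for the $\Lambda$-derivative contributions. The only (immaterial) divergence is the curvature commutator: you absorb it entirely into $CA_{1\bar{1}}\sum_{i}F^{i\bar{i}}$ via diagonalization with $\lambda_{1}$ maximal, whereas the paper bounds one piece by $C_{1}F^{i\bar{j}}A_{i\bar{j}}\geq-C_{2}F(A)$ using the homogeneity estimate $-F^{i\bar{j}}A_{i\bar{j}}\leq nF(A)$ of Lemma \ref{lem: tech}, so the diagonalization is not as indispensable as your closing remark suggests — the $-CF(A)/\epsilon$ term in the stated bound already has room to receive that contribution.
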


\begin{proof}
We have 
\begin{equation}
\pdv_{1\bar{1}}F(A)=F^{i\bar{j}}A_{i\bar{j},1\bar{1}}+F^{i\bar{j},r\bar{s}}A_{i\bar{j},1}A_{r\bar{s},\bar{1}}+2\text{Re}\left(F_{,\bar{1}}^{i\bar{j}}A_{i\bar{j},1}\right)+F_{1\bar{1}}.\label{eq:compute pdv11}
\end{equation}
Note 
\begin{equation}
F^{i\bar{j}}A_{i\bar{j},1\bar{1}}=F^{i\bar{j}}A_{1\bar{1},i\bar{j}}+F^{i\bar{j}}(\rho^{a\bar{b}}A_{a\bar{j}}R_{1\bar{1},i\bar{b}}-\rho^{a\bar{b}}A_{a\bar{1}}R_{i\bar{j},1\bar{b}}),\label{eq:compute pdv 11 2}
\end{equation}
where $R_{i\bar{j},k\bar{l}}$ is the bisectional curvature of $\rho$.
By (\ref{eq:compute pdv11}) and (\ref{eq:compute pdv 11 2}), 
\begin{align}
\pdv_{1\bar{1}}F(A) & =F^{i\bar{j}}A_{1\bar{1},i\bar{j}}+F^{i\bar{j}}\left(\rho^{a\bar{b}}A_{a\bar{j}}R_{1\bar{1},i\bar{b}}-\rho^{a\bar{b}}A_{a\bar{1}}R_{i\bar{j},1\bar{b}}\right)\label{eq:ddbar_11 F}\\
 & +F^{i\bar{j},r\bar{s}}A_{i\bar{j},1}A_{r\bar{s},\bar{1}}+2\text{Re}\left(F_{,\bar{1}}^{i\bar{j}}A_{i\bar{j},1}\right)+F_{1\bar{1}}.\nonumber 
\end{align}
The first term in (\ref{eq:ddbar_11 F}) is 
\begin{equation}
F^{i\bar{j}}A_{1\bar{1},i\bar{j}}=A_{1\bar{1}}F^{i\bar{j}}\left(\log A_{1\bar{1}}\right)_{,i\bar{j}}+A^{\bar{1}1}F^{i\bar{j}}A_{i\bar{1},1}\overline{A_{j\bar{1},1}}.\label{eq:-2}
\end{equation}
The second term in (\ref{eq:ddbar_11 F}) is controlled by 
\begin{align}
F^{i\bar{j}}\left(\rho^{a\bar{b}}A_{a\bar{j}}R_{1\bar{1},i\bar{b}}-\rho^{a\bar{b}}A_{a\bar{1}}R_{i\bar{j},1\bar{b}}\right) & \geq C_{1}F^{i\bar{j}}A_{i\bar{j}}+C_{1}A_{1\bar{1}}\sum_{i}F^{i\bar{i}}\label{eq:-3}\\
 & \geq-C_{2}F(A)+C_{2}A_{1\bar{1}}\sum_{i}F^{i\bar{i}},\nonumber 
\end{align}
where the constants $C_{1},C_{2}$ depend a bound of the bisectional
curvature $|\text{Rm}_{\rho}|$. Apply Lemma \ref{lem:tech2} to (\ref{eq:ddbar_11 F})
to obtain 
\begin{align*}
\pdv_{1\bar{1}}F(A) & \geq A_{1\bar{1}}F^{i\bar{j}}\left(\log A_{1\bar{1}}\right)_{,i\bar{j}}+A^{\bar{1}1}F^{i\bar{j}}A_{i\bar{1},1}\overline{A_{j\bar{1},1}}+F^{i\bar{j},r\bar{s}}A_{i\bar{j},1}A_{r\bar{s},\bar{1}}\\
 & +\epsilon F^{i\bar{s}}A^{\bar{j}r}A_{i\bar{j},1}\overline{A_{s\bar{r},1}}-C_{3}\left(1+\frac{1}{\epsilon}\right)F(A)+C_{3}A_{1\bar{1}}\sum_{i}F^{i\bar{i}}.
\end{align*}
We have proved the claim. 
\end{proof}
The following proposition is a key ingredient in the proof of $C^{2}$
estimate. It has been proved in several context. See Song-Weinkove
\cite{song2008convergence}, Fang-Lai-Ma \cite{Fang-Lai-Ma}, Guan
\cite{Guan2014Second-order}, Guan-Sun \cite{guan2015class}, Collins-Szèkelyhidi
\cite{collins2017convergence}, Szèkelyhidi \cite{szekelyhidi2018fully},
Datar-Pingali \cite{datar2021numerical}. The current form of Proposition
\ref{prop: Fang-Lai-Ma1} is adapted from\textbf{ }Fang-Lai-Ma \cite{Fang-Lai-Ma}
and Datar-Pingali \cite{datar2021numerical}. We will put the proof
in the appendix. 
\begin{prop}
\label{prop: Fang-Lai-Ma1} Suppose that $\Lambda$ satisfies \textbf{H2'}.
Let $\omega_{\text{sub}}\in\mathcal{C}_{\Lambda}^{\kappa}$ and $\omega=\omega_{\text{sub}}+i\ddbar u$
be a solution to (\ref{eq:eq with Berezin integ}). Then there is
a $N=N(\omega_{\text{sub}},\Lambda,M)$ and $\mu=\mu(\omega_{\text{sub}},\Lambda,M)>0$
s.t. if $|\ddbar u|_{\rho}>N$, 
\[
F^{i\bar{j}}(A)\left(u_{i\bar{j}}\right)\geq\mu\left(1-\sum_{i}F^{i\bar{i}}(A)\right).
\]
\end{prop}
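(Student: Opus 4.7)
The plan is to combine the strict convexity of $F$ on $\mathcal{C}_\Lambda^\kappa$ (Lemma \ref{lem:conv cont path}) with the quantitative subsolution estimate (\ref{eq:quantitative sub}) of Proposition \ref{prop:crite for Sub }, evaluated in the largest eigendirection of $A$. Since $\omega_{\text{sub}}\in\mathcal{C}_\Lambda^\kappa$ and the cone is open, compactness of $M$ supplies a uniform $\eta>0$ for which $\underline{A}(p)-\eta\,\text{Id}\in\mathcal{C}_\Lambda^\kappa(p)$ at every $p$. Applying (\ref{eq:quantitative sub}) to $\underline{A}-\eta\,\text{Id}$ will then furnish constants $R=R(\omega_{\text{sub}},\Lambda)$ and $\delta=\delta(\omega_{\text{sub}},\Lambda)>0$ such that
\[\kappa - F\bigl(\underline{A}(p)-\eta\,\text{Id}+R\,vv^{\dagger}\bigr)\geq 2\delta\]
holds for every unit vector $v$ at every $p\in M$.

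At a point with $|\ddbar u|_\rho>N$, I would choose normal coordinates of $\rho$ diagonalizing $A=\text{diag}(\lambda_1,\ldots,\lambda_n)$ with $\lambda_1\geq\cdots\geq\lambda_n$. Since $A=\underline{A}+u_{i\bar j}$ and $\underline{A}$ is uniformly bounded, a large value of $|\ddbar u|_\rho$ forces $\lambda_1$ to be comparable to $N$. Strict convexity of $F$ at $A$ applied between $A$ and $Y:=\underline{A}-\eta\,\text{Id}+R e_1 e_1^{\dagger}$, together with $F(A)=\kappa$, then gives
\[F^{i\bar j}(A)\,u_{i\bar j}\geq \kappa-F(Y)-\eta\sum_i F^{i\bar i}(A)+R\,F^{1\bar 1}(A)\geq 2\delta-\eta\sum_i F^{i\bar i}(A)+R\,F^{1\bar 1}(A).\]
Because $-\sum_i F^{i\bar i}(A)\geq 0$ (from Lemma \ref{lem:mononton with f}), the desired estimate follows with $\mu:=\min(\delta,\eta)$ as soon as $|R\,F^{1\bar 1}(A)|\leq\delta$, and $N$ will be chosen large enough to guarantee this.

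The crucial technical step will therefore be the pointwise bound $|F^{1\bar 1}(A)|\leq C/\lambda_1$, with $C=C(\Lambda,\kappa,m)$. From (\ref{eq:lem of comp}), in the diagonal frame and for $k\leq n-1$,
\[-F_k^{1\bar 1}(A)=\frac{1}{\lambda_1^2}\Bigl\langle \Lambda^{[k]},\;\frac{\chi^{k-1}}{(k-1)!}\wedge 2\sqrt{-1}\,\tfrac{\partial}{\partial\bar z^1}\wedge\tfrac{\partial}{\partial z^1}\Bigr\rangle.\]
The $\mathcal{O}$-UP part of \textbf{H2'} gives $\Lambda^{[k]}\geq 0$ for $k<n$, and in the diagonal frame $2\sqrt{-1}\,\partial_{\bar z^1}\wedge\partial_{z^1}\leq_s \lambda_1\chi$, so wedging and pairing with $\Lambda^{[k]}$ yields $-F_k^{1\bar 1}(A)\leq (k/\lambda_1)F_k(A)$, while $F_k(A)$ is bounded by $\kappa$ plus the controlled $|F_n(A)|$. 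For $k=n$, $F_n^{1\bar 1}(A)=-f(p)/(\lambda_1\det A)$, which is $O(1/\lambda_1)$ by virtue of the upper bound on $\det A^{-1}$ furnished by the cone condition through Lemma \ref{lem: induced cone}. Summing will produce the announced bound, and choosing $N$ so that $\lambda_1\geq RC/\delta$ then closes the argument.

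The hardest part is precisely this estimate $F^{1\bar 1}(A)=O(1/\lambda_1)$: it uses both the componentwise positivity $\Lambda^{[k]}\geq 0$ for $k<n$ (which assigns a definite sign to $-F_k^{1\bar 1}$ and permits the wedge comparison with $F_k$) and the $\det A^{-1}$ upper bound from the cone condition (to handle the $k=n$ contribution involving the possibly negative $f$). Both ingredients are uniformly available under \textbf{H2'}; once this bound is in hand the remaining rearrangement of the convexity inequality is straightforward.
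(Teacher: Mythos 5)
Your argument is correct, but it follows a genuinely different route from the paper. The paper's proof (Appendix B) follows Guan: it introduces the sets $\Gamma_{\Lambda}^{\kappa}(q)$ and the auxiliary function $\varrho_{R}$ of (\ref{eq:varrho}), proves via monotonicity in $R$ and a compactness/contradiction argument (Lemmas \ref{lem:Guan lem} and \ref{lem:claim}) that the tangent plane to the level set $\{F_{q}=\kappa\}$ at a large $A$ quantitatively separates it from $\underline{A}$, and then handles the $-\mu\sum_{i}F^{i\bar{i}}$ term by a two-case analysis on whether $\underline{A}-\delta\,\mathrm{Id}$ lies inside or outside $\overline{\Gamma_{\Lambda}^{\kappa}}$. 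You instead get everything from two explicit quantitative inputs: the subsolution bound (\ref{eq:quantitative sub}) applied to $\underline{A}-\eta\,\mathrm{Id}$, and the decay $|F^{1\bar{1}}(A)|\leq C/\lambda_{1}$, which you derive correctly from $\Lambda^{[k]}\geq0$ for $k\leq n-1$ together with $2\sqrt{-1}\,\partial_{\bar{z}^{1}}\wedge\partial_{z^{1}}\leq_{s}\lambda_{1}\chi$ and the bound on $\det A^{-1}$ from Lemma \ref{lem: induced cone}; this decay is essentially the pointwise refinement of the homogeneity bound $-F^{i\bar{j}}A_{i\bar{j}}\leq nF(A)$ of Lemma \ref{lem: tech}. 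Your route buys explicit, traceable constants and avoids all compactness-by-contradiction steps; the paper's route is less tied to the specific structure of $F$ (it only uses monotonicity and convexity of $F$ on the cone, not any decay rate of $F^{i\bar{j}}$) and so transfers to more general operators. One step you should make explicit: the supporting-hyperplane inequality $F(Y)\geq F(A)+F^{i\bar{j}}(A)(Y_{i\bar{j}}-A_{i\bar{j}})$ requires convexity of $F$ along the whole segment $[A,Y]$, and under \textbf{H2'} Lemma \ref{lem:conv cont path} only gives convexity inside $\mathcal{C}_{\Lambda}^{\kappa}(p)$; this is fine because $A\in\mathcal{C}_{\Lambda}^{\kappa}(p)$ by Lemma \ref{lem:Datar-Pingali}, $Y\geq\underline{A}-\eta\,\mathrm{Id}\in\mathcal{C}_{\Lambda}^{\kappa}(p)$ so $Y$ is in the cone by monotonicity of $\mathcal{P}_{\Lambda}$, and the cone is convex as a sublevel set of the convex function $\mathcal{P}_{\Lambda}$ (Lemma \ref{lem:P_La property}), but it needs to be said.
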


Now with all the preparations, we prove the $C^{2}$-estimate. 
\begin{proof}[Proof of Proposition \ref{prop:C2 estimate}]
We use maximum principle to deduce Proposition \ref{prop:C2 estimate}.
Let $u=\varphi-\varphi_{\text{sub}}$ and $g$ be the metric tensor
of $\omega$. We may assume that $\inf_{M}u=0$, $|\ddbar u|>N$ so
that we can apply Proposition \ref{prop: Fang-Lai-Ma1}. Pick the
test function 
\begin{equation}
G(x,\xi)=\log(g_{i\bar{j}}\xi^{i}\xi^{\bar{j}})-\phi(u),\label{eq:test G}
\end{equation}
where $\xi\in T_{x}^{1,0}M,\rho_{i\bar{j}}\xi^{i}\xi^{\bar{j}}=1$.
$\phi:\R_{\geq0}\to\R$ is a smooth function: 
\begin{equation}
\phi(x)=2Lx-\frac{L\tau}{2}x^{2}.\label{eq:test phi}
\end{equation}
The choice of $\tau$ relies on $\sup u$ so that 
\begin{equation}
L\leq\phi'\leq2L,\ \phi''=-L\tau.\label{eq:test L}
\end{equation}
For instance, we may start by assuming that $\tau=\frac{1}{\sup u+1}$.
Suppose that $G(x,\xi)$ achieves maximum at $(p,\xi_{0})$. We choose
a normal coordinate $\{z^{i}\}$ of $\rho$ at $p$ so that $\xi_{0}$
is along the direction of $\frac{\pdv}{\pdv z^{1}}$ and $\omega$
is diagonal at $p$. Locally 
\begin{equation}
H=\log g_{1\bar{1}}-\phi(u)\label{eq:test H}
\end{equation}
also achieves maximum at $p$. At $p$, we have 
\begin{equation}
0=H_{,i}=g_{1\bar{1}}^{-1}g_{1\bar{1},i}-\phi'u_{,i},\label{eq:C2 est 1}
\end{equation}
\begin{equation}
0\leq F^{i\bar{j}}H_{,i\bar{j}}.\label{eq:C2 est 2}
\end{equation}

At $p$, we have 
\begin{equation}
0=H_{,i}=g_{1\bar{1}}^{-1}g_{1\bar{1},i}-\phi'u_{,i}.\label{eq:grad}
\end{equation}
We have $g_{1\bar{1},i}=g_{i\bar{1},1}$ and (\ref{eq:grad}) implies
that 
\begin{equation}
u_{,i}=\frac{g^{\bar{1}1}}{\phi'}g_{i\bar{1},1}.\label{eq:gradeqal0}
\end{equation}
Now 
\begin{equation}
H_{,i\bar{j}}=\left(\log g_{1\bar{1}}\right)_{,i\bar{j}}-\phi''u_{,i}u_{,\bar{j}}-\phi'u_{,i\bar{j}}.\label{eq:Hij}
\end{equation}
We have $g_{1\bar{1},i}=g_{i\bar{1},1}$. Thus, by Lemma \ref{lem:ddbar F},
\begin{align}
0 & \leq F^{i\bar{j}}H_{,i\bar{j}}=F^{i\bar{j}}\left(\left(\log g_{1\bar{1}}\right)_{,i\bar{j}}-\phi''u_{,i}u_{,\bar{j}}-\phi'u_{,i\bar{j}}\right)\label{eq:compute est 2}\\
 & \leq-g^{\bar{1}1}\left(F^{i\bar{j},r\bar{s}}g_{i\bar{j},1}g_{r\bar{s},\bar{1}}+F^{i\bar{j}}g^{\bar{1}1}g_{i\bar{1},1}\overline{g_{j\bar{1},1}}+\epsilon F^{i\bar{j}}g^{\bar{l}l}g_{i\bar{l},1}\overline{g_{j\bar{l},1}}\right)-\frac{C_{\ref{lem:ddbar F}}}{\epsilon}F(A)\nonumber \\
 & \ -C_{\ref{lem:ddbar F}}\sum_{i}F^{i\bar{i}}-F^{i\bar{j}}\left(\phi''u_{,i}u_{,\bar{j}}+\phi'u_{,i\bar{j}}\right).\nonumber 
\end{align}
We apply (\ref{eq:convexity in continuous path}) to (\ref{eq:compute est 2})
to get 
\begin{align}
0 & \leq g^{\bar{1}1}\left((1-\epsilon)F^{i\bar{j}}g^{\bar{l}l}g_{i\bar{l},1}\overline{g_{j\bar{l},1}}-\frac{f(p)}{\det g}\left|\sum_{j}g^{\bar{j}j}g_{j\bar{j},1}\right|^{2}-F^{i\bar{j}}g^{\bar{1}1}g_{i\bar{1},1}\overline{g_{j\bar{1},1}}\right)\label{eq:compute est 22}\\
 & +g^{\bar{1}1}\frac{C_{\ref{lem:ddbar F}}}{\epsilon}F-C_{\ref{lem:ddbar F}}\sum_{i}F^{i\bar{i}}-F^{i\bar{j}}\left(\phi''u_{,i}u_{,\bar{j}}+\phi'u_{,i\bar{j}}\right).\nonumber 
\end{align}
Substitute $u_{,i}=\frac{g^{\bar{1}1}}{\phi'}g_{i\bar{1},1}$ into
(\ref{eq:compute est 22}) to get 
\begin{align}
0 & \leq(1-\epsilon)g^{\bar{1}1}F^{i\bar{j}}g^{\bar{l}l}g_{i\bar{l},1}\overline{g_{j\bar{l},1}}-\frac{f(p)}{\det g}\left|\sum_{j}g^{\bar{j}j}g_{j\bar{j},1}\right|^{2}-F^{i\bar{j}}(g^{\bar{1}1})^{2}g_{i\bar{1},1}\overline{g_{j\bar{1},1}}\label{eq:-7}\\
 & +g^{\bar{1}1}\frac{C_{\ref{lem:ddbar F}}}{\epsilon}F(A)-\frac{\phi''}{(\phi')^{2}}F^{i\bar{j}}\left(g^{\bar{1}1}\right)^{2}g_{i\bar{1},1}\overline{g_{j\bar{1},1}}-C_{\ref{lem:ddbar F}}\sum_{i}F^{i\bar{i}}-\phi'F^{i\bar{j}}u_{,i\bar{j}}\nonumber \\
 & =g^{\bar{1}1}\left[(1-\epsilon)F^{i\bar{j}}g^{\bar{l}l}g_{i\bar{l},1}\overline{g_{j\bar{l},1}}-\frac{f(p)}{\det g}\left|\sum_{j}g^{\bar{j}j}g_{j\bar{j},1}\right|^{2}-\left(1+\frac{\phi''}{(\phi')^{2}}\right)F^{i\bar{j}}g^{\bar{1}1}g_{i\bar{1},1}\overline{g_{j\bar{1},1}}\right]\nonumber \\
 & +g^{\bar{1}1}\frac{C_{\ref{lem:ddbar F}}}{\epsilon}F-C_{\ref{lem:ddbar F}}\sum_{i}F^{i\bar{i}}-\phi'F^{i\bar{j}}u_{,i\bar{j}}.\nonumber 
\end{align}
Take $\epsilon\leq\min\{-\frac{\phi''}{(\phi')^{2}},\frac{1}{2}\}$,
then we have 
\begin{align}
(1-\epsilon)F^{i\bar{j}}g^{\bar{l}l}g_{i\bar{l},1}\overline{g_{j\bar{l},1}}-\left(1+\frac{\phi''}{(\phi')^{2}}\right)F^{i\bar{j}}g^{\bar{1}1}g_{i\bar{1},1}\overline{g_{j\bar{1},1}} & \leq\frac{1}{2}\sum_{l\geq2}F^{i\bar{j}}g^{\bar{l}l}g_{i\bar{l},1}\overline{g_{j\bar{l},1}}.\label{eq:-1}
\end{align}

If $f(p)\geq0$, then by (\ref{eq:-1}), (\ref{eq:-1}), and the fact
that $F^{i\bar{j}}\leq0$ in Lemma \ref{lem:mononton with f}, 
\begin{equation}
0\leq g^{\bar{1}1}\frac{C_{\ref{lem:ddbar F}}}{\epsilon}F-C_{\ref{lem:ddbar F}}\sum_{i}F^{i\bar{i}}-\phi'F^{i\bar{j}}u_{,i\bar{j}}.\label{eq:impor est 1}
\end{equation}
If $f(p)<0$, then by (\ref{eq:interm monot with f}), we have 
\begin{equation}
-\sum_{l\geq2}F^{i\bar{j}}g^{\bar{l}l}g_{i\bar{l},1}\overline{g_{j\bar{l},1}}\geq\left(\frac{m}{2}\gamma_{\min}(\frac{\kappa}{m},n_{p},\mathbf{d}_{p},\mathbf{k}_{p})-|f(p)|\right)\frac{1}{\det g}\sum_{l\geq2}\sum_{i=1}^{n}\frac{|g_{i\bar{l},1}|^{2}}{g_{l\bar{l}}g_{i\bar{i}}}.\label{eq:add est 2}
\end{equation}
We argue similarly as in Lemma \ref{lem:conv cont path}: 
\begin{align}
 & \frac{1}{2}\sum_{l\geq2}F^{i\bar{j}}g^{\bar{l}l}g_{i\bar{l},1}\overline{g_{j\bar{l},1}}-\frac{f(p)}{\det g}\left|\sum_{j}g^{\bar{j}j}g_{j\bar{j},1}\right|^{2}\label{eq:est impo}\\
 & \leq-\frac{1}{\det g}\left(\left(\frac{m}{4}\gamma_{\min}(\frac{\kappa}{m},n_{p},\mathbf{d}_{p},\mathbf{k}_{p})-\frac{1}{2}|f(p)|\right)\sum_{l\geq2}\sum_{i=1}^{n}\frac{|g_{i\bar{l},1}|^{2}}{g_{l\bar{l}}g_{i\bar{i}}}-|f(p)|n\sum_{j=1}^{n}\frac{|g_{j\bar{j},1}|^{2}}{g_{j\bar{j}}^{2}}\right)\nonumber \\
 & \leq\frac{n|f(p)|}{\det g}\frac{|g_{1\bar{1},1}|^{2}}{g_{1\bar{1}}^{2}}.\nonumber 
\end{align}
The last line is due to the almost positive volume condition. 

By (\ref{eq:-7}), (\ref{eq:-1}), (\ref{eq:est impo}), and (\ref{eq:impor est 1}),
we have 
\begin{align}
0 & \leq-\frac{\min\{0,f(p)\}n(g^{\bar{1}1})^{3}}{\det g}|g_{1\bar{1},1}|^{2}+g^{\bar{1}1}\frac{C_{\ref{lem:ddbar F}}}{\epsilon}F-C_{\ref{lem:ddbar F}}\sum_{i}F^{i\bar{i}}-\phi'F^{i\bar{j}}u_{,i\bar{j}}\label{eq:impor 2}\\
 & \leq C_{0}(m,\kappa,n)(g^{\bar{1}1})^{3}|g_{1\bar{1},1}|^{2}+g^{\bar{1}1}\frac{C_{\ref{lem:ddbar F}}}{\epsilon}F-C_{\ref{lem:ddbar F}}\sum_{i}F^{i\bar{i}}-\phi'F^{i\bar{j}}u_{,i\bar{j}},\nonumber 
\end{align}
where the last line is due to $\det g^{-1}\leq C_{1}(m,\kappa,n_{p},\mathbf{d}_{p},\mathbf{k}_{p})$
by Lemma \ref{lem: induced cone}. Since $u_{,i\bar{j}}=g_{i\bar{j}}-g_{i\bar{j}}^{\text{sub}},$
if $|\ddbar u|>N$, from Proposition \ref{prop: Fang-Lai-Ma1}, we
have 
\begin{align}
F^{i\bar{j}}\left(u_{,i\bar{j}}\right) & =F^{i\bar{j}}\left(g_{i\bar{j}}-g_{i\bar{j}}^{\text{sub}}\right)\label{eq:}\\
 & \geq\mu(1-\sum_{i}F^{i\bar{i}}).\nonumber 
\end{align}
Thus, use \ref{eq:impor 2}) and $g_{1\bar{1},1}=\phi'u_{,1}g_{1\bar{1}}$
to get 
\begin{align}
0 & \leq C_{1}g^{\bar{1}1}(\phi')^{2}\left(1+|\nabla u|^{2}\right)+g^{\bar{1}1}\frac{C_{\ref{lem:ddbar F}}}{\epsilon}F(A)-C_{\ref{lem:ddbar F}}\sum_{i}F^{i\bar{i}}-\phi'\mu(1-\sum_{i}F^{i\bar{i}})\label{eq:est 2 3}\\
 & \leq C_{1}g^{\bar{1}1}(\phi')^{2}\left(1+|\nabla u|^{2}\right)+g^{\bar{1}1}\frac{C_{\ref{lem:ddbar F}}}{\epsilon}F(A)-\phi'\mu+\left(\phi'\mu-C_{\ref{lem:ddbar F}}\right)\sum_{i}F^{i\bar{i}}.\nonumber 
\end{align}
We choose $\phi'\mu>C_{\ref{lem:ddbar F}}$ and then by (\ref{eq:est 2 3})
, 
\begin{equation}
0\leq C_{1}g^{\bar{1}1}(\phi')^{2}\left(1+|\nabla u|^{2}\right)+g^{\bar{1}1}\frac{C_{\ref{lem:ddbar F}}}{\epsilon}F(A)-\phi'\mu.\label{eq: est 2 4}
\end{equation}
Therefore, by (\ref{eq: est 2 4}), we have $g_{1\bar{1}}\leq C_{2}(1+|\nabla u|^{2}),$
and hence at $p$ 
\begin{equation}
|\ddbar u|<C_{3}(1+|\nabla u|^{2}),\label{eq:ddbar u smaller}
\end{equation}
for some $C_{3}=C_{3}(C_{\ref{lem:ddbar F}},L,\epsilon,\mu,m,\kappa,n)$.

Now, we fix choices of $L,\tau,\epsilon$. First, we need $\phi'\mu>C_{\ref{lem:ddbar F}}$
which requires that$L>\frac{C_{\ref{lem:ddbar F}}}{\mu}+1$. Next,
we need $\epsilon\leq-\frac{\phi''}{(\phi')^{2}}.$ Note 
\begin{equation}
-\frac{\phi''}{(\phi')^{2}}\ge\frac{L\tau}{4L^{2}}=\frac{\tau}{4L}.\label{eq:check phi}
\end{equation}
Therefore, we may pick $\epsilon=\min\{\frac{\tau}{4L},\frac{1}{2}\}=\frac{1}{4L(\sup u+1)}$.
Note $L,\epsilon,\tau$ only depend on $M,\Lambda,\omega_{\text{sub}},n,m,\kappa,$$C_{H2},\sup|u|,|\text{Rm}_{\rho}|$.

Once we have arranged $L,\epsilon,\tau$ properly, from (\ref{eq:ddbar u smaller}),
\begin{equation}
|\ddbar u|<C_{4}\sup(|\nabla u|^{2}+1),\label{eq:ddbar u}
\end{equation}
for some $C_{4}=C_{4}(M,\Lambda,\omega_{\text{sub}},n,m,\kappa,$$C_{H2},\sup|u|,|\text{Rm}_{\rho}|)$.
Finally, we apply the blow-up technique in \cite{collins20201} (Proposition
5.1) to (\ref{eq:ddbar u}) to deduce that $|\ddbar u|\leq C_{5}(M,\Lambda,\omega_{\text{sub}},n,$$m,\kappa,C_{H2},\sup|u|,|\text{Rm}_{\rho}|)$
as desired. 
\end{proof}
We now give the proof of Theorem \ref{thm:Analytic theorem}. 
\begin{proof}[Proof of Theorem \ref{thm:Analytic theorem}]
Suppose that $\Lambda$ satisfies \textbf{H2 }with uniform constant
$m>0$. From discussions above, we only need to show that $\mathbf{I}$
is closed. Since $\mathbf{I}$ is open $(0,t_{0})\subset\mathbf{I}$
for some $t_{0}>0$. Suppose that $t_{1}=\sup\{t:[0,t)\subset\mathbf{I}\}>t_{0}>0$.
Let $t\in[t_{0},t_{1}]$, and denote 
\[
\Lambda_{t}=t\mathring{\Lambda}+f_{t}P^{[n]}.
\]
Then as explained before, $\Lambda_{t}$ satisfies \textbf{H2'}. It
is straightforward to see that if $\omega_{\text{sub}}$ is a subsolution
for (\ref{eq:equation with f}), then it is a subsolution for (\ref{eq:contin path})
at all $t\in[0,1]$. So we can apply the a priori $C^{0}$, $C^{2}$
estimates derived in Proposition \ref{prop:C0 estimate} and \ref{prop:C2 estimate}.
$C^{1}$ estimate is obtained by the blow-up technique as in \cite{collins20201}
(Proposition 5.1). Together with the complex version of Evans-Krylov
theory we obtain $C^{2,\alpha}$ bounds for solutions of (\ref{eq:contin path})
for all $t\in\mathbf{I}$. The standard Schauder theory implies $C^{k,\alpha}$
bounds for every $k$. The closedness of $\mathbf{I}$ can then be
obtained from Arzela-Ascoli theorem. Thus, $\mathbf{I}$ is closed
which implies the existence of smooth solution of (\ref{eq:equation with f}).
\end{proof}

\part{Numerical Criterion}

The second part of this paper aims to give a proof of Theorem \ref{thm:numerical criterion}.
The main approach to prove Theorem \ref{thm:numerical criterion}
is an induction argument on dimension introduced first by G. Chen
\cite{chen2021j}. A key component is the so called mass concentration
technique, which was originally due to Demailly-Păun \cite{demailly2004numerical}
and has been successfully employed by Chen \cite{chen2021j} to the
$J$-equation and the supercritical dHYM equation. Our work is also
based on Song \cite{Song2020NakaiMoishezonCF}, in which Song improved
Chen's argument to treat singular sub-varieties.

The rest of this part is organized as follows. In Section \ref{sec:Set-up-for-the},
we set up the above-mentioned induction process and establish the
base case. In Section \ref{sec:Mass-concentration}, we prove a mass
concentration result, Theorem \ref{thm:Mass}. In Section \ref{sec:Completing-the-induction},
we complete the induction argument, and prove Theorem \ref{thm:numerical criterion}.
In Section \ref{sec:Application-to-Hypercritical}, we apply Theorem
\ref{thm:numerical criterion} to study special cases of dHYM equations.

\section{Notations and technical preparation}

In this section, we set up some further notations and definitions.
Assume that $M$, $[\omega_{0}]$, $\Lambda$, $\kappa$ are given
as before.

We first define regularized maximum functions. Readers may consult
\cite{demailly2012complex} I.5.E. for general discussion. 
\begin{defn}
(Regularized Maximum). For any $\eta\in(0,\infty)^{l}$, the regularized
maximum function is defined as 
\[
\widetilde{\max}_{\eta}(t_{1},\cdots,t_{l}):=\int_{\R^{l}}\max(t_{1}+h_{1},\cdots,t_{l}+h_{l})\prod_{1\leq j\leq n}\theta\left(\frac{h_{j}}{\eta_{j}}\right)\frac{dh_{1}\cdots dh_{l}}{\eta_{1}\cdots\eta_{l}},
\]
for $\eta=(\eta_{1},\cdots,\eta_{l})\in\left(0,\infty\right)^{l}$.
Here $\theta$ is a smooth non-negative function supported on $(-1,1)$
s.t. $\int_{\R}\theta(t)dt=1$ and $\int_{\R}t\theta(t)dt=0$.

We will use the regularized maximum function and Richberg's technique
\cite{Richberg1967StetigeSP} to glue local PSH functions. Some related
known facts are collected in the following lemma. See \cite{demailly2012complex}
I.5.18 for proofs. 
\end{defn}

\begin{lem}
\label{lem:reg max}For any $\eta\in(0,\infty)^{l}$, $\widetilde{\max}_{\eta}$
possesses the following properties: 
\begin{enumerate}
\item \label{enu:reg maximum 1}$\widetilde{\max}_{\eta}(t_{1},\cdots,t_{l})$
is non-decreasing in all variables, smooth and convex on $\R^{l}$; 
\item $\max\{t_{1},\cdots,t_{l}\}\leq\widetilde{\max}_{\eta}(t_{1},\cdots,t_{l})\leq\max\{t_{1}+\eta_{1},\cdots,t_{l}+\eta_{l}\}$; 
\item \label{enu:important feature}if $\ t_{j}+\eta_{j}\leq\max_{k\not=j}\{t_{k}-\eta_{k}\}$,
then 
\[
\widetilde{\max}_{\eta}(t_{1},\cdots,t_{l})=\widetilde{\max}_{(\eta_{1},\cdots,\hat{\eta}_{j},\cdots,t_{l})}(t_{1},\cdots,\hat{t}_{j},\cdots,t_{l});
\]
\item \label{enu:.reg maximum 4}$\widetilde{\max}_{\eta}(t_{1}+a,\cdots,t_{l}+a)=\widetilde{\max}_{\eta}(t_{1},\cdots,t_{l})+a$. 
\end{enumerate}
\end{lem}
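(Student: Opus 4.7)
The plan is to verify the four listed properties directly from the integral definition. The key observation is that $\widetilde{\max}_{\eta}$ is essentially the convolution of the convex, Lipschitz, non-decreasing function $\max$ with the smooth, compactly supported, non-negative mollifier $\prod_{j}\theta(h_{j}/\eta_{j})/\eta_{j}$. After the substitution $s_{j}=t_{j}+h_{j}$, we can rewrite
\[
\widetilde{\max}_{\eta}(t_{1},\dots,t_{l})=\int_{\R^{l}}\max(s_{1},\dots,s_{l})\prod_{j}\theta\!\left(\frac{s_{j}-t_{j}}{\eta_{j}}\right)\frac{ds_{1}\cdots ds_{l}}{\eta_{1}\cdots\eta_{l}},
\]
which is the viewpoint best suited to deriving smoothness and monotonicity.

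For (\ref{enu:reg maximum 1}), smoothness follows by differentiating under the integral, which is justified because $\theta$ is smooth with compact support; monotonicity in $t_{j}$ follows because $\max(s_{1},\dots,s_{l})$ is non-decreasing in $s_{j}$ and convolution with a non-negative kernel preserves this property; convexity is inherited from the convexity of $\max$ via the fact that convolution with a non-negative integrable kernel preserves convexity. For (\ref{enu:.reg maximum 4}), translating all $t_{k}$ by $a$ simply translates the argument of $\max$ by $a$, and one factors $a$ out of the integral using $\int\prod\theta(h_{j}/\eta_{j})/\eta_{j}\,dh=1$.

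For property (2), the upper bound is immediate: the integrand is supported where $|h_{j}|\leq\eta_{j}$, so $\max(t_{1}+h_{1},\dots,t_{l}+h_{l})\leq\max_{j}(t_{j}+\eta_{j})$, and the kernel has total mass one. For the lower bound, I apply Jensen's inequality using convexity of $\max$ and the fact that the mollifier has mean zero in each coordinate (from $\int t\theta(t)\,dt=0$), yielding
\[
\widetilde{\max}_{\eta}(t_{1},\dots,t_{l})\geq\max\!\left(t_{1}+\int h_{1}\tfrac{\theta(h_{1}/\eta_{1})}{\eta_{1}}dh_{1},\dots,t_{l}+\int h_{l}\tfrac{\theta(h_{l}/\eta_{l})}{\eta_{l}}dh_{l}\right)=\max(t_{1},\dots,t_{l}).
\]

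Property (\ref{enu:important feature}) is the only step requiring a short case analysis, and it is the one I would flag as the most delicate, though it is still elementary. Under the hypothesis $t_{j}+\eta_{j}\leq\max_{k\neq j}(t_{k}-\eta_{k})$, for any $(h_{1},\dots,h_{l})$ in the support of the mollifier we have $t_{j}+h_{j}\leq t_{j}+\eta_{j}\leq t_{k_{0}}-\eta_{k_{0}}\leq t_{k_{0}}+h_{k_{0}}$ for the index $k_{0}\neq j$ achieving the maximum on the right, so the $j$-th entry never participates in the maximum. Therefore the integrand is independent of $h_{j}$, and Fubini together with $\int\theta(h_{j}/\eta_{j})\,dh_{j}/\eta_{j}=1$ collapses the $l$-fold integral to the corresponding $(l-1)$-fold integral, which is the definition of $\widetilde{\max}_{(\eta_{1},\dots,\hat{\eta}_{j},\dots,\eta_{l})}(t_{1},\dots,\hat{t}_{j},\dots,t_{l})$. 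No deep ideas are needed beyond the support conditions on $\theta$, so I do not anticipate a genuine obstacle.
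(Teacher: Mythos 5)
Your proposal is correct. The paper does not actually prove this lemma --- it defers to Demailly \cite{demailly2012complex}, I.5.18 --- and your direct verification from the integral definition (smoothness, monotonicity and convexity via convolution with a non-negative mass-one kernel; the upper bound from the support condition; the lower bound from Jensen plus the mean-zero condition $\int t\theta(t)\,dt=0$; and the collapse in property (3) from the observation that the $j$-th entry never attains the maximum on the support of the mollifier) is exactly the standard argument given there, so there is nothing to flag.
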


We prove the following technical lemma. 
\begin{lem}
\label{cor:reg max preserves subsolution}If $g=g(A_{1},\cdots,A_{l})$
is a convex function on the spaces of Hermitian matrices and is monotone
decreasing in each $A_{i}$, then 
\begin{equation}
g(\sqrt{-1}\ddbar\widetilde{\max}_{\eta}(u_{1},\cdots,u_{l}))\leq\sum_{i}\frac{\pdv\widetilde{\max}_{\eta}}{\pdv u_{i}}g(\sqrt{-1}\ddbar u_{i}).\label{eq:reg max pres sub}
\end{equation}
In particular, if $u_{1},\cdots,u_{l}$ are in $\text{PSH}(M,\omega_{0})$
and satisfies $\mathcal{P}_{\Lambda}(\omega_{0}+\sqrt{-1}\ddbar u_{i})<\kappa,$
then 
\begin{equation}
\mathcal{P}_{\Lambda}(\omega_{0}+\sqrt{-1}\ddbar\widetilde{\max}_{\eta}(u_{1},\cdots,u_{l}))\leq\mathcal{P}_{\Lambda}(\omega_{0}+\sqrt{-1}\ddbar u_{i})<\kappa.\label{eq:reg max pres sub-1}
\end{equation}
\end{lem}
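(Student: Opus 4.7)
My plan is to differentiate $\widetilde{\max}_\eta(u_1,\ldots,u_l)$ via the chain rule, drop a non-negative quadratic term using convexity of $\widetilde{\max}_\eta$, and then combine the monotonicity and convexity of $g$. Concretely I would first write
\[
\sqrt{-1}\ddbar\,\widetilde{\max}_\eta(u_1,\ldots,u_l)=\sum_i\widetilde{\max}_{\eta,i}\,\sqrt{-1}\ddbar u_i+\sum_{i,j}\widetilde{\max}_{\eta,ij}\,\sqrt{-1}\,\partial u_i\wedge\bar{\partial} u_j,
\]
where $\widetilde{\max}_{\eta,i}$ and $\widetilde{\max}_{\eta,ij}$ denote the first and second partial derivatives of $\widetilde{\max}_\eta$ evaluated at $(u_1,\ldots,u_l)$. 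Lemma \ref{lem:reg max}(\ref{enu:reg maximum 1}) gives convexity and hence a positive semidefinite Hessian $(\widetilde{\max}_{\eta,ij})$; diagonalizing it as $UDU^{\dagger}$ with $\beta_k=\sum_i U_{ik}\partial u_i$ would rewrite the second sum as $\sqrt{-1}\sum_k d_k\beta_k\wedge\bar{\beta}_k\ge 0$, which yields the pointwise $(1,1)$-form inequality
\[
\sqrt{-1}\ddbar\,\widetilde{\max}_\eta(u_1,\ldots,u_l)\ \ge\ \sum_i\widetilde{\max}_{\eta,i}\,\sqrt{-1}\ddbar u_i.
\]

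Next I would invoke Lemma \ref{lem:reg max}(\ref{enu:reg maximum 1}) again to see that $\widetilde{\max}_{\eta,i}\ge 0$, together with Lemma \ref{lem:reg max}(\ref{enu:.reg maximum 4}) to conclude $\sum_i\widetilde{\max}_{\eta,i}=1$, so the weights form a pointwise convex partition of unity. Applying the monotonicity hypothesis on $g$ to the matrix inequality above, and then the convexity of $g$ against the convex weights, would produce the chain
\[
g\bigl(\sqrt{-1}\ddbar\,\widetilde{\max}_\eta(u_1,\ldots,u_l)\bigr)\le g\Bigl(\sum_i\widetilde{\max}_{\eta,i}\sqrt{-1}\ddbar u_i\Bigr)\le\sum_i\widetilde{\max}_{\eta,i}\,g(\sqrt{-1}\ddbar u_i),
\]
which is exactly (\ref{eq:reg max pres sub}). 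For the second assertion, the idea is to apply the first part to $g(A):=\mathcal{P}_\Lambda(A_0+A)$, where $A_0$ is the local matrix of $\omega_0$; convexity and monotone decrease of this $g$ are supplied by Lemma \ref{lem:P_La property}(\ref{enu:-is-a}) and (\ref{enu:-is-decreasing}). Combining with the identity $\omega_0=\sum_i\widetilde{\max}_{\eta,i}\,\omega_0$, which is legal since $\sum_i\widetilde{\max}_{\eta,i}=1$, one obtains
\[
\mathcal{P}_\Lambda\bigl(\omega_0+\sqrt{-1}\ddbar\,\widetilde{\max}_\eta(u_1,\ldots,u_l)\bigr)\le\sum_i\widetilde{\max}_{\eta,i}\,\mathcal{P}_\Lambda(\omega_0+\sqrt{-1}\ddbar u_i)<\kappa,
\]
the final strict inequality following from the fact that the right-hand side is a convex combination of values each strictly smaller than $\kappa$.

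The main potential obstacle I foresee is bookkeeping rather than analytic: one has to verify carefully that the Hermitian matrix inequality derived above really makes sense pointwise (this rests squarely on the PSD Hessian of $\widetilde{\max}_\eta$), and that the convexity/monotonicity of $g$ are indeed valid over arbitrary Hermitian matrix arguments, which for $\mathcal{P}_\Lambda$ is exactly what Lemma \ref{lem:P_La property} records. If one wanted to allow $u_i$ that are merely $\omega_0$-plurisubharmonic rather than smooth, a standard smoothing approximation would be necessary; but in the inductive application of Part~2 the $u_i$ are smooth, so no such subtlety arises here.
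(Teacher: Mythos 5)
Your argument is correct and follows essentially the same route as the paper: expand $\sqrt{-1}\ddbar\widetilde{\max}_{\eta}$ by the chain rule, discard the non-negative Hessian term using convexity of $\widetilde{\max}_{\eta}$, note the first derivatives form a convex partition of unity, and then invoke the monotonicity and convexity of $g$ (and of $\mathcal{P}_{\Lambda}$ via Lemma \ref{lem:P_La property} for the second claim). Your explicit diagonalization of the Hessian merely spells out a step the paper leaves implicit.
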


\begin{proof}
Suppose that $u_{1},\cdots,u_{l}$ are $C^{2}$ functions on $\C^{n}$.
Then 
\begin{align}
\sqrt{-1}\ddbar\widetilde{\max}_{\eta}(u_{1},\cdots,u_{l}) & =\sum_{i}\frac{\pdv\widetilde{\max}_{\eta}}{\pdv u_{i}}\sqrt{-1}\ddbar u_{i}+\sum_{i,j}\frac{\pdv^{2}\widetilde{\max}_{\eta}}{\pdv u_{i}\pdv u_{j}}\sqrt{-1}\pdv u_{j}\wedge\dbar u_{i}.\label{eq:reg max deriv}
\end{align}
From Lemma \ref{lem:reg max} (\ref{enu:.reg maximum 4}), we see
that $\sum_{i}\frac{\pdv\widetilde{\max}_{\eta}}{\pdv u_{i}}=1$ and
each $\frac{\pdv\widetilde{\max}}{\pdv u_{i}}$ is non-negative. Thus,
from (\ref{eq:reg max deriv}), 
\[
\sqrt{-1}\ddbar\widetilde{\max}_{\eta}(u_{1},\cdots,u_{l})\geq\sum_{i}\frac{\pdv\widetilde{\max}_{\eta}}{\pdv u_{i}}\sqrt{-1}\ddbar u_{i}.
\]
The right hand side is a convex combination of $\sqrt{-1}\ddbar u_{i}$.
Thus, (\ref{eq:reg max pres sub}) holds by the monotonicity and the
convexity of $g$. (\ref{eq:reg max pres sub-1}) holds by the corresponding
properties of $\mathcal{P}_{\Lambda}$ in Lemma \ref{lem:P_La property}. 
\end{proof}
We use Richberg technique (\cite{demailly2012complex}, I. Corollary
5.19) to patch up PSH functions on manifolds. 
\begin{cor}
\label{cor:re max}Let $u_{\alpha}\in C^{\infty}(\overline{U}_{\alpha})\cap PSH(U_{\alpha},\omega)$
where $U_{\alpha}\subset\subset M$ is a finite open covering of $M$.
Assume that $u_{\beta}<\max\{u_{\alpha}(z)\}$ at every point $z\in\pdv U_{\beta}$
when $\alpha$ runs over the indices s.t. $z\in U_{\alpha}$. Choose
a family $\{\eta_{\alpha}\}$ of positive numbers so small that $u_{\beta}(z)+\eta_{\beta}\leq\max_{U_{\alpha}\ni z}\{u_{\alpha}-\eta_{\alpha}\}$
for all $\beta$ s.t. $z\in\pdv U_{\beta}$. Then the function 
\[
\tilde{u}(z)=\widetilde{\max}_{(\eta_{\alpha})}(u_{\alpha}(z))
\]
is in $C^{\infty}(M)\cap\text{PSH}(M,\omega)$. 
\end{cor}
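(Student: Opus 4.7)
The plan is to show that $\tilde{u}$ is well defined, smooth, and $\omega$-plurisubharmonic by reducing the global regularized-max formula to a local expression involving only those $u_\alpha$ with $z \in U_\alpha$, on which $\widetilde{\max}_{(\eta_\alpha)}$ is manifestly smooth. The key subtlety is that $u_\beta$ is only defined on $\overline{U}_\beta$, so the formula $\widetilde{\max}_{(\eta_\alpha)}(u_\alpha(z))$ must be interpreted by discarding the indices $\beta$ with $z \notin \overline{U}_\beta$; the forgetfulness property, Lemma~\ref{lem:reg max}(\ref{enu:important feature}), is what makes this interpretation canonical.

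First I would fix $z_0 \in M$ and set $S(z_0) := \{\alpha : z_0 \in U_\alpha\}$, which is nonempty since $\{U_\alpha\}$ covers $M$. For each $\beta \notin S(z_0)$ one of two things holds: either $z_0 \notin \overline{U}_\beta$, in which case a small open neighborhood of $z_0$ is disjoint from $\overline{U}_\beta$ and the index $\beta$ does not enter the local formula at all; or $z_0 \in \partial U_\beta$, in which case the hypothesis of the corollary gives $u_\beta(z_0) + \eta_\beta \leq \max_{\alpha \in S(z_0)}\{u_\alpha(z_0) - \eta_\alpha\}$. By continuity of $u_\beta$ on $\overline{U}_\beta$ and of the $u_\alpha$ near $z_0$, this inequality persists on a sufficiently small neighborhood $V$ of $z_0$ for every such $\beta$; finiteness of the cover guarantees that a single $V$ works simultaneously for all $\beta \notin S(z_0)$.

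Applying Lemma~\ref{lem:reg max}(\ref{enu:important feature}) then shows that on $V$ the value of $\widetilde{\max}_{(\eta_\alpha)}$ is insensitive to the entries indexed by $\beta \notin S(z_0)$, hence on $V$
\[
\tilde{u} \;=\; \widetilde{\max}_{(\eta_\alpha)_{\alpha \in S(z_0)}}\bigl((u_\alpha)_{\alpha \in S(z_0)}\bigr).
\]
Each $u_\alpha$ with $\alpha \in S(z_0)$ is smooth on a neighborhood of $z_0$, and $\widetilde{\max}$ is itself smooth by Lemma~\ref{lem:reg max}(\ref{enu:reg maximum 1}); thus $\tilde{u} \in C^\infty(V)$, and since $z_0$ was arbitrary, $\tilde{u} \in C^\infty(M)$.

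For the $\omega$-plurisubharmonicity, on $V$ I would apply the convexity inequality from Lemma~\ref{cor:reg max preserves subsolution} (which rests on convexity and monotonicity of $\widetilde{\max}$ together with $\sum_\alpha \pdv\widetilde{\max}/\pdv u_\alpha = 1$) to obtain
\[
\omega + \sqrt{-1}\ddbar \tilde{u} \;\geq\; \sum_{\alpha \in S(z_0)} \frac{\pdv \widetilde{\max}}{\pdv u_\alpha}\,\bigl(\omega + \sqrt{-1}\ddbar u_\alpha\bigr) \;\geq\; 0,
\]
since each summand is non-negative and the coefficients are non-negative with sum one. The main (and essentially only) obstacle is the local reduction in the second paragraph: once the global formula is canonically identified with a smooth function of finitely many smooth inputs on each $V$, both the smoothness and the PSH bound follow automatically from the structural properties of $\widetilde{\max}$ already established in Lemma~\ref{lem:reg max} and Lemma~\ref{cor:reg max preserves subsolution}.
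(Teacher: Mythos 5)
The paper does not prove this corollary at all; it simply cites Demailly (\cite{demailly2012complex}, I.5.19). Your argument is the standard proof of that result: localize, use Lemma \ref{lem:reg max}(\ref{enu:important feature}) to discard the indices whose domains the point is leaving, and then read off smoothness and the $\omega$-PSH inequality from the structural properties of $\widetilde{\max}$. The reduction to $S(z_0)$ and the convexity argument for plurisubharmonicity are both correct.

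One step is glossed over and, as literally written, would fail: you claim that the inequality $u_{\beta}(z_0)+\eta_{\beta}\leq\max_{\alpha\in S(z_0)}\{u_{\alpha}(z_0)-\eta_{\alpha}\}$ ``persists on a sufficiently small neighborhood by continuity.'' A non-strict inequality does not persist under perturbation when it is an equality at $z_0$, and the hypothesis on the $\eta_{\alpha}$ only guarantees $\leq$ on $\partial U_{\beta}$. Yet persistence is exactly what you need, because for $z\in V\cap U_{\beta}$ the index $\beta$ genuinely enters the global formula and must be forgettable there for the two local expressions to agree. There are two standard repairs. First, go back to the \emph{strict} hypothesis $u_{\beta}<\max_{U_{\alpha}\ni z}u_{\alpha}$: the function $z\mapsto\max_{U_{\alpha}\ni z}u_{\alpha}(z)$ is lower semicontinuous, so on the compact set $\partial U_{\beta}$ the strict gap is bounded below by some $2\delta_{\beta}>0$; choosing the $\eta_{\alpha}$ smaller than $\min_{\beta}\delta_{\beta}$ yields the $\eta$-inequality with a uniform positive margin $\delta_{\beta}$, and \emph{that} does persist on a neighborhood of $z_0$ by continuity of the finitely many functions involved. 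Alternatively, one can avoid persistence altogether by observing that the function $(t_1,\dots,t_l)\mapsto\widetilde{\max}_{\eta}(t_1,\dots,t_l)-\widetilde{\max}_{(\eta_1,\dots,\hat{\eta}_{j},\dots,\eta_l)}(t_1,\dots,\hat{t}_{j},\dots,t_l)$ is smooth, nonnegative, and vanishes identically on the closed region $\{t_{j}+\eta_{j}\leq\max_{k\neq j}(t_{k}-\eta_{k})\}$, hence vanishes to infinite order on its boundary; composing with the smooth data $u_{\gamma}$ shows the two local expressions for $\tilde{u}$ glue in $C^{\infty}$ across $\partial U_{\beta}$ even in the equality case. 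With either repair the proof is complete.
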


Next, we recall some well known definitions in complex geometry. 
\begin{defn}
Let $T$ be a closed positive $(1,1)$-current on $M$. We call $T$
a $\kah$ current if 
\[
T-\epsilon\gamma\geq0,
\]
for some $\epsilon>0$ and $\gamma$ is a Hermitian metric on $M$. 
\end{defn}

We recall the definition of Lelong number: 
\begin{defn}
For $p\in B_{R}\subset V\subset\C^{d}$, and $\varphi\in\text{PSH}(V)$,
define 
\[
\nu_{\varphi}(p,r)=\frac{\bar{\varphi}(p,R)-\bar{\varphi}(p,r)}{\log R-\log r}
\]
where $0<r<R$, and $\bar{\varphi}(p,r)=\sup_{B_{r}(p)}\varphi(z)$.
The Lelong number of $\varphi$ at $p$ is given by 
\[
\nu_{\varphi}(p)=\lim_{r\to0^{+}}\nu_{\varphi}(p,r).
\]
The Lelong number of a closed positive $(1,1)$-current $T$, denoted
as $\nu_{T}(p)$, is defined to be the Lelong number of the local
potential function. 
\end{defn}

The classic result of Y.-T. Siu shows that the Lelong number is upper
semi-continuous with respect to analytic Zariski topology. 
\begin{thm}[Siu's semi-continuity theorem \cite{Siu1974AnalyticityOS}]
If $T$ is a closed positive $(1,1)$-current on $M$, then the upper
level sets 
\[
E_{c}(T)=\{p:\nu_{T}(p)\geq c\}
\]
are analytic subsets of $M$ of dimension $\leq n-1$. 
\end{thm}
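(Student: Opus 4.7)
The plan is to follow the modern proof of Siu's theorem via Demailly's regularization of plurisubharmonic functions by logarithms of $L^{2}$ sections; the point is to express $E_c(T)$ in terms of zero sets of coherent multiplier / Bergman ideal sheaves and then invoke Noetherianity of local rings of holomorphic functions. Since analyticity is a local property, I would fix $p_0 \in M$ and work in a coordinate ball $V \subset \C^n$ around $p_0$ where $T = \tfrac{\sqrt{-1}}{2\pi}\partial\bar\partial\varphi$ for some $\varphi \in \mathrm{PSH}(V)$, so that $\nu_{T}(p) = \nu_{\varphi}(p)$ on $V$ and the task reduces to proving that $\{p \in V : \nu_\varphi(p) \ge c\}$ is analytic for each fixed $c > 0$.

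The first key step is Demailly's approximation. For each integer $m \ge 1$, let $\mathcal{H}_m$ be the Hilbert space of holomorphic functions $f$ on $V$ with $\int_V |f|^{2} e^{-2m\varphi}\, dV < \infty$, pick an orthonormal basis $(f_{m,j})_j$, and set
\[
\varphi_m := \frac{1}{2m}\log \sum_j |f_{m,j}|^{2}.
\]
The critical analytic input is the two-sided comparison
\[
\nu_\varphi(p) - \frac{n}{m} \;\le\; \nu_{\varphi_m}(p) \;\le\; \nu_\varphi(p),
\]
whose upper bound follows from the Bergman-kernel submean inequality, and whose lower bound is obtained by applying the Ohsawa--Takegoshi extension theorem (or Hörmander's $L^{2}$ estimates) to extend local jets vanishing to high order at $p$ to global sections in $\mathcal{H}_m$ with controlled norm. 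This is the main technical obstacle of the whole argument: once these estimates are granted, the rest is essentially algebraic bookkeeping.

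The second step identifies $E_c(\varphi_m)$ as an analytic set. Since $\varphi_m = \tfrac{1}{2m}\log\sum_j |f_{m,j}|^{2}$, a direct computation yields
\[
\nu_{\varphi_m}(p) = \frac{1}{m}\,\mathrm{ord}_p(\mathfrak{a}_m),
\]
where $\mathfrak{a}_m \subset \mathcal{O}_V$ is the coherent ideal sheaf generated by $\{f_{m,j}\}$; hence
\[
E_c(\varphi_m) \;=\; \bigl\{p \in V : \mathfrak{a}_m \subset \mathfrak{m}_p^{\lceil mc \rceil}\bigr\}
\]
is a genuine analytic subset of $V$. Combining this with the Lelong-number comparison gives
\[
E_c(\varphi) \;=\; \bigcap_{m \ge 1} E_{c - n/m}(\varphi_m),
\]
and for each fixed $p_0$ the corresponding chain of defining ideals in the local ring $\mathcal{O}_{V,p_0}$ stabilizes by Noetherianity; thus the germ of $E_c(\varphi)$ at $p_0$ is the germ of some $E_{c - n/m_0}(\varphi_{m_0})$, which is analytic.

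Finally, for the dimension bound, $E_c(T) \subset \{\nu_\varphi > 0\}$ is contained in the polar set of $\varphi$, which has zero Lebesgue measure since $\varphi \in L^{1}_{\mathrm{loc}}$; an analytic subset of $M$ of zero Lebesgue measure has complex codimension at least one, so $\dim E_c(T) \le n-1$. The entire argument hinges on Step 1 (Demailly approximation together with the $L^{2}$ extension lower bound on $\nu_{\varphi_m}$); the other steps are straightforward once the approximation and its Lelong-number control are in place.
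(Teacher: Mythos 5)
The paper does not prove this statement at all: it is quoted as a classical black box, with a citation to Siu's 1974 paper, and is used later only through its conclusion (e.g.\ to define the sets $S_{\tilde\epsilon}$ in Section 11). So there is no in-paper argument to compare against; what you have written is a self-contained substitute for the citation.

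Your route is the standard modern one due to Demailly (via weighted Bergman kernels and Ohsawa--Takegoshi), rather than Siu's original argument, and as a sketch it is essentially correct: the localization, the two-sided Lelong-number comparison $\nu_\varphi - n/m \le \nu_{\varphi_m} \le \nu_\varphi$, the identity $E_c(\varphi) = \bigcap_m E_{c-n/m}(\varphi_m)$, and the measure-zero argument for the codimension bound are all right. Two small points deserve care. First, since the orthonormal basis $(f_{m,j})_j$ is countably infinite, the analyticity of $E_c(\varphi_m)$ requires the coherence of the ideal sheaf generated by $\mathcal{H}_m$ (or, equivalently, the strong Noetherian property applied on relatively compact subsets); this is standard but is exactly the kind of step that should be flagged rather than asserted. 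Second, Noetherianity gives that the germ of $\bigcap_m E_{c-n/m}(\varphi_m)$ at $p_0$ equals the germ of a \emph{finite} sub-intersection $\bigcap_{m\le m_0} E_{c-n/m}(\varphi_m)$, not necessarily of a single term $E_{c-n/m_0}(\varphi_{m_0})$; the conclusion (analyticity of the germ) is unaffected, but the statement as written is slightly too strong. With those caveats your proposal stands as a legitimate proof of the cited theorem, with the genuinely hard analytic content concentrated, as you say, in the $L^2$ extension lower bound.
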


Finally we define the local regularization of a current.
\begin{defn}
\label{def:Local-regularization of current} Let $T$ be a closed
positive $(1,1)$-current on a smooth variety $Y$ of dimension $d$
and $R>0$. We call $T^{(r)}=\{T_{j}^{(r)}\}_{j\in\mathcal{J}}$ a
local regularization of $T$ with respect to a finite open covering
$\mathscr{P}=\{B_{j,3R}\}_{j\in\mathcal{J}}$ of $Y$ if the following
conditions (1) and (2) are satisfied. 
\begin{enumerate}
\item Each $B_{j,3R}$ is biholomorphic to a Euclidean ball $B_{3R}(0)$
in $\C^{d}$ equipped with standard Euclidean metric $g_{j}$. Furthermore,
$B_{j,R}\simeq B_{R}(0)\subset\C^{d}$ is also a covering of $Y$; 
\item $T_{j}^{(r)}(z)$ is the standard smoothing of $T$ in $B_{j,2R}$
defined by 
\[
T_{j}^{(r)}(z)=\int_{B_{r}(0)}r^{-2m}\vartheta\left(\frac{z'}{|r|}\right)T(z-z')dV_{\C^{n}}(z')
\]
for $r\in(0,R)$. $\vartheta(t)$ is a smooth non-negative function
with support in $[0,1]$ satisfying $\vartheta\equiv\text{const}$
in $[0,1/2]$ and a normalization condition: 
\[
\int_{B_{1}(0)}\vartheta(|z'|)dV_{\C^{n}}(z')=1.
\]
\end{enumerate}
If in each $B_{3R}$, $T=\sqrt{-1}\ddbar\varphi_{i}$ for some local
PSH function $\varphi_{i}$. Then it is easy to see that for $r\in(0,R)$,
\[
T_{j}^{(r)}=\sqrt{-1}\ddbar\varphi_{i}^{(r)},
\]
where 
\[
\varphi_{i}^{(r)}(z)=\int_{B_{r}(0)}r^{-2m}\vartheta\left(\frac{z'}{|r|}\right)\varphi_{i}(z-z')dV_{\C^{n}}(z').
\]
\end{defn}

\section{Initiation of induction argument\label{sec:Set-up-for-the}}

In this section, we state the main technical theorem of this part
and initiate an induction proof. Several technical issues including
the resolution of singularities are also discussed.

The following theorem is the main goal of the rest of this part:
\begin{thm}
\label{thm:cone condition in a neighborhood }Let $M$ be a $n$-dimensional
connected compact Kähler manifold. Let $\kappa>0$ be a constant.
Let $\Lambda$ be a close real form satisfies \textbf{H1}. If $[\omega_{0}]$
is $([\Lambda],\kappa)$-positive, then for any analytic subvariety
$Y$ with dimension $d\leq n$, there is a neighborhood $U_{Y}$ of
$Y$ in $M$ and a Kähler form $\omega_{U_{Y}}\in[\omega_{0}]|_{U_{Y}}$
satisfies the cone condition (\ref{eq:cone condition}) on $U_{Y}$. 
\end{thm}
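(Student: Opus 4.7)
The strategy is induction on $\dim Y = d$, following the method of G.~Chen \cite{chen2021j} and J.~Song \cite{Song2020NakaiMoishezonCF}.

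For the base case $d = 0$, write $Y = \{p_{1},\dots,p_{k}\}$. Near each $p_{i}$ we modify $\omega_{0}$ by $A\sqrt{-1}\partial\bar\partial(\chi_{i}|z|^{2})$ for a cutoff $\chi_{i}$ and $A \gg 1$. The resulting Kähler form in $[\omega_{0}]|_{U}$ has arbitrarily large eigenvalues near $p_{i}$, so the dual $\chi_{\mathcal{H}}$ of any hyperplane restriction becomes arbitrarily small. Since $\Lambda^{[0]} = 0$ under condition \textbf{H1}, $\langle \Lambda, \exp \chi_{\mathcal{H}}\rangle$ tends to $0$ uniformly in $\mathcal{H}$, hence $\mathcal{P}_{\Lambda}(\omega) < \kappa$ in a neighborhood of each $p_{i}$ by Lemma \ref{lem: submatrix and restricting to subspac}\,(\ref{enu:-1}).

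For the inductive step, assume the statement for all analytic subvarieties of dimension less than $d$, and let $Y$ be irreducible of dimension $d \geq 1$. The main input is the mass concentration Theorem \ref{thm:Mass} of Section \ref{sec:Mass-concentration}: using the $([\Lambda],\kappa)$-positivity of $[\omega_{0}]$ one constructs a Kähler current $T = \omega_{0} + \sqrt{-1}\partial\bar\partial\varphi \in [\omega_{0}]$ which satisfies the cone condition $\mathcal{P}_{\Lambda}(T) < \kappa$ pointwise on $M \setminus Z$, where $Z \subsetneq M$ is a proper analytic subvariety with $\dim(Z \cap Y) < d$. Applying the inductive hypothesis to $Z \cap Y$ (together with any lower-dimensional components of $Y$) produces a neighborhood $V \supset Z \cap Y$ and a smooth Kähler form $\omega_{V} \in [\omega_{0}]|_{V}$ with $\mathcal{P}_{\Lambda}(\omega_{V}) < \kappa$ on $V$. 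Away from $Z$, the local regularization $T^{(r)}$ of Definition \ref{def:Local-regularization of current} is for small $r > 0$ a smooth Kähler form converging to $T$ in $C^{\infty}_{\mathrm{loc}}(M \setminus Z)$, and by continuity of $\mathcal{P}_{\Lambda}$ in both $A$ and $\Lambda$ (Lemma \ref{lem:P_La property}\,(\ref{enu:-is-a},\,\ref{enu:-is-continuous},\,\ref{enu:-as-a})), the strict inequality $\mathcal{P}_{\Lambda}(T^{(r)}) < \kappa$ persists on a neighborhood of $Y \setminus V'$ for any $V' \subset\subset V$.

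To conclude, write $T^{(r)} = \omega_{0} + \sqrt{-1}\partial\bar\partial\varphi^{(r)}$ and $\omega_{V} = \omega_{0} + \sqrt{-1}\partial\bar\partial\varphi_{V}$ and adjust additive constants so that $\varphi_{V}$ dominates $\varphi^{(r)}$ on the inner ring of $V$ while $\varphi^{(r)}$ dominates $\varphi_{V}$ on the outer ring. The regularized maximum of Corollary \ref{cor:re max}, together with Lemma \ref{cor:reg max preserves subsolution} (which uses the convexity and monotonicity of $\mathcal{P}_{\Lambda}$ from Lemma \ref{lem:P_La property}), produces a smooth potential $\tilde\varphi$ on an open neighborhood $U_{Y}$ of $Y$ such that $\omega_{U_{Y}} := \omega_{0} + \sqrt{-1}\partial\bar\partial\tilde\varphi$ is Kähler and satisfies the cone condition throughout $U_{Y}$. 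The main obstacle is the mass concentration Theorem \ref{thm:Mass} itself, which requires a Demailly--Păun type construction adapted to the positivity condition \textbf{H1}; the delicate point is producing a Kähler current whose singular set $Z$ meets $Y$ in a proper subvariety (so the induction closes) while preserving the strict inequality $\mathcal{P}_{\Lambda} < \kappa$ on the regular locus. Once this is granted, the smoothing and patching steps are essentially formal consequences of the algebraic properties of $\mathcal{P}_{\Lambda}$ established in Section \ref{sec:Cone-condition}.
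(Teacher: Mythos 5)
Your overall strategy --- induction on dimension, mass concentration producing a K\"ahler current that satisfies the cone condition away from a smaller analytic set, the inductive hypothesis near that set, and regularized-maximum gluing --- is the paper's strategy. But there is a concrete gap in how the induction starts. Your base case covers only $d=0$, and you then claim the mass-concentration step handles every $d\ge 1$. It does not: the product-manifold construction behind Theorem \ref{thm:Mass} requires $d>k_{0}$. The lifted form $\boldsymbol{\Lambda}=\pi_{1}^{*}\hat{\Lambda}+\frac{1}{d}\pi_{2}^{*}\hat{\rho}$ on $\hat{Y}\times\hat{Y}$ must be $\mathcal{O}$-uniformly positive for the labeled splitting $\mathcal{O}_{(x,y)}=\{2,(d,d),\{\mathcal{T}_{x}\hat{Y},\mathcal{T}_{y}\hat{Y}\},(k_{0},1)\}$ (Lemma \ref{lem:O-up The-canonical-splitting}), and Definition \ref{def:A-labeled-orthogonal} demands $k_{i}\le d_{i}-1$, i.e.\ $k_{0}\le d-1$; this is exactly why Remark \ref{rem:reduction} reduces to $d>k_{0}$ before running the argument. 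For $1\le d\le k_{0}$ the paper argues completely differently (Lemma \ref{lem:base lemma}): since $\Lambda^{[l]}=0$ for $l<k_{0}$, one solves a complex Monge--Amp\`ere equation (\ref{eq:Ma equa}) on a resolution of $Y$, with the numerical hypothesis $\int_{Y}(\kappa-\Lambda^{[d]})\wedge\exp\omega_{0}>0$ serving as the integrability condition, then glues with the inductively obtained potential near $S_{Y}$ and extends off $Y$ by adding $Nd_{\rho}^{2}$. Without this separate treatment your induction never reaches the range in which mass concentration is available.

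Two further points are underestimated as ``essentially formal.'' First, the current produced by Theorem \ref{thm:Mass} lies in $(1-\delta)[\omega_{0}]$, not in $[\omega_{0}]$: the deficit in the class is what keeps $\mathcal{P}_{\Lambda}<\kappa$ strictly after the weak limits in $s$, $t$ and $\hat{\epsilon}$ (the cone condition would otherwise degenerate in the limit), and it must be compensated at the end as in (\ref{eq:Ups def}). Second, the local regularizations $\Upsilon^{(r)}$ are defined chart by chart and do not agree on overlaps, so before any regularized maximum can be taken one needs the Lelong-number dichotomy of Lemma \ref{lem:using rich} (the B\l ocki--Ko\l odziej trick) to determine, at every boundary point of every chart, which local potential dominates; this is where Siu's semicontinuity theorem and the positive Lelong numbers of $\Upsilon$ along $S_{\hat{Y}}$ enter, and it is not a consequence of the algebraic properties of $\mathcal{P}_{\Lambda}$ alone.
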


Without loss of generality, by a rescaling of $\Lambda$, we may assume
that $\kappa=1$ in the rest of this part. Then the corresponding
cone condition is 
\begin{equation}
((1-\Lambda)\wedge\exp\omega)^{[n-1]}>0\quad\text{or}\quad\mathcal{P}_{\Lambda}(\omega)<1.\label{eq:recal kapp}
\end{equation}

Note Theorem \ref{thm:numerical criterion}, presented in our introduction,
follows immediately from Theorem \ref{thm:cone condition in a neighborhood }
and Theorem \ref{thm:Analytic theorem}. 
\begin{rem}
If $Y$ is a smooth subvariety of $M$, Theorem \ref{thm:cone condition in a neighborhood }
implies that $\omega_{Y}=\omega_{U}|_{Y}$ satisfies the cone condition
on $Y$:
\begin{equation}
(\exp\omega_{Y})^{[\dim Y-1]}>(\Lambda|_{Y}\wedge\exp\omega_{Y})^{[\dim Y-1]}\quad\text{or}\quad\mathcal{P}_{\Lambda|_{Y}}(\omega_{Y})<1.\label{eq: restricted}
\end{equation}
On the other hand, (\ref{eq: restricted}) also implies the existence
of a subsolution in a neighborhood $U$ of a smooth subvariety $Y$.
This is stated in the following lemma. 
\end{rem}

\begin{lem}
\label{lem: construct a local extension}Notations as above. Suppose
that $Y$ is a smooth subvariety of $M$ and $\omega_{Y}\in[\omega_{0}|_{Y}]$
satisfies the cone condition (\ref{eq: restricted}) on $Y$. Then
there exists a neighborhood $U$ of $Y$ in $M$ and a Kähler form
$\omega_{U}\in[\omega_{0}|_{U}]$ such that $\omega_{U}|_{Y}=\omega_{Y}$
and $\omega_{U}$ satisfies the cone condition (\ref{eq:recal kapp})
in $U$. 
\end{lem}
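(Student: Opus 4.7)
The plan is to construct $\omega_U$ by extending a Kähler potential for $\omega_Y$ from $Y$ to a tubular neighborhood and then adding a large normal-direction bump, chosen so that the restriction to $Y$ is unchanged, positivity holds in a full neighborhood, and the cone condition propagates from $Y$ to a neighborhood of it in $M$. Writing $\omega_Y=\omega_0|_Y+\sqrt{-1}\partial\bar\partial\varphi_Y$ for some $\varphi_Y\in C^\infty(Y)$, I would first extend $\varphi_Y$ to a smooth $\tilde\varphi$ on a tubular neighborhood of $Y$ (for instance via pullback along the tubular projection $\pi:T\to Y$). Next, I would pick a smooth non-negative function $\rho_Y^2$ on this neighborhood satisfying $\rho_Y^2|_Y\equiv 0$, $d\rho_Y^2|_Y\equiv 0$, and with $\sqrt{-1}\partial\bar\partial\rho_Y^2$ strictly positive in the normal-bundle directions along $Y$; the standard construction patches local squared distances $\sum_{i=1}^c|z_i|^2$ (in coordinates where $Y=\{z_1=\cdots=z_c=0\}$) by a partition of unity.

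For a constant $A>0$ to be fixed, define $\omega_U:=\omega_0+\sqrt{-1}\partial\bar\partial(\tilde\varphi+A\rho_Y^2)$. Since pullback by $\iota:Y\hookrightarrow M$ annihilates $\sqrt{-1}\partial\bar\partial\rho_Y^2$ (owing to the vanishing of $\rho_Y^2$ and $d\rho_Y^2$ on $Y$) and sends $\tilde\varphi$ to $\varphi_Y$, the identity $\omega_U|_Y=\omega_Y$ is built in. To check $\omega_U>0$ near $Y$ I would argue pointwise at $p\in Y$ using the splitting $T_pM=T_pY\oplus N_pY$: in a compatible frame the block of $\omega_U|_p$ along $T_pY$ is $\omega_Y|_p$ (Kähler), the block along $N_pY$ has the form $A\cdot g_N+O(1)$ for a positive Hermitian $g_N$ coming from $\sqrt{-1}\partial\bar\partial\rho_Y^2$, and the cross block is bounded independently of $A$. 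A Schur-complement computation then yields positivity at each $p\in Y$ once $A$ is large, and continuity extends positivity to a neighborhood $U$ of the compact set $Y$.

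The cone condition on $U$ is the core analytic step. Using the characterization $\mathcal{P}_\Lambda(\omega_U)(p)=\max_\mathcal{H}\langle\Lambda,\exp\chi_\mathcal{H}\rangle$ over hyperplanes $\mathcal{H}\subset T_pM$, together with the block-matrix identities of Lemma \ref{rem:elementary linear agb}, I would analyze the restricted inverse $(\omega_U|_\mathcal{H})^{-1}$ asymptotically as $A\to\infty$. The hyperplanes split into two regimes according to their intersection with $N_pY$: those containing the full normal space yield $\chi_\mathcal{H}\to\chi_{\mathcal{H}'}$ for an induced $(d-1)$-dimensional subspace $\mathcal{H}'\subset T_pY$, and those containing $T_pY$ yield $\chi_\mathcal{H}\to\chi_Y$, the dual of $\omega_Y|_p$ on $T_pY$. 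The resulting pairings $\langle\Lambda,\exp\chi_\mathcal{H}\rangle$ converge to quantities intrinsic to $Y$, controlled by the subsolution hypothesis $\mathcal{P}_{\Lambda|_Y}(\omega_Y)<1$ combined with the ambient $([\Lambda],\kappa)$-positivity available when this lemma is invoked. Compactness of $Y$ upgrades this to a uniform bound $\mathcal{P}_\Lambda(\omega_U)<1$ on $Y$ for $A$ large, and continuity of $\mathcal{P}_\Lambda$ (Lemma \ref{lem:P_La property}(\ref{enu:-as-a})) propagates the cone condition to a neighborhood.

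The main obstacle I anticipate is the uniform asymptotic analysis of $(\omega_U|_\mathcal{H})^{-1}$ as $A\to\infty$: the convergence of $\chi_\mathcal{H}$ to its intrinsic $Y$-limit must be uniform in $\mathcal{H}$ (a compact Grassmannian of hyperplanes at each point) and in $p\in Y$, so that a single large choice of $A$ suffices. The most delicate regime is the one in which $T_pY\subseteq\mathcal{H}$, because there the limiting pairing is the full quantity $F_{\Lambda|_Y}(\omega_Y)(p)$, which is not controlled by the cone condition on $Y$ alone and must be bounded strictly below $1$ using the additional numerical positivity assumptions present in the applications of the lemma.
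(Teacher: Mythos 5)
Your construction coincides with the paper's: it takes $\omega_U=\omega_0+\sqrt{-1}\ddbar\left(\text{pr}_Y^*\varphi+Nd_\rho^2(\cdot,Y)\right)$ on a tubular neighborhood, checks positivity by exactly the block/Schur-complement decomposition you describe, and verifies the cone condition by letting $N\to\infty$, noting that $A^{-1}$ converges to $H^{-1}$ extended by zero, and then using continuity of $\mathcal{P}_\Lambda$ and compactness of $Y$ to fix a single large $N$. So the route is identical, and the uniformity worry you raise in your last paragraph is handled just as you suggest.

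The substantive issue is the "delicate regime" you flag at the end, and you have correctly identified the crux. For a hyperplane $\mathcal{H}\supset\mathcal{T}_pY$ the limit of $\<\Lambda,\exp\chi_{\mathcal{H}}\>$ as $N\to\infty$ is $\<\Lambda,\exp\chi_{\mathcal{T}_pY}\>$, which contains the top-degree contribution $\Lambda^{[d]}|_Y\big/(\omega_Y^d/d!)$ and is therefore \emph{not} controlled by (\ref{eq: restricted}), which constrains only the $(d-1,d-1)$-component (for the $J$-equation this is the difference between $\sum_{i\neq j}\lambda_i^{-1}<\kappa$ for all $j$ and $\sum_i\lambda_i^{-1}<\kappa$). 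The paper's proof passes over this point: it asserts $\<\Lambda,\exp\chi_{\mathcal{H}'}\><1-\epsilon_Y$ "for any linear subspace $\mathcal{H}'\subset\mathcal{T}_pY$", which suffices only if $\mathcal{H}'=\mathcal{T}_pY$ itself is included, i.e., only if one additionally knows $\left((1-\Lambda)\wedge\exp\omega_Y\right)^{[d]}>0$ pointwise on $Y$. Your proposed remedy, importing that extra positivity from the context in which the lemma is invoked, matches how the paper actually uses it (in Lemma \ref{lem:base lemma} the \MA{} construction delivers precisely the $[d]$-positivity), but it means that neither your argument nor the paper's derives the conclusion from (\ref{eq: restricted}) literally as stated; a clean fix is to strengthen the hypothesis to $\<\Lambda,\exp\chi_{\mathcal{H}'}\><1$ for every subspace $\mathcal{H}'\subseteq\mathcal{T}_pY$, the full tangent space included.
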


\begin{proof}
Let $U_{1}$ be a tubular neighborhood of $Y$ in $M$ such that he
projection map $\text{pr}_{Y}:U_{1}\to Y$ is well defined. Let $N>0$
and define 
\[
\omega_{U}=\omega_{0}+i\ddbar\left(\text{pr}_{Y}^{*}\varphi+Nd_{\rho}^{2}(\cdot,Y)\right).
\]
Clearly, there exists $N>0$ and a neighborhood $U_{2}\subset\subset U_{1}$
of $Y$ such that $\omega_{U}$ is $\kah$ in $U_{2}$. At a point
$p\in Y$, we choose a local normal coordinate $\{z^{i}\}$ with respect
to $\rho$ such that $\frac{\pdv}{\pdv z^{1}},\cdots,\frac{\pdv}{\pdv z^{d}}$
are tangential to $Y$ at $p$, $\frac{\pdv}{\pdv z^{d+1}},\cdots,\frac{\pdv}{\pdv z^{n}}$
are orthogonal to $Y$, and $\omega_{U}=A_{i\bar{j}}\frac{\sqrt{-1}}{2}dz^{i}\wedge d\bar{z}^{j}$
with 
\[
A=\left(\begin{array}{cc}
H & C\\
C^{\dagger} & V
\end{array}\right).
\]
At $p$, $H_{i\bar{j}}\frac{\sqrt{-1}}{2}dz^{i}\wedge d\bar{z}^{j}=\pi^{*}\omega_{Y}$,
$V_{i\bar{j}}\ge N\delta_{i\bar{j}}$, and $C=0$. In a neighborhood
of $p$, we have $C=O(d_{\rho})$ and $V>\frac{N}{2}\text{Id}_{(n-d)\times(n-d)}$.
By the continuity of $\mathcal{P}_{\Lambda}$ in Lemma \ref{lem:P_La property},
we only need to verify the cone condition (\ref{eq:recal kapp}) at
$p$. Notice that at $p$, if $N\to\infty$, $A^{-1}$ converges to
$H^{-1}$ uniformly. By (\ref{eq: restricted}), there exists $\epsilon_{Y}>0$
such that for any linear subspace $\mathcal{H}'\subset\mathcal{T}_{p}Y$,
it holds 
\[
1-\epsilon_{Y}>\<\Lambda,\exp\chi_{\mathcal{H}'}\>,
\]
where $\chi_{\mathcal{H}}=\left(H|_{\mathcal{H}'}\right)^{\bar{j}i}2\sqrt{-1}\frac{\pdv}{\pdv\bar{z}^{j}}\wedge\frac{\pdv}{\pdv z^{i}}$.
Therefore, there exists $N>>1$ s.t. $\mathcal{P}_{\Lambda}(A)<1-\frac{\epsilon_{Y}}{2}$.
Hence, by the continuity of $\mathcal{P}_{\Lambda}$ and the compactness
of $Y$, we may pick a uniform $N$, a uniform $\epsilon_{Y}$, and
a neighborhood $U\subset\subset U_{2}$ of $Y$ s.t. $\mathcal{P}_{\Lambda}(\omega_{U})<1-\frac{\epsilon_{Y}}{4}$
in $U$, which verifies the cone condition (\ref{eq:recal kapp}). 
\end{proof}
We prove Theorem \ref{thm:cone condition in a neighborhood } by induction
on the dimension of $Y$. We start with the base case.

\subsection{Base case for induction}

Since we have assumed the \textbf{H1} condition, $\mathring{\Lambda}$
is $k_{0}$-UP. Clearly, for any smooth subvariety $Y$ of dimension
$d>k_{0}$, $\mathring{\Lambda}|_{Y}$ is also $k_{0}$-UP. To initiate
the induction argument, we need to show that Theorem \ref{thm:cone condition in a neighborhood }
is valid for any subvareity $Y$ with dimension $d\leq k_{0}$. The
following lemma generalizes Song's Lemma 2.1 in \cite{Song2020NakaiMoishezonCF}. 
\begin{lem}
\label{lem:base lemma}Notations as above. Suppose that $\mathring{\Lambda}$
is $k_{0}$-UP. Let $Y$ be an analytic subvariety of $M$ with $\dim Y\leq k_{0}$.
If $[\omega_{0}]$ is $(\kappa,[\Lambda])$-positive, then there exists
a neighborhood $U_{Y}$ of $Y$ in $M$ and a Kähler metric $\omega_{U_{Y}}\in[\omega_{0}]|_{U_{Y}}$
such that the cone condition (\ref{eq:cone condition}) holds for
$\omega_{U_{Y}}$ in $U_{Y}$.
\end{lem}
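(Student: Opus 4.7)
The plan is to exploit the structural hypothesis that $\Lambda^{[l]}=0$ for $l<k_{0}$, which implies that the cone condition is automatically satisfied once the Kähler metric is pointwise large enough compared to the reference metric $\rho$. The problem then reduces to constructing a sufficiently large Kähler representative of $[\omega_{0}]|_{U_{Y}}$ on a small neighborhood $U_{Y}$ of $Y$, which is a standard local-to-global gluing. The numerical $([\Lambda],\kappa)$-positivity of $[\omega_{0}]$ enters only through the fact that $[\omega_{0}]$ is itself Kähler.

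\textbf{Step 1 (pointwise reduction).} By Lemma~\ref{lem: submatrix and restricting to subspac}(\ref{enu:-1}), together with the vanishing $\Lambda^{[l]}=0$ for $l<k_{0}$ and $\chi_{\mathcal{H}}^{n}=0$ on an $(n-1)$-dimensional $\mathcal{H}$,
\[
\mathcal{P}_{\Lambda}(\omega)(p)=\max_{\mathcal{H}\subset\mathcal{T}_{p}M,\,\dim\mathcal{H}=n-1}\sum_{l=k_{0}}^{n-1}\left\langle \Lambda^{[l]}(p),\frac{\chi_{\mathcal{H}}^{l}}{l!}\right\rangle.
\]
If $\omega\geq A\rho$ at $p$, then $(\omega|_{\mathcal{H}})^{-1}\leq A^{-1}(\rho|_{\mathcal{H}})^{-1}$, so $\chi_{\mathcal{H}}\leq A^{-1}(\chi_{\mathcal{H}})_{\rho}$ as $(1,1)$-vectors in $\mathcal{H}$, and Lemma~\ref{lem:exp storng positive} gives $\chi_{\mathcal{H}}^{l}\leq_{s}A^{-l}(\chi_{\mathcal{H}})_{\rho}^{l}$. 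Since $\rho^{l}$ lies in the interior of the cone of positive $(l,l)$-forms, there is a uniform constant (depending on $\rho,\Lambda,\overline{U_{Y}}$) with $|\langle\Lambda^{[l]},V\rangle|\leq C_{\rho,\Lambda}\langle\rho^{l},V\rangle$ for every strongly positive $V$; combined with $\langle\rho^{l},(\chi_{\mathcal{H}})_{\rho}^{l}/l!\rangle\leq\binom{n-1}{l}$, this yields
\[
|\mathcal{P}_{\Lambda}(\omega)(p)|\leq C A^{-k_{0}}
\]
with $C=C(\Lambda,\rho,n,k_{0},U_{Y})$ uniform over $p\in U_{Y}$. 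Fix $A_{0}$ with $CA_{0}^{-k_{0}}<1$; then any Kähler metric $\omega$ on $U_{Y}$ satisfying $\omega\geq A_{0}\rho$ automatically satisfies the cone condition.

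\textbf{Step 2 (construction of a large representative).} It remains to produce a smooth $\omega_{0}$-plurisubharmonic function $\varphi$ on some neighborhood $U_{Y}$ of $Y$ with $\omega_{0}+i\partial\bar{\partial}\varphi\geq A_{0}\rho$. Cover $Y$ by finitely many coordinate balls $B_{1},\ldots,B_{N}$ in $M$, with nested interior balls $B_{j}''\subset\subset B_{j}'\subset\subset B_{j}$ which also cover $Y$. On each $B_{j}$, in chart coordinates $z_{j}$, put $\varphi_{j}(z_{j})=K|z_{j}|^{2}+c_{j}$ for $K$ large and constants $c_{j}$ to be fixed; then $\omega_{0}+i\partial\bar{\partial}\varphi_{j}\geq cK\rho$ on $\overline{B_{j}'}$ for some $c=c(\rho)>0$ independent of $K$. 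Choose the shifts $c_{j}$ and small parameters $\eta_{j}$ so that the Richberg boundary condition of Corollary~\ref{cor:re max} is met with uniform margin on each $\partial B_{j}''\cap U_{Y}$, and set
\[
\varphi:=\widetilde{\max}_{\eta}(\varphi_{1},\ldots,\varphi_{N})\in C^{\infty}(U_{Y}),
\]
where $U_{Y}:=\bigcup_{j}B_{j}''$. Since $\widetilde{\max}_{\eta}$ is non-decreasing and convex in its arguments, the same reasoning as in Lemma~\ref{cor:reg max preserves subsolution} shows that $\omega_{0}+i\partial\bar{\partial}\varphi$ pointwise dominates a convex combination of the forms $\omega_{0}+i\partial\bar{\partial}\varphi_{j}$, hence $\omega_{0}+i\partial\bar{\partial}\varphi\geq cK\rho$ on $U_{Y}$. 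Taking $K\geq A_{0}/c$ yields the desired $\omega_{U_{Y}}\in[\omega_{0}]|_{U_{Y}}$.

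The main technical obstacle is the combinatorial bookkeeping in Step 2---choosing the shifts $c_{j}$ and parameters $\eta_{j}$ so that the Richberg boundary condition is met with uniform margin on each $\partial B_{j}''$. This is a standard but tedious cover-and-shift argument which exploits the nested ball structure; the crucial point is that the estimate $\omega_{0}+i\partial\bar{\partial}\varphi_{j}\geq cK\rho$ is uniform in $K$, so one can take $K$ arbitrarily large after the shifts are fixed. Conceptually, this lemma is the induction base precisely because the $k_{0}$-UP hypothesis on $\mathring{\Lambda}$ is what forces the cone condition to be local-analytic (rather than genuinely geometric) on subvarieties of dimension $\leq k_{0}$.
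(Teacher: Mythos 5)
Your Step 1 is fine as far as it goes: since $\Lambda^{[l]}=0$ for $l<k_{0}$ and the $(n,n)$-component does not contribute to $\mathcal{P}_{\Lambda}$, one indeed gets $\mathcal{P}_{\Lambda}(\omega)\leq CA^{-k_{0}}$ whenever $\omega\geq A\rho$. But this only reduces the lemma to the task in Step 2, and Step 2 is impossible once $\dim Y=d\geq1$. Any $\omega_{U_{Y}}=\omega_{0}+i\partial\bar{\partial}\varphi$ with $\varphi$ smooth on a neighborhood of the compact subvariety $Y$ satisfies $\int_{Y}\omega_{U_{Y}}^{d}=[\omega_{0}]^{d}\cdot Y$ by Stokes' theorem (integration against the closed current $[Y]$), while the pointwise bound $\omega_{U_{Y}}\geq A_{0}\rho$ on $Y_{\mathrm{reg}}$ would force $[\omega_{0}]^{d}\cdot Y\geq A_{0}^{d}\,[\rho]^{d}\cdot Y$. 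This fails for $A_{0}$ large, so no such representative exists; the Richberg gluing you sketch must break down (concretely, the shifts $c_{j}$ required by the boundary condition grow with $K$ and cannot be chosen consistently around the cover of $Y$). Your proposal therefore only establishes the trivial case $d=0$. Relatedly, your remark that the $([\Lambda],\kappa)$-positivity "enters only through $[\omega_{0}]$ being Kähler" is a warning sign: for $d=k_{0}$ the hypothesis $\int_{Y}\exp\omega_{0}\wedge(\kappa-\Lambda)>0$ is a genuine quantitative constraint involving $\Lambda^{[k_{0}]}|_{Y}$, and the lemma would be false without it.

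What the paper actually does for $1\leq d\leq k_{0}$ is quite different: it sets $\Lambda'=\Lambda^{[d]}$ (or $0$ if $d<k_{0}$), uses the numerical hypothesis to guarantee $\int_{Y}\exp\omega_{0}-(1+2\epsilon)\Lambda'>0$, passes to a resolution $\hat{Y}$ of $Y$, and solves a Monge--Amp\`ere equation $(\varpi_{Y}+\sqrt{-1}\partial\bar{\partial}u)^{d}=(1+\epsilon)\Phi^{*}\Lambda'+c\varpi_{Y}^{d}$ there, producing a potential $\varphi_{Y}$ whose associated form satisfies the needed $(d,d)$-positivity away from $S_{Y}$ and has positive Lelong number along $S_{Y}$. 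It then handles $S_{Y}$ by a sub-induction on dimension and glues via the regularized maximum, finally extending off $Y$ by adding $Nd_{\rho}^{2}$ as in Lemma \ref{lem: construct a local extension}. If you want to salvage your approach, the scaling observation of Step 1 could at best replace the extension-off-$Y$ step, not the construction of the metric along $Y$ itself.
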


\begin{proof}
Case 1. $\dim Y=0$, the result is obvious since any Kähler class
in a neighborhood of a point is trivial.

We may assume fron now on that there exists $d\geq0$ such that Lemma
\ref{lem:base lemma} holds for any $d'$ dimensional subvariety $Y'\subset M$
where $d'\leq d$. Let $Y$ be an subvariety of dimenison $d\leq k_{0}$.

Case 2. $1\leq d\leq k_{0}.$ Let $\Lambda'=\Lambda^{[d]}$ if $k_{0}=d$
or $\Lambda'=0$ if $k_{0}>d$.

Note that $([\Lambda],\kappa)$-positivity implies the following
\[
\int_{Y}(1-\Lambda')\wedge\exp\omega>0.
\]
 Let $S_{Y}$ be the singular set of $Y$. Let $\Phi:M'\to M$ be
the resolution of singularities of $Y$ by successive blowups along
smooth centers. Let $\hat{Y}$ be the strict transform of $Y$. Denote
the exceptional divisor $E_{\Phi}$. Let $\sigma$ be a defining section
of the line bundle $[E_{\Phi}]$ and $h$ be a hermitian metric on
$[E_{\Phi}]$. Let $F_{h}$ be the curvature form on $h$. Then there
is a small $\delta>0$ such that $\varpi_{Y}:=\omega_{Y}-\delta F_{h}$
is a $\kah$ metric on $\Phi^{-1}(W)$. Pick a small $\epsilon$ such
that 
\begin{equation}
\int_{Y}\exp\omega-(1+2\epsilon)\Lambda'>0.\label{eq:Lemma 9.4}
\end{equation}
By choosing a smaller $\delta$ if necessary, we may assume that 
\begin{equation}
\int_{\hat{Y}}\exp\varpi-(1+\epsilon)\Phi^{*}\Lambda'>0.\label{eq:Lemma 9.4 1}
\end{equation}
On $\hat{Y}$, we may solve the following \MA equation 
\begin{equation}
(\varpi_{Y}+\sqrt{-1}\ddbar u)^{d}=(1+\epsilon)\Phi^{*}\Lambda'+c\varpi_{Y}^{d},\label{eq:Ma equa}
\end{equation}
for some constant $c>0$. In $M'\backslash E_{\Phi}$, we may write
$-F_{h}=\sqrt{-1}\ddbar\log|\sigma|_{h}^{2}$. Thus, there exists
$\varphi_{Y}\in C^{\infty}(W\backslash S_{Y})\cap\text{PSH}(W,\omega_{0})$
such that
\[
(\omega_{0}+\sqrt{-1}\ddbar\varphi_{Y})^{d}-(1+\epsilon)\Lambda'>0,
\]
as a $(d,d)$ form away from $S_{Y}$; Furthermore, the Lelong number
of $\varphi_{Y}$ at $S_{Y}$ is larger than $\delta$.

By the induction hypothesis, there exists a neighborhood $U$ of $S_{Y}$
in which there is a smooth $\kah$ metric $\omega_{U}=\omega_{0}+\sqrt{-1}\ddbar\varphi_{U}$
satisfying the condition $((1-\Lambda)\wedge\exp\omega_{U})^{[n-1]}>0$.
We pick neighborhoods $U_{0}\Subset U_{1}\Subset U_{2}\Subset U$
of $p$. 
\begin{figure}
\includegraphics[width=6cm]{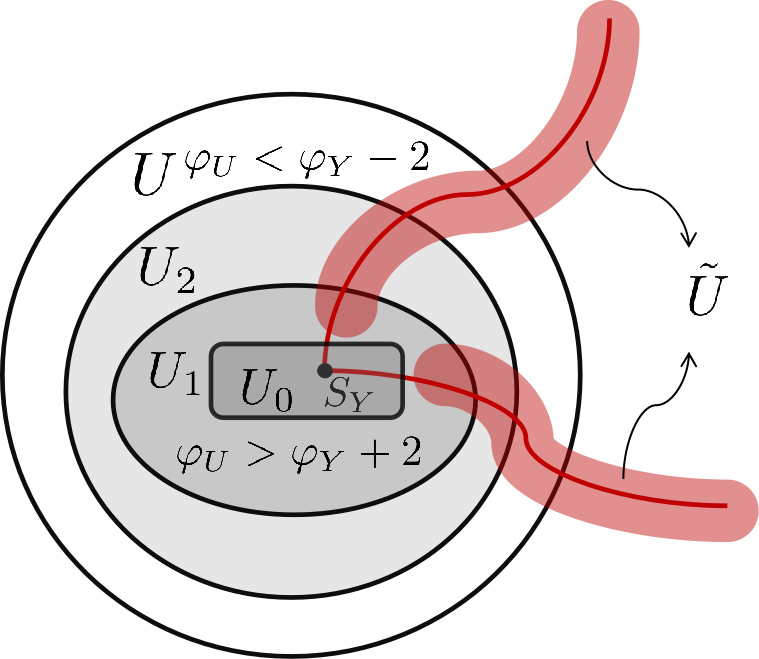}

\caption{Neighborhoods}
\end{figure}

Without loss of generality, we may subtract a large number from $\varphi_{U}$
such that $\varphi_{U}<\varphi_{Y}-2$ in $W\backslash U_{2}$. Since
$\varphi_{Y}$ diverges to $-\infty$ at $p$, shrinking $U_{1}$
if necessary, we may assume that in $U_{1}$, $\varphi_{Y}+2<\varphi_{U}$
in $U_{1}$.

For a point $z\in W\backslash U_{0}$, let $d_{\rho}(z)$ be the distant
function to $Y\cap(W\backslash U_{0})$ with respect to $\rho$. Let
\begin{equation}
\tilde{\varphi}_{Y}=\varphi_{Y}+Nd_{\rho}^{2}.\label{eq:def tilde varphi}
\end{equation}
By the same argument as in Lemma \ref{lem: construct a local extension},
for sufficiently large $N$, $\tilde{\omega}=\omega_{0}+\sqrt{-1}\ddbar\tilde{\varphi}_{Y}$
is a $\kah$ form in a neighborhood $\tilde{U}$ of $Y\cap(W\backslash U_{1})$
and satisfies the cone condition (\ref{eq:recal kapp}) on $\tilde{U}.$
Shrinking $\tilde{U}$ if necessary, we may assume that in $\tilde{U}\cap U_{1}$
the follow holds:
\begin{align}
\tilde{\varphi}_{Y} & <\varphi_{U}-1,\ \text{in}\ \tilde{U}\cap U_{1};\ \tilde{\varphi}_{Y}>\varphi_{U}+1,\ \text{in }\tilde{U}\backslash U_{2}.\label{eq:tildevarphi}
\end{align}
Let 
\[
\tilde{\varphi}=\widetilde{\max}_{(1/2,1/2)}(\tilde{\varphi}_{Y},\varphi_{U})
\]
in $U_{Y}=\tilde{U}\cup U_{1}$. By the above construction, we have
on $U_{1}\cap\tilde{U}$ $\varphi_{U}-\frac{1}{2}>\tilde{\varphi}_{Y}+\frac{1}{2}$
and on $\tilde{U}\backslash U_{2}$, $\tilde{\varphi}_{Y}-\frac{1}{2}>\varphi_{U}+\frac{1}{2}$.
Therefore, by Richberg's technique (Corollary \ref{cor:re max}),
$\tilde{\varphi}$ is smooth and in $\text{PSH}(U_{Y},\omega_{0})$.
Moreover, by Corollary \ref{cor:reg max preserves subsolution}, $\widetilde{\max}$
preserves the subsolution. Thus, $\omega_{0}+\sqrt{-1}\ddbar\tilde{\varphi}$
satisfies the cone condition. 
\end{proof}
Now that we have established the base case for the induction argument,
from now on, we may assume $\dim Y>k_{0}$ in the later discussions.
We state our induction hypothesis.

\textbf{Induction Hypothesis}: There exists $d\in\N$ and $d>k_{0}$
such that for any subvariety $Y$ if $\dim Y\leq d-1\leq n-1$, then
there exists a neighborhood $U$ of $Y$ in $M$ and a $\kah$ metric
$\omega_{U}\in[\omega_{0}|_{U}]$ such that $\omega_{U}|_{Y}=\omega_{Y}$
and $\omega_{U}$ satisfies the cone condition (\ref{eq:recal kapp})
in $U$.

\subsection{Resolution of singularities}

Let $Y$ be a subvariety of $M$ with $\dim Y=d>k_{0}$. Assume that
$Y$ is irreducible for simplicity. Apply the resolution of singularities
for $Y$ to get
\begin{equation}
\Phi:M'\to M,\label{eq:Phidef}
\end{equation}
where $\Phi$ is achieved by a sequence of blow-ups along smooth centers.
Let $\hat{Y}$ be the strict transform of $Y$ in $M'$ which is a
smooth $d$-dimensional submanifold. Let $S_{Y}$ be the singular
set of $Y$ and 
\begin{equation}
S_{\hat{Y}}=\Phi^{-1}(S_{Y})\cap\hat{Y}.\label{eq:Sing Y}
\end{equation}
Since $Y$ is irreducible, $Y\backslash S_{Y}=\Phi(\hat{Y}\backslash S_{\hat{Y}})$.

Let $S$ be the exceptional locus of $\Phi$, $h_{S}$ be a hermitian
metric on the line bundle $[S]$ associated to $S$, and $F_{h_{S}}$
be the curvature. Then for some small $\delta_{S}>0$,$\varpi:=\rho-\delta_{S}F_{h_{S}}$
is $\kah$ on $M'$ for some small $\delta_{S}>0$. We may further
assume that 
\begin{equation}
\varpi=\rho-\delta_{S}F_{h_{S}}>\frac{\rho}{2}.\label{eq:varpi bigger than rho}
\end{equation}
If $\sigma_{S}$ is a defining section of the line bundle $[S]$,
and $\phi_{S}=\delta_{S}\log|\sigma_{S}|_{h_{S}}^{2}$ then on $M'\backslash S$
we have 
\[
\varpi=\rho+\sqrt{-1}\ddbar\phi_{S}.
\]
Notice $\phi_{S}$ has positive Lelong number along $S$.

We need to perturb $[\omega_{0}]$ and $[\Lambda]$ to obtain strict
$(\kappa,[\Lambda])$-positivity on $\hat{Y}$. For this purpose,
we define the following: 
\begin{equation}
\hat{\omega}_{0}=\hat{\omega}_{0}(t,\hat{\epsilon})=\left(1+Kt\right)\omega_{0}+\hat{\epsilon}t\varpi,\ \hat{\Lambda}=\hat{\Lambda}(t,\hat{\epsilon})=\Lambda+\hat{\epsilon}^{n}t^{n}\frac{\varpi^{k_{0}}}{k_{0}!},\label{eq:perturbed classes}
\end{equation}
\begin{equation}
\hat{\rho}=\hat{\rho}(t,\hat{\epsilon})=\rho+\hat{\epsilon}^{n}t^{n}\varpi.\label{eq:perturbed rho}
\end{equation}
Here $K>1$ is chosen large enough such that 
\begin{equation}
\Lambda\leq K\left(\exp(\frac{\omega_{0}}{2n})-1\right).\label{eq:choice of K}
\end{equation}
The following result shows that $[\hat{\omega}_{0}]$ is $(\kappa,[\hat{\Lambda}])$-positive. 
\begin{lem}[Song Lemma 4.1]
There is a small $\hat{\epsilon}_{Y}$ s.t. if $\hat{\epsilon}\in(0,\hat{\epsilon}_{Y}),\ t\in(0,1]$
then 
\begin{equation}
\int_{\hat{Y}}e^{\hat{\omega}_{0}}\wedge\left(1-\hat{\Lambda}\right)>0,\label{eq:Song 4.1 1}
\end{equation}
and for any subvariety $V'$ of $\hat{Y}$ with dimension $k<d$,
it holds 
\begin{equation}
\int_{V'}e^{\hat{\omega}_{0}}\wedge\left(1-\hat{\Lambda}\right)>\frac{1}{2}\int_{V'}\exp(\hat{\epsilon}t\varpi).\label{eq:Song 4.1 2}
\end{equation}
\end{lem}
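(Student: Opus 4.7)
The plan is to reduce both inequalities, at $\hat{\epsilon}=0$, to cohomological integrals on $Y$ (and its subvarieties) via pullback by $\Phi$, where the strict $([\Lambda],1)$-positivity of $[\omega_0]$ provides positivity, and then extend to small $\hat{\epsilon}>0$ by continuity. I will treat the two inequalities separately because the exceptional locus of $\Phi$ interacts with subvarieties of $\hat{Y}$ differently from with $\hat{Y}$ itself.

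For (\ref{eq:Song 4.1 1}), I would decompose
\begin{equation*}
\int_{\hat{Y}}(1-\hat{\Lambda})e^{\hat{\omega}_0} = \int_{\hat{Y}}(1-\Lambda)e^{(1+Kt)\omega_0+\hat{\epsilon}t\varpi} - \hat{\epsilon}^n t^n \int_{\hat{Y}}\frac{\varpi^{k_0}}{k_0!}\wedge e^{\hat{\omega}_0},
\end{equation*}
and observe the second piece is $O(\hat{\epsilon}^n)$. Setting $\hat{\epsilon}=0$ in the first piece and invoking $\int_{\hat{Y}}\Phi^*\beta=\int_Y\beta$ reduces it to $\int_Y(1-\Lambda)e^{s\omega_0}$ with $s=1+Kt\ge 1$, which expands as $s^d a_0-\sum_{k=1}^{d}s^{d-k}C_k$ for $a_0=\int_Y\omega_0^d/d!$ and $C_k=\int_Y\Lambda^{[k]}\wedge\omega_0^{d-k}/(d-k)!$. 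Under \textbf{H1} each $C_k\ge 0$ when $k\le n-1$. For $\dim Y<n$, strict $([\Lambda],1)$-positivity gives $a_0>\sum C_k$, so for $s\ge 1$, using $s^{-k}\le 1$,
\begin{equation*}
s^d a_0-\sum_{k=1}^d s^{d-k}C_k \ge s^d\Bigl(a_0-\sum_k C_k\Bigr)\ge a_0-\sum_k C_k>0
\end{equation*}
uniformly in $t\in[0,1]$, and continuity handles small $\hat{\epsilon}>0$. The case $\dim Y=n$ (so $Y=M$ and $\Phi$ is trivial) is more delicate: the normalization $\kappa=1$ forces $a_0=\sum C_k$, so the $\hat{\epsilon}=0$ reduction equals $s^d\sum C_k(1-s^{-k})$, which is strictly positive for $t>0$ but vanishes at $t=0$; positivity for $t\in(0,1]$ and small $\hat{\epsilon}$ comes by combining this with the first-order $\hat{\epsilon}t\,\varpi$ contribution, where (\ref{eq:choice of K}) controls the competition between $\Lambda$ and the rescaled class.

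For (\ref{eq:Song 4.1 2}), let $V'\subset\hat{Y}$ with $\dim V'=k<d$, $V=\Phi(V')$, $r=\dim V\le k$. Using $e^{\hat{\omega}_0}=e^{(1+Kt)\omega_0}\wedge e^{\hat{\epsilon}t\varpi}$ and expanding in powers of $\hat{\epsilon}t\varpi$, the right side is $\frac{(\hat{\epsilon}t)^k}{2\,k!}\int_{V'}\varpi^k$, of order $\hat{\epsilon}^k$. In the generic case $r=k$, pullback preserves the integral, so the $\hat{\epsilon}^0$-term on the left equals $\int_V(1-\Lambda)e^{(1+Kt)\omega_0}|^{[k]}$, which is uniformly bounded below by the argument above applied to the proper subvariety $V\subset M$; the right side tends to zero, so LHS $>$ RHS for small $\hat{\epsilon}$ uniformly in $t$. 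In the exceptional case $r<k$, pullback annihilates forms of degree $>r$ on $V'$, so the LHS leading contribution comes from the $j=k-r$ power of $\varpi$, giving $(\hat{\epsilon}t)^{k-r}\int_V[(1-\Lambda)e^{(1+Kt)\omega_0}]^{[r]}\cdot g$ via fiber integration, where $g=(\Phi|_{V'})_*(\varpi^{k-r}/(k-r)!)$ is a positive function on $V_{\mathrm{reg}}$. Since $k-r<k$, this term dominates the RHS in the $\hat{\epsilon}$-order, provided its coefficient is positive.

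The chief obstacle is precisely this last point. The $([\Lambda],1)$-positivity is a cohomological condition that guarantees $\int_V[(1-\Lambda)e^{(1+Kt)\omega_0}]^{[r]}>0$ (against the constant weight), but here one needs the same inequality against the variable positive weight $g$, which is not formally implied by cohomology alone. Following J.~Song's strategy, I would address this by iterated resolution of $V$, passing to a further birational model on which the fiber-integration weight becomes effectively constant, and by running a nested induction on $\dim V$; alternatively, once the induction hypothesis is invoked on $V$ itself (of strictly smaller dimension), one can use a local subsolution on a neighborhood of $V$ constructed from Lemma~\ref{lem: construct a local extension} to control the integrand with variable weight. This weighted-positivity argument, together with the borderline $\dim Y=n$ situation in (\ref{eq:Song 4.1 1}), is where the specific normalization of $K$ in (\ref{eq:choice of K}) and the precise $n$-th order subtraction $\hat{\epsilon}^n t^n \varpi^{k_0}/k_0!$ in $\hat{\Lambda}$ enter crucially, and where the uniform choice of $\hat{\epsilon}_Y$ must be made.
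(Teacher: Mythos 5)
Your treatment of (\ref{eq:Song 4.1 1}) is essentially the paper's: the same decomposition into $\int_{\hat Y}e^{(1+Kt)\omega_0+\hat\epsilon t\varpi}\wedge(1-\Lambda)$ minus an $O(\hat\epsilon^n)$ correction, the same reduction to $\int_Y(1-\Lambda)\wedge e^{s\omega_0}$ with $s=1+Kt\geq1$, and the same monotonicity estimate $s^da_0-\sum_k s^{d-k}C_k\geq a_0-\sum_kC_k$. Your observation that the case $Y=M$ is borderline (the normalization forces $a_0=\sum_kC_k$, so the main term is only of order $t$ and must beat the $O(\hat\epsilon t)$ corrections) is a legitimate refinement of what the paper actually writes, which simply asserts $\int_Ye^{\omega_0}\wedge(1-\Lambda)>0$.

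The gap is in (\ref{eq:Song 4.1 2}). Your main line of argument --- push the integral down to $V=\Phi(V')$ and conclude from $([\Lambda],1)$-positivity of $[\omega_0]$ on $V$ --- does not close, for exactly the reason you yourself identify: when $\Phi$ contracts $V'$, the leading term is a fiber integral against a variable positive weight, and the numerical condition says nothing about weighted integrals. Moreover, even in the generically finite case, your conclusion ``LHS $>$ RHS for small $\hat\epsilon$'' yields a threshold depending on $V'$ through the ratio of $\int_V(1-\Lambda)\wedge e^{s\omega_0}$ to $\int_{V'}\varpi^k$, whereas the lemma requires a single $\hat\epsilon_Y$ working for all subvarieties of $\hat Y$ simultaneously. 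The device you mention only in passing as an ``alternative'' is in fact the paper's entire argument and has to be carried out: by the induction hypothesis there is a K\"ahler form $\omega_W\in[\omega_0]|_{U_W}$ on a neighborhood of $W=\Phi(V')$ satisfying the cone condition, so by Lemma \ref{lem: submatrix and restricting to subspac} every component $(\exp\omega_W\wedge(1-\Lambda))^{[l]}$, $1\leq l\leq n-1$, is a positive form. Replacing $\omega_0$ by $\omega_W$ (harmless, since the integral over the compact $V'$ depends only on the class) makes the expansion of $e^{(1+Kt)\omega_W}\wedge e^{\hat\epsilon t\varpi}\wedge(1-\Lambda)$ over $V'$ a sum of manifestly nonnegative terms plus the exact term $\frac{(\hat\epsilon t)^k}{k!}\varpi^k$; no cohomological positivity on $V$ is invoked, the contraction issue disappears, and the only negative contribution comes from $\hat\epsilon^nt^n\varpi^{k_0}/k_0!$, whose terms carry $\hat\epsilon$-powers $n+k-k_0-a> k$ and are bounded by $C\int_{V'}\varpi^k$ uniformly in $V'$ via one constant $C_1$ with $C_1[\varpi]\geq[\Phi^*\omega_0]$. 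Without executing this step, your proposal does not prove (\ref{eq:Song 4.1 2}).
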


\begin{proof}
For $\hat{Y}$, we have 
\begin{align}
\int_{\hat{Y}}e^{\hat{\omega}_{0}}\wedge\left(1-\hat{\Lambda}\right) & =\int_{\hat{Y}}e^{(1+Kt)\omega_{0}+\hat{\epsilon}t\varpi}\wedge(1-\Lambda)-\hat{\epsilon}^{n}t^{n}\int_{\hat{Y}}e^{\hat{\omega}_{0}}\wedge\frac{\varpi^{k_{0}}}{k_{0}!}\label{eq:est int}\\
 & \geq(1+Kt)^{d}\int_{Y}e^{\omega_{0}}\wedge(1-\Lambda)+O(\hat{\epsilon}(1+t)^{d}).\nonumber 
\end{align}
Since $\int_{Y}e^{\omega_{0}}\wedge(1-\Lambda)>0$, there is a $\hat{\epsilon}_{Y}'>0$
so that if $\hat{\epsilon}<\hat{\epsilon}_{Y}'$ then (\ref{eq:Song 4.1 1})
holds.

If $k<d$, we may assume $V'$ is a irreducible. Let $W=\Phi(V')$.
By induction hypothesis, there is a $\kah$ form $\omega_{W}$ in
a neighborhood $U_{W}$ of $W$ in $M$ so that $\omega_{W}\in[\omega_{0}]|_{W}$
and 
\[
\left(\exp\omega_{W}\wedge\left(1-\Lambda\right)\right)^{[n-1]}>0.
\]
Then by Lemma \ref{lem: submatrix and restricting to subspac}, for
any $l\leq n-1$, 
\[
(\exp\omega_{W}\wedge(1-\Lambda))^{[l]}\geq0,
\]
which implies 
\begin{equation}
(\exp\omega_{W}\wedge\left(1-\Lambda\right))^{[l]}\wedge\varpi^{k-l}\geq0,\label{eq:int est int}
\end{equation}
in $U'_{W}=\Phi^{-1}(W)$ in $M'$ for $l\leq k$. Let 
\begin{equation}
\hat{\omega}_{1}:=(1+Kt)\omega_{W}+\hat{\epsilon}t\varpi.\label{eq:OMEGahat 1}
\end{equation}
Then by (\ref{eq:int est int}), 
\begin{align}
\int_{V'}e^{\hat{\omega}_{1}}\wedge\left(1-\hat{\Lambda}\right) & =\int_{V'}e^{(1+Kt)\omega_{W}}\wedge e^{\hat{\epsilon}t\varpi}\wedge(1-\Lambda-\hat{\epsilon}^{n}t^{n}\frac{\varpi^{k_{0}}}{k_{0}!})\label{eq:tech song 4.1}\\
 & =\int_{V'}e^{(1+Kt)\omega_{W}}\wedge e^{\hat{\epsilon}t\varpi}\wedge(1-\Lambda)-\hat{\epsilon}^{n}t^{n}\int_{V'}e^{(1+Kt)\omega_{W}}\wedge e^{\hat{\epsilon}t\varpi}\wedge\frac{\varpi^{k_{0}}}{k_{0}!}\nonumber \\
 & \geq\int_{V'}\frac{(\hat{\epsilon}t)^{k}}{k!}\varpi^{k}-\sum_{a=0}^{k-k_{0}}R(a)\int_{V'}\frac{\omega_{W}^{a}}{a!}\wedge\frac{\varpi^{k-a}}{(k-k_{0}-a)!k_{0}!},\nonumber 
\end{align}
where $R(a)=(1+Kt)^{a}(\hat{\epsilon}t)^{n+k-k_{0}-a}$. There is
a uniform constant $C_{1}$, s.t. $C_{1}[\varpi]\geq[\Phi^{*}\omega_{0}]$.
Thus, 
\begin{equation}
\int_{V'}e^{\hat{\omega}_{1}}\wedge\left(1-\hat{\Lambda}\right)\geq\left(\frac{(\hat{\epsilon}t)^{k}}{k!}-C_{2}\sum_{a=0}^{k-k_{0}}R(a)\right)\int_{V'}\varpi^{k}.\label{eq:tech song 4.1-1}
\end{equation}
where $C_{2}=C_{2}(C_{1},k,k_{0})$. If $k<k_{0}$, we have 
\begin{equation}
\int_{V'}e^{\hat{\omega}_{1}}\wedge\left(1-\hat{\Lambda}\right)>\frac{(\hat{\epsilon}t)^{k}}{k!}\int_{V'}\varpi^{k}.\label{eq:st int}
\end{equation}
If $k\geq k_{0}$, the power of $\hat{\epsilon}$ in $R(a)$ is greater
than $k$ and the power of $t$ is bigger than $n$. Hence, there
is an $\hat{\epsilon}_{Y}''=\hat{\epsilon}_{Y}''(k,n,C_{2},K)$ such
that if $\hat{\epsilon}<\hat{\epsilon}_{Y}''$, 
\begin{equation}
R(a)<\frac{1}{2kC_{2}}\frac{(\hat{\epsilon}t)^{k}}{k!}.\label{eq:tech song 4.1-2}
\end{equation}
Substituting (\ref{eq:tech song 4.1-2}) into (\ref{eq:tech song 4.1-1})
yields (\ref{eq:Song 4.1 2}). Finally, choose $\hat{\epsilon}_{Y}=\min(\hat{\epsilon}_{Y}',\hat{\epsilon}_{Y}'')$
and we have finished the proof. 
\end{proof}
We proceed to check the $k_{0}$-UP condition for $\hat{\Lambda}$. 
\begin{lem}
\label{lem:K0 up for hat La}If $\mathring{\Lambda}$ is $k_{0}$-UP
with respect to $\rho$ and a uniform constant $m>0$. Then $\mathring{\hat{\Lambda}}$
is $k_{0}$-UP with respect to $\hat{\rho}$ and a constant 
\[
m'=\min\{m,k_{0}\left(\frac{1}{2}+(\hat{\epsilon}t)^{n}\right)^{k_{0}-1}\}.
\]
\end{lem}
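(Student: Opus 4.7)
The plan is to verify the two requirements of Definition \ref{def:UP} for $\mathring{\hat{\Lambda}}$ with respect to $\hat{\rho}$ directly. Writing $s:=(\hat{\epsilon}t)^{n}$ for brevity and using $\hat{\Lambda}=\Lambda+s\,\varpi^{k_{0}}/k_{0}!$, the perturbation term is purely of bidegree $(k_{0},k_{0})$. Consequently $\hat{\Lambda}^{[l]}=\Lambda^{[l]}$ for every $l\neq k_{0}$; the vanishing $\hat{\Lambda}^{[l]}=0$ for $l<k_{0}$ is inherited from $\Lambda$, and for $l>k_{0}$ the $(l,l)$-component of $\mathring{\hat{\Lambda}}-m'\hat{\rho}^{k_{0}}/k_{0}!$ equals $\mathring{\Lambda}^{[l]}$, which is positive by the hypothesis. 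So the only substantive check is at the $(k_{0},k_{0})$-level:
\[
\mathring{\Lambda}^{[k_{0}]}+s\frac{\varpi^{k_{0}}}{k_{0}!}\;\geq\;m'\frac{\hat{\rho}^{k_{0}}}{k_{0}!}.
\]
Using $\mathring{\Lambda}^{[k_{0}]}\geq m\rho^{k_{0}}/k_{0}!$, this is implied by the cleaner pure-form inequality
\[
m\rho^{k_{0}}+s\varpi^{k_{0}}\;\geq\;m'(\rho+s\varpi)^{k_{0}}. \quad (\ast)
\]

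Next I would reduce $(\ast)$ to a pointwise numerical inequality by simultaneously diagonalizing $\rho$ and $\varpi$ at each point, which is always possible for two $(1,1)$-forms. Writing $\rho=\sum_{i}\alpha_{i}\theta_{i}$ and $\varpi=\sum_{i}\beta_{i}\theta_{i}$ in a joint eigenbasis $\theta_{i}=\frac{\sqrt{-1}}{2}dz^{i}\wedge d\bar{z}^{i}$, with $\beta_{i}\geq\alpha_{i}/2$ coming from $\varpi>\rho/2$, the form inequality $(\ast)$ decouples into independent inequalities indexed by $k_{0}$-subsets $K\subset\{1,\dots,n\}$:
\[
m\prod_{i\in K}\alpha_{i}+s\prod_{i\in K}\beta_{i}\;\geq\;m'\prod_{i\in K}(\alpha_{i}+s\beta_{i}).
\]
Normalizing via $\alpha_{i}=2\beta_{i}x_{i}$ with $x_{i}\in[0,1]$ and dividing by $\prod_{K}\beta_{i}$ turns this into a polynomial inequality in $(x_{i},s)$. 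The extremal regime $x_{i}\equiv 1$ (where $\varpi=\rho/2$ saturates the assumed lower bound) produces the bound $m'\leq m$, while the opposite extreme $x_{i}\equiv 0$ (maximal degeneracy of $\rho$, which genuinely occurs along the exceptional divisor of $\Phi$) produces the bound involving $k_{0}(1/2+s)^{k_{0}-1}$. Combining these two and controlling intermediate $x_{i}$ by monotonicity in each variable gives the stated value $m'=\min\{m,k_{0}(1/2+s)^{k_{0}-1}\}$.

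The main obstacle is the bookkeeping in this elementary last step. The cross terms in $\prod_{K}(\alpha_{i}+s\beta_{i})$ must be controlled using only the one-sided estimate $\beta_{i}\geq\alpha_{i}/2$, because after pullback to $M'$ the form $\rho$ is only semi-positive (it degenerates along the exceptional divisor $S$), so no matching upper bound of the form $\beta_{i}\leq C\alpha_{i}$ is available. A naive H\"older-type interpolation therefore fails, and instead one has to expand $(\rho+s\varpi)^{k_{0}}$ binomially and absorb each cross term $\binom{k_{0}}{j}\rho^{k_{0}-j}(s\varpi)^{j}$ into a combination of $m\rho^{k_{0}}$ and $s\varpi^{k_{0}}$ using $\alpha_{i}\leq 2\beta_{i}$. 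This careful term-by-term comparison, together with the factor $1/2$ appearing in $\varpi>\rho/2$, is precisely what produces the factor $(1/2+s)^{k_{0}-1}$ in the constant $m'$.
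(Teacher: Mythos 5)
Your final paragraph is, in substance, the paper's own proof: expand $(\rho+(\hat{\epsilon}t)^{n}\varpi)^{k_{0}}$ binomially, absorb every cross term into $\varpi^{k_{0}}$ using $\rho<2\varpi$, obtain $\hat{\rho}^{k_{0}}/k_{0}!\leq\rho^{k_{0}}/k_{0}!+(\hat{\epsilon}t)^{n}C\,\varpi^{k_{0}}/k_{0}!$ for an explicit $C$, and then conclude from $\mathring{\Lambda}+(\hat{\epsilon}t)^{n}\varpi^{k_{0}}/k_{0}!\geq m\rho^{k_{0}}/k_{0}!+(\hat{\epsilon}t)^{n}\varpi^{k_{0}}/k_{0}!$. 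Your opening reduction to the $(k_{0},k_{0})$-component is correct and is implicit in the paper. Note, however, that the simultaneous diagonalization and the per-subset scalar inequalities are an unnecessary detour: $\varpi>\rho/2$ is an inequality of $(1,1)$-forms, and wedge products of positive $(1,1)$-forms respect such inequalities (cf.\ Lemma \ref{lem:exp storng positive}), so $\rho^{k_{0}-a}\wedge\varpi^{a}\leq 2^{k_{0}-a}\varpi^{k_{0}}$ holds directly at the level of forms, exactly as the paper uses it.

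The middle paragraph, by contrast, is not correct as written. At $x_{i}\equiv 0$ (i.e.\ $\rho=0$) the requirement degenerates to $s\varpi^{k_{0}}\geq m's^{k_{0}}\varpi^{k_{0}}$ with $s=(\hat{\epsilon}t)^{n}$, i.e.\ $m'\leq s^{1-k_{0}}$, which is vacuous for small $s$ and does not produce the factor $k_{0}(\tfrac12+s)^{k_{0}-1}$; the binding degenerate configurations are the mixed corners where some but not all eigenvalues of $\rho$ vanish. Since you then discard this corner analysis in favor of the binomial expansion, the proof still goes through, but that paragraph should not survive into a final write-up. A further small point: carrying out the expansion with $\rho<2\varpi$ actually yields the factor $(2+s)^{k_{0}-1}$ rather than $(\tfrac12+s)^{k_{0}-1}$, so the constant one obtains this way is $m'=\min\{m,(k_{0}(2+s)^{k_{0}-1})^{-1}\}$; the precise value is immaterial downstream, where only a uniform positive lower bound for $m'$ independent of $t$ and $\hat{\epsilon}$ is ever used.
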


\begin{proof}
By \ref{eq:varpi bigger than rho}, we have assumed $\varpi>\frac{\rho}{2}$.
Thus 
\begin{align}
\frac{\hat{\rho}^{k_{0}}}{k_{0}!} & =\frac{(\rho+\hat{\epsilon}^{n}t^{n}\varpi)^{k_{0}}}{k_{0}!}\label{eq:hat rho ineq}\\
 & \leq\frac{\rho^{k_{0}}}{k_{0}!}+(\hat{\epsilon}t)^{n}\frac{\varpi^{k_{0}}}{k_{0}!}\left(\sum_{a=0}^{k_{0}-1}\frac{(\hat{\epsilon}t)^{n(k_{0}-1-a)}k_{0}!}{2^{a}a!(k_{0}-a)!}\right)\nonumber \\
 & \leq\frac{\rho^{k_{0}}}{k_{0}!}+(\hat{\epsilon}t)^{n}k_{0}\left(\frac{1}{2}+(\hat{\epsilon}t)^{n}\right)^{k_{0}-1}\frac{\varpi^{k_{0}}}{k_{0}!}.\nonumber 
\end{align}
Since $\mathring{\Lambda}\geq m\frac{\rho^{k_{0}}}{k_{0}!}$, by (\ref{eq:hat rho ineq}),
\begin{align*}
\hat{\Lambda}-\hat{\Lambda}^{[n]} & =\mathring{\Lambda}+(\hat{\epsilon}t)^{n}\frac{\varpi^{k_{0}}}{k_{0}!}\geq m\frac{\rho^{k_{0}}}{k_{0}!}+(\hat{\epsilon}t)^{n}\frac{\varpi^{k_{0}}}{k_{0}!}\\
 & \geq m'\frac{\hat{\rho}^{k_{0}}}{k_{0}!}.
\end{align*}
We have proved the Lemma. 
\end{proof}

\subsection{Equations on $\hat{Y}$}

Next, we consider the following equation on $\hat{Y}:$ 
\begin{equation}
(\exp\hat{\omega}\wedge(1-\hat{\Lambda}))^{[d]}=c_{t,\hat{\epsilon}}(\exp\hat{\rho})^{[d]},\label{eq:equation on hat Y}
\end{equation}
for $t>0$, $\hat{\epsilon}\in(0,\hat{\epsilon}_{Y})$, and $\hat{\omega}\in[\hat{\omega}_{0}]$.
Here $c_{t,\hat{\epsilon}}$ is a normalization constant defined s.t.
\[
\int_{\hat{Y}}(\exp\hat{\omega}\wedge(1-\hat{\Lambda}))^{[d]}=\int_{\hat{Y}}c_{t,\hat{\epsilon}}(\exp\hat{\rho})^{[d]}.
\]
We may choose a smaller $\hat{\epsilon}_{Y}$ if necessary, such that
$\hat{\epsilon}_{Y}<\left(\frac{1}{4n!}\right)^{\frac{1}{n-k_{0}}}$.

The following lemma implies that $\hat{\omega}_{0}$ is a subsolution
of (\ref{eq:equation on hat Y}) for $t=1$. 
\begin{lem}
\label{lem:solution t equal 1}If $K>1$ is chosen as in (\ref{eq:choice of K}),
we have 
\begin{equation}
e^{(1+K)\omega_{0}}\wedge(1-\Lambda)\geq\frac{1}{2}e^{(1+K)\omega_{0}}\label{eq:solut tequal 1}
\end{equation}
on $M$. Moreover, on $\hat{Y}$, for $t=1$ and $\hat{\epsilon}<\left(\frac{1}{4n!}\right)^{\frac{1}{n-k_{0}}}$,
we have
\begin{equation}
\left(e^{\hat{\omega}_{0}}\wedge(1-\hat{\Lambda})\right)^{[d-1]}>0.\label{eq:holds on hat Y}
\end{equation}
\end{lem}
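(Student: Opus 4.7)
The plan is to reduce both inequalities to elementary scalar estimates after extracting the appropriate $(k,k)$-components degree by degree.

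For (\ref{eq:solut tequal 1}), I would start from the hypothesis $\Lambda \le K(e^{\omega_0/(2n)} - 1)$ in (\ref{eq:choice of K}). Wedging against the strongly positive form $e^{(1+K)\omega_0}$ (Lemma \ref{lem:exp storng positive}) preserves the inequality, yielding
\[
e^{(1+K)\omega_0} \wedge \Lambda \;\le\; K\bigl(e^{(1+K+1/(2n))\omega_0} - e^{(1+K)\omega_0}\bigr).
\]
Since every $(k,k)$-component here is a scalar multiple of $\omega_0^k$, it suffices to verify, for each $0 \le k \le n$, the scalar bound $K\bigl[(1+K+1/(2n))^k - (1+K)^k\bigr] \le \tfrac{1}{2}(1+K)^k$, or equivalently $\bigl(1 + \tfrac{1}{2n(1+K)}\bigr)^k \le 1 + \tfrac{1}{2K}$. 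I use $(1+x)^k \le e^{kx}$ together with $k \le n$ to bound the left side by $e^{1/(2(1+K))}$, and then apply the elementary estimate $\log(1+y) \ge 2y/(2+y)$ with $y = 1/(2K)$, reducing the desired inequality to $4K+1 \le 4(1+K)$, which is immediate.

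For (\ref{eq:holds on hat Y}), I specialize to $t=1$, so $\hat\omega_0 = (1+K)\omega_0 + \hat\epsilon\varpi$ and $\hat\Lambda = \Lambda + \hat\epsilon^n\varpi^{k_0}/k_0!$. Using $e^{A}\wedge e^{B} = e^{A+B}$ for commuting forms, expansion gives
\[
e^{\hat\omega_0} \wedge (1-\hat\Lambda) \;=\; e^{\hat\epsilon\varpi} \wedge \bigl(e^{(1+K)\omega_0} \wedge (1-\Lambda)\bigr) - \hat\epsilon^n\, e^{\hat\omega_0} \wedge \frac{\varpi^{k_0}}{k_0!}.
\]
Applying (\ref{eq:solut tequal 1}) and wedging with the Kähler factor $e^{\hat\epsilon\varpi}$, the first summand dominates $\tfrac{1}{2} e^{\hat\omega_0}$. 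The task reduces to showing strict positivity of the $(d-1,d-1)$-part of $\tfrac{1}{2}e^{\hat\omega_0} - \hat\epsilon^n\, e^{\hat\omega_0} \wedge \varpi^{k_0}/k_0!$ on $\hat Y$; since the induction hypothesis ensures $d - 1 \ge k_0$, the correction contributes $\hat\epsilon^n\, \hat\omega_0^{d-1-k_0}/(d-1-k_0)! \wedge \varpi^{k_0}/k_0!$. The Kähler domination $\hat\omega_0 \ge \hat\epsilon \varpi \ge 0$, via Lemma \ref{lem:exp storng positive} applied at the $(1,1)$-level, yields $\hat\omega_0^{k_0} \ge \hat\epsilon^{k_0} \varpi^{k_0}$, so
\[
\frac{\hat\omega_0^{d-1}}{2(d-1)!} \;\ge\; \frac{\hat\epsilon^{k_0}}{2\binom{d-1}{k_0}} \cdot \frac{\hat\omega_0^{d-1-k_0}}{(d-1-k_0)!} \wedge \frac{\varpi^{k_0}}{k_0!}.
\]
The difference is then strictly positive provided $\hat\epsilon^{n-k_0} < \frac{1}{2\binom{d-1}{k_0}}$, which is ensured by the standing hypothesis $\hat\epsilon < (1/(4n!))^{1/(n-k_0)}$ together with the crude bound $\binom{d-1}{k_0} \le (d-1)! \le n!$.

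The main technical point throughout is the degree-by-degree bookkeeping: each $(k,k)$-component must be carefully extracted so that the scalar coefficients line up and the Kähler positivity of $\omega_0$, $\varpi$, and $\hat\omega_0$ can be applied cleanly. Once this decomposition is set up, every remaining step is an elementary estimate.
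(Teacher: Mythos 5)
Your argument is correct and follows essentially the same route as the paper's: both parts reduce to componentwise scalar comparisons after expanding the exponentials, hinging on the same inequality $1+\frac{1}{2K}\ge e^{1/(2(1+K))}$ in the first part and the same smallness condition $\hat{\epsilon}^{\,n-k_{0}}<\frac{1}{4n!}$ in the second. The only differences are cosmetic: you supply an explicit justification of the scalar inequality via $\log(1+y)\ge 2y/(2+y)$ (the paper merely asserts it for $K>1$), and in the second part you absorb the correction term using $\hat{\omega}_{0}^{k_{0}}\ge\hat{\epsilon}^{k_{0}}\varpi^{k_{0}}$ rather than expanding term by term in powers of $\varpi$, which leads to the same conclusion.
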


\begin{proof}
By our choice of $K$ in (\ref{eq:choice of K}), 
\begin{align}
e^{(1+K)\omega_{0}}(1-\Lambda) & \geq e^{(1+K)\omega_{0}}\wedge((1+K)-Ke^{\frac{\omega_{0}}{2n}})\label{eq:solution t equal 1}\\
 & =\left((1+K)e^{(1+K)\omega_{0}}-Ke^{(1+K+\frac{1}{2n})\omega_{0}}\right)\nonumber \\
 & =\sum_{k=0}^{n}\left((1+K)^{k+1}-K(1+K+\frac{1}{2n})^{k}\right)\frac{\omega_{0}^{k}}{k!},\nonumber 
\end{align}
on $M$. If $k=0$, then $(1+K)-K=1$. If $k\geq1$ then 
\begin{align}
(1+K)^{k+1}-K(1+K)^{k}\left(1+\frac{1}{2n(1+K)}\right)^{k}\geq(1+K)^{k}K\left(1+\frac{1}{2K}-e^{\frac{1}{2(1+K)}}+\frac{1}{2K}\right).\label{eq:solution t equal 1 2}
\end{align}
If $K>1$, we have $1+\frac{1}{2K}>e^{\frac{1}{2(1+K)}}$ . By (\ref{eq:solution t equal 1})
and (\ref{eq:solution t equal 1 2}), we have 
\begin{equation}
e^{(1+K)\omega_{0}}\wedge(1-\Lambda)\geq\sum_{k=0}^{n}\frac{(1+K)^{k}K}{2K}\frac{\omega_{0}^{k}}{k!}=\frac{1}{2}e^{(1+K)\omega_{0}}.\label{eq:holds 2-1}
\end{equation}
Thus we have proved (\ref{eq:solut tequal 1}).

At $t=1$, by (\ref{eq:holds 2-1}), we have 
\begin{align*}
(\exp\hat{\omega}_{0}\wedge(1-\hat{\Lambda}))^{[d-1]} & =\left(e^{(1+K)\omega_{0}}\wedge(1-\Lambda-\hat{\epsilon}^{n}\frac{\varpi^{k_{0}}}{k_{0}!})\wedge e^{\hat{\epsilon}\varpi}\right)^{[d-1]}\\
 & \geq\left(e^{(1+K)\omega_{0}}\wedge\left(\frac{1}{2}e^{\hat{\epsilon}\varpi}-\hat{\epsilon}^{n}\frac{\varpi^{k_{0}}}{k_{0}!}\wedge e^{\hat{\epsilon}\varpi}\right)\right)^{[d-1]}\\
 & =\left(e^{(1+K)\omega_{0}}\wedge\left(\sum_{k=0}^{d-2}\frac{1}{2}\cdot\frac{\hat{\epsilon}^{k}}{k!}\varpi^{k}-\sum_{k\geq k_{0}}^{d-2}\frac{\hat{\epsilon}^{n+k-k_{0}}}{(k-k_{0})!k_{0}!}\varpi^{k}\right)\right)^{[d-1]}\\
 & \ +\left(\frac{1}{2}\frac{\hat{\epsilon}^{d-1}}{(d-1)!}-\frac{\hat{\epsilon}^{n+d-1-k_{0}}}{k_{0}!(d-1-k_{0})!}\right)\varpi^{d-1}\\
 & \geq\frac{1}{4}\frac{\hat{\epsilon}^{d-1}}{(d-1)!}\varpi^{d-1},
\end{align*}
since $\hat{\epsilon}<\left(\frac{1}{4n!}\right)^{\frac{1}{n-k_{0}}}$. 
\end{proof}
We run a continuity argument for (\ref{eq:equation on hat Y}). Let
\begin{equation}
\mathbf{I}_{\hat{\epsilon}}:=\{t\in(0,1]:(\ref{eq:equation on hat Y})\text{ has a smooth solution for }\hat{\epsilon}\in(0,\hat{\epsilon}_{Y})\}.\label{eq:-20}
\end{equation}
By Lemma \ref{lem:solution t equal 1}, $\hat{\omega}_{0}$ is a subsolution
if $t=1$ and $\hat{\epsilon}<\hat{\epsilon}_{Y}$. We conclude that
$1\in\mathbf{I}_{\hat{\epsilon}}$. $\mathbf{I}_{\hat{\epsilon}}$
is clearly open as the cone condition is an open condition. Let 
\[
t_{\hat{\epsilon}}:=\inf\mathbf{I}_{\hat{\epsilon}}.
\]

\begin{rem}
\label{rem:We-will-assume}We may and will assume that $t_{\hat{\epsilon}}=0$
without loss of generality. If $t_{\hat{\epsilon}}=t'_{\hat{\epsilon}}>0$,
we may replace $\omega_{0}$ by $\hat{\omega}_{0}(t_{\hat{\epsilon}}',\hat{\epsilon})$.
If $t_{\hat{\epsilon}}'>0$, $\hat{\omega}(t_{\hat{\epsilon}}',\hat{\epsilon})$
is $\kah$. We can go through the same induction argument to prove
Theorem \ref{thm:cone condition in a neighborhood }. Then (\ref{eq:equation on hat Y})
admits a solution in $[\hat{\omega}_{0}(t_{\hat{\epsilon}}',\hat{\epsilon})]$.
By the openness of the cone condition, we have $t_{\hat{\epsilon}}<t_{\hat{\epsilon}}'$,
which a contradiction. Therefore, from now on, we assume that $t_{\hat{\epsilon}}=0$. 
\end{rem}

\section{Mass concentration \label{sec:Mass-concentration}}

In this Section, we prove a mass concentration result for our PDE
based on techniques from \cite{demailly2004numerical,chen2021j,Song2020NakaiMoishezonCF}.
We use notations as in previous sections.

By (\ref{eq:-20}), for $t\in\mathbf{I}_{\hat{\epsilon}}$, there
exists a $\omega_{t}\in[\hat{\omega}_{0}(t,\hat{\epsilon})]$ solving
\begin{equation}
\left(\exp\omega_{t}\wedge\left(1-\hat{\Lambda}\right)-c_{t,\hat{\epsilon}}\exp\hat{\rho}\right)^{[d]}=0,\label{eq:eq for omega_t}
\end{equation}
for some $c_{t,\hat{\epsilon}}>0$. Here $\hat{\omega}_{0}$, $\hat{\Lambda}$,
$\hat{\rho}$ are defined in (\ref{eq:perturbed classes}) and (\ref{eq:perturbed rho}).
As before, we denote $\Omega_{t}=\exp\omega_{t}.$

The main result of this section is the following theorem. 
\begin{thm}
\label{thm:Mass}Under the same assumption as in Theorem \ref{thm:cone condition in a neighborhood },
then there exists $\delta>0$, a finite Euclidean ball partition $\mathscr{P}=\{B_{j,3R}\}_{j\in\mathcal{J}}$
of $\hat{Y}$, $\varepsilon>0$, $r_{0}>0$, and a Kähler current
$\Upsilon\in(1-\delta)[\omega_{0}]$ s.t. for all $0<r<r_{0}$, $j\in\mathcal{J}$,
\[
\mathcal{P}_{\Lambda}(\Upsilon^{(r)})<1-\varepsilon,\ \text{in }B_{j,R},
\]
where $\Upsilon^{(r)}$ is given in Definition \ref{def:Local-regularization of current}.
Furthermore, $\Upsilon$ has positive Lelong numbers along $S_{\hat{Y}}.$ 
\end{thm}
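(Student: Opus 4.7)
The plan is to implement the mass concentration technique of Demailly-Păun \cite{demailly2004numerical}, as adapted to this type of Monge-Ampère equation by G. Chen \cite{chen2021j} and refined for the singular setting by J. Song \cite{Song2020NakaiMoishezonCF}. The current $\Upsilon$ will be obtained as a suitably corrected weak limit of the continuity-path solutions $\omega_t$ on $\hat Y$ as $t\to 0^+$, with a small singular contribution from the exceptional divisor added by hand to force positive Lelong numbers along $S_{\hat Y}$.

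First I would write $\omega_t=\hat\omega_0(t,\hat\epsilon)+\sqrt{-1}\partial\bar\partial\varphi_t$ with the normalization $\sup_{\hat Y}\varphi_t=0$. Since $\hat\omega_0(t,\hat\epsilon)\le\hat\omega_0(1,\hat\epsilon)$, the $\varphi_t$ are $\hat\omega_0(1,\hat\epsilon)$-PSH with a uniform $L^1$ bound (via the Green function argument in Proposition \ref{prop:C0 estimate}). Standard potential-theoretic compactness yields a subsequence $t_k\to 0$ and an $L^1$ limit $\varphi_\infty$ which is $\omega_0|_{\hat Y}$-PSH. The limit current $\Theta:=\omega_0|_{\hat Y}+\sqrt{-1}\partial\bar\partial\varphi_\infty$ is closed and positive. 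By Lemma \ref{lem:Datar-Pingali}, applied with $(\hat\Lambda,c_{t,\hat\epsilon})$ in place of $(\Lambda,\kappa)$ and using the $k_0$-UP bound from Lemma \ref{lem:K0 up for hat La} together with the explicit shape of $\gamma_{\min}$ in (\ref{eq:gamm_min}), each $\omega_{t_k}$ enjoys a uniform cone gap $\mathcal{P}_{\hat\Lambda}(\omega_{t_k})\le 1-\varepsilon$ for a fixed $\varepsilon>0$. Since $\mathcal{P}_{\hat\Lambda}$ is convex and monotone (Lemma \ref{lem:P_La property}), this gap is preserved under the convolution kernel of Definition \ref{def:Local-regularization of current}, giving $\mathcal{P}_{\hat\Lambda}(\omega_{t_k}^{(r)})\le 1-\varepsilon$ on each $B_{j,R}$ for $0<r<r_0$; passing to the limit in $k$ transfers the bound to $\Theta^{(r)}$.

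The third step is to modify $\Theta$ into a genuine Kähler current in $(1-\delta)[\omega_0]$ with positive Lelong numbers along $S_{\hat Y}$. Using the defining section $\sigma_S$ of the exceptional divisor, I set
\[
\Upsilon := (1-\delta)\,\omega_0|_{\hat Y} + \sqrt{-1}\partial\bar\partial\bigl((1-\delta)\varphi_\infty + \lambda\phi_S\bigr),
\]
for small $\lambda,\delta>0$, where $\phi_S=\delta_S\log|\sigma_S|^2_{h_S}$ satisfies $\sqrt{-1}\partial\bar\partial\phi_S=\varpi-\rho$. Because $\varpi$ is Kähler on $\hat Y$ (by (\ref{eq:varpi bigger than rho})) and $\phi_S\to-\infty$ along $S$, the current $\Upsilon$ dominates $\epsilon_0\varpi$ for some $\epsilon_0>0$ and carries Lelong number $\ge\lambda\delta_S$ along the entire exceptional divisor, which contains $S_{\hat Y}$. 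Since adding $\lambda\phi_S$ decreases the potential, the monotonicity of $\mathcal{P}_\Lambda$ ensures that the cone bound on $\Theta^{(r)}$ persists for $\Upsilon^{(r)}$, while the factor $(1-\delta)$ tightens it by at most $O(\delta)$ and is absorbed into $\varepsilon$.

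The main obstacle will be excluding spurious Lelong concentration: one must verify that the upper level sets $E_c(\Upsilon)$, which by Siu's theorem are analytic of dimension $\le d-1$, do not wreck the construction. For any component of $E_c(\Upsilon)$ not contained in $S_{\hat Y}$, the induction hypothesis supplies a smooth local subsolution in a neighborhood, which can be grafted onto $\Upsilon$ via Richberg's regularized maximum (Corollary \ref{cor:re max}) in a way that preserves both the cohomology class and the smoothing cone bound, thanks to the subsolution-preservation property of $\widetilde{\max}_\eta$ (Corollary \ref{cor:reg max preserves subsolution}). The delicate point is the coupled bookkeeping between the smoothing scale $r$, the gluing parameters $\eta$, the constants $\delta,\lambda$, and the uniform gap $\varepsilon$, all of which must be chosen compatibly with the explicit form of $\gamma_{\min}$ under the perturbation $(\Lambda,\kappa,\rho)\rightsquigarrow(\hat\Lambda,c_{t,\hat\epsilon},\hat\rho)$.
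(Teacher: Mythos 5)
Your proposal has a genuine gap at its core, and it is precisely the difficulty that the mass concentration technique exists to overcome. You claim that each solution $\omega_{t_k}$ of the continuity path on $\hat Y$ satisfies a \emph{uniform} cone gap $\mathcal{P}_{\hat\Lambda}(\omega_{t_k})\le 1-\varepsilon$ with $\varepsilon>0$ fixed as $t_k\to 0$, citing Lemma \ref{lem:Datar-Pingali}. That lemma only shows that a solution is a subsolution, i.e.\ the strict pointwise inequality $\mathcal{P}_{\hat\Lambda}(\omega_t)<1$ for each fixed $t$; it gives no quantitative gap uniform in $t$. As $t\to t_{\hat\epsilon}=0$ the solutions are expected to degenerate (the $C^0$ and $C^2$ estimates of Part 1 depend on a fixed subsolution, which is exactly what is unavailable at $t=0$), and the cone condition can degenerate along with them. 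If a uniform gap did hold, the closedness argument of Section \ref{sec:Continuity-method} would extend the continuity path past $t=0$ and the entire induction on subvarieties would be unnecessary. Consequently your limit current $\Theta$ lies in $[\omega_0|_{\hat Y}]$ (not in a strictly smaller class) and satisfies at best $\mathcal{P}_{\hat\Lambda}(\Theta^{(r)})\le 1$ with no room to absorb either the $(1-\delta)$ rescaling or the loss from $\hat\Lambda\rightsquigarrow\Lambda$; there is nothing to trade for the strict $\varepsilon$ in the conclusion.

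The paper's proof obtains the missing positivity from an entirely different source. It lifts the problem to the product $\mathcal{Y}=\hat Y\times\hat Y$ with $\boldsymbol{\Lambda}=\pi_1^*\hat\Lambda+\frac{1}{d}\pi_2^*\hat\rho$ and a perturbed metric $\boldsymbol{\rho}_s$ singular along the diagonal; solves the lifted equation (\ref{eq:product eq}) using the \textbf{H2} framework (the product destroys $k_0$-UP, which is why $\mathcal{O}$-UP was introduced); pushes forward by fiber integration to get $\omega_{\boldsymbol{\tau}}\in[\hat\omega_0]$ with $\mathcal{P}_{\hat\Lambda}(\omega_{\boldsymbol{\tau}})<1$ via a Cauchy--Schwarz argument (Lemma \ref{lem:We lemma tech }); and then shows (Proposition \ref{prop:concentration to T}, following Demailly--P\u{a}un) that as $s\to 0$ a definite mass $\epsilon_\Delta[\Delta]$ concentrates on the diagonal. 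The near-diagonal piece $\omega'_{\boldsymbol{\tau},\eta}$ of the fiber integral therefore dominates $20\delta_\Delta\varpi$ after smoothing (Lemma \ref{lem:Song 5.9}); subtracting it and adding the small correction $\omega''_{\boldsymbol{\tau},\eta}$ produces a surplus $\delta_\Delta\varpi$ of K\"ahlerness, and it is exactly this surplus that pays for both the shrinkage to $(1-\delta)[\omega_0]$ and the strict gap $\varepsilon$ in $\mathcal{P}_\Lambda(\Upsilon^{(r)})<1-\varepsilon$. Your direct limit of $\omega_t$ has no analogue of this gain. (Your final paragraph on excising the Lelong upper-level sets by Richberg gluing is also misplaced: that step belongs to the subsequent gluing theorem, Theorem \ref{thm:gluing theorem}, not to the mass concentration statement itself.)
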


\begin{rem}
\label{rem:reduction}We may simplify our argument by making following
assumptions without loss of generality:
\begin{enumerate}
\item $d>k_{0}$, by Lemma \ref{lem:base lemma};
\item $\mathbf{I}_{\hat{\epsilon}}=(0,1]$, which may be achieved by arguments
in Remark \ref{rem:We-will-assume};
\item $\hat{\Lambda}^{[d]}=0$; Since the subsolution condition is only
related to the components of degree less than $d$; If $\hat{\Lambda}^{[d]}\not=0$,
we may take another $c_{t,\hat{\epsilon}}'\geq0$ such that 
\[
\int_{\hat{Y}}\exp\omega_{t}\wedge(1-\sum_{k=1}^{d-1}\hat{\Lambda}^{[k]})=c'_{t,\hat{\epsilon}}\int_{\hat{Y}}\exp\hat{\rho};
\]
Therefore, after replacing $\hat{\Lambda}$ by $\sum_{k=1}^{d-1}\hat{\Lambda}^{[k]}$
and $c_{t,\hat{\epsilon}}$ by $c'_{t,\hat{\epsilon}}$ , the cone
condition does not change and the solvability is not affected; 
\item $\hat{\Lambda}$ is $k_{0}$-UP with respect to $\hat{\rho}$ and
a uniform constant $m'>0$. This reduction is possible due to Lemma
\ref{lem:K0 up for hat La}. 
\end{enumerate}
\end{rem}

\subsection{The lifted equation on the product manifold\label{subsec:The-lifted-equation}}

Following \cite{chen2021j}, we consider a new equation on the product
space $\mathcal{Y}=\hat{Y}\times\hat{Y}$. We fix some notations:
On $\mathcal{Y}$, let $\pi_{1}:\mathcal{Y}\to\hat{Y}$, $(x,y)\mapsto x$
and $\pi_{2}:\mathcal{Y}\to\hat{Y},\ (x,y)\mapsto y$ be canonical
projections. Let $\hat{Y}_{1}=\pi_{1}(\mathcal{Y})$ , $\hat{Y}_{2}=\pi_{2}(\mathcal{Y})$,
and let $\iota_{i}:\hat{Y}_{i}\hookrightarrow\mathcal{Y}$ be canonical
embeddings. We denote 
\begin{equation}
\Lambda_{x}:=\pi_{1}^{*}\hat{\Lambda},\ \Lambda_{y}:=\frac{1}{d}\pi_{2}^{*}\hat{\rho}.\label{eq:Define La_xy}
\end{equation}
We make the following observation: $\hat{\rho}$ can be viewed as
a solution to the equation of $\omega\in[\hat{\rho}]:$
\[
(\frac{1}{d}\hat{\rho}\wedge\exp\omega)^{[d]}=\frac{\omega^{d}}{d!}
\]
 because of the simple fact
\begin{equation}
\left(\frac{1}{d}\hat{\rho}\wedge\exp\hat{\rho}\right)^{[d]}=\frac{\hat{\rho}^{d}}{d!}.\label{eq:hat rho st eq}
\end{equation}
Let 
\begin{equation}
\boldsymbol{\Lambda}:=\Lambda_{x}+\Lambda_{y},\ \boldsymbol{\rho}:=\pi_{1}^{*}\hat{\rho}+\pi_{2}^{*}\hat{\rho},\ \boldsymbol{\varpi}=\pi_{1}^{*}\varpi+\pi_{2}^{*}\varpi.\label{eq:lifted forms}
\end{equation}
Let $\{B_{j}\}$ be a finite open cover of $\Delta=\{(x,x):x\in\hat{Y}\}\subset\hat{Y}\times\hat{Y}$
with balls $B_{j}$. Let $\theta_{j}^{2}$ be a partition of unity
subordinate to $B_{j}$. Let $g_{j,k}$ be the defining function of
$\Delta$ in $B_{j}$. Let 
\begin{equation}
\psi=\frac{1}{2}\log(\sum_{j,k}\theta_{j}^{2}|g_{j,k}|^{2}),\ \psi_{s}=\frac{1}{2}\log(\sum_{j,k}\theta_{j}^{2}|g_{j,k}|^{2}+s^{2}).\label{eq:definition of psi}
\end{equation}
Define 
\begin{equation}
\boldsymbol{\rho}_{s}=\boldsymbol{\rho}+\delta_{\rho}\sqrt{-1}\ddbar\left(\pi_{1}^{*}\phi_{S}+\pi_{2}^{*}\phi_{S}\right)+\delta_{\rho}^{2}\sqrt{-1}\ddbar\psi_{s}.\label{eq:rho_s_tau}
\end{equation}
$\delta_{\rho}$ is chosen small but fixed so that $\boldsymbol{\rho}_{s}$
is still $\kah$. We will determined $\delta_{\rho}$ later in Proposition
\ref{prop: solvability of lifted equation}. Let 
\begin{equation}
f_{t,s}=\left(\frac{\boldsymbol{\rho}_{s}^{2d}}{\boldsymbol{\rho}^{2d}}-(1+c_{t,\hat{\epsilon},\delta_{\rho}})\right)+c_{t,\hat{\epsilon}}.\label{eq:f_s,t,tau}
\end{equation}
where $c_{t,\hat{\epsilon},\delta_{\rho}}$ is chosen such that 
\[
\int_{\mathcal{Y}}\left(\boldsymbol{\rho}_{s}^{2d}-(1+c_{t,\hat{\epsilon},\delta_{\rho}})\boldsymbol{\rho}^{2d}\right)=0.
\]
Note, $c_{t,\hat{\epsilon},\delta_{\rho}}$ is uniformly bounded for
any $t\in(0,1)$ and $\hat{\epsilon}\in(0,\hat{\epsilon}_{Y})$.

We denote $\boldsymbol{\tau}=(t,s)$ with $t,s>0$. We consider\textbf{
}the following lifted equation on $\mathcal{Y}$: 
\begin{equation}
2\frac{\boldsymbol{\omega}_{\boldsymbol{\tau}}^{2d}}{(2d)!}=\sum_{k=1}^{d-1}\mathbf{\Lambda}^{[k]}\wedge\frac{\boldsymbol{\omega}_{\boldsymbol{\tau}}^{2d-k}}{(2d-k)!}+f_{\boldsymbol{\tau}}\frac{\boldsymbol{\rho}^{2d}}{(2d)!},\label{eq:product eq}
\end{equation}
for $\boldsymbol{\omega}_{\boldsymbol{\tau}}\in[\pi_{1}^{*}\omega_{t}+\pi_{2}^{*}\hat{\rho}]$.
If we denote 
\begin{equation}
\boldsymbol{\Omega}_{\boldsymbol{\tau}}:=\exp\boldsymbol{\omega}_{\boldsymbol{\tau}},\ \mathbf{P}:=\exp\boldsymbol{\rho},\label{eq:Omegatau exp}
\end{equation}
then we may re-write (\ref{eq:product eq}) as 
\begin{equation}
2\boldsymbol{\Omega}_{\boldsymbol{\tau}}^{[2d]}=(\boldsymbol{\Lambda}\wedge\boldsymbol{\Omega}_{\boldsymbol{\tau}})^{[2d]}+f_{\boldsymbol{\tau}}\mathbf{P}^{[2d]}.\label{eq:new eq big ome}
\end{equation}

\begin{lem}
\label{lem:O-up The-canonical-splitting}Notations as above. The canonical
splitting gives a labeled splitting $\mathcal{O}$ with respect to
$\boldsymbol{\rho}$. Moreover, $\boldsymbol{\Lambda}$ is $\mathcal{O}$-UP. 
\end{lem}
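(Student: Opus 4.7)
The plan is to verify both assertions by unpacking Definition \ref{def:O-uniform positive} on the product $\mathcal{Y} = \hat{Y}_1 \times \hat{Y}_2$. At any point $\mathbf{p} = (p_1,p_2) \in \mathcal{Y}$, the tangent space splits as $\mathcal{T}_{\mathbf{p}}\mathcal{Y} = \mathcal{V}_1 \oplus \mathcal{V}_2$ with $\mathcal{V}_i = \mathcal{T}_{p_i}\hat{Y}_i$, and this splitting is orthogonal with respect to $\boldsymbol{\rho} = \pi_1^*\hat{\rho} + \pi_2^*\hat{\rho}$ since the cross evaluations vanish. I would declare the labeled orthogonal splitting $\mathcal{O}$ by setting $n_\mathbf{p} = 2$, $\mathbf{d}_\mathbf{p} = (d,d)$, $\mathbf{k}_\mathbf{p} = (k_0, 1)$. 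The constraints $1 \leq k_i \leq d_i - 1$ are valid: by Remark \ref{rem:reduction}(1) we assume $d > k_0 \geq 1$, which in particular forces $d \geq 2$. Under this splitting, the induced forms from \eqref{eq:rho_i} are $\rho_1 = \pi_1^*\hat{\rho}$ and $\rho_2 = \pi_2^*\hat{\rho}$ on $\mathcal{Y}$, since restricting $\boldsymbol{\rho}$ to $\mathcal{V}_i$ kills the other summand.

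For the $\mathcal{O}$-UP inequality, I would first exploit the reductions of Remark \ref{rem:reduction}: since $\hat{\Lambda}^{[d]} = 0$ by reduction (3) and $\Lambda_y$ has bidegree $(1,1)$, no wedge of components of $\pi_1^*\hat{\Lambda}$ or $\Lambda_y$ reaches bidegree $(2d,2d)$, so $\boldsymbol{\Lambda}^{[2d]} = 0$ and hence $\mathring{\boldsymbol{\Lambda}} = \boldsymbol{\Lambda}$. Next, by Lemma \ref{lem:K0 up for hat La} (invoked via Remark \ref{rem:reduction}(4)), there is a constant $m' > 0$ with
\[
\mathring{\hat{\Lambda}} - m'\,\frac{\hat{\rho}^{k_0}}{k_0!} \geq 0 \quad \text{on } \hat{Y}.
\]
Pulling back by $\pi_1$ preserves componentwise positivity, so the analogous inequality holds for $\pi_1^*\mathring{\hat{\Lambda}} - m'(\pi_1^*\hat{\rho})^{k_0}/k_0!$ on $\mathcal{Y}$.

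Setting $m'' = \min\{m', 1/d\}$, I then compute
\[
\mathring{\boldsymbol{\Lambda}} - m''\left(\frac{\rho_1^{k_0}}{k_0!} + \rho_2\right) = \left(\pi_1^*\mathring{\hat{\Lambda}} - m''\,\frac{(\pi_1^*\hat{\rho})^{k_0}}{k_0!}\right) + \left(\frac{1}{d} - m''\right)\pi_2^*\hat{\rho}.
\]
The first summand is the $\pi_1$-pullback of a positive form, hence positive in every graded component; the second is a non-negative multiple of a $\kah$ form, positive in bidegree $(1,1)$. Their sum is thus a positive $(l,l)$-form for each $1 \leq l \leq 2d-1$, which is the required $\mathcal{O}$-UP inequality with uniform constant $m''$.

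The only real point requiring care is the bigrading bookkeeping: Definition \ref{def:O-uniform positive} demands positivity of every $(l,l)$-component of the difference separately, not merely of the total form. This is clean here because the two positive contributions live in disjoint ``blocks'' under the product splitting, and the pullbacks $\pi_i^*$ produce no cross-term interference between them. What is really at work is that the $k_0$-UP hypothesis on $\hat{\Lambda}$ controls the first block while the built-in factor $\frac{1}{d}\pi_2^*\hat{\rho}$ controls the second; this is precisely the motivation for including the $\Lambda_y$ term in the lifted setup of \S\ref{subsec:The-lifted-equation}.
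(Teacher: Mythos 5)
Your proposal is correct and follows essentially the same route as the paper: the canonical product splitting $\mathcal{O}_{(x,y)}=\{2,(d,d),\{\mathcal{T}_{x}\hat{Y}_{1},\mathcal{T}_{y}\hat{Y}_{2}\},(k_{0},1)\}$ as in Example \ref{exa:example}, combined with Lemma \ref{lem:K0 up for hat La} to get $\boldsymbol{\Lambda}\geq\min\{m',\tfrac{1}{d}\}\bigl(\tfrac{\hat{\rho}^{k_{0}}(x)}{k_{0}!}+\hat{\rho}(y)\bigr)$. Your write-up merely spells out the details (the constraint $k_{i}\leq d_{i}-1$, the identification $\rho_{i}=\pi_{i}^{*}\hat{\rho}$, and the componentwise positivity of the two blocks) that the paper leaves implicit.
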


\begin{proof}
Following arguments in Example \ref{exa:example}, the product structure
gives the labeled splitting 
\[
\mathcal{O}_{(x,y)}=\{2,(d,d),\{\mathcal{T}_{x}\hat{Y}_{1},\mathcal{T}_{y}\hat{Y}_{2}\},(k_{0},1)\}.
\]
By Lemma \ref{lem:K0 up for hat La}, we have 
\[
\boldsymbol{\Lambda}\geq\min\{m',\frac{1}{d}\}\left(\frac{\hat{\rho}^{k_{0}}(x)}{k_{0}!}+\hat{\rho}(y)\right).
\]
Hence, $\boldsymbol{\Lambda}$ is $\mathcal{O}$-UP with uniform constant
$m''=\min\{m',\frac{1}{d}\}$. 
\end{proof}
\begin{prop}
\label{prop: solvability of lifted equation}Notations as above. For
$\delta_{\rho}$ small, there is a smooth solution to (\ref{eq:product eq})
if $\Lambda$ satisfies condition \textbf{H1.}
\end{prop}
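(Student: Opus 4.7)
The plan is to deduce the proposition from Theorem \ref{thm:Let--beanlytic-1} applied to equation (\ref{eq:new eq big ome}) on the $2d$-dimensional product manifold $\mathcal{Y}$, with $\kappa=2$ and total differential form $\boldsymbol{\Lambda}+f_{\boldsymbol{\tau}}\mathbf{P}^{[2d]}$. Two things must be verified: (i) this total form satisfies condition H2 on $\mathcal{Y}$, and (ii) a smooth subsolution of (\ref{eq:new eq big ome}) exists in the class $[\pi_1^{*}\omega_t+\pi_2^{*}\hat{\rho}]$.

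For (i), the $\mathcal{O}$-UP portion of H2 for $\boldsymbol{\Lambda}$ is precisely Lemma \ref{lem:O-up The-canonical-splitting}. The covariant-derivative bounds (\ref{eq:assump1})-(\ref{eq:assump 2}) follow from the pullback argument in Example \ref{exa:example}: $\boldsymbol{\Lambda}=\pi_1^{*}\hat{\Lambda}+\tfrac{1}{d}\pi_2^{*}\hat{\rho}$, and for each summand derivatives along directions transverse to the relevant factor vanish, while tangential derivatives are controlled by H1 on $\hat{\Lambda}$ (inherited via Lemma \ref{lem:K0 up for hat La}) and by the smoothness of $\hat{\rho}$. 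For the almost positivity of $f_{\boldsymbol{\tau}}\mathbf{P}^{[2d]}$, note that with $\boldsymbol{\tau}=(t,s)$ fixed, $\boldsymbol{\rho}_s-\boldsymbol{\rho}=O(\delta_{\rho})$ by (\ref{eq:rho_s_tau}), so both $\boldsymbol{\rho}_s^{2d}/\boldsymbol{\rho}^{2d}\to1$ and $c_{t,\hat{\epsilon},\delta_{\rho}}\to0$ as $\delta_{\rho}\to0$. Consequently $f_{t,s}\to c_{t,\hat{\epsilon}}$, which is strictly positive by the admissibility of (\ref{eq:equation on hat Y}); hence for $\delta_{\rho}$ sufficiently small $f_{\boldsymbol{\tau}}\mathbf{P}^{[2d]}$ is a genuine positive volume form on $\mathcal{Y}$.

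For (ii), I claim $\boldsymbol{\omega}_0:=\pi_1^{*}\omega_t+\pi_2^{*}\hat{\rho}$ is a subsolution, i.e., $\mathcal{P}_{\boldsymbol{\Lambda}}(\boldsymbol{\omega}_0)<2$. By sublinearity (Lemma \ref{lem:P_La property}(\ref{enu:-is-sublinear})),
\[
\mathcal{P}_{\boldsymbol{\Lambda}}(\boldsymbol{\omega}_0)\le\mathcal{P}_{\Lambda_x}(\boldsymbol{\omega}_0)+\mathcal{P}_{\Lambda_y}(\boldsymbol{\omega}_0).
\]
For $\mathcal{P}_{\Lambda_x}$: at any hyperplane $\mathcal{H}\subset\mathcal{T}_{(x,y)}\mathcal{Y}$, an index-bookkeeping computation (using that $\pi_1^{*}\hat{\Lambda}^{[k]}$ has all indices in $\hat{Y}_1$) shows
\[
\langle\pi_1^{*}\hat{\Lambda},\exp\chi_{\mathcal{H}}\rangle=\langle\hat{\Lambda},\exp A\rangle,
\]
where $A=(\pi_1)_{*}\chi_{\mathcal{H}}$ is the first-factor block of $\chi_{\mathcal{H}}$; the Schur-complement estimate in Lemma \ref{rem:elementary linear agb}(\ref{lem:Alg2}) gives $A\le\omega_t^{-1}$, and combined with Lemma \ref{lem:exp storng positive} yields
\[
\langle\pi_1^{*}\hat{\Lambda},\exp\chi_{\mathcal{H}}\rangle\le\langle\hat{\Lambda},\exp\omega_t^{-1}\rangle=F(\omega_t,\hat{\Lambda})<1,
\]
the strict inequality coming from (\ref{eq:equation on hat Y}) and $c_{t,\hat{\epsilon}}>0$. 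Hence $\mathcal{P}_{\Lambda_x}(\boldsymbol{\omega}_0)<1$. For $\mathcal{P}_{\Lambda_y}$: since $\Lambda_y=\tfrac{1}{d}\pi_2^{*}\hat{\rho}$ is a pure $(1,1)$-form from the second factor, the analogous computation gives
\[
\langle\Lambda_y,\exp\chi_{\mathcal{H}}\rangle=\tfrac{1}{d}\langle\hat{\rho},(\pi_2)_{*}\chi_{\mathcal{H}}\rangle\le\tfrac{1}{d}\langle\hat{\rho},\hat{\rho}^{-1}\rangle=1,
\]
so $\mathcal{P}_{\Lambda_y}(\boldsymbol{\omega}_0)\le1$. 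Summing gives $\mathcal{P}_{\boldsymbol{\Lambda}}(\boldsymbol{\omega}_0)<2$.

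Theorem \ref{thm:Let--beanlytic-1} then supplies a smooth solution. The chief conceptual point, already embedded in the design of (\ref{eq:product eq}), is that the factor $\kappa=2$ and the normalization $\tfrac{1}{d}$ in $\Lambda_y$ are chosen so that $\pi_2^{*}\hat{\rho}$ saturates the bound $\mathcal{P}_{\Lambda_y}\le1$, leaving strictly less than $1$ to be covered by the subsolution property of $\omega_t$ on $\hat{Y}$. The main technical subtlety is that condition H1 is \emph{not} preserved under the product construction; this motivated the introduction of H2 (Example \ref{exa:example}), and the argument above crucially uses H2, not H1, on $\mathcal{Y}$.
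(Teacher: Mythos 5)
Your verification of the subsolution (part (ii)) is correct, and it takes a slightly different route from the paper: you bound $\mathcal{P}_{\boldsymbol{\Lambda}}(\boldsymbol{\omega}_0)$ by sublinearity plus the Schur-complement estimate $H-DV^{-1}D^{\dagger}\geq\omega_t$ for the blocks of $\mathbf{A}+s\mathbf{B}$, which is essentially the content of Lemma \ref{lem:generalized G Chen}; the paper instead verifies the cone condition by expanding $(\boldsymbol{\Lambda}\wedge\boldsymbol{\Omega}_0)^{[2d-1]}$ into its four bidegree pieces and substituting the equations (\ref{eq:eq for omega_t}) and (\ref{eq:hat rho st eq}) together with the two cone conditions (\ref{eq:tech-1})--(\ref{eq:tech-2}). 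Both arguments are sound and exploit the same design feature ($\kappa=2$, normalization $\tfrac1d$).

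Part (i) contains a genuine gap in the treatment of $f_{\boldsymbol{\tau}}$. You argue that $\boldsymbol{\rho}_s-\boldsymbol{\rho}=O(\delta_\rho)$, hence $\boldsymbol{\rho}_s^{2d}/\boldsymbol{\rho}^{2d}\to1$ and $f_{t,s}\to c_{t,\hat\epsilon}>0$, so that for $\delta_\rho$ small the volume term is genuinely positive. This fails for two reasons. First, the bound $\delta_\rho^2\sqrt{-1}\ddbar\psi_s=O(\delta_\rho^2)$ is not uniform in $s$: by construction $\sqrt{-1}\ddbar\psi_s$ blows up near $\Delta$ as $s\to0$ (this is exactly what produces the mass concentration in Proposition \ref{prop:concentration to T}), so your argument forces $\delta_\rho=\delta_\rho(s)\to0$, whereas the subsequent weak-limit argument as $s\to0$ requires $\delta_\rho$ fixed and uniform in $(t,s,\hat\epsilon)$. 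Second, even with the correct uniform lower bound — Lemma \ref{lem:2.1 in DP}(1), the Demailly--P\u{a}un estimate $\boldsymbol{\rho}_s>(1-\epsilon)\boldsymbol{\rho}+\tfrac{\delta_\rho}{2}\boldsymbol{\varpi}$ — one only obtains
\[
f_{\boldsymbol{\tau}}\geq(1-\epsilon)^{2d}-1-c_{t,\hat\epsilon,\delta_\rho}+c_{t,\hat\epsilon},
\]
which need not be positive (e.g. $c_{t,\hat\epsilon}$ may be small while $c_{t,\hat\epsilon,\delta_\rho}>0$), only larger than the almost-positivity threshold $f_{\min}<0$ determined by $\gamma_{\min}$. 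The correct conclusion is that $\boldsymbol{\Lambda}+f_{\boldsymbol{\tau}}\mathbf{P}^{[2d]}$ satisfies \textbf{H2} with an \emph{almost positive} volume part; the negative values of $f_{\boldsymbol{\tau}}$ cannot be removed, and this is precisely the situation the almost-positive-volume-form machinery of Section \ref{sec:Equations-with-negative} was built to handle. Your proof sidesteps rather than uses it, and in doing so loses the uniformity of $\delta_\rho$ that the rest of Section \ref{sec:Mass-concentration} depends on.
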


\begin{proof}
By Lemma \ref{lem:O-up The-canonical-splitting}, $\boldsymbol{\Lambda}$
satisfies $\mathcal{O}$-UP condition with a uniform constant $m''>0$.
Let 
\[
f_{\min}=-\min\left\{ \frac{m''}{8d+2}\gamma_{\min}(\frac{2\kappa}{m''},2,(d,d),(k_{0},1)),\frac{\kappa\int_{\mathcal{Y}}(\pi_{1}^{*}\omega_{t}+\pi_{2}^{*}\hat{\rho})^{2d}}{2\int_{\mathcal{Y}}\boldsymbol{\rho}^{2d}}\right\} .
\]
Note $f_{\min}<-c<0$ where $c$ can be chosen independent of $\hat{\epsilon}$
and $t$. From Lemma \ref{lem:2.1 in DP}, there exits a uniform small
$\delta_{\rho}$ independent of $\hat{\epsilon}$ and $t$ such that
$f_{\boldsymbol{\tau}}>f_{\min}$. Therefore, $\boldsymbol{\Lambda}+f_{\boldsymbol{\tau}}\mathbf{P}^{[2d]}$
satisfies condition \textbf{H2}. It is easy to check that integrals
of both sides of (\ref{eq:product eq}) match. Thus apply Theorem
\ref{thm:Let--beanlytic}, one only needs to check the cone condition.

Let 
\begin{equation}
\boldsymbol{\omega}_{0}=\pi_{1}^{*}\omega_{t}+\pi_{2}^{*}\rho,\ \mathbf{\Omega}_{\text{0}}=\exp\boldsymbol{\omega}_{0},\ \hat{P}=\exp\hat{\rho}.\label{eq:Define ome_0}
\end{equation}
We claim $\boldsymbol{\omega}_{0}\in\mathcal{C}_{\boldsymbol{\Lambda}}^{2}$.
Since 
\begin{equation}
\mathbf{\Omega}_{0}^{[2d-1]}=\Omega_{t}(x)^{[d]}\wedge P(y)^{[d-1]}+\Omega_{t}(x)^{[d-1]}\wedge P(y)^{[d]},\label{eq:Omega_0 def}
\end{equation}
we have 
\begin{align}
(\boldsymbol{\Lambda}\wedge\boldsymbol{\Omega}_{0})^{[2d-1]} & =\left(\Lambda_{x}\wedge\Omega_{t}(x)\right)^{[d]}\wedge\hat{P}(y)^{[d-1]}+\left(\Lambda_{x}\wedge\Omega_{t}(x)\right)^{[d-1]}\wedge\hat{P}(y)^{[d]}\label{eq:conein 2d}\\
 & +\Omega_{t}(x)^{[d-1]}\wedge\left(\Lambda_{y}\wedge\hat{P}(y)\right)^{[d]}+\Omega_{t}(x)^{[d]}\wedge(\Lambda_{y}\wedge\hat{P}(y))^{[d-1]}.\nonumber 
\end{align}
By equation (\ref{eq:eq for omega_t}), 
\begin{align}
(\boldsymbol{\Lambda}\wedge\boldsymbol{\Omega}_{0})^{[2d-1]} & =\left(\Omega_{t}-c_{t,\hat{\epsilon}}\hat{P}(x)\right)^{[d]}\wedge\hat{P}(y)^{[d-1]}+\left(\Lambda_{x}\wedge\Omega_{t}\right)^{[d-1]}\wedge\hat{P}(y)^{[d]}\label{eq:tech}\\
 & +\Omega_{t}{}^{[d-1]}\wedge\hat{P}(y)^{[d]}+\Omega_{t}(x)^{[d]}\wedge(\Lambda_{y}\wedge\hat{P}(y))^{[d-1]}.\nonumber 
\end{align}
Since $\omega_{t}$ satisfies the corresponding cone condition of
equation (\ref{eq:eq for omega_t}), we have 
\begin{equation}
\left(\Lambda_{x}\wedge\Omega_{t}\right)^{[d-1]}<\Omega_{t}{}^{[d-1]}.\label{eq:tech-1}
\end{equation}
Similarly, $\rho$ satisfies equation (\ref{eq:hat rho st eq}), which
implies 
\begin{equation}
(\Lambda_{y}\wedge\hat{P}(y))^{[d-1]}<\hat{P}(y)^{[d-1]}.\label{eq:tech-2}
\end{equation}
Combining (\ref{eq:tech-1}), (\ref{eq:tech-2}), and (\ref{eq:tech}),
we obtain 
\begin{align*}
(\boldsymbol{\Lambda}\wedge\boldsymbol{\Omega}_{0})^{[2d-1]} & <\Omega_{t}(x)^{[d]}\wedge\hat{P}(y)^{[d-1]}+\Omega_{t}(x)^{[d-1]}\wedge\hat{P}(y)^{[d]}\\
 & +\Omega_{t}(x)^{[d-1]}\wedge\hat{P}(y)^{[d]}+\Omega_{t}(x)^{[d]}\wedge\hat{P}(y)^{[d-1]}\\
 & =2\Omega_{t}(x)^{[d]}\wedge\hat{P}(y)^{[d-1]}+2\Omega_{t}(x)^{[d-1]}\wedge\hat{P}(y)^{[d]}\\
 & =2\boldsymbol{\Omega}_{0}^{[2d-1]}.
\end{align*}
Therefore, $\boldsymbol{\omega}_{0}\in\mathcal{C}_{\mathbf{\Lambda}}^{2}$
satisfies the cone condition. By Theorem \ref{thm:Let--beanlytic},
there exists a smooth solution to (\ref{thm:Let--beanlytic}). 
\end{proof}
We illustrate the construction of the lifted equation with the following
example.
\begin{example}
\label{exa:non-trivial example}Suppose that $Y$ is smooth of dimension
3 and $\Lambda=\rho^{2}$. Then equations (\ref{eq:eq for omega_t})
and (\ref{eq:hat rho st eq}) imply that
\begin{equation}
\frac{\omega_{t}^{3}}{3!}=\rho^{2}\wedge\omega_{t}+c_{t}\frac{\rho^{3}}{3!},\ \frac{\rho^{3}}{3!}=\frac{\rho}{3}\wedge\frac{\rho^{2}}{2!}.\label{eq:example}
\end{equation}
Let $\boldsymbol{\omega}_{0}=\pi_{1}^{*}\omega_{t}+\pi_{2}^{*}\rho$
and $\boldsymbol{\omega}\in[\boldsymbol{\omega}_{0}]$. The lifted
equation on $\mathcal{Y}$ is 
\begin{equation}
2\frac{\boldsymbol{\omega}^{6}}{6!}=\pi_{1}^{*}\rho^{2}\wedge\frac{\boldsymbol{\omega}^{4}}{4!}+\frac{1}{3}\pi_{2}^{*}\rho\wedge\frac{\boldsymbol{\omega}^{5}}{5!}+f_{t,s}\frac{(\pi_{1}^{*}\rho+\pi_{2}^{*}\rho)^{6}}{6!}.\label{eq:exmpll}
\end{equation}
We use (\ref{eq:example}) and corresponding cone conditions to obtain
that
\begin{align*}
2\boldsymbol{\omega}_{0}^{5}/5! & =2\frac{\pi_{1}^{*}\omega_{t}^{3}}{3!}\wedge\frac{\pi_{2}^{*}\rho^{2}}{2}+2\frac{\pi_{1}^{*}\omega_{t}^{2}}{2}\wedge\frac{\pi_{2}^{*}\rho^{3}}{3!}\\
 & >\pi_{1}^{*}\left(\rho^{2}\wedge\omega_{t}\right)\wedge\frac{\pi_{2}^{*}\rho^{2}}{2}+\pi_{1}^{*}\rho^{2}\wedge\frac{\pi_{2}^{*}\rho^{3}}{3!}\\
 & +\frac{\pi_{1}^{*}\omega_{t}^{3}}{3!}\wedge\pi_{2}^{*}\rho\wedge\frac{\pi_{2}^{*}\rho}{3}+\frac{\pi_{1}^{*}\omega_{t}^{2}}{2}\wedge\frac{\pi_{2}^{*}\rho^{2}}{2}\wedge\frac{\pi_{2}^{*}\rho}{3}\\
 & =\pi_{1}^{*}\rho^{2}\wedge\frac{\boldsymbol{\omega}_{0}^{3}}{3!}+\pi_{2}^{*}\frac{\rho}{3}\wedge\frac{\boldsymbol{\omega}_{0}^{4}}{4!}.
\end{align*}
Thus $\boldsymbol{\omega}_{0}$ is a subsolution to (\ref{eq:exmpll}).
\end{example}

Suppose that $\boldsymbol{\omega}_{\boldsymbol{\tau}}$ is a solution
to the equation (\ref{eq:product eq}). At $(x,y)\in\mathcal{Y}$,
we pick a coordinate $\{x^{i}\}$, $\{y^{i}\}$ near $x$ and $y$,
respectively. Then, $\boldsymbol{\omega}$ is represented by a Hermitian
matrix 
\[
\mathbf{A}=\left(\begin{array}{cc}
H & D\\
D^{\dagger} & V
\end{array}\right).
\]
We write 
\begin{equation}
\boldsymbol{\omega}_{\boldsymbol{\tau}}=\boldsymbol{\omega}_{x}+\boldsymbol{\omega}_{y}+\boldsymbol{\omega}_{m}+\bar{\boldsymbol{\omega}}_{m},\label{eq:omega_tau}
\end{equation}
where 
\begin{align}
\boldsymbol{\omega}_{x} & =\pi_{1}^{*}\iota_{1}^{*}\boldsymbol{\omega}_{\boldsymbol{\tau}}=\frac{\sqrt{-1}}{2}H_{i\bar{j}}dx^{i}\wedge d\bar{x}^{j},\label{eq:omega _x}
\end{align}
\begin{equation}
\boldsymbol{\omega}_{y}=\pi_{2}^{*}\iota_{2}^{*}\boldsymbol{\omega}_{\boldsymbol{\tau}}=\frac{\sqrt{-1}}{2}V_{i\bar{j}}dy^{i}\wedge d\bar{y}^{j},\label{eq:omega_y}
\end{equation}
\begin{equation}
\boldsymbol{\omega}_{m}=\frac{\sqrt{-1}}{2}D_{i\bar{j}}dx^{i}\wedge d\bar{y}^{j}.\label{eq:omega_mix}
\end{equation}
Let 
\begin{equation}
\hat{c}=\hat{c}(t,\hat{\epsilon}):=\left[\frac{\hat{\rho}^{d}}{d!}\right]\cdot\hat{Y}.\label{eq:hat c}
\end{equation}
Define 
\begin{align}
\omega_{\boldsymbol{\tau}} & :=\frac{1}{\hat{c}}\int_{\{x\}\times\hat{Y}}\left(\Lambda_{y}\wedge\boldsymbol{\Omega}_{\boldsymbol{\tau}}\right)^{[d+1]}\label{eq:def om _tau}\\
 & =\frac{1}{\hat{c}}\int_{\{x\}\times\hat{Y}}\left(\left(\Lambda_{y}\wedge\exp\boldsymbol{\omega}_{y}\right)^{[d]}\wedge\boldsymbol{\omega}_{x}+\left(\Lambda_{y}\wedge\exp\boldsymbol{\omega}_{y}\right)^{[d-1]}\wedge\boldsymbol{\omega}_{m}\wedge\bar{\boldsymbol{\omega}}_{m}\right).\nonumber 
\end{align}

\begin{lem}
Notations as above. We have $\omega_{\boldsymbol{\tau}}\in[\hat{\omega}_{0}]$.
\label{lem:Notations-as-above,in teh class} 
\end{lem}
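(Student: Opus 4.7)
The plan is to compute $[\omega_{\boldsymbol{\tau}}]$ purely on the cohomological level, using the fact that fiber integration $\pi_{1*}:H^{2(d+1)}(\mathcal{Y})\to H^2(\hat{Y})$ (a proper submersion over a smooth base) commutes with taking de Rham cohomology classes. Since $\boldsymbol{\omega}_{\boldsymbol{\tau}}\in [\pi_1^*\omega_t+\pi_2^*\hat{\rho}]$, writing $\alpha:=\pi_1^*[\omega_t]$ and $\beta:=\pi_2^*[\hat{\rho}]$, the integrand representing the defining fiber integral in (\ref{eq:def om _tau}) satisfies
\[
\bigl[(\Lambda_y\wedge\boldsymbol{\Omega}_{\boldsymbol{\tau}})^{[d+1]}\bigr]=\frac{\beta}{d}\wedge\frac{(\alpha+\beta)^d}{d!}=\frac{1}{d\cdot d!}\sum_{k=0}^{d}\binom{d}{k}\alpha^k\,\beta^{d-k+1}\quad\text{in }H^{d+1,d+1}(\mathcal{Y}).
\]

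Next I will identify the single surviving term under $\pi_{1*}$. Since $\alpha^k$ is pulled back from $\hat{Y}_1$ and $\beta^{d-k+1}$ from $\hat{Y}_2$, the fiber integration along $\{x\}\times\hat{Y}$ annihilates any summand whose $y$-part is not of top bidegree $(d,d)$; thus it kills every $k\geq 2$, while the $k=0$ term is zero by dimension since $\beta^{d+1}=0$ on the $d$-dimensional factor $\hat{Y}_2$. Only $k=1$ remains, giving
\[
\pi_{1*}\!\left(\frac{1}{d\cdot d!}\binom{d}{1}\alpha\,\beta^d\right)=\frac{1}{d!}[\omega_t]\cdot\!\int_{\hat{Y}}\hat{\rho}^d=\hat{c}\,[\omega_t],
\]
by the definition of $\hat c$ in (\ref{eq:hat c}).

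Dividing by $\hat c$ yields $[\omega_{\boldsymbol{\tau}}]=[\omega_t]=[\hat{\omega}_0]$, which is the assertion of the lemma. There is essentially no serious obstacle; the computation is a straightforward binomial expansion combined with the projection formula for fiber integration, and the only matter requiring mild care is the bookkeeping of which $(k,k)$-bidegree components in $\boldsymbol{\omega}_{\boldsymbol{\tau}}^d$ contribute after the bidegree truncation $(\cdot)^{[d+1]}$ and subsequent pushforward, which the argument above handles by selecting $k=1$.
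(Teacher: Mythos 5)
Your proof is correct and follows essentially the same route as the paper: both compute $[\omega_{\boldsymbol{\tau}}]$ cohomologically via fiber integration of $\Lambda_y\wedge\boldsymbol{\omega}_{\boldsymbol{\tau}}^d/d!$, observe that only the term with exactly one factor of $\pi_1^*[\omega_t]$ and $d$ factors of $\pi_2^*[\hat{\rho}]$ survives the pushforward, and identify the resulting coefficient with $\hat{c}$. No gaps.
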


\begin{proof}
Direct computation shows 
\begin{align}
[\omega_{\boldsymbol{\tau}}] & =\frac{1}{\hat{c}}\left[\int_{\{x\}\times\hat{Y}}\left(\Lambda_{y}\wedge\frac{\boldsymbol{\omega}_{\boldsymbol{\tau}}^{d}}{d!}\right)\right]\label{eq:omega_tau in class}\\
 & =\frac{1}{\hat{c}}\left(\frac{1}{d}[\Lambda_{y}]\cdot\frac{[\boldsymbol{\omega}_{y}]^{d-1}}{(d-1)!}\cdot\hat{Y}\right)[\omega_{t}]\nonumber \\
 & =\frac{1}{\hat{c}}\left(\frac{[\hat{\rho}]^{d}}{d!}\cdot\hat{Y}\right)[\hat{\omega}_{0}]\nonumber \\
 & =[\hat{\omega}_{0}].\nonumber 
\end{align}
\end{proof}
We list following definitions.
\begin{equation}
\mathbf{F}(\boldsymbol{\omega},\mathbf{\Lambda}):=\frac{\left(\mathbf{\Lambda}\wedge\exp\boldsymbol{\omega}\right)^{[2d]}}{(\exp\boldsymbol{\omega})^{[2d]}}.\label{eq:bold F}
\end{equation}
For convenience, we abuse the notation and write 
\begin{align}
\mathbf{F}\left(\mathbf{A}\right) & =\mathbf{F}(\boldsymbol{\omega},\mathbf{\Lambda}),\label{eq:Bold F A}
\end{align}
and define 
\begin{equation}
F_{1}(H):=\frac{\left(\Lambda_{x}\wedge\exp\boldsymbol{\omega}_{x}\right)^{[d]}}{\left(\exp\boldsymbol{\omega}_{x}\right)^{[d]}},\label{eq:F1}
\end{equation}
\begin{equation}
F_{2}(V):=\frac{\left(\Lambda_{y}\wedge\exp\boldsymbol{\omega}_{y}\right)^{[d]}}{\left(\exp\boldsymbol{\omega}_{y}\right)^{[d]}}.\label{eq:F2}
\end{equation}
We also define 
\begin{equation}
\mathcal{P}_{\mathbf{\Lambda}}(\mathbf{A})=\max_{\mathbf{B}\in\overline{\Gamma_{2n\times2n}^{+}},\|\mathbf{B}\|=1}\lim_{t\to+\infty}\mathbf{F}(\mathbf{A}+t\mathbf{B}),\label{eq:P bold}
\end{equation}
\begin{equation}
\mathcal{P}_{1}(H)=\mathcal{P}_{\hat{\Lambda}}(H)=\max_{B\in\overline{\Gamma_{n\times n}^{+}},\|B\|=1}\lim_{t\to+\infty}F_{1}(H+tB),\label{eq:P1}
\end{equation}
\begin{align}
\mathcal{P}_{2}(V) & =\max_{B\in\overline{\Gamma_{n\times n}^{+}},\|B\|=1}\lim_{t\to+\infty}F_{2}(V+tB).\label{eq:P2}
\end{align}
It is easy to check that $\mathbf{F},F_{1},F_{2},\mathcal{P}_{\mathbf{\Lambda}},\mathcal{P}_{1},\mathcal{P}_{2}$
are all functions satisfying monotone and convexity conditions by
Lemma \ref{lem:P_La property}. 
\begin{lem}
\label{lem:generalized G Chen}Notations as above. We have 
\begin{equation}
\mathbf{F}(\mathbf{A})\geq F_{1}(H-DV^{-1}D^{\dagger})+F_{2}(V).\label{eq:F(A,Lambda)}
\end{equation}
Moreover, 
\begin{equation}
\mathcal{P}_{\mathbf{\Lambda}}(\mathbf{A})\geq\mathcal{P}_{1}(H-DV^{-1}D^{\dagger})+F_{2}(V).\label{eq:P(ALambda)}
\end{equation}
\end{lem}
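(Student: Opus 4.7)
The plan is to exploit the block structure $\mathbf{A} = \begin{pmatrix} H & D \\ D^{\dagger} & V \end{pmatrix}$ together with the Moore--Penrose/Schur inversion identities from Lemma~\ref{rem:elementary linear agb}. The key observation is that since $\Lambda_x = \pi_1^*\hat{\Lambda}$ only contains wedge products of $dx^i \wedge d\bar{x}^j$, and since $\Lambda_y = \frac{1}{d}\pi_2^*\hat{\rho}$ only contains wedge products of $dy^i \wedge d\bar{y}^j$, the canonical pairing of these forms against $\exp\boldsymbol{\chi}$ only sees the pure $xx$- and $yy$-blocks of the inverse matrix $\mathbf{A}^{-1}$ respectively.

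First I would split $\mathbf{F}(\mathbf{A}) = \langle\Lambda_x,\exp\boldsymbol{\chi}\rangle + \langle\Lambda_y,\exp\boldsymbol{\chi}\rangle$ via Lemma~\ref{lem:rewrite}. For the $\Lambda_x$ contribution, a direct multi-linear algebra argument (parallel to the computation in Lemma~\ref{lem:rewrite}) shows that when pairing a pure-$x$ $(k,k)$-form with $\boldsymbol{\chi}^k/k!$, only the monomials built from $\frac{\partial}{\partial\bar{x}^j}\wedge\frac{\partial}{\partial x^i}$ survive. Reading off from the formula
\[
\mathbf{A}^{-1} = \begin{pmatrix} \hat{H}^{-1} & -\hat{H}^{-1}DV^{-1} \\ -V^{-1}D^{\dagger}\hat{H}^{-1} & \hat{V}^{-1} \end{pmatrix}, \qquad \hat{H} = H - DV^{-1}D^{\dagger},\ \hat{V} = V - D^{\dagger}H^{-1}D,
\]
this gives
\[
\langle\Lambda_x,\exp\boldsymbol{\chi}\rangle = F_1(\hat{H}), \qquad \langle\Lambda_y,\exp\boldsymbol{\chi}\rangle = F_2(\hat{V}).
\]

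Second, Lemma~\ref{rem:elementary linear agb} yields the identity $\hat{V}^{-1} = V^{-1} + V^{-1}D^{\dagger}\hat{H}^{-1}DV^{-1} \geq V^{-1}$, equivalently $\hat{V}\leq V$. By the monotonicity of $F_2$ (Corollary~\ref{cor: convexity for f bigger than 0} applied to $\Lambda_y \geq 0$), this gives $F_2(\hat{V})\geq F_2(V)$, and combining with the equality above establishes (\ref{eq:F(A,Lambda)}).

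For (\ref{eq:P(ALambda)}), I would test the definition of $\mathcal{P}_{\boldsymbol{\Lambda}}$ with perturbations of the block form $\mathbf{B} = \begin{pmatrix} B & 0 \\ 0 & 0\end{pmatrix}$ for $B\in\overline{\Gamma_{n\times n}^+}$ with $\|B\|=1$. The Schur reduction applied to $\mathbf{A} + t\mathbf{B}$ gives
\[
\mathbf{F}(\mathbf{A} + t\mathbf{B}) = F_1(\hat{H} + tB) + F_2\bigl(V - D^{\dagger}(H + tB)^{-1}D\bigr).
\]
As $t\to\infty$, Lemma~\ref{lem:moore-penrose  inverse} controls $(H + tB)^{-1}$ and gives some limit $\leq V$ in the second slot, so monotonicity of $F_2$ yields $\liminf_{t\to\infty}F_2(\cdot)\geq F_2(V)$; the first term tends to $F_{1,\hat\Lambda}(\hat{H}:B)$ by definition. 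Taking the supremum over admissible $B$ gives $\mathcal{P}_{\boldsymbol{\Lambda}}(\mathbf{A})\geq \mathcal{P}_1(\hat{H}) + F_2(V)$.

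The main technical point to verify carefully is the identity $\langle\Lambda_x,\exp\boldsymbol{\chi}\rangle = F_1(\hat{H})$, i.e.\ that mixed blocks of $\boldsymbol{\chi}$ truly do not contribute. This follows because each $(x,y)$ or $(y,x)$ mixed term in $\boldsymbol{\chi}$ always supplies one $y$-differential, which cannot be absorbed by a purely-$x$ form $\Lambda_x^{[k]}$ under the canonical pairing; hence only the $xx$-block $\hat{H}^{-1}$ survives, and matches exactly the prescription that defines $F_1$ on the Schur complement. Once this identification is in hand, the rest of the argument is a clean application of the Lemma~\ref{rem:elementary linear agb}/\ref{lem:Alg2} inequalities and the monotonicity established in Section~\ref{sec:Ellipticity-and-convexity}.
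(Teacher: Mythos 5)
Your proposal is correct and follows essentially the same route as the paper: both rest on the Schur-complement formula for $\mathbf{A}^{-1}$, the observation that the pure-$x$ form $\Lambda_x$ pairs only with minors of the $xx$-block $\hat{H}^{-1}$ (so that mixed terms drop out), the inequality $\hat{V}^{-1}\geq V^{-1}$ for the $\Lambda_y$ contribution, and testing $\mathcal{P}_{\boldsymbol{\Lambda}}$ with block-diagonal perturbations $\mathbf{B}=\left(\begin{smallmatrix}B&0\\0&0\end{smallmatrix}\right)$. The only cosmetic difference is that you justify the vanishing of mixed contributions via the minor-determinant formula of Lemma \ref{lem:rewrite}, whereas the paper does an explicit bidegree count on $\chi_h^a\wedge\chi_v^c\wedge(\chi_m\wedge\bar\chi_m)^b$; these are equivalent.
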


\begin{proof}
We denote 
\begin{align*}
\chi & =\mathbf{A}^{\bar{i}j}2\sqrt{-1}\frac{\pdv}{\pdv\bar{z}^{i}}\wedge\frac{\pdv}{\pdv z^{j}}.
\end{align*}
Then by Lemma \ref{lem:rewrite}, $\mathbf{F}(\mathbf{A})=\<\mathbf{\Lambda},\exp\chi\>.$
Denote $\hat{H}=H-DV^{-1}D^{\dagger}$. Then 
\begin{equation}
\mathbf{A}^{-1}=\left(\begin{array}{cc}
\hat{H}^{-1} & -\hat{H}^{-1}DV^{-1}\\
-V^{-1}D^{\dagger}\hat{H}^{-1} & V^{-1}+V^{-1}D^{\dagger}\hat{H}^{-1}DV^{-1}
\end{array}\right).\label{eq:bold A inverse}
\end{equation}
Let 
\begin{align}
\chi_{h} & =\hat{H}^{\bar{i}j}2\sqrt{-1}\frac{\pdv}{\pdv\bar{x}^{i}}\wedge\frac{\pdv}{\pdv x^{j}},\label{eq:chi_h}\\
\chi_{v} & =\left(V^{\bar{i}j}+\left(V^{-1}D^{\dagger}\hat{H}^{-1}DV^{-1}\right)^{\bar{i}j}\right)2\sqrt{-1}\frac{\pdv}{\pdv\bar{y}^{i}}\wedge\frac{\pdv}{\pdv y^{j}},\label{eq:chi_v}\\
\chi_{m} & =\left(-\hat{H}^{-1}DV^{-1}\right)^{\bar{i}j}2\sqrt{-1}\frac{\pdv}{\pdv\bar{x}^{i}}\wedge\frac{\pdv}{\pdv y^{j}}.\label{eq:chi _m}
\end{align}
Then 
\begin{align}
\mathbf{F}(\mathbf{A}) & =\sum_{k=1}^{d-1}\frac{1}{k!}\<\mathbf{\Lambda}^{[k]},\sum_{a+2b+c=k}\frac{k!}{a!c!b!b!}\chi_{h}^{a}\wedge\chi_{v}^{c}\wedge(\chi_{m}\wedge\overline{\chi_{m}})^{b}\>.\label{eq:F(Omega)}
\end{align}
Note that $\chi_{h}^{a}\wedge\chi_{v}^{c}\wedge(\chi_{m}\wedge\overline{\chi_{m}})^{b}$
is a wedge product of some type $(a+b,a+b)$ tensor in $x$ and type
$(b+c,b+c)$ tensor in $y$. Since $\mathbf{\Lambda}^{[k]}=\Lambda_{x}^{[k]}+\Lambda_{y}^{[k]}$,
for non-vanishing terms in (\ref{eq:F(Omega)}), these indeces satisfy
$a+b=k$ or $b+c=k$; which implies that $a=k$ or $c=k$. Therefore,
\begin{align}
\mathbf{F}(\mathbf{A}) & =\sum_{k=1}^{d-1}\frac{1}{k!}\<\Lambda_{x}^{[k]},\chi_{h}^{k}\>+\<\Lambda_{y},\chi_{v}\>.\label{eq:bold FA}
\end{align}
The first term is $F_{1}(H-DV^{-1}D^{\dagger})$. Since $V^{-1}D^{\dagger}\hat{H}^{-1}DV^{-1}$
is non-negative, we have $\<\Lambda_{y},\chi_{v}\>\geq F_{2}(V)$.
We have proved (\ref{eq:F(A,Lambda)}).

Let $B$ be any $d\times d$ non-negative Hermitian matrix. Let $\mathbf{B}=\left(\begin{array}{cc}
B & 0\\
0 & 0
\end{array}\right)$. From (\ref{eq:F(A,Lambda)}), 
\begin{equation}
\mathbf{F}(\mathbf{A}+t\mathbf{B})\geq F_{1}(\hat{H}+tB)+F_{2}(V).\label{eq:limit bold FA}
\end{equation}
We obtain (\ref{eq:P(ALambda)}) by taking $t\to\infty$ and then
taking maximum for all $\|B\|=1$.
\end{proof}
\begin{lem}
\label{lem:tech lem 1} Notations as above, we have 
\[
\Lambda_{y}\wedge\frac{\boldsymbol{\omega}_{y}^{d-2}}{(d-2)!}\wedge\boldsymbol{\omega}_{m}\wedge\bar{\boldsymbol{\omega}}_{m}\geq-\Lambda_{y}\wedge\frac{\boldsymbol{\omega}_{y}^{d-1}}{(d-1)!}V^{\bar{j}l}D_{i\bar{j}}\overline{D_{k\bar{l}}}\frac{\sqrt{-1}}{2}dx^{i}\wedge d\bar{x}^{k}.
\]
\end{lem}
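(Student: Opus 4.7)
The strategy is to reduce the inequality to a pointwise algebraic identity in adapted coordinates. Since both sides are $(d+1,d+1)$-forms on $\mathcal{Y}$ of pure type $(1,1)$ in $x$ and $(d,d)$ in $y$, and since positivity is invariant under holomorphic change of coordinates, I would fix an arbitrary point $(x_{0},y_{0})\in\mathcal{Y}$ and choose local $y$-coordinates in two steps: first a $GL$-change to make $V_{i\bar{j}}=\delta_{ij}$ (so $\boldsymbol{\omega}_{y}=\sum_{p}\frac{\sqrt{-1}}{2}dy^{p}\wedge d\bar{y}^{p}$), then a unitary change (preserving $V=I$) to simultaneously diagonalize the nonnegative $(1,1)$-form $\Lambda_{y}=\frac{1}{d}\pi_{2}^{*}\hat{\rho}$ as $\Lambda_{y}=\sum_{p}\lambda_{p}\frac{\sqrt{-1}}{2}dy^{p}\wedge d\bar{y}^{p}$ with $\lambda_{p}\geq 0$. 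In parallel I would introduce the $(1,0)$-forms $\mu_{l}:=\sum_{i}D_{i\bar{l}}\,dx^{i}$ on the $x$-factor, so that $\boldsymbol{\omega}_{m}=\frac{\sqrt{-1}}{2}\sum_{l}\mu_{l}\wedge d\bar{y}^{l}$, and a straightforward rearrangement yields
\[
\boldsymbol{\omega}_{m}\wedge\bar{\boldsymbol{\omega}}_{m}=\tfrac{1}{4}\sum_{i,j,k,l}D_{i\bar{j}}\overline{D_{k\bar{l}}}\,dx^{i}\wedge d\bar{x}^{k}\wedge dy^{l}\wedge d\bar{y}^{j}.
\]

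The next step is to expand the $y$-side wedges. Writing $\eta^{p}:=\frac{\sqrt{-1}}{2}dy^{p}\wedge d\bar{y}^{p}$, I would use the identity $\Lambda_{y}\wedge\frac{\boldsymbol{\omega}_{y}^{d-2}}{(d-2)!}=\sum_{|K|=d-1}\bigl(\sum_{p\in K}\lambda_{p}\bigr)\prod_{q\in K}\eta^{q}$. Wedging with $dy^{l}\wedge d\bar{y}^{j}$ forces $l,j\notin K$; since $|K^{c}|=1$, this vanishes unless $l=j$, and for $l=j$ it equals $-2\sqrt{-1}\bigl(\sum_{p\neq l}\lambda_{p}\bigr)\mathrm{vol}(y)$ with $\mathrm{vol}(y)=\boldsymbol{\omega}_{y}^{d}/d!$. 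Plugging this into the computation of the LHS collapses it to
\[
\text{LHS}=-\sum_{l=1}^{d}\Bigl(\sum_{p\neq l}\lambda_{p}\Bigr)\cdot\tfrac{\sqrt{-1}}{2}\mu_{l}\wedge\bar{\mu}_{l}\cdot\mathrm{vol}(y).
\]
For the right-hand side, $V=I$ gives $V^{\bar{j}l}D_{i\bar{j}}\overline{D_{k\bar{l}}}=\sum_{l}D_{i\bar{l}}\overline{D_{k\bar{l}}}$ and $\Lambda_{y}\wedge\frac{\boldsymbol{\omega}_{y}^{d-1}}{(d-1)!}=\bigl(\sum_{p}\lambda_{p}\bigr)\mathrm{vol}(y)$, so
\[
\text{RHS}=-\sum_{l}\Bigl(\sum_{p}\lambda_{p}\Bigr)\cdot\tfrac{\sqrt{-1}}{2}\mu_{l}\wedge\bar{\mu}_{l}\cdot\mathrm{vol}(y).
\]

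Subtracting produces the desired identity
\[
\text{LHS}-\text{RHS}=\sum_{l=1}^{d}\lambda_{l}\cdot\tfrac{\sqrt{-1}}{2}\mu_{l}\wedge\bar{\mu}_{l}\cdot\mathrm{vol}(y),
\]
which is a sum of wedge products of (strongly) positive forms, hence strongly positive; this proves the inequality. The only real difficulty lies in bookkeeping the signs during the rearrangement of $\boldsymbol{\omega}_{m}\wedge\bar{\boldsymbol{\omega}}_{m}$ and in tracking the identification $dy^{l}\wedge d\bar{y}^{l}=-2\sqrt{-1}\,\eta^{l}$; once the simultaneously diagonalizing coordinates are in place, the crucial cancellation between the two terms is purely algebraic and displays the pleasant combinatorial fact that $\sum_{p}\lambda_{p}-\sum_{p\neq l}\lambda_{p}=\lambda_{l}\geq 0$.
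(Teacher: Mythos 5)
Your proof is correct. Both sides are $(1,1)$ in $x$ and $(d,d)$ in $y$, the pointwise simultaneous diagonalization of the positive definite $V$ and the nonnegative $(1,1)$-form $\Lambda_{y}=\frac{1}{d}\pi_{2}^{*}\hat{\rho}$ is legitimate, the combinatorial collapse to $l=j$ is right, and the final identity
$\text{LHS}-\text{RHS}=\sum_{l}\lambda_{l}\,\frac{\sqrt{-1}}{2}\mu_{l}\wedge\bar{\mu}_{l}\wedge\frac{\boldsymbol{\omega}_{y}^{d}}{d!}$
exhibits the difference as a sum of wedge products of strongly positive forms, hence positive in the sense of Definition \ref{def:An-element-positive forms}. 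Your route differs from the paper's in how the key sign is established: the paper expands $\boldsymbol{\omega}_{m}\wedge\bar{\boldsymbol{\omega}}_{m}$ the same way, contracts the $x$-indices against an arbitrary vector $\zeta$, and then recognizes the resulting $y$-expression as $F_{2}^{l\bar{j}}(V)$ applied to the rank-one nonnegative matrix $b_{\bar{j}}\overline{b_{\bar{l}}}$ with $b_{\bar{j}}=\sum_{i}D_{i\bar{j}}\zeta^{i}$, so the inequality follows in one line from the already-proved monotonicity $F_{2}^{l\bar{j}}\leq0$ (Lemma \ref{lem:F_k monot}). Your argument instead reproves that monotonicity from scratch in the special case where $\Lambda_{y}$ is a $(1,1)$-form, by explicit diagonalization. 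What the paper's approach buys is brevity and reuse of the general machinery (it would survive if $\Lambda_{y}$ were replaced by a higher-degree positive form); what yours buys is a self-contained computation and an explicit formula for the defect term, which in particular shows the inequality is strict in the $\mu_{l}$-directions where $\lambda_{l}>0$ --- information the paper's proof does not extract.
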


\begin{proof}
Notice that 
\begin{align}
\boldsymbol{\omega}_{m}\wedge\overline{\boldsymbol{\omega}}_{m} & =\left(\frac{\sqrt{-1}}{2}\right)^{2}D_{i\bar{j}}dx^{i}\wedge d\bar{y}^{j}\wedge\overline{D_{k\bar{l}}}dy^{l}\wedge d\bar{x}^{k}\label{eq:omega_mix wedge}\\
 & =-\left(\frac{\sqrt{-1}}{2}\right)^{2}D_{i\bar{j}}\overline{D_{k\bar{l}}}dx^{i}\wedge d\bar{x}^{k}\wedge dy^{l}\wedge d\bar{y}^{j}.\nonumber 
\end{align}
To prove the lemma, it is sufficient to show that for any $\zeta=(\zeta^{i})$,
\begin{equation}
\sum_{i,j,k,l}\Lambda_{y}\wedge\left(\frac{\boldsymbol{\omega}_{y}^{d-2}}{(d-2)!}\wedge\frac{\sqrt{-1}}{2}dy^{l}\wedge d\bar{y}^{j}-\frac{\boldsymbol{\omega}_{y}^{d-1}}{(d-1)!}V^{\bar{j}l}\right)D_{i\bar{j}}\overline{D_{k\bar{l}}}\zeta^{i}\bar{\zeta}^{k}\leq0.\label{eq:equiv to derivative}
\end{equation}
However, since $F_{2}^{l\bar{j}}(V)\leq0$, (\ref{eq:equiv to derivative})
is true.
\end{proof}
The following lemma illustrates that $\omega_{\boldsymbol{\tau}}$
satisfies the cone condition.
\begin{lem}
Notations as above, we have \label{lem:We lemma tech } 
\begin{equation}
\mathcal{P}_{\hat{\Lambda}}(\omega_{\boldsymbol{\tau}})<1.\label{eq:we lem tech}
\end{equation}
\end{lem}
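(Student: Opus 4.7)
The plan is to combine the pointwise inequality on $\mathcal{Y}$ provided by Lemma \ref{lem:tech lem 1} with the cone condition inherited by $\boldsymbol{\omega}_{\boldsymbol{\tau}}$ from being a solution of the lifted equation (\ref{eq:product eq}), together with Lemma \ref{lem:generalized G Chen}. The first step is to rewrite the integrand in (\ref{eq:def om _tau}) on each fiber $\{x\}\times\hat{Y}$. The summand $(\Lambda_y\wedge\exp\boldsymbol{\omega}_y)^{[d]}\wedge\boldsymbol{\omega}_x$ contributes $\Lambda_y\wedge\frac{\boldsymbol{\omega}_y^{d-1}}{(d-1)!}\wedge H_{i\bar{j}}\frac{\sqrt{-1}}{2}dx^i\wedge d\bar{x}^j$, while Lemma \ref{lem:tech lem 1} allows me to bound the mixed summand from below by $-\Lambda_y\wedge\frac{\boldsymbol{\omega}_y^{d-1}}{(d-1)!}\wedge(DV^{-1}D^{\dagger})_{i\bar{k}}\frac{\sqrt{-1}}{2}dx^i\wedge d\bar{x}^k$. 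Combining the two and introducing the Schur complement $\hat H(x,y):=H(x,y)-D(x,y)V(x,y)^{-1}D(x,y)^{\dagger}$ and the probability measure $d\mu(y):=\frac{1}{\hat c}\Lambda_y\wedge\frac{\boldsymbol{\omega}_y^{d-1}}{(d-1)!}$ on the fiber (it integrates to $1$ by the cohomological identity used in Lemma \ref{lem:Notations-as-above,in teh class}), I obtain the matrix-valued inequality
\begin{equation}
\omega_{\boldsymbol{\tau}}|_x\ \geq\ \int_{\{x\}\times\hat Y}\hat H(x,y)\,d\mu(y).\label{eq:mass-step1}
\end{equation}

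Next, since $\mathcal{P}_{\hat{\Lambda}}$ is decreasing in its Hermitian argument and convex (Lemma \ref{lem:P_La property}(\ref{enu:-is-a}), (\ref{enu:-is-decreasing})), (\ref{eq:mass-step1}) together with Jensen's inequality gives
\[
\mathcal{P}_{\hat{\Lambda}}(\omega_{\boldsymbol{\tau}}|_x)\ \leq\ \mathcal{P}_{\hat{\Lambda}}\!\left(\int\hat H\,d\mu\right)\ \leq\ \int \mathcal{P}_{\hat{\Lambda}}(\hat H(x,y))\,d\mu(y).
\]
To estimate the integrand pointwise, I invoke the lifted equation: $\boldsymbol{\omega}_{\boldsymbol{\tau}}$ solves (\ref{eq:product eq}) with $\kappa=2$, hence by Lemma \ref{lem:Datar-Pingali} it is a strict subsolution with $\mathcal{P}_{\boldsymbol{\Lambda}}(\mathbf{A})<2$ at every point of $\mathcal{Y}$. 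Applying Lemma \ref{lem:generalized G Chen} (identifying $\mathcal{P}_1=\mathcal{P}_{\hat{\Lambda}}$) yields the pointwise strict inequality $\mathcal{P}_{\hat{\Lambda}}(\hat H)<2-F_2(V)$ on $\mathcal{Y}$.

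Integrating this strict inequality against $d\mu$, I get $\mathcal{P}_{\hat{\Lambda}}(\omega_{\boldsymbol{\tau}}|_x)<2-\int F_2(V)\,d\mu$, so it remains to show $\int F_2(V)\,d\mu\geq 1$. Using the identity $\Lambda_y\wedge\frac{\boldsymbol{\omega}_y^{d-1}}{(d-1)!}=F_2(V)\,\frac{\boldsymbol{\omega}_y^d}{d!}$, I rewrite
\[
\int F_2(V)\,d\mu\ =\ \frac{1}{\hat c}\int_{\{x\}\times\hat Y}F_2(V)^2\,\frac{\boldsymbol{\omega}_y^d}{d!},
\]
and Cauchy--Schwarz with respect to $\boldsymbol{\omega}_y^d/d!$, together with the cohomological computations $\int F_2(V)\,\boldsymbol{\omega}_y^d/d!=\int\Lambda_y\wedge\boldsymbol{\omega}_y^{d-1}/(d-1)!=\hat c$ and $\int\boldsymbol{\omega}_y^d/d!=\hat c$, yields $\int F_2(V)^2\,\boldsymbol{\omega}_y^d/d!\geq\hat c$, hence $\int F_2(V)\,d\mu\geq 1$. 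Combining, $\mathcal{P}_{\hat{\Lambda}}(\omega_{\boldsymbol{\tau}}|_x)<1$.

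I expect the main technical subtlety to be the first step: recasting the fiber integrand defining $\omega_{\boldsymbol{\tau}}$ as a weighted average of the Schur complement $\hat H$ is exactly the reason why Lemma \ref{lem:tech lem 1} was isolated and why Lemma \ref{lem:generalized G Chen} takes the form it does. Once (\ref{eq:mass-step1}) is established, the remainder is a clean chain of monotonicity, convexity/Jensen, pointwise cone condition, and a Cauchy--Schwarz inequality whose equality case is precluded by the strict pointwise inequality on $\mathcal{Y}$.
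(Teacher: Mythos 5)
Your argument is correct and follows essentially the same route as the paper's proof: the fiberwise lower bound by the Schur complement via Lemma \ref{lem:tech lem 1}, the pointwise estimate $\mathcal{P}_{1}(\hat H)<2-F_{2}(V)$ from Lemma \ref{lem:generalized G Chen} and the cone condition for the lifted equation, convexity of $\mathcal{P}_{\hat\Lambda}$ against the normalized fiber measure, and the final Cauchy--Schwarz step using $\int_{\hat Y}F_{2}(V)\,\boldsymbol{\omega}_{y}^{d}/d!=\int_{\hat Y}\boldsymbol{\omega}_{y}^{d}/d!=\hat c$. No gaps.
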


\begin{proof}
From Lemma \ref{lem:tech lem 1}, we see that 
\begin{align}
\omega_{\boldsymbol{\tau}} & =\frac{1}{\hat{c}}\int_{\{x\}\times\hat{Y}}\Lambda_{y}\wedge\left(\frac{\boldsymbol{\omega}_{y}^{d-1}}{(d-1)!}\wedge\boldsymbol{\omega}_{x}+\frac{\boldsymbol{\omega}_{y}^{d-2}}{(d-2)!}\wedge\boldsymbol{\omega}_{m}\wedge\bar{\boldsymbol{\omega}}_{m}\right)\label{eq:compute omega_tau}\\
 & \geq\frac{1}{\hat{c}}\int_{\{x\}\times\hat{Y}}\Lambda_{y}\wedge\frac{\boldsymbol{\omega}_{y}^{d-1}}{(d-1)!}\wedge\left(H_{i\bar{k}}-V^{\bar{j}l}D_{i\bar{j}}\overline{D_{k\bar{l}}}\right)\frac{\sqrt{-1}}{2}dx^{i}\wedge d\bar{x}^{k}.\nonumber 
\end{align}
Now, we apply $\mathcal{P}_{1}$ to $H_{i\bar{k}}-V^{\bar{j}l}D_{i\bar{j}}\overline{D_{k\bar{l}}}$.
By Lemma \ref{lem:generalized G Chen}, 
\begin{align}
\mathcal{P}_{1}(H-V^{\bar{j}l}D_{i\bar{j}}\overline{D_{k\bar{l}}}) & \leq\mathcal{P}_{\mathbf{\Lambda}}\left(\begin{array}{cc}
H & D\\
D^{\dagger} & V
\end{array}\right)-F_{2}(V)\label{eq:lemma tech sub 1}\\
 & <2-F_{2}(V).\nonumber 
\end{align}
Using the convexity of $\mathcal{P}$, we have 
\begin{align}
\mathcal{P}_{\hat{\Lambda}}(\omega_{\boldsymbol{\tau}}) & <\frac{1}{\hat{c}}\int_{\{x\}\times\hat{Y}}\left(\Lambda_{y}\wedge\frac{\boldsymbol{\omega}_{y}^{d-1}}{(d-1)!}\right)\left(2-F_{2}(V)\right)\label{eq:lemma tech sub 2}\\
 & =\frac{1}{\hat{c}}\left[2\hat{c}-\int_{\{x\}\times\hat{Y}}F_{2}^{2}(V)\frac{\boldsymbol{\omega}_{y}^{d}}{d!}\right]\nonumber \\
 & \leq\frac{1}{\hat{c}}\left[2\hat{c}-\frac{1}{[\boldsymbol{\omega}_{y}^{d}/d!]}\left(\int_{\hat{Y}}F_{2}(V)\frac{\boldsymbol{\omega}_{y}^{d}}{d!}\right)^{2}\right].\nonumber 
\end{align}
Notice$\int_{\hat{Y}}F_{2}(V)\frac{\boldsymbol{\omega}_{y}^{d}}{d!}=\int_{\hat{Y}}\frac{\hat{\rho}^{d}}{d!}=\hat{c}.$Thus,
by 
\begin{equation}
\mathcal{P}_{\hat{\Lambda}}(\omega_{\boldsymbol{\tau}})<\frac{1}{\hat{c}}\left[2\hat{c}-\hat{c}\right]=1.\label{eq:we lem tech 1}
\end{equation}
We have finished the proof. 
\end{proof}

\subsection{Mass Concentration on $\Delta$}

Next, we show that any weak limit of $\boldsymbol{\omega}_{\boldsymbol{\tau}}^{d}$
(\ref{eq:product eq}) as $s$ is converging to 0 contains a positive
piece of $[\Delta]$. 
\begin{lem}
\label{lem:2.1 in DP}Let $\boldsymbol{\rho}_{s}$ be defined as in
(\ref{eq:rho_s_tau}). Then we have 
\begin{enumerate}
\item For any $\epsilon>0$, there is a $\delta_{\rho}>0$ s.t. $\boldsymbol{\rho}_{s}>(1-\epsilon)\boldsymbol{\rho}+\frac{\delta_{\rho}}{2}\boldsymbol{\varpi}.$ 
\item \label{enu:For-an-open}Let $V_{s}:=\{z:\psi(z)<\log s\}$ where $\psi$
is given in (\ref{eq:definition of psi}). Let $p$ be a point of
$\Delta$. For any open neighborhood $U$ of $p$, there is a $\delta_{1}(U)>0$
independent of $s$ s.t. 
\[
\int_{U\cap V_{s}}\boldsymbol{\rho}_{s}^{2d}\geq\delta_{1}(U)>0.
\]
\item For $p\in\Delta$ and any open neighborhood $U$ of $p$, there is
a $\delta_{2}(U)>0$ independent of $s$ such that 
\begin{equation}
\int_{U\cap V_{s}}\boldsymbol{\rho}_{s}^{d}\wedge\boldsymbol{\varpi}^{d}\geq\delta_{2}(U)>0.\label{eq:delta(u_)}
\end{equation}
\end{enumerate}
\end{lem}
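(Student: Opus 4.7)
The plan is to prove (1) from the quasi-plurisubharmonicity of $\psi$, and to deduce (2) and (3) by localizing near $p\in\Delta$ and extracting the Monge-Amp\`ere mass of $\sqrt{-1}\ddbar\psi_s$ concentrated in the shrinking tube $V_s$ about the diagonal.

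For (1), since $\sqrt{-1}\ddbar\phi_S=\varpi-\rho$ (smoothly extending across $S$), we have
$\boldsymbol{\rho}_s=(1-\delta_\rho)\boldsymbol{\rho}+\delta_\rho\boldsymbol{\varpi}+\delta_\rho^2\sqrt{-1}\ddbar\psi_s$.
Because $\psi_s$ is the $s$-regularization of $\tfrac12\log$ of a sum of squares of holomorphic defining functions weighted by the smooth $\theta_j^2$, a standard computation yields a constant $C_\psi>0$ independent of $s$ with $\sqrt{-1}\ddbar\psi_s\ge-C_\psi\boldsymbol{\rho}$. Hence
\[
\boldsymbol{\rho}_s-(1-\epsilon)\boldsymbol{\rho}-\tfrac{\delta_\rho}{2}\boldsymbol{\varpi}\ge(\epsilon-\delta_\rho-C_\psi\delta_\rho^2)\boldsymbol{\rho}+\tfrac{\delta_\rho}{2}\boldsymbol{\varpi},
\]
which is strictly positive once $\delta_\rho$ is chosen small enough that $\epsilon-\delta_\rho-C_\psi\delta_\rho^2>0$.

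For (2), set $\tilde\alpha:=(1-\delta_\rho)\boldsymbol{\rho}+\delta_\rho\boldsymbol{\varpi}-C_\psi\delta_\rho^2\boldsymbol{\rho}$ (K\"ahler for $\delta_\rho$ small) and $\tilde\beta:=\delta_\rho^2(\sqrt{-1}\ddbar\psi_s+C_\psi\boldsymbol{\rho})\ge 0$, so $\boldsymbol{\rho}_s=\tilde\alpha+\tilde\beta$ as non-negative real $(1,1)$-forms; discarding the other non-negative terms in the binomial expansion gives $\boldsymbol{\rho}_s^{2d}\ge\binom{2d}{d}\tilde\alpha^d\wedge\tilde\beta^d$. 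Localize near $p=(p_0,p_0)\in\Delta$ via coordinates $(w^k,z^k)=(x^k-y^k,y^k)$ so that $\Delta\cap U=\{w=0\}$; in these coordinates $\sum_{j,k}\theta_j^2|g_{j,k}|^2=w^\dagger H(w,z)w$ for a smooth positive-definite Hermitian matrix $H(w,z)$. The linear change $w\mapsto H(0,z_0)^{-1/2}w$ (holomorphic in $w$, smoothly depending on $z_0$) together with a Taylor expansion of $H$ around $(0,z_0)$ normalizes $\psi_s$ to $\tfrac12\log(|w|^2+s^2)$ plus a remainder whose complex Hessian is bounded uniformly in $s$ by $\boldsymbol{\rho}$. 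After enlarging $C_\psi$ to absorb this remainder, $\tilde\beta\ge\delta_\rho^2\sqrt{-1}\ddbar\tfrac12\log(|w|^2+s^2)\ge 0$ on $U$, and consequently
\[
\boldsymbol{\rho}_s^{2d}\ge\binom{2d}{d}\delta_\rho^{2d}\,\tilde\alpha^d\wedge\mu_s,\qquad \mu_s:=\bigl(\sqrt{-1}\ddbar\tfrac12\log(|w|^2+s^2)\bigr)^d.
\]

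Since $\mu_s$ involves only $dw^k$ and $d\bar w^k$, the wedge $\tilde\alpha^d\wedge\mu_s$ picks out the $(0,0)$-in-$w$, $(d,d)$-in-$z$ component of $\tilde\alpha^d$, which by the K\"ahler property equals $d!\det(\tilde\alpha|_\Delta)\prod_l\tfrac{\sqrt{-1}}{2}dz^l\wedge d\bar z^l$ at points of $\Delta$ and is therefore bounded below by $c\prod_l\tfrac{\sqrt{-1}}{2}dz^l\wedge d\bar z^l$ on a shrunken neighborhood of $p$. The substitution $w=s\xi$ yields $\int_{\{|w|<s\}}\mu_s\ge c_d>0$ uniformly in $s$, and Fubini's theorem then gives the uniform lower bound of (2). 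Part (3) is identical: $\boldsymbol{\rho}_s^d\wedge\boldsymbol{\varpi}^d\ge\tilde\beta^d\wedge\boldsymbol{\varpi}^d\ge\delta_\rho^{2d}\mu_s\wedge\boldsymbol{\varpi}^d$, and the $(d,d)$-in-$z$ component of $\boldsymbol{\varpi}^d$ is likewise bounded below on $U$ by a positive multiple of the $z$-volume. The main obstacle is the reduction of $\psi_s$ to the model form $\tfrac12\log(|w|^2+s^2)$ up to a uniformly $C^2$-bounded error: the $z$-dependent linear normalization of the defining-function Gram matrix $H$ must be chosen so that the Taylor remainder is uniform in $s$ and can be absorbed into $C_\psi$, which is what makes the binomial inequality descend from $\tilde\beta^d$ to the explicit model form $\mu_s$.
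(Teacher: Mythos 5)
Your argument is correct and follows the same route the paper takes for this lemma, namely the mass-concentration computation of Demailly-Păun's Lemma 2.1 (to which the paper's proof simply defers): the uniform quasi-psh bound $\sqrt{-1}\ddbar\psi_{s}\geq-C_{\psi}\boldsymbol{\rho}$, the splitting of $\boldsymbol{\rho}_{s}$ into a Kähler part plus a non-negative part carrying $\sqrt{-1}\ddbar\psi_{s}$, and the reduction of $\psi_{s}$ to the model $\frac{1}{2}\log(|w|^{2}+s^{2})$, whose Monge-Ampère mass on $\{|w|<cs\}$ is scale-invariant. No gaps worth flagging beyond the standard Cauchy--Schwarz absorption of the $\partial\theta_{j}$ cross terms, which you correctly identify as the technical core.
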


The proof is the same as in Lemma 2.1 in Demailly-Păun\cite{demailly2004numerical}.

Let $\mathbf{1}_{\Delta}$ be the characteristic function of the diagonal
$\Delta\subset\mathcal{Y}$. The following proposition asserts that
when $s$ tends to $0$, a positive portion of mass of $\boldsymbol{\omega}_{\boldsymbol{\tau}}^{d}$
concentrates on $\Delta$. The proof is similar to the proof of Proposition
2.6 of Demailly-Păun. For readers' convenience, we write a detailed
proof here. 
\begin{prop}
\label{prop:concentration to T}Let $T$ be a weak limit of $\boldsymbol{\omega}_{\boldsymbol{\tau}}^{d}$
when $s\to0$ for some $t\in(0,1)$. Then $\mathbf{1}_{\Delta}T$
is a positive closed current and there is a constant $\epsilon_{T}$
s.t. $\mathbf{1}_{\Delta}T=\epsilon_{T}[\Delta]$ and $\epsilon_{T}>\epsilon_{\Delta}$,
where $\epsilon_{\Delta}(\hat{Y})>0$ is a constant independent of
$t,\hat{\epsilon}$. 
\end{prop}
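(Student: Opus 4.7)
The argument adapts the mass-concentration technique of Demailly--P\u{a}un \cite{demailly2004numerical}, as refined by G.~Chen \cite{chen2021j} and Song \cite{Song2020NakaiMoishezonCF}, to the lifted equation on $\mathcal{Y}$. The plan breaks into three steps.

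\emph{Step 1 (identification of $T$).} Pick a sequence $s_{k}\downarrow 0$ with $t$ fixed and extract a weakly convergent subsequence $T=\lim_{k}\boldsymbol{\omega}_{\boldsymbol{\tau}_{k}}^{d}$. Since each $\boldsymbol{\omega}_{\boldsymbol{\tau}_{k}}$ is a smooth K\"ahler form, the wedge power $\boldsymbol{\omega}_{\boldsymbol{\tau}_{k}}^{d}$ is a smooth, closed, strongly positive $(d,d)$-form, so $T$ is a closed positive $(d,d)$-current on $\mathcal{Y}$ with fixed cohomology class $[\pi_{1}^{*}\omega_{t}+\pi_{2}^{*}\hat{\rho}]^{d}$. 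In particular, the total mass of $T$ is uniformly bounded in $(t,\hat{\epsilon})$.

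\emph{Step 2 (support theorem).} The diagonal $\Delta\subset\mathcal{Y}$ is smooth and irreducible of complex dimension $d$, and $\mathbf{1}_{\Delta}T$ is a closed positive $(d,d)$-current with support in $\Delta$. Demailly's support theorem (cf.\ \cite{demailly2012complex}, III.2.14) then forces $\mathbf{1}_{\Delta}T=\epsilon_{T}[\Delta]$ for a unique $\epsilon_{T}\geq 0$. It remains only to produce the uniform positive lower bound on $\epsilon_{T}$.

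\emph{Step 3 (uniform lower bound).} From the PDE (\ref{eq:product eq}) and the positivity of each $\boldsymbol{\Lambda}^{[k]}\wedge\boldsymbol{\omega}_{\boldsymbol{\tau}}^{2d-k}$, we obtain the pointwise estimate $2\boldsymbol{\omega}_{\boldsymbol{\tau}}^{2d}/(2d)!\geq f_{\boldsymbol{\tau}}\,\boldsymbol{\rho}^{2d}/(2d)!$. Choose a non-negative smooth cutoff $\chi$ supported in a small tubular neighborhood $U$ of a fixed point $p\in\Delta$. Combining the definition (\ref{eq:f_s,t,tau}) of $f_{\boldsymbol{\tau}}$ with Lemma \ref{lem:2.1 in DP}(2), and using that $c_{t,\hat{\epsilon}}$ and $c_{t,\hat{\epsilon},\delta_{\rho}}$ are uniformly bounded in $t\in(0,1]$ and $\hat{\epsilon}\in(0,\hat{\epsilon}_{Y})$ once $\delta_{\rho}$ is fixed uniformly as in Proposition \ref{prop: solvability of lifted equation}, one obtains a uniform lower bound $\int_{U}\chi\,\boldsymbol{\omega}_{\boldsymbol{\tau}}^{2d}\geq c_{0}(U)>0$ valid for all sufficiently small $s$ and all admissible $(t,\hat{\epsilon})$. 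An \emph{a priori} uniform $L^{\infty}$-estimate on the local potentials of $\boldsymbol{\omega}_{\boldsymbol{\tau}}$ (from Proposition \ref{prop:C0 estimate} applied to the lifted equation, together with the subsolution $\boldsymbol{\omega}_{0}$ of Proposition \ref{prop: solvability of lifted equation}) then allows us to invoke Bedford--Taylor continuity and to identify the weak limit of $\boldsymbol{\omega}_{\boldsymbol{\tau}_{k}}^{2d}$ with the non-pluripolar wedge of the $(1,1)$-limit of $\boldsymbol{\omega}_{\boldsymbol{\tau}_{k}}$. Siu-decomposing that limit and observing that only the $[\Delta]$-component of $T$ can contribute concentrated mass near $\Delta\cap U$, the uniform lower bound $c_{0}(U)$ translates into the desired uniform coefficient estimate $\epsilon_{T}\geq\epsilon_{\Delta}(\hat{Y})>0$.

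The principal obstacle is Step 3: translating top-degree mass concentration into a lower bound on the coefficient of a $(d,d)$-current. Self-wedges of higher-bidegree currents are not directly defined, so one has to work at the $(1,1)$-level with bounded local potentials and then propagate to wedge powers via Bedford--Taylor. A secondary but essential bookkeeping task is verifying that every implicit constant — the $L^{\infty}$-bound, $\delta_{\rho}$, and $\delta_{2}(U)$ in Lemma \ref{lem:2.1 in DP} — is uniform in $(t,\hat{\epsilon})$; this follows from the uniform K\"ahlerness of $\boldsymbol{\rho}_{s}$ and the uniform continuity of the classes $[\hat{\omega}_{0}(t,\hat{\epsilon})]$ built into the construction.
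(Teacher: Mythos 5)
Your Steps 1 and 2 match the paper (the paper invokes Skoda--El Mir to see that $\mathbf{1}_{\Delta}T$ is closed, then the support theorem, Corollary III.2.14 of \cite{demailly2012complex}); the problem is Step 3, where you correctly identify the principal obstacle but the proposed resolution does not work. A uniform lower bound on $\int_{U}\chi\,\boldsymbol{\omega}_{\boldsymbol{\tau}}^{2d}$ is a statement about the top-degree volume in a fixed neighborhood of $\Delta$ and by itself says nothing about the $(d,d)$-current $\boldsymbol{\omega}_{\boldsymbol{\tau}}^{d}$ charging $\Delta$: the volume could be produced entirely by eigenvalues in directions transverse to $\Delta$, with $\boldsymbol{\omega}_{\boldsymbol{\tau}}^{d}\wedge\boldsymbol{\varpi}^{d}$ spreading out rather than concentrating. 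The paper's Claim 1 closes exactly this gap with the Demailly--P\u{a}un eigenvalue-splitting device: writing $\lambda_{1}\leq\cdots\leq\lambda_{2d}$ for the eigenvalues of $\boldsymbol{\omega}_{\boldsymbol{\tau}}$ with respect to the degenerating reference metric $\boldsymbol{\rho}_{s}$, the PDE gives a pointwise lower bound on $\lambda_{1}\cdots\lambda_{2d}$, while the cohomological bound on $\int\boldsymbol{\omega}_{\boldsymbol{\tau}}^{d}\wedge\boldsymbol{\rho}_{s}^{d}$ bounds $\lambda_{d+1}\cdots\lambda_{2d}$ in $L^{1}(\boldsymbol{\rho}_{s}^{2d})$, so that off an exceptional set $E_{\delta}$ of small $\boldsymbol{\rho}_{s}$-measure one gets $\lambda_{1}\cdots\lambda_{d}\geq\frac{\delta}{C}\lambda_{1}\cdots\lambda_{2d}$ and hence a lower bound for $\int_{U\cap V_{s}\setminus E_{\delta}}\boldsymbol{\omega}_{\boldsymbol{\tau}}^{d}\wedge\boldsymbol{\varpi}^{d}$ via Lemma \ref{lem:2.1 in DP}(3). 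Crucially the integration is over the shrinking sublevel sets $V_{s}=\{\psi<\log s\}$, which collapse onto $\Delta$, not over a fixed neighborhood; this is what forces the limit mass onto $\Delta$ itself.

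The auxiliary tools you invoke to finish Step 3 also point in the wrong direction. The local potentials of $\boldsymbol{\omega}_{\boldsymbol{\tau}}$ relative to a fixed smooth reference cannot stay uniformly bounded as $s\to0$ (the construction builds a logarithmic singularity along $\Delta$ into $\boldsymbol{\rho}_{s}$ precisely so that mass escapes there), and if they did stay bounded, the Bedford--Taylor/non-pluripolar product you propose would assign \emph{zero} mass to the pluripolar set $\Delta$, contradicting the conclusion you are trying to reach. Likewise, Siu decomposition of the limiting $(1,1)$-current detects only divisorial (codimension-one) components and cannot see the codimension-$d$ diagonal. The correct mechanism is entirely at the level of the smooth forms $\boldsymbol{\omega}_{\boldsymbol{\tau}}^{d}$ before passing to the limit, with no regularity theory for the limit current required.
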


\begin{proof}
We first prove 2 claims.

\textbf{Claim 1: }For any $p\in\Delta$ and $U$ a neighborhood of
$p$, there is a constant $\delta(U)>0$ independent of $s$ s.t.
$\int_{U\cap V_{s}}\boldsymbol{\omega}_{\boldsymbol{\tau}}^{d}\wedge\boldsymbol{\varpi}^{d}>\delta(U)$
for small $s$.

Notice that 
\begin{align}
2\frac{\boldsymbol{\omega}_{\boldsymbol{\tau}}^{2d}}{(2d)!} & \geq f_{\boldsymbol{\tau}}\frac{\boldsymbol{\rho}^{2d}}{(2d)!}\label{eq:mass con}\\
 & =\frac{1}{(2d)!}\left(\boldsymbol{\rho}_{s}^{2d}+(c_{t}-1)\boldsymbol{\rho}^{2d}\right),\nonumber 
\end{align}
where $c_{t}=c_{t,\hat{\epsilon}}-c_{t,\hat{\epsilon},\delta_{\rho}}$
has a uniform lower bound for all $t,\hat{\epsilon}$. Let $\lambda_{1}\leq\lambda_{2}\leq\cdots\leq\lambda_{2d}$
be the eigenvalues of $\boldsymbol{\omega}_{\boldsymbol{\tau}}$ with
respect to $\boldsymbol{\rho}_{s}$. From (\ref{eq:mass con}), we
have 
\begin{equation}
2\lambda_{1}\cdots\lambda_{2d}\frac{\boldsymbol{\rho}_{s}^{2d}}{(2d)!}-\frac{c_{t}-1}{(2d)!}\boldsymbol{\rho}^{2d}\geq\frac{\boldsymbol{\rho}_{s}^{2d}}{(2d)!}.\label{eq:mass con-1}
\end{equation}
We have 
\begin{equation}
\boldsymbol{\omega}_{\boldsymbol{\tau}}^{d}\geq\lambda_{1}\cdots\lambda_{d}\boldsymbol{\rho}_{s}^{d}.\label{eq:Omega1 con}
\end{equation}
\begin{equation}
\frac{\boldsymbol{\omega}_{\boldsymbol{\tau}}^{d}}{d!}\wedge\frac{\boldsymbol{\rho}_{s}^{d}}{d!}>\lambda_{d+1}\cdots\lambda_{2d}\frac{\boldsymbol{\rho}_{s}^{d}}{d!}.\label{eq:Omega control 2}
\end{equation}
Thus 
\begin{align*}
\int_{\hat{Y}}\lambda_{d+1}\cdots\lambda_{2d}\frac{\boldsymbol{\rho}_{s}^{d}}{d!} & \leq\int_{\hat{Y}}\frac{\boldsymbol{\omega}_{\boldsymbol{\tau}}^{d}}{d!}\wedge\frac{\boldsymbol{\rho}_{s}^{d}}{d!}\\
 & =\int_{\hat{Y}}\frac{\boldsymbol{\omega}_{0}^{d}}{d!}\wedge\frac{\boldsymbol{\rho}^{d}}{d!}\\
 & \leq C(\hat{Y}).
\end{align*}
Given $U$, let $\delta_{2}(U)$ be given in (\ref{eq:delta(u_)}).
For any $\delta>0$ s.t. $\left(\left(\frac{2}{\delta_{\rho}}\right)^{d}+1\right)\delta<(1-2^{-d})\delta_{2}$,
let $E_{\delta}$ be the set of points in $\hat{Y}$ s.t. $\lambda_{d+1}\cdots\lambda_{2d}>C(\hat{Y})/\delta$.
Then, it is clear that 
\begin{equation}
\int_{E_{\delta}}\boldsymbol{\rho}_{s}^{2d}\leq\delta.\label{eq:tech in DP mass}
\end{equation}
Therefore, we have 
\begin{align*}
\int_{U\cap V_{s}\backslash E_{\delta}}\boldsymbol{\omega}_{\boldsymbol{\tau}}^{d}\wedge\boldsymbol{\varpi}^{d} & \ge\int_{U\cap V_{s}\backslash E_{\delta}}\lambda_{1}\cdots\lambda_{d}\boldsymbol{\rho}_{s}^{d}\wedge\boldsymbol{\varpi}^{d}\\
 & =\int_{U\cap V_{s}\backslash E_{\delta}}\frac{\lambda_{1}\cdots\lambda_{2d}}{\lambda_{d}\cdots\lambda_{2d}}\boldsymbol{\rho}_{s}^{d}\wedge\boldsymbol{\varpi}^{d}\\
 & \geq\frac{\delta}{C(\hat{Y})}\left(\int_{U\cap V_{s}\backslash E_{\delta}}\lambda_{1}\cdots\lambda_{2d}\boldsymbol{\rho}_{s}^{d}\wedge\boldsymbol{\varpi}^{d}\right).
\end{align*}
From (\ref{eq:mass con-1}), we have 
\begin{equation}
\lambda_{1}\cdots\lambda_{2d}\geq\frac{1}{2}+\frac{c_{t}-1}{2}\frac{\boldsymbol{\rho}^{2d}}{\boldsymbol{\rho}_{s}^{2d}}.\label{eq:gist in D_P}
\end{equation}
We assume that $c_{t}<1$ since otherwise he right hand side of (\ref{eq:gist in D_P})$\geq\frac{1}{2}$
and the proof is easier. From Lemma (\ref{lem:2.1 in DP}), we choose
$\delta_{\rho}$ small so that $\boldsymbol{\rho}_{s}>\frac{\delta_{\rho}}{2}\boldsymbol{\varpi}$.
Then 
\begin{align*}
\frac{c_{t}-1}{2}\cdot\frac{\boldsymbol{\rho}^{2d}}{\boldsymbol{\rho}_{s}^{2d}}\cdot\boldsymbol{\rho}_{s}^{d}\wedge\boldsymbol{\varpi}^{d} & \geq\frac{c_{t}-1}{2}\cdot\frac{\boldsymbol{\rho}^{2d}}{\boldsymbol{\rho}_{s}^{2d}}\cdot\boldsymbol{\rho}_{s}^{2d}2^{d}\delta_{\rho}^{-d}\\
 & =\left(\frac{2}{\delta_{\rho}}\right)^{d}\frac{c_{t}-1}{2}\cdot\boldsymbol{\rho}^{2d}.
\end{align*}
Therefore, we see that 
\begin{align}
\int_{U\cap V_{s}\backslash E_{\delta}}\boldsymbol{\omega}_{\boldsymbol{\tau}}^{d}\wedge\boldsymbol{\varpi}^{d} & \geq\frac{\delta}{C(\hat{Y})}\int_{U\cap V_{s}\backslash E_{\delta}}\left(\frac{1}{2}\boldsymbol{\rho}_{s}^{d}\wedge\boldsymbol{\varpi}^{d}+\left(\frac{2}{\delta_{\rho}}\right)^{d}\frac{c_{t}-1}{2}\cdot\boldsymbol{\rho}^{2d}\right).\label{eq:D-P tech 1}
\end{align}
Now, for sufficiently small $s$ (independent of $t,\hat{\epsilon}$),
we have 
\begin{equation}
-\int_{U\cap V_{s}}\left(\left(\frac{2}{\delta_{\rho}}\right)^{d}\frac{c_{t}-1}{2}\cdot\boldsymbol{\rho}^{2d}\right)<2^{-d}\delta_{2}.\label{eq:DP tech 2}
\end{equation}
Also, from (\ref{eq:tech in DP mass}), 
\begin{align}
\int_{U\cap V_{s}\backslash E_{\delta}}\left(\frac{1}{2}\boldsymbol{\rho}_{s}^{d}\wedge\boldsymbol{\varpi}^{d}\right) & =\frac{1}{2}\left(\int_{U\cap V_{s}}\boldsymbol{\rho}_{s}^{d}\wedge\boldsymbol{\varpi}^{d}-\int_{E_{\delta}}\boldsymbol{\rho}_{s}^{d}\wedge\boldsymbol{\varpi}^{d}\right)\label{eq:D-P tech 3}\\
 & \geq\frac{1}{2}\left(\delta_{2}-\left(\frac{2}{\delta_{\rho}}\right)^{d}\delta\right).\nonumber 
\end{align}
Thus, by (\ref{eq:D-P tech 1}),(\ref{eq:DP tech 2}), and (\ref{eq:D-P tech 3}),
we have 
\begin{align}
\int_{U\cap V_{s}\backslash E_{\delta}}\boldsymbol{\omega}_{\boldsymbol{\tau}}^{d}\wedge\boldsymbol{\varpi}^{d} & \geq\frac{\delta}{C(\hat{Y})}\cdot\frac{1}{2}\left(\delta_{2}-\left(\frac{2}{\delta_{\rho}}\right)^{d}\delta-2^{-d}\delta_{2}\right)\label{eq:D-P tech 4}\\
 & \geq\frac{\delta^{2}}{2C(\hat{Y})}.\nonumber 
\end{align}
We have proved Claim 1.

\textbf{Claim 2:} $\boldsymbol{\omega}_{\boldsymbol{\tau}}^{d}$ has
a uniform upper bound in mass.

In fact, it is easy to check that 
\begin{align*}
\int_{\hat{Y}}\boldsymbol{\omega}_{\boldsymbol{\tau}}^{d}\wedge\boldsymbol{\varpi}^{d} & =\int_{\hat{Y}}\hat{\boldsymbol{\omega}}_{0}^{d}\wedge\boldsymbol{\varpi}^{d}\\
 & =[\pi_{1}^{*}\omega_{t}+\frac{1}{d}\pi_{2}^{*}\hat{\rho}]^{d}\cdot[\boldsymbol{\varpi}]^{d}\\
 & \leq\text{Const}.
\end{align*}

By Claims 1 and 2, if $U$ is a neighborhood of a point $p\in\Delta$,
any weak limit $T$ of $\boldsymbol{\omega}_{\boldsymbol{\tau}}^{d}$
contains positive mass in $U\cap\Delta$. By Skoda-El Mir extension
theorem (Theorem III.2.3 of \cite{demailly2012complex}) and Corollary
III.2.14 of \cite{demailly2012complex}, 
\[
\mathbf{1}_{\Delta}T=\epsilon_{T}[\Delta].
\]
\end{proof}

\subsection{The cone condition for the limit current}

We continue our discussion. Since $\omega_{\boldsymbol{\tau}}$ satisfies
the cone condition by Lemma \ref{lem:We lemma tech } and by Proposition
\ref{prop:concentration to T}, it converges weakly to a positive
current. However, $\omega_{\boldsymbol{\tau}}\in[\omega_{0}]$ instead
of $(1-\delta)[\omega_{0}]$, and the cone condition may degenerate
when passing to the limit. Thus, to get the positive current $\Upsilon$
in Theorem \ref{thm:Mass}, we need more precise estimates.

Let $\eta>0$. We denote $\Delta_{\eta}$ to be the $\eta$-neighborhood
of $\Delta$ in $\mathcal{Y}$ with respect to $\varpi$. Define following
forms 
\begin{equation}
\omega_{\boldsymbol{\tau},\eta}'=\frac{1}{\hat{c}}\int_{\{x\}\times\hat{Y}\cap\Delta_{\eta}}\left(\Lambda_{y}\wedge\boldsymbol{\Omega}_{\boldsymbol{\tau}}\right)^{[d+1]},\label{eq:omega_tau eta}
\end{equation}
\begin{equation}
\omega_{\boldsymbol{\tau},\eta}''=\frac{1}{\hat{c}}\int_{\{x\}\times\hat{Y}\cap\Delta_{\eta}}F_{2}(V)(1+K)\hat{\omega}_{0}(x)\wedge\frac{\boldsymbol{\omega}_{y}^{d}}{d!},\label{eq:omega prime prime}
\end{equation}
and 
\begin{equation}
\omega_{\boldsymbol{\tau},\eta}=\omega_{\boldsymbol{\tau}}-\omega_{\boldsymbol{\tau},\eta}'+\omega_{\boldsymbol{\tau},\eta}''.\label{eq:om tau eta}
\end{equation}

The next 2 lemmas shows that $\omega_{\boldsymbol{\tau},\eta}$ almost
satisfies the cone condition. 
\begin{lem}
\label{lem:P_Lambda leq 1+integral of F_2 }Notations as above, we
have 
\begin{equation}
\mathcal{P}_{\hat{\Lambda}}(\omega_{\boldsymbol{\tau},\eta})\leq1+\frac{1}{\hat{c}}\int_{\{x\}\times\hat{Y}\cap\Delta_{\eta}}F_{2}(V)\frac{\boldsymbol{\omega}_{y}^{d}}{d!}.\label{eq:Plambdaleq 1}
\end{equation}
\end{lem}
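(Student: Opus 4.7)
The plan is to mirror the argument used for Lemma \ref{lem:We lemma tech }, splitting the fiber integral over $\{x\}\times\hat{Y}$ into contributions from outside and inside $\Delta_\eta$, and then applying the monotonicity and convexity of $\mathcal{P}_{\hat{\Lambda}}$ from Lemma \ref{lem:P_La property}. The role of $\omega''_{\boldsymbol{\tau},\eta}$ will be to compensate the loss incurred by truncating the fiber integral to $\hat{Y}\setminus\Delta_\eta$.

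Following the derivation of (\ref{eq:compute omega_tau}) together with Lemma \ref{lem:tech lem 1} and the identity $\Lambda_y\wedge\boldsymbol{\omega}_y^{d-1}/(d-1)!=F_2(V)\boldsymbol{\omega}_y^d/d!$ on each $y$-fiber, I would first establish the pointwise lower bound
\[
(\Lambda_y\wedge\boldsymbol{\Omega}_{\boldsymbol{\tau}})^{[d+1]} \ge F_2(V)\bigl(H_{i\bar k}-V^{\bar j l}D_{i\bar j}\overline{D_{k\bar l}}\bigr)\tfrac{\sqrt{-1}}{2}dx^i\wedge d\bar x^k\wedge\tfrac{\boldsymbol{\omega}_y^d}{d!}.
\]
Integrating this over $\{x\}\times(\hat{Y}\setminus\Delta_\eta)$ and adding $\omega''_{\boldsymbol{\tau},\eta}$ from (\ref{eq:omega prime prime}) gives
\[
\omega_{\boldsymbol{\tau},\eta} \ge \tfrac{1}{\hat c}\!\!\int_{\{x\}\times\hat{Y}\setminus\Delta_\eta}\!\!F_2(V)(H-V^{-1}D^\dagger)\tfrac{\boldsymbol{\omega}_y^d}{d!} + \tfrac{1}{\hat c}\!\!\int_{\{x\}\times\hat{Y}\cap\Delta_\eta}\!\!F_2(V)(1+K)\hat{\omega}_0(x)\tfrac{\boldsymbol{\omega}_y^d}{d!}.
\]
The weight $\tfrac{1}{\hat c}F_2(V)\boldsymbol{\omega}_y^d/d!$ is a probability measure on $\{x\}\times\hat{Y}$ (its total mass equals $\tfrac{1}{\hat c}[\hat\rho]^d/d!\cdot[\hat Y]=1$, exactly as computed in Lemma \ref{lem:We lemma tech }), so I can apply monotonicity of $\mathcal{P}_{\hat{\Lambda}}$ and then convexity to bring $\mathcal{P}_{\hat{\Lambda}}$ inside each integral.

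It then suffices to bound each integrand: outside, $\mathcal{P}_{\hat{\Lambda}}(H-V^{-1}D^\dagger)<2-F_2(V)$ holds by Lemma \ref{lem:generalized G Chen} combined with the cone condition $\mathcal{P}_{\boldsymbol{\Lambda}}(\boldsymbol{\omega}_{\boldsymbol{\tau}})<2$ (a solution of (\ref{eq:product eq}) is itself a subsolution, by Lemma \ref{lem:Datar-Pingali}); inside, I claim $\mathcal{P}_{\hat{\Lambda}}((1+K)\hat{\omega}_0)\le 1$ on $\hat{Y}$. Writing $m_i=\tfrac{1}{\hat c}\!\int_{A_i}\!F_2(V)\boldsymbol{\omega}_y^d/d!$ and $\mu_i=\tfrac{1}{\hat c}\!\int_{A_i}\!\boldsymbol{\omega}_y^d/d!$ for $A_1=\hat{Y}\setminus\Delta_\eta$, $A_2=\hat{Y}\cap\Delta_\eta$ (so $\mu_1+\mu_2=1$), this produces
\[
\mathcal{P}_{\hat{\Lambda}}(\omega_{\boldsymbol{\tau},\eta}) \le 2m_1 - \tfrac{1}{\hat c}\!\!\int_{A_1}\!\!F_2(V)^2\tfrac{\boldsymbol{\omega}_y^d}{d!} + m_2.
\]
By Cauchy--Schwarz, $\tfrac{1}{\hat c}\!\int_{A_1}\!F_2(V)^2\boldsymbol{\omega}_y^d/d!\ge m_1^2/\mu_1$, so the first two terms are bounded by $2m_1-m_1^2/\mu_1\le\mu_1\le\mu_1+\mu_2=1$, delivering the claim.

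The main technical obstacle will be confirming $\mathcal{P}_{\hat{\Lambda}}((1+K)\hat{\omega}_0)\le 1$ uniformly for all parameters $(t,\hat\epsilon)$ under consideration. I would extend the proof of Lemma \ref{lem:solution t equal 1}: the choice of $K$ in (\ref{eq:choice of K}) together with $(1+K)(1+Kt)\ge 1+K>1$ yields $e^{(1+K)(1+Kt)\omega_0}\wedge(1-\Lambda)\ge\tfrac12 e^{(1+K)(1+Kt)\omega_0}$ for every $t\in[0,1]$; wedging with $e^{(1+K)\hat\epsilon t\varpi}$ and absorbing the perturbation $\hat\epsilon^n t^n\varpi^{k_0}/k_0!$ for $\hat\epsilon<\hat\epsilon_Y$ will produce $(e^{(1+K)\hat{\omega}_0}\wedge(1-\hat\Lambda))^{[d-1]}>0$ on $\hat{Y}$, which is the cone condition for $(1+K)\hat{\omega}_0$. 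Once this pointwise bound is secured, the remainder of the argument is just convexity and the elementary Cauchy--Schwarz step above.
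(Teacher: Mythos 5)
Your proposal is correct and follows essentially the same route as the paper: decompose the fiber integral over $\hat{Y}\setminus\Delta_\eta$ and $\hat{Y}\cap\Delta_\eta$, apply convexity/monotonicity of $\mathcal{P}_{\hat{\Lambda}}$ (Jensen), bound the outer integrand by $2-F_2(V)$ via Lemma \ref{lem:generalized G Chen} and the inner one by $\mathcal{P}_{\hat{\Lambda}}((1+K)\hat{\omega}_0)\le 1$ as in Lemma \ref{lem:solution t equal 1}, then finish with Cauchy--Schwarz. The only (harmless) difference is that you localize the Cauchy--Schwarz step to $\hat{Y}\setminus\Delta_\eta$, which actually yields the constant $1$ appearing in the statement, whereas the paper's own computation applies it globally and ends with a factor $2$ on the error integral.
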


\begin{proof}
From the proof of Lemma \ref{lem:We lemma tech } and Jensen's inequality,
we have 
\begin{align}
\mathcal{P}_{\hat{\Lambda}}\left(\omega_{\boldsymbol{\tau},\eta}\right) & \leq\frac{1}{\hat{c}}\int_{\{x\}\times\hat{Y}\backslash\Delta_{\eta}}F_{2}(V)\left(2-F_{2}(V)\right)\frac{\boldsymbol{\omega}_{y}^{d}}{d!}\label{eq:P_L int}\\
 & +\frac{1}{\hat{c}}\int_{\{x\}\times\hat{Y}\cap\Delta_{\eta}}F_{2}(V)\mathcal{P}_{\hat{\Lambda}}((1+K)\hat{\omega}_{0}(x))\frac{\boldsymbol{\omega}_{y}^{d}}{d!}.\nonumber 
\end{align}
Now it is straightforward to check 
\begin{equation}
\mathcal{P}_{\hat{\Lambda}}((1+K)\hat{\omega}_{0})<1,\label{eq:Pl  int}
\end{equation}
as in the proof of Lemma \ref{lem:solution t equal 1}. Therefore,
by (\ref{eq:P_L int}) and (\ref{eq:Pl  int}), 
\begin{align*}
\mathcal{P}_{\hat{\Lambda}}\left(\omega_{\boldsymbol{\tau},\eta}\right) & \leq\frac{1}{\hat{c}}\int_{\{x\}\times\hat{Y}}2F_{2}(V)\frac{\boldsymbol{\omega}_{y}^{d}}{d!}-\frac{1}{\hat{c}}\int_{\{x\}\times\hat{Y}\backslash\Delta_{\eta}}F_{2}(V)^{2}\frac{\boldsymbol{\omega}_{y}^{d}}{d!}-\frac{1}{\hat{c}}\int_{\{x\}\times\hat{Y}\cap\Delta_{\eta}}F_{2}(V)\frac{\boldsymbol{\omega}_{y}^{d}}{d!}\\
 & \leq2-\frac{1}{\hat{c}}\int_{\{x\}\times\hat{Y}}F_{2}(V)^{2}\frac{\boldsymbol{\omega}_{y}^{d}}{d!}+\frac{1}{\hat{c}}\int_{\{x\}\times\hat{Y}\cap\Delta_{\eta}}F_{2}(V)^{2}\frac{\boldsymbol{\omega}_{y}^{d}}{d!}\\
 & \leq2-\frac{1}{\hat{c}^{2}}\left(\int_{\{x\}\times\hat{Y}}F_{2}(V)\frac{\boldsymbol{\omega}_{y}^{d}}{d!}\right)^{2}+\frac{1}{\hat{c}}\int_{\{x\}\times\hat{Y}\cap\Delta_{\eta}}F_{2}(V)^{2}\frac{\boldsymbol{\omega}_{y}^{d}}{d!}\\
 & =1+\frac{1}{\hat{c}}\int_{\{x\}\times\hat{Y}\cap\Delta_{\eta}}F_{2}(V)^{2}\frac{\boldsymbol{\omega}_{y}^{d}}{d!}\\
 & \leq1+\frac{2}{\hat{c}}\int_{\{x\}\times\hat{Y}\cap\Delta_{\eta}}F_{2}(V)\frac{\boldsymbol{\omega}_{y}^{d}}{d!}.
\end{align*}
The last line is because $F_{2}(V)\leq2$ from (\ref{eq:P(ALambda)}).
We have proved the lemma. 
\end{proof}
\begin{lem}
\label{lem:Song 5.7}For any $\varepsilon>0$, $t\in(0,1)$, there
exists a $\eta_{0}=\eta_{0}(\varepsilon,t,\hat{\epsilon})>0$ s.t.
for all $s\in(0,s_{0})$ and $0<\eta<\eta_{0}$, it holds 
\[
\int_{\Delta_{\eta}}F_{2}(V)\frac{\boldsymbol{\omega}_{y}^{d}}{d!}\wedge\frac{\pi_{1}^{*}\varpi^{d}}{d!}<\varepsilon.
\]
\end{lem}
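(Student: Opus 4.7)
The plan is to rewrite the integrand via a topological identity, then argue by a weak-compactness contradiction exploiting a codimension--bidegree dichotomy on the product $\mathcal{Y}$.

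First, I would use the pointwise identity $F_{2}(V)\,\boldsymbol{\omega}_{y}^{d}/d! = \Lambda_{y}\wedge\boldsymbol{\omega}_{y}^{d-1}/(d-1)!$, which follows directly from the definition of $F_{2}$ together with $\Lambda_{y} = \tfrac{1}{d}\pi_{2}^{*}\hat{\rho}$. A degree-counting argument in the spirit of the computation leading to \eqref{eq:tech} shows that wedging with $\pi_{1}^{*}\varpi^{d}$ (pure type $(d,d)$ in the $x$-factor) and $\Lambda_{y}$ (pure type $(1,1)$ in the $y$-factor) kills every monomial of $\boldsymbol{\omega}_{\boldsymbol{\tau}}^{d-1} = (\boldsymbol{\omega}_{x}+\boldsymbol{\omega}_{y}+\boldsymbol{\omega}_{m}+\bar{\boldsymbol{\omega}}_{m})^{d-1}$ except $\boldsymbol{\omega}_{y}^{d-1}$, so pointwise
\[
\Lambda_{y}\wedge\boldsymbol{\omega}_{y}^{d-1}\wedge\pi_{1}^{*}\varpi^{d} \;=\; \Lambda_{y}\wedge\boldsymbol{\omega}_{\boldsymbol{\tau}}^{d-1}\wedge\pi_{1}^{*}\varpi^{d}.
\]
Setting $\alpha := \tfrac{1}{d\,(d-1)!\,d!}\,\pi_{2}^{*}\hat{\rho}\wedge\pi_{1}^{*}\varpi^{d}$, a smooth positive $(d+1,d+1)$-form on $\mathcal{Y}$, the integral to be estimated equals $\int_{\Delta_{\eta}}\alpha\wedge\boldsymbol{\omega}_{\boldsymbol{\tau}}^{d-1}$. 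A cohomology computation (only the $[\pi_{2}^{*}\hat{\rho}]^{d-1}$ term survives in the binomial expansion of $[\pi_{1}^{*}\omega_{t}+\pi_{2}^{*}\hat{\rho}]^{d-1}$, for type reasons) yields the uniform total mass
\[
\int_{\mathcal{Y}}\alpha\wedge\boldsymbol{\omega}_{\boldsymbol{\tau}}^{d-1} \;=\; \tfrac{1}{(d!)^{2}}\,[\hat{\rho}]^{d}\cdot[\varpi]^{d} \;=:\; C_{0},
\]
independent of $s$, $t$, and $\hat{\epsilon}$.

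Second, I would argue by contradiction. Assume there exist $\varepsilon_{0}>0$ and sequences $s_{n}\in(0,s_{0})$, $\eta_{n}\to 0$ with $\int_{\Delta_{\eta_{n}}}\alpha\wedge\boldsymbol{\omega}_{\boldsymbol{\tau}_{n}}^{d-1}\geq\varepsilon_{0}$. The uniform mass bound $C_{0}$ lets me extract, along a subsequence, a weak limit $\boldsymbol{\omega}_{\boldsymbol{\tau}_{n}}^{d-1}\rightharpoonup T_{d-1}$, a positive closed $(d-1,d-1)$-current on $\mathcal{Y}$. The crucial step is that the positive Borel measure $\alpha\wedge T_{d-1}$ puts no mass on $\Delta$: the Skoda--El Mir extension theorem together with Corollary III.2.14 of \cite{demailly2012complex} (already invoked in the proof of Proposition \ref{prop:concentration to T}) implies that any positive closed $(p,p)$-current supported on an analytic subvariety of complex codimension strictly greater than $p$ vanishes. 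Since $\Delta\subset\mathcal{Y}$ has complex codimension $d>d-1$, this forces $\mathbf{1}_{\Delta}T_{d-1}=0$, and smoothness of $\alpha$ then gives $(\alpha\wedge T_{d-1})(\Delta)=0$; Borel regularity yields $\int_{\Delta_{2\eta}}\alpha\wedge T_{d-1}\to 0$ as $\eta\to 0$.

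To close the contradiction, pick a smooth cutoff $\phi_{\eta}$ with $\mathbf{1}_{\Delta_{\eta}}\leq\phi_{\eta}\leq\mathbf{1}_{\Delta_{2\eta}}$ and choose $\eta>0$ so small that $\int_{\Delta_{2\eta}}\alpha\wedge T_{d-1}<\varepsilon_{0}/2$. Since $\phi_{\eta}\alpha$ is a smooth test form, weak convergence yields $\int\phi_{\eta}\,\alpha\wedge\boldsymbol{\omega}_{\boldsymbol{\tau}_{n}}^{d-1}\to\int\phi_{\eta}\,\alpha\wedge T_{d-1}<\varepsilon_{0}/2$ as $n\to\infty$, so for $n$ large enough and $\eta_{n}<\eta$,
\[
\int_{\Delta_{\eta_{n}}}\alpha\wedge\boldsymbol{\omega}_{\boldsymbol{\tau}_{n}}^{d-1} \;\leq\; \int\phi_{\eta}\,\alpha\wedge\boldsymbol{\omega}_{\boldsymbol{\tau}_{n}}^{d-1} \;<\; \varepsilon_{0},
\]
contradicting the assumed lower bound. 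The main obstacle is precisely this last step: naively, $\boldsymbol{\omega}_{\boldsymbol{\tau}}^{d}$ genuinely concentrates on $\Delta$ (Proposition \ref{prop:concentration to T}), so the crude bound $F_{2}(V)\leq 2$ applied to $\boldsymbol{\omega}_{\boldsymbol{\tau}}^{d}\wedge\pi_{1}^{*}\varpi^{d}$ is not enough. The saving structural feature is that the extra factor $\Lambda_{y}$ drops the power of $\boldsymbol{\omega}_{\boldsymbol{\tau}}$ by one, after which the codimension--bidegree dichotomy rules out any singular concentration of the weak limit on $\Delta$, which is exactly why the statement of the lemma requires the additional smooth factor $\pi_{1}^{*}\varpi^{d}$.
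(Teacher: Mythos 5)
Your proof is correct and follows essentially the same route as the paper's: rewrite the integrand as $\Lambda_{y}\wedge\boldsymbol{\omega}_{\boldsymbol{\tau}}^{d-1}\wedge\pi_{1}^{*}\varpi^{d}$ (up to constants), extract a weak limit $T'$ of $\boldsymbol{\omega}_{\boldsymbol{\tau}_{i}}^{d-1}$ by contradiction, and kill $\mathbf{1}_{\Delta}T'$ via Skoda--El Mir and the first support theorem because $\Delta$ has dimension $d$ while the current has bidimension $d+1$. Your write-up in fact supplies two details the paper leaves implicit — the uniform mass bound justifying weak compactness and the smooth cutoff needed to pass from weak convergence to the vanishing of $\lim_{i}\int_{\Delta_{\eta_{i}}}$ — but these are refinements of the same argument, not a different one.
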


\begin{proof}
We may rewrite 
\begin{align}
\int_{\Delta_{\eta}}F_{2}(V)\frac{\boldsymbol{\omega}_{y}^{d}}{d!}\wedge\frac{\pi_{1}^{*}\varpi^{d}}{d!} & =\int_{\Delta_{\eta}}\Lambda_{y}\wedge\frac{\boldsymbol{\omega}_{\boldsymbol{\tau}}^{d-1}}{(d-1)!}\wedge\frac{\pi_{1}^{*}\varpi^{d}}{d!}.\label{eq:5.7 1}
\end{align}
To prove the claim, we argue by contradiction. If the lemma is false,
then there is an $\varepsilon>0$, $t=t_{0}$, a sequence of $s_{i}\in(0,s_{0})$
and a sequence $\eta_{i}\to0$ s.t. 
\begin{equation}
\int_{\Delta_{\eta_{i}}}\Lambda_{y}\wedge\frac{\boldsymbol{\omega}_{\boldsymbol{\tau}_{i}}^{d-1}}{(d-1)!}\wedge\frac{\pi_{1}^{*}\varpi^{d}}{d!}>\varepsilon,\label{eq:song 5.7}
\end{equation}
where $\boldsymbol{\tau}_{i}=(t_{0},s_{i})$. By weak compactness,
replacing by a subsequence, we may assume that 
\[
\boldsymbol{\omega}_{\boldsymbol{\tau}_{i}}^{d-1}\rightharpoonup T',
\]
where $T'$ is a closed positive $(d-1,d-1)$-current. By Skoda-El
Mir extension theorem $\mathbf{1}_{\Delta}T'$ is a positive closed
current with support in $\Delta$. As $\Delta$ has dimension $d$,
by the first theorem of support (\cite{demailly2012complex}, III,
Corollary 2.11), $\mathbf{1}_{\Delta}T'=0$. Thus, 
\[
\lim_{i\to\infty}\int_{\Delta_{\eta_{i}}}\Lambda_{y}\wedge\frac{\boldsymbol{\omega}_{\boldsymbol{\tau}_{i}}^{d-1}}{(d-1)!}\wedge\frac{\pi_{1}^{*}\varpi^{d}}{d!}=0,
\]
which contradicts to (\ref{eq:song 5.7}). We have finished the proof. 
\end{proof}
Next, we investigate a weak limiting current of $\omega_{\boldsymbol{\tau},\eta}$
and its regularization.

To perform a local regularization as in Definition \ref{def:Local-regularization of current},
we need to choose a finite open ball covering. We pick a finite covering
$\mathscr{P}=\{B_{j,3R}\}_{j\in\mathcal{J}}$ of $\hat{Y}$ so that
each $B_{j,3R}$ is biholomorphic to a Euclidean ball $B_{3R}(0)$
in $\C^{d}$ equipped with standard Euclidean metric $g_{j}$. Furthermore,
$B_{j,2R}\simeq B_{2R}(0)\subset\C^{d}$ is also a covering of $M$.
For a small $\epsilon_{\Lambda}>0$, we can choose a sufficiently
fine cover $\mathscr{P}$ s.t. on each $B_{j,2R}$, there are constant
coefficient positive forms $\tilde{\Lambda}_{j}$ s.t. 
\begin{equation}
\tilde{\Lambda}_{j}^{[k]}\leq\hat{\Lambda}^{[k]}\leq\tilde{\Lambda}_{j}^{[k]}+\epsilon_{\Lambda}\frac{\varpi^{k}}{k!},\label{eq:tilde Lambda}
\end{equation}
for some small $\epsilon_{\Lambda}$ to be chosen later. We may choose
$R$ even smaller such that on $B_{j,2R}$ 
\begin{align}
\frac{1}{2}\varpi & <g_{j}<2\varpi,\label{eq:assumption on varpi}
\end{align}
where $g_{j}$ is the Euclidean metric on $B_{j,3R}$.
\begin{lem}
\label{lem:song 5.8}Notations as above. There exists $r_{0}>0$,
s.t. for any $\varepsilon>0$, $t\in(0,1)$, there is a $\eta_{0}(\varepsilon,t,\hat{\epsilon})>0$
s.t. for $\eta\in(0,\eta_{0})$, $s\in(0,s_{0})$ , $r\in(0,r_{0})$,
and $j\in\mathcal{J}$, such that 
\begin{equation}
\mathcal{P}_{\tilde{\Lambda}_{j}}\left(\omega_{\boldsymbol{\tau},\eta}^{(r)}\right)\leq1+\varepsilon.\label{eq:song 5.8}
\end{equation}
\end{lem}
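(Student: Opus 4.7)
The plan is to combine convexity and monotonicity of $\mathcal{P}_{\tilde{\Lambda}_j}$ from Lemma \ref{lem:P_La property} with the pointwise bound from Lemma \ref{lem:P_Lambda leq 1+integral of F_2 } and the integrated smallness from Lemma \ref{lem:Song 5.7}. First, I would set $r_0 := R$ so that for $r \in (0, r_0)$ and $x \in B_{j,R}$ the regularization $\omega_{\boldsymbol{\tau},\eta}^{(r)}(x)$ only samples $\omega_{\boldsymbol{\tau},\eta}$ within $B_{j,2R}$, where (\ref{eq:tilde Lambda}) gives $\tilde{\Lambda}_j \leq \hat{\Lambda}$ and (\ref{eq:assumption on varpi}) makes $\varpi^d/d!$ and $dV_{g_j}$ comparable up to a fixed constant. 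Working in the Euclidean coordinates on $B_{j,3R}$, the matrix of $\omega_{\boldsymbol{\tau},\eta}^{(r)}(x)$ is the componentwise convolution of the matrix of $\omega_{\boldsymbol{\tau},\eta}$ against the probability kernel $K_r(x-\cdot)$.

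Since $\tilde{\Lambda}_j$ has constant coefficients, the function $A \mapsto \mathcal{P}_{\tilde{\Lambda}_j}(A)$ on $\Gamma_{n\times n}^+$ is convex (Lemma \ref{lem:P_La property}(\ref{enu:-is-a})) and independent of the base point, so Jensen's inequality gives
$$
\mathcal{P}_{\tilde{\Lambda}_j}(\omega_{\boldsymbol{\tau},\eta}^{(r)}(x)) \leq \int K_r(x-z)\,\mathcal{P}_{\tilde{\Lambda}_j}(\omega_{\boldsymbol{\tau},\eta}(z))\,dV_{g_j}(z).
$$
Monotonicity of $\mathcal{P}$ in its form argument (Lemma \ref{lem:P_La property}(\ref{enu:-is-increasing})) together with $\tilde{\Lambda}_j \leq \hat{\Lambda}$ replaces $\mathcal{P}_{\tilde{\Lambda}_j}$ by $\mathcal{P}_{\hat{\Lambda}}$ inside the integral, after which Lemma \ref{lem:P_Lambda leq 1+integral of F_2 } bounds the integrand pointwise by $1 + \tfrac{2}{\hat{c}} f(z)$, where
$$
f(z) := \int_{\{z\}\times\hat{Y}\cap\Delta_\eta} F_2(V)\,\frac{\boldsymbol{\omega}_y^d}{d!}.
$$
Combining these steps yields
$$
\mathcal{P}_{\tilde{\Lambda}_j}(\omega_{\boldsymbol{\tau},\eta}^{(r)}(x)) \leq 1 + \frac{2}{\hat{c}}\,(K_r * f)(x).
$$

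It remains to control the convolution. Using the comparability of $\varpi^d/d!$ with $dV_{g_j}$ on $B_{j,3R}$,
$$
\|f\|_{L^1(B_{j,3R},\,dV_{g_j})} \leq C \int_{\Delta_\eta} F_2(V)\,\frac{\boldsymbol{\omega}_y^d}{d!} \wedge \frac{\pi_1^*\varpi^d}{d!},
$$
which Lemma \ref{lem:Song 5.7} makes smaller than any prescribed $\varepsilon' > 0$ once $\eta < \eta_0(\varepsilon', t, \hat{\epsilon})$, uniformly in $s$. With $\|K_r\|_\infty \leq C' r^{-2d}$, the standard convolution estimate gives $(K_r * f)(x) \leq C' r^{-2d}\|f\|_{L^1}$, and choosing $\varepsilon'$ sufficiently small in terms of $r_0$, $\varepsilon$ and $\hat{c}$ delivers the desired bound $\mathcal{P}_{\tilde{\Lambda}_j}(\omega_{\boldsymbol{\tau},\eta}^{(r)}) \leq 1 + \varepsilon$.

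The main obstacle is the uniformity in $r \in (0, r_0)$ against the factor $r^{-2d}$ introduced by the $L^\infty$-$L^1$ convolution bound. Since $r_0$ is chosen first, the simplest resolution is to absorb this factor into the choice of $\varepsilon'$ and thus of $\eta_0$, which gives the estimate uniformly for $r$ bounded below by a fixed multiple of $r_0$; this suffices for the downstream use of Lemma \ref{lem:song 5.8} in the proof of Theorem \ref{thm:Mass}, where the regularization scale is macroscopic. If one insists on uniformity for $r \to 0$, one sharpens the convolution estimate by exploiting that $\operatorname{supp} f$ lies in an $\eta$-neighborhood of the diagonal, which provides a scale-sensitive improvement; the overall scheme above is not affected qualitatively.
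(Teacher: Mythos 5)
Your proposal follows the same route as the paper's own proof: convexity of $A\mapsto\mathcal{P}_{\tilde{\Lambda}_{j}}(A)$ (legitimate precisely because $\tilde{\Lambda}_{j}$ has constant coefficients, so the functional is base-point independent) gives the Jensen step, monotonicity in the form argument together with $\tilde{\Lambda}_{j}\leq\hat{\Lambda}$ upgrades to $\mathcal{P}_{\hat{\Lambda}}$, Lemma \ref{lem:P_Lambda leq 1+integral of F_2 } supplies the pointwise bound $1+\tfrac{2}{\hat{c}}f$, and Lemma \ref{lem:Song 5.7} supplies the smallness of $\|f\|_{L^{1}}$. The one place you diverge is the convolution step, and your worry there is legitimate rather than pedantic: the second inequality of (\ref{eq:song 5.8 2}) passes from the weighted average $\int r^{-2d}\vartheta(\cdot/r)f$ to $2^{2d}\int f\,\varpi^{d}/d!$, silently dropping a factor of order $r^{-2d}\|\vartheta\|_{\infty}$; since $f$ is only bounded (by $2\hat{c}$, via $F_{2}\leq2$) and small in $L^{1}$, but not small in $L^{\infty}$ uniformly in $s$, an $L^{\infty}$--$L^{1}$ bound cannot give a constant independent of $r$ as $r\to0$. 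Your first resolution — letting $\eta_{0}$ (hence $s_{2}$ in Proposition \ref{prop:Song 5.1}) depend on $r$ — is the correct repair and costs nothing downstream, because the proof of Theorem \ref{thm:Mass} only uses the estimate for the limiting current at each fixed regularization scale $r$, so the quantifiers may be reordered. Your second resolution, however, rests on a false premise: $f$ is a function on $\hat{Y}_{1}$ obtained by integrating over the slice $\{x\}\times\hat{Y}\cap\Delta_{\eta}$ of each fiber, and its support on $\hat{Y}_{1}$ is in no way confined to a small set as $\eta\to0$, so there is no scale-sensitive gain to be extracted from the support of $f$ itself.
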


\begin{proof}
We pick $r_{0}<R$. At a point $x\in B_{j,2R}$, we have 
\begin{align}
\mathcal{P}_{\tilde{\Lambda}_{j}}(\omega_{\boldsymbol{\tau},\eta}^{(r)})(x) & =\mathcal{P}_{\tilde{\Lambda}_{j}}\left(\int_{z\in B_{r}(0)}r^{-2d}\vartheta\left(\frac{z}{|r|}\right)\omega_{\boldsymbol{\tau},\eta}(x+z)dV_{\C^{d}}(z)\right)\label{eq:Song5.8 1}\\
 & \leq\int_{z\in B_{r}(0)}r^{-2d}\vartheta\left(\frac{z}{|r|}\right)\mathcal{P}_{\tilde{\Lambda}_{j}}\left(\omega_{\boldsymbol{\tau},\eta}(x+z)\right)dV_{\C^{d}}(z).\nonumber 
\end{align}
Since $\tilde{\Lambda}_{j}\leq\hat{\Lambda}$, we have $\mathcal{P}_{\tilde{\Lambda}_{j}}\left(\omega_{\boldsymbol{\tau},\eta}(x+z)\right)\leq\mathcal{P}_{\hat{\Lambda}}\left(\omega_{\boldsymbol{\tau},\eta}(x+z)\right)$.
Hence by (\ref{eq:Song5.8 1}), Lemma \ref{lem:P_Lambda leq 1+integral of F_2 },
and (\ref{eq:assumption on varpi}), 
\begin{align}
\mathcal{P}_{\tilde{\Lambda}_{j}}(\omega_{\boldsymbol{\tau},\eta}^{(r)})(x) & \leq\int_{z\in B_{r}(0)}r^{-2d}\vartheta\left(\frac{z}{|r|}\right)\left(1+\frac{2}{\hat{c}}\int_{\{x+z\}\times M\cap\Delta_{\eta}}F_{2}(V)\frac{\boldsymbol{\omega}_{y}^{d}}{d!}\right)dV_{\C^{d}}(z)\label{eq:song 5.8 2}\\
 & \leq1+\frac{2}{\hat{c}}2^{2d}\int_{\Delta_{\eta}}F_{2}(V)\frac{\boldsymbol{\omega}_{y}^{d}}{d!}\wedge\frac{\pi_{1}^{*}\varpi^{d}}{d!}\nonumber \\
 & <1+\varepsilon,\nonumber 
\end{align}
where last inequality is due to Lemma \ref{lem:Song 5.7} if $\eta<\eta_{0}(\varepsilon,t,\hat{\epsilon})$. 
\end{proof}
The following Lemma shows that $\omega'_{\boldsymbol{\tau},\eta}$
is almost a $\kah$ current when $s$ is small. 
\begin{lem}
\label{lem:Song 5.9}Notations as above, there exist $r_{0}>0$ and
$\delta_{\Delta}=\frac{\epsilon_{\Delta}}{100\hat{c}}$ s.t. for any
$t\in(0,1)$, there is a $s_{1}=s_{1}(r_{0},t,\eta)>0$ s.t. for $s\in(0,s_{1})$
, $r\in(0,r_{0})$, 
\begin{equation}
\left(\omega_{\boldsymbol{\tau},\eta}'+100\delta_{\Delta}\sqrt{-1}\ddbar\phi_{S}\right)^{(r)}>20\delta_{\Delta}\varpi.\label{eq:song 5.9}
\end{equation}
\end{lem}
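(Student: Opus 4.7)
The plan is to identify the weak limit of $\omega'_{\boldsymbol{\tau},\eta}$ as $s\to 0$, use the mass concentration on $\Delta$ supplied by Proposition \ref{prop:concentration to T} to extract a Kähler-type lower bound, and then transfer that bound pointwise through the smooth local averaging $(\,\cdot\,)^{(r)}$.

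Since $\Lambda_{y}=\tfrac{1}{d}\pi_{2}^{*}\hat\rho$ is purely of bidegree $(1,1)$, we have $(\Lambda_{y}\wedge\boldsymbol{\Omega}_{\boldsymbol{\tau}})^{[d+1]}=\Lambda_{y}\wedge\tfrac{\boldsymbol{\omega}_{\boldsymbol{\tau}}^{d}}{d!}$, hence
\[
\omega'_{\boldsymbol{\tau},\eta}\;=\;\tfrac{1}{\hat c\,d!}\,(\pi_{1})_{*}\!\bigl(\mathbf{1}_{\Delta_{\eta}}\,\Lambda_{y}\wedge\boldsymbol{\omega}_{\boldsymbol{\tau}}^{d}\bigr).
\]
Fix $t\in(0,1)$. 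By the uniform mass bound on $\boldsymbol{\omega}_{\boldsymbol{\tau}}^{d}$, every sequence $s_{j}\to 0$ admits a subsequence along which $\boldsymbol{\omega}_{\boldsymbol{\tau}}^{d}\rightharpoonup T$ weakly, and Proposition \ref{prop:concentration to T} gives $\mathbf{1}_{\Delta}T=\epsilon_{T}[\Delta]$ with $\epsilon_{T}\geq\epsilon_{\Delta}$. Because $\pi_{1}|_{\Delta}\colon\Delta\xrightarrow{\sim}\hat Y$ and $\Lambda_{y}|_{\Delta}=\tfrac{1}{d}\hat\rho$, the projection formula yields $(\pi_{1})_{*}(\Lambda_{y}\wedge[\Delta])=\tfrac{1}{d}\hat\rho$. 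Using $T\geq\mathbf{1}_{\Delta}T$, $\Delta\subset\Delta_{\eta}$, and $\epsilon_{\Delta}=100\hat c\,\delta_{\Delta}$, I get
\[
\omega'_{\boldsymbol{\tau},\eta}\;\rightharpoonup\;\Upsilon_{0,\eta}\;\geq\;\tfrac{\epsilon_{T}}{\hat c\,d\,d!}\hat\rho\;\geq\;\tfrac{100\,\delta_{\Delta}}{d\,d!}\hat\rho\;\geq\;c_{1}\,\delta_{\Delta}\,\varpi,
\]
where $c_{1}>0$ depends only on $d$ and on the uniform Kähler comparison $\hat\rho\geq c_{0}\varpi$ on the compact $\hat Y$.

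For the regularization I would argue as follows. Since $\omega'_{\boldsymbol{\tau},\eta}$ is smooth and the kernel $r^{-2d}\vartheta(|z|/r)$ is a fixed smooth test form for each $r>0$, the weak convergence $\omega'_{\boldsymbol{\tau},\eta}\rightharpoonup\Upsilon_{0,\eta}$ gives pointwise convergence $(\omega'_{\boldsymbol{\tau},\eta})^{(r)}(x)\to(\Upsilon_{0,\eta})^{(r)}(x)$ for every fixed $r,x$. Smoothness of $\varpi$ yields $\varpi^{(r)}\geq\tfrac{1}{2}\varpi$ pointwise once $r<r_{0}$ is small enough, so $(\Upsilon_{0,\eta})^{(r)}\geq\tfrac{c_{1}}{2}\delta_{\Delta}\varpi$; hence for $s<s_{1}(r_{0},t,\eta)$ sufficiently small and all $r\in(0,r_{0})$ one has $(\omega'_{\boldsymbol{\tau},\eta})^{(r)}\geq\tfrac{c_{1}}{4}\delta_{\Delta}\varpi$ on $\hat Y$. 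On the complement of $S$, $\sqrt{-1}\ddbar\phi_{S}=\varpi-\rho$ so that $(\sqrt{-1}\ddbar\phi_{S})^{(r)}\to\varpi-\rho$ uniformly on compact subsets as $r\to 0$; near $S$, the quasi-plurisubharmonicity of $\phi_{S}$ gives $(\sqrt{-1}\ddbar\phi_{S})^{(r)}\geq-C_{1}\rho$ for a constant $C_{1}$ independent of $r$. Combining these pieces and choosing $\delta_{\Delta}$ small enough relative to the fixed constants $c_{1},c_{0},C_{1}$, I conclude the desired $(\omega'_{\boldsymbol{\tau},\eta}+100\delta_{\Delta}\sqrt{-1}\ddbar\phi_{S})^{(r)}>20\delta_{\Delta}\varpi$.

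The main obstacle will be threading the numerical constants: the dimensional factor $\tfrac{1}{d\,d!}$ coming from the fiber integration over $\{x\}\times\hat Y$ competes with the constants $100$ and $20$ in the statement, and must be absorbed by the gap baked into the definition $\delta_{\Delta}=\tfrac{\epsilon_{\Delta}}{100\hat c}$ together with the Kähler comparison $\hat\rho\geq c_{0}\varpi$. A secondary technical point is checking that the singular behaviour of $\phi_{S}$ along the exceptional locus $S$ survives regularization in a way that does not destroy the strict inequality; this will be handled uniformly in $r\in(0,r_{0})$ by the quasi-plurisubharmonic lower bound above.
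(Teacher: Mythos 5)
Your overall strategy---extract the diagonal mass from Proposition \ref{prop:concentration to T}, push it forward by $\pi_{1}$ to get a K\"ahler lower bound on the limit of $\omega'_{\boldsymbol{\tau},\eta}$, and transfer it through the smoothing---is the same as the paper's (the paper runs it as a contradiction argument, testing against constant-coefficient vectors $v$ and the forms $\gamma_{v}$, but that is cosmetic). The genuine gap is in your last step. You arrive at $(\omega'_{\boldsymbol{\tau},\eta})^{(r)}\geq\frac{c_{1}}{4}\delta_{\Delta}\varpi$ with an unspecified $c_{1}$ carrying a factor $\frac{1}{d\,d!}$ and a comparison constant between $\hat{\rho}$ and $\varpi$, and you propose to reach the required $20\delta_{\Delta}\varpi$ by ``choosing $\delta_{\Delta}$ small enough.'' This cannot work: $\delta_{\Delta}=\epsilon_{\Delta}/(100\hat{c})$ is fixed by the statement, and even if it were free, every term in (\ref{eq:song 5.9})---your lower bound, the term $100\delta_{\Delta}\sqrt{-1}\ddbar\phi_{S}$, and the right-hand side $20\delta_{\Delta}\varpi$---scales linearly in $\delta_{\Delta}$, so shrinking it changes nothing. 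The constants have to be threaded exactly: the diagonal mass produces the coefficient $\epsilon_{\Delta}/\hat{c}=100\delta_{\Delta}$ in front of $\hat{\rho}\geq\rho$ (note $T$ is the weak limit of $\boldsymbol{\omega}_{\boldsymbol{\tau}}^{d}/d!$, so your extra $d!$ should not appear), and the only loss from $100$ down to $25$ and then $20$ is the bounded distortion $\frac{1}{2}\leq\|v\|_{\varpi}\leq2$ of the constant-coefficient frame used in the convolution.

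The second, related flaw is your treatment of $\phi_{S}$. The term $100\delta_{\Delta}\sqrt{-1}\ddbar\phi_{S}$ is not a nuisance to be bounded below; it is precisely what converts the $100\delta_{\Delta}\hat{\rho}\geq100\delta_{\Delta}\rho$ coming from the diagonal into $100\delta_{\Delta}\varpi$, via the global current inequality $\sqrt{-1}\ddbar\phi_{S}\geq-\delta_{S}F_{h_{S}}=\varpi-\rho$ (the Lelong--Poincar\'e contribution supported on $S$ is a positive current, and the inequality is preserved by the positive convolution kernel). Your substitute near $S$, a quasi-psh bound $\sqrt{-1}\ddbar\phi_{S}\geq-C_{1}\rho$, is strictly weaker and fatal: it would contribute a loss $-100\delta_{\Delta}C_{1}\rho$ of the same order $\delta_{\Delta}$ as your gain, which cannot be absorbed by any choice of the remaining parameters. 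Replacing that step by the global inequality above and redoing the constant bookkeeping as indicated brings your argument in line with the paper's proof.
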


\begin{proof}
If (\ref{eq:song 5.9}) is false, then there are sequence $s_{i}\to0$,
and a point $x\in B_{j,R}$ s.t. 
\begin{equation}
(\omega'_{\boldsymbol{\tau}_{i},\eta})^{(r)}(x)<20\delta_{\Delta}\varpi.\label{eq:contra song 5.9}
\end{equation}
Here $\boldsymbol{\tau}_{i}=(t,s_{i})$. For any sequence $s_{j}\to0$,
after passing to a subsequence, we have 
\[
\frac{\boldsymbol{\omega}_{\boldsymbol{\tau}_{i}}^{d}}{d!}\rightharpoonup T\geq\epsilon_{\Delta}[\Delta]
\]
in weak sense by Proposition \ref{prop:concentration to T}.

Fix $j$ s.t. $x\in B_{j,R}\subset M$. Let $v\in\mathcal{T}_{x}\hat{Y}$
be any vector s.t. $\|v\|_{\varpi}=1$ and we extend it in $B_{j,2R}$
so that $v$ has constant coefficients in a local coordinates and
$1/2\leq\|v\|_{\varpi}\leq2$. Let $\gamma_{v}$ be a $(d-1,d-1)$-form
in $B_{j,2R}$ such that 
\begin{equation}
\left(\iota_{\bar{v}}\iota_{v}\xi\right)dV_{\C^{d}}=\xi\wedge\gamma_{v}.\label{eq:gamma_v}
\end{equation}
for any $(1,1)$-form $\xi$. Since $v$ has constant coefficients,
$\gamma_{v}$ is a non-negative $(d-1,d-1)$-form with constant coefficients.
At $x$, 
\begin{align}
\lim_{i\to\infty}(\omega_{\boldsymbol{\tau}_{i},\eta}')^{(r)}(v,\bar{v}) & =\lim_{i\to0}\frac{1}{\hat{c}}\int_{z\in B_{r}(0)}r^{-2d}\vartheta\left(\frac{z}{|r|}\right)\omega_{\boldsymbol{\tau}_{i},\eta}'(z+x)\wedge\gamma_{v}(z+x)\label{eq:5.9}\\
 & =\lim_{i\to0}\frac{1}{\hat{c}}\int_{B_{r}(x)\times\hat{Y}\cap\Delta_{\eta}}r^{-2d}\vartheta\left(\frac{z'-x}{|r|}\right)\Lambda_{y}(y)\wedge\frac{\boldsymbol{\omega}_{\boldsymbol{\tau}_{i}}^{d}(z',y)}{d!}\wedge\gamma_{v}(z')\nonumber \\
 & =\frac{1}{\hat{c}}\int_{B_{r}(x)\times\hat{Y}\cap\Delta_{\eta}}r^{-2d}\vartheta\left(\frac{z'-x}{|r|}\right)\Lambda_{y}(y)\wedge T(z',y)\wedge\gamma_{v}(z')\nonumber \\
 & \geq\frac{\epsilon_{\Delta}}{\hat{c}}\int_{B_{r}(x)}r^{-2d}\vartheta\left(\frac{z'-x}{|r|}\right)\hat{\rho}(z')\wedge\gamma_{v}(z')\nonumber \\
 & \geq100\delta_{\Delta}\int_{B_{r}(x)}r^{-2d}\vartheta\left(\frac{z'-x}{|r|}\right)\hat{\rho}(z')\wedge\gamma_{v}(z').\nonumber 
\end{align}
By (\ref{eq:5.9}), 
\begin{align}
\lim_{i\to\infty}(\omega_{\boldsymbol{\tau}_{i},\eta}'+100\delta_{\Delta}\sqrt{-1}\ddbar\phi_{S})^{(r)}(v,\bar{v}) & \geq100\delta_{\Delta}\int_{B_{r}(x)}r^{-2d}\vartheta\left(\frac{z-x}{|r|}\right)\varpi(z)\wedge\gamma_{v}(z)\label{eq:contra 5.9}\\
 & =100\delta_{\Delta}\int_{B_{r}(x)}r^{-2d}\vartheta\left(\frac{z-x}{|r|}\right)\|v\|_{\varpi}^{2}dV_{\C^{d}}\nonumber \\
 & \geq25\delta_{\Delta},\nonumber 
\end{align}
for $r<r_{0}<R$. Since $v$ is chosen arbitrarily at $x$, (\ref{eq:contra 5.9})
contradicts to (\ref{eq:contra song 5.9}). So we have finished the
proof. 
\end{proof}
\begin{lem}
\label{lem:Song 5.10}Notations as above, for any $\varepsilon>0$,
$t\in(0,1)$, there is a $\eta_{0}(\varepsilon,t,\hat{\epsilon})>0$
s.t. for all $s\in(0,s_{0})$, and $r\in(0,r_{0})$, 
\begin{equation}
(\omega_{\boldsymbol{\tau},\eta}'')^{(r)}<\varepsilon\varpi.\label{eq:5.10 result}
\end{equation}
\end{lem}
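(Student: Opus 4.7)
The plan is to observe that $\omega_{\boldsymbol{\tau},\eta}''$ is, up to a smooth bounded Kähler factor, a non-negative measurable \emph{function} on $\hat{Y}$ whose $L^1$ norm is controlled by Lemma \ref{lem:Song 5.7}, and then to convert this integral control into a pointwise estimate on the mollification at the fixed scale $r\in(0,r_0)$.

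Concretely, first factor the form. By definition,
\[
\omega_{\boldsymbol{\tau},\eta}''(x)=g_\eta(x)\,(1+K)\hat{\omega}_0(x),\qquad g_\eta(x):=\frac{1}{\hat{c}}\int_{\{x\}\times\hat{Y}\cap\Delta_\eta}F_2(V)\,\frac{\boldsymbol{\omega}_y^{d}}{d!},
\]
where $g_\eta\geq 0$ is measurable on $\hat{Y}$ and $(1+K)\hat{\omega}_0$ is a fixed smooth Kähler form. Since $\hat{\omega}_0$ has uniformly bounded coefficients with respect to $\varpi$ on $\hat{Y}$, there is $C_1=C_1(\hat{\omega}_0,\varpi)$ with $(1+K)\hat{\omega}_0\leq C_1\varpi$ everywhere, so $0\leq \omega_{\boldsymbol{\tau},\eta}''\leq C_1 g_\eta(x)\,\varpi$ pointwise.

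Second, control the $L^1$ norm of $g_\eta$. By Fubini and the definition of $\Lambda_y$,
\[
\int_{\hat{Y}}g_\eta(x)\,\frac{\varpi^{d}}{d!}(x)\;=\;\frac{1}{\hat{c}}\int_{\Delta_\eta}F_2(V)\,\frac{\boldsymbol{\omega}_y^{d}}{d!}\wedge\frac{\pi_1^*\varpi^{d}}{d!}.
\]
Lemma \ref{lem:Song 5.7} says the right-hand side can be made smaller than any prescribed $\varepsilon'>0$ by choosing $\eta<\eta_0(\varepsilon',t,\hat{\epsilon})$, for all $s\in(0,s_0)$. Equivalently, $\|g_\eta\|_{L^1(\hat{Y},\varpi^d/d!)}\to 0$ as $\eta\to 0$.

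Third, pass from the $L^1$ bound to a pointwise bound after mollification. Fix a chart $B_{j,3R}$ from the cover $\mathscr{P}$ and let $r_0<R$. Using $\tfrac{1}{2}\varpi<g_j<2\varpi$ on $B_{j,2R}$, for $x\in B_{j,R}$ and $r\in(0,r_0)$,
\[
(\omega_{\boldsymbol{\tau},\eta}'')^{(r)}(x)\leq C_1\!\int_{B_r(0)}\!r^{-2d}\vartheta\!\left(\tfrac{z}{r}\right)g_\eta(x+z)\,\varpi(x+z)\,dV_{\mathbb{C}^d}(z)\leq C_2\,r^{-2d}\|\vartheta\|_\infty\|g_\eta\|_{L^1(B_r(x),dV_{\mathbb{C}^d})}\,\varpi(x),
\]
where $C_2$ depends only on $C_1$ and the comparison between $g_j$ and $\varpi$. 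Since $\|g_\eta\|_{L^1(B_r(x),dV_{\mathbb{C}^d})}\leq C_3\|g_\eta\|_{L^1(\hat{Y},\varpi^d/d!)}$ uniformly in $x$ and $r$, we get
\[
(\omega_{\boldsymbol{\tau},\eta}'')^{(r)}(x)\leq C_4\,r^{-2d}\,\|g_\eta\|_{L^1(\hat{Y},\varpi^d/d!)}\,\varpi(x).
\]
Given $\varepsilon>0$, apply Lemma \ref{lem:Song 5.7} with $\varepsilon':=\varepsilon\,r_0^{2d}/C_4$ to pick $\eta_0=\eta_0(\varepsilon,t,\hat{\epsilon})$ so that $\|g_\eta\|_{L^1(\hat{Y},\varpi^d/d!)}<\varepsilon'$ for $\eta<\eta_0$. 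Then for every $r\in(0,r_0)$ and every $x$, $(\omega_{\boldsymbol{\tau},\eta}'')^{(r)}<\varepsilon\,\varpi$, as desired. The only delicate point — and the one that forces the $\varepsilon$-$r_0$ ordering in the statement — is the appearance of the factor $r^{-2d}$ when trading an $L^1$-bound for a pointwise bound at scale $r$; this is handled simply by fixing $r_0$ first (as is done in the statement of the lemma) and then selecting $\eta_0$ depending on both $\varepsilon$ and $r_0$.
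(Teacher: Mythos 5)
Your first two steps are sound: the factorization $\omega''_{\boldsymbol{\tau},\eta}=g_\eta\,(1+K)\hat{\omega}_0$ with $g_\eta\ge 0$, and the Fubini identity showing that $\|g_\eta\|_{L^1(\hat{Y},\varpi^d/d!)}$ is exactly the quantity controlled by Lemma \ref{lem:Song 5.7}, are both correct. The gap is in the third step. Your mollification estimate is $(\omega''_{\boldsymbol{\tau},\eta})^{(r)}\le C_4\,r^{-2d}\|g_\eta\|_{L^1}\,\varpi$, and you choose $\eta_0$ so that $\|g_\eta\|_{L^1}<\varepsilon r_0^{2d}/C_4$. For $r<r_0$ this only yields $(\omega''_{\boldsymbol{\tau},\eta})^{(r)}<\varepsilon\,(r_0/r)^{2d}\,\varpi$, which is \emph{weaker} than $\varepsilon\varpi$; the factor $r^{-2d}$ is unbounded on $(0,r_0)$, so no single choice of $\eta_0$ uniform in $r$ (as the statement requires, since $\eta_0$ is allowed to depend only on $\varepsilon,t,\hat{\epsilon}$) closes the argument. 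The ``delicate point'' you flag is handled backwards: fixing $r_0$ first does nothing to tame $r^{-2d}$ for $r$ near $0$.

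The failure is not cosmetic. As $r\to 0$ the mollification recovers $g_\eta(x)$ itself at Lebesgue points, and $g_\eta$ is not pointwise small uniformly in $s$: Lemma \ref{lem:Song 5.7} controls only the $L^1$ norm, and precisely because $\boldsymbol{\omega}_{\boldsymbol{\tau}}^{d}$ concentrates mass near $\Delta$ as $s\to 0$, the fiber integrals defining $g_\eta$ can remain of order one on sets of small measure (one only has the crude bound $g_\eta\le 2$ coming from $F_2\le 2$ and the cohomological bound on fiber integrals). So any passage from the $L^1$ bound to the pointwise conclusion must use more than the total mass. The paper instead argues by contradiction: a failing sequence $(s_i,\eta_i)$ with $\eta_i\to 0$ pairs the currents $\boldsymbol{\omega}_{\boldsymbol{\tau}_i}^{d-1}$ against the fixed test form $r^{-2d}\vartheta((\cdot-x)/|r|)\,\hat{\omega}_0\wedge\gamma_v$; weak compactness together with the Skoda--El Mir theorem and the support theorem show that the limit current $T_{d-1}$ puts no mass on $\Delta$, hence these pairings over $\Delta_{\eta_i}$ tend to zero, contradicting the assumed lower bound. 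To repair your proof you would need either to adopt that compactness argument or to establish an $L^\infty$ version of Lemma \ref{lem:Song 5.7}, which is not available.
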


\begin{proof}
We argue by contradiction. If the claim is false, then there is a
point $x$, a vector $v\in T_{x}^{(1,0)}\hat{Y}$ with $\|v\|_{\varpi}=1$,
an $\varepsilon>0$, $t=t_{0}$, a sequence of $s_{i}\in(0,s_{0})$,
and a sequence $\eta_{i}\to0$ s.t. 
\begin{equation}
\iota_{\bar{v}}\iota_{v}(\omega''_{\boldsymbol{\tau}_{i},\eta_{i}})^{(r)}(x)>\varepsilon,\label{eq:contr song 5.10}
\end{equation}
where $\boldsymbol{\tau}_{i}=(t_{0},s_{i})$. By weak compactness,
replacing by a subsequence, we may assume that for $k=1,\cdots,d-1$
\begin{equation}
\boldsymbol{\omega}_{\boldsymbol{\tau}_{i}}^{k}\rightharpoonup T_{k},\label{eq:T_k current}
\end{equation}
where each $T_{k}$ is a closed positive $(k,k)$-current. By Skoda-El
Mir extension theorem $\mathbf{1}_{\Delta}T_{k}$ is a positive closed
current with support in $\Delta$ which has dimension $d$. By the
first theorem of support (\cite{demailly2012complex} III, Corollary
2.11), $\mathbf{1}_{\Delta}T_{k}=0$. Therefore, for any fixed smooth
$(d,d)$-form $\gamma$ on $\hat{Y}_{1}$ it holds that 
\begin{equation}
\lim_{i\to\infty}\int_{\Delta_{\eta_{i}}}\pi_{1}^{*}(\gamma)\wedge\left(\Lambda_{y}\wedge\frac{\boldsymbol{\omega}_{\boldsymbol{\tau}_{i}}^{d-1}}{(d-1)!}\right)=0.\label{eq:contra Song 5.10}
\end{equation}

Suppose $x\in B_{j,R}$. Let $v\in\mathcal{T}_{x}\hat{Y}$ be any
vector s.t. $\|v\|_{\varpi}=1$ and we extend $v$ in $B_{j,2R}$
so that $v$ has constant coefficients and $1/2\leq\|v\|_{\varpi}\leq2$.
Let $\gamma_{v}$ be defined in (\ref{eq:gamma_v}). At $x$, 
\begin{align}
 & (\omega_{\boldsymbol{\tau}_{i},\eta_{i}}'')^{(r)}(v,\bar{v})\label{eq:coclusion 5.10}\\
 & =\frac{1+K}{\hat{c}}\int_{B_{r}(x)}r^{-2d}\vartheta\left(\frac{z-x}{|r|}\right)\left(\int_{\{z\}\times\hat{Y}\cap\Delta_{\eta_{i}}}\Lambda_{y}\wedge\frac{\boldsymbol{\omega}_{\boldsymbol{\tau}_{i}}^{d-1}(z,y)}{(d-1)!}\right)\hat{\omega}_{0}(z)\wedge\gamma_{v}(z)\nonumber \\
 & =\frac{1+K}{\hat{c}}\int_{B_{r}(x)\times\hat{Y}\cap\Delta_{\eta_{i}}}r^{-2d}\vartheta\left(\frac{z-x}{|r|}\right)\Lambda_{y}\wedge\frac{\boldsymbol{\omega}_{\boldsymbol{\tau}_{i}}^{d-1}(z,y)}{(d-1)!}\wedge\hat{\omega}_{0}(z)\wedge\gamma_{v}(z).\nonumber 
\end{align}
Therefore, by (\ref{eq:contra Song 5.10}) and (\ref{eq:coclusion 5.10}),
\begin{equation}
\lim_{i\to\infty}(\omega_{\boldsymbol{\tau}_{i},\eta_{i}}'')^{(r)}(v,\bar{v})=0,\label{eq:conclu 5.10}
\end{equation}
which contradicts to (\ref{eq:contr song 5.10}). Thus, we have finished
the proof. 
\end{proof}
We choose $K_{1}>1$ so that on $\hat{Y}$, 
\begin{equation}
K_{1}\varpi>\hat{\omega}_{0}.\label{eq:K_2}
\end{equation}
Note that $K_{1}$ may be chosen independent of $\hat{\epsilon}$
and $t$. We have the following proposition. 
\begin{prop}
\label{prop:Song 5.1}Notations as above. For any $\varepsilon>0$,
there is a small $\epsilon_{\Lambda}$ such that for any $t\in(0,1]$,
$\hat{\epsilon}\in(0,\hat{\epsilon}_{Y})$, $r\in(0,r_{0})$, 
\[
\mathcal{P}_{\hat{\Lambda}}\left(\left(\omega_{\boldsymbol{\tau}}-\frac{\delta_{\Delta}}{K_{1}}\hat{\omega}_{0}+100\delta_{\Delta}\sqrt{-1}\ddbar\phi_{S}\right)^{(r)}\right)<1+2\varepsilon,
\]
for $s\in(0,s_{2})$ where $s_{2}=s_{2}(\varepsilon,\hat{\epsilon},t,r)$. 
\end{prop}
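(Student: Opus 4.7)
The plan is to decompose $\omega_{\boldsymbol{\tau}}$ according to (\ref{eq:om tau eta}) as $\omega_{\boldsymbol{\tau}} = \omega_{\boldsymbol{\tau},\eta} + \omega'_{\boldsymbol{\tau},\eta} - \omega''_{\boldsymbol{\tau},\eta}$ and then invoke Lemmas \ref{lem:song 5.8}, \ref{lem:Song 5.9}, and \ref{lem:Song 5.10} to control each piece. Since the mollification $(\cdot)^{(r)}$ is linear and commutes with $\sqrt{-1}\partial\bar{\partial}$, applying it to the identity
\[
\omega_{\boldsymbol{\tau}} - \frac{\delta_{\Delta}}{K_1}\hat{\omega}_0 + 100\delta_{\Delta}\sqrt{-1}\partial\bar{\partial}\phi_S = \omega_{\boldsymbol{\tau},\eta} + \bigl(\omega'_{\boldsymbol{\tau},\eta} + 100\delta_{\Delta}\sqrt{-1}\partial\bar{\partial}\phi_S\bigr) - \omega''_{\boldsymbol{\tau},\eta} - \frac{\delta_{\Delta}}{K_1}\hat{\omega}_0
\]
reduces the problem to bounding $\mathcal{P}_{\hat{\Lambda}}$ of an expression that I will show dominates $\omega_{\boldsymbol{\tau},\eta}^{(r)}$ plus a definite positive piece.

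For the positivity step, Lemma \ref{lem:Song 5.9} gives $(\omega'_{\boldsymbol{\tau},\eta} + 100\delta_{\Delta}\sqrt{-1}\partial\bar{\partial}\phi_S)^{(r)} > 20\delta_{\Delta}\varpi$ once $s < s_1$; Lemma \ref{lem:Song 5.10} applied with $\delta_{\Delta}$ in place of its $\varepsilon$ gives $(\omega''_{\boldsymbol{\tau},\eta})^{(r)} < \delta_{\Delta}\varpi$ once $\eta$ and $s$ are small enough; and (\ref{eq:K_2}) together with the standard smoothness of $\hat{\omega}_0$ gives $\tfrac{\delta_{\Delta}}{K_1}\hat{\omega}_0^{(r)} \leq 2\delta_{\Delta}\varpi$ for $r$ small. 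Combining these three bounds yields, on each $B_{j,R}$,
\[
\left(\omega_{\boldsymbol{\tau}} - \frac{\delta_{\Delta}}{K_1}\hat{\omega}_0 + 100\delta_{\Delta}\sqrt{-1}\partial\bar{\partial}\phi_S\right)^{(r)} \geq \omega_{\boldsymbol{\tau},\eta}^{(r)} + 17\delta_{\Delta}\varpi.
\]
By the monotonicity of $\mathcal{P}_{\hat{\Lambda}}$ in $A$ from Lemma \ref{lem:P_La property} (\ref{enu:-is-decreasing}), this reduces the desired estimate to a bound on $\mathcal{P}_{\hat{\Lambda}}\bigl(\omega_{\boldsymbol{\tau},\eta}^{(r)} + 17\delta_{\Delta}\varpi\bigr)$.

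Finally, I split $\hat{\Lambda} = \tilde{\Lambda}_j + (\hat{\Lambda} - \tilde{\Lambda}_j)$ and use the sublinearity from Lemma \ref{lem:P_La property} (\ref{enu:-is-sublinear}) together with the monotonicities in both $A$ and $\Lambda$. Since (\ref{eq:tilde Lambda}) yields $0 \leq \hat{\Lambda} - \tilde{\Lambda}_j \leq \epsilon_{\Lambda}(\exp\varpi - 1)$, I obtain
\[
\mathcal{P}_{\hat{\Lambda}}\bigl(\omega_{\boldsymbol{\tau},\eta}^{(r)} + 17\delta_{\Delta}\varpi\bigr) \leq \mathcal{P}_{\tilde{\Lambda}_j}\bigl(\omega_{\boldsymbol{\tau},\eta}^{(r)}\bigr) + \mathcal{P}_{\epsilon_{\Lambda}(\exp\varpi - 1)}\bigl(17\delta_{\Delta}\varpi\bigr).
\]
Lemma \ref{lem:song 5.8} bounds the first term by $1 + \varepsilon$, while the second term is at most $\epsilon_{\Lambda}C(\hat{Y}, \delta_{\Delta})$ via Lemma \ref{lem: submatrix and restricting to subspac} (\ref{enu:-1}), since $17\delta_{\Delta}\varpi$ gives uniform control on the relevant $\chi_{\mathcal{H}}$. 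Fixing $\epsilon_{\Lambda}$ small enough that $\epsilon_{\Lambda}C < \varepsilon$ before the choice of $\eta, s, r$ yields the claim. The main delicacy will be keeping the order of quantifiers straight: $\epsilon_{\Lambda}$, $K_1$, and the refinement of $\mathscr{P}$ must be fixed first, uniformly in $t$ and $\hat{\epsilon}$, after which the three lemmas are applied in sequence to produce $\eta, s, r$. This is exactly what forces the dependence $s_2 = s_2(\varepsilon, \hat{\epsilon}, t, r)$ in the conclusion.
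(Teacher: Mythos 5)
Your proposal is correct and follows essentially the same route as the paper's proof: the same decomposition $\omega_{\boldsymbol{\tau}}=\omega_{\boldsymbol{\tau},\eta}+\omega'_{\boldsymbol{\tau},\eta}-\omega''_{\boldsymbol{\tau},\eta}$, the same three lemmas to extract a definite positive multiple of $\varpi$, the same sublinearity-plus-monotonicity split of $\hat{\Lambda}$ into $\tilde{\Lambda}_{j}$ plus an $\epsilon_{\Lambda}$-small remainder, and the same final choice of $\epsilon_{\Lambda}$ (the paper simply computes the constant you call $C(\hat{Y},\delta_{\Delta})$ explicitly as $(1+1/\delta_{\Delta})^{d}$). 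Your remark on the order of quantifiers matches the paper's remark following Theorem \ref{thm:Mass}.
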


\begin{proof}
By Lemmas \ref{lem:Song 5.9} and \ref{lem:Song 5.10}, if $s_{2},\eta<\eta_{0}$
are small, we have 
\begin{align}
\left(\omega_{\boldsymbol{\tau}}-\frac{1}{K_{1}}\delta_{\Delta}\hat{\omega}_{0}+100\delta_{\Delta}\sqrt{-1}\ddbar\phi_{S}\right)^{(r)} & =\left(\omega_{\boldsymbol{\tau},\eta}+\omega_{\boldsymbol{\tau},\eta}'-\omega_{\boldsymbol{\tau},\eta}''-\frac{\delta_{\Delta}\hat{\omega}_{0}}{K_{1}}+100\delta_{\Delta}\sqrt{-1}\ddbar\phi_{S}\right)^{(r)}\label{eq:prop song 5.1}\\
 & \geq\left(\omega_{\boldsymbol{\tau},\eta}\right)^{(r)}+20\delta_{\Delta}\varpi-\delta_{\Delta}\varpi-\delta_{\Delta}\varpi\nonumber \\
 & \geq\left(\omega_{\boldsymbol{\tau},\eta}\right)^{(r)}+\delta_{\Delta}\varpi.\nonumber 
\end{align}
For 2 positive forms $\Lambda$ and $\Lambda'$, by (\ref{lem:P_La property}),
we have 
\[
\mathcal{P}_{\Lambda+\Lambda'}(\gamma)\leq\mathcal{P}_{\Lambda}(\gamma)+\mathcal{P}_{\Lambda'}(\gamma)
\]
for any positive $(1,1)$-form $\gamma$. Let $\Lambda'=\exp\varpi$.
Fix a point $x\in B_{j}$. Since $\hat{\Lambda}<\tilde{\Lambda}_{j}+\epsilon_{\Lambda}\Lambda'$,
we have 
\begin{align}
\mathcal{P}_{\hat{\Lambda}}\left(\left(\omega_{\boldsymbol{\tau},\eta}\right)^{(r)}+\delta_{\Delta}\varpi\right) & \leq\mathcal{P}_{\tilde{\Lambda}_{j}}\left(\left(\omega_{\boldsymbol{\tau},\eta}\right)^{(r)}+\delta_{\Delta}\varpi\right)+\mathcal{P}_{\epsilon_{\Lambda}\Lambda'}\left(\left(\omega_{\boldsymbol{\tau},\eta}\right)^{(r)}+\delta_{\Delta}\varpi\right)\label{eq:tech insong5.1}\\
 & \leq\mathcal{P}_{\tilde{\Lambda}_{j}}(\omega_{\boldsymbol{\tau},\eta}^{(r)})+\mathcal{P}_{\epsilon_{\Lambda}\Lambda'}\left(\delta_{\Delta}\varpi\right)\nonumber \\
 & <1+\varepsilon+\mathcal{P}_{\epsilon_{\Lambda}\Lambda'}\left(\delta_{\Delta}\varpi\right).\nonumber 
\end{align}
We have used Lemma \ref{lem:song 5.8} in the last line. Assume $\delta_{\Delta}<1$.
We have 
\begin{equation}
\epsilon_{\Lambda}\frac{\left(\Lambda'\wedge\exp\left(\delta_{\Delta}\varpi\right)\right)^{[d]}}{\exp\left(\delta_{\Delta}\varpi\right)^{[d]}}=\epsilon_{\Lambda}\left(1+\frac{1}{\delta_{\Delta}}\right)^{d},\label{eq:song lemma tech 2}
\end{equation}
which implies $\epsilon_{\Lambda}\mathcal{P}_{\Lambda'}(\delta_{\Delta}\varpi)<\epsilon_{\Lambda}\left(1+\frac{1}{\delta_{\Delta}}\right)^{d}.$
We choose $\epsilon_{\Lambda}$ small enough so that 
\[
\epsilon_{\Lambda}\left(1+\frac{1}{\delta_{\Delta}}\right)^{d}<\varepsilon.
\]
Then by (\ref{eq:tech insong5.1}), the claim follows. 
\end{proof}
With all the preparations, we prove Theorem \ref{thm:Mass}. 
\begin{proof}[Proof of Theorem \ref{thm:Mass}]
We fix a small $\varepsilon>0$. By Proposition \ref{prop:Song 5.1},
there is $r_{0}>0$ such that for a fixed $t\in(0,t_{0})$ and $\hat{\epsilon}\in(0,\hat{\epsilon}_{Y})$
there is a sequence of $s_{i}\to0$ s.t. for all $r\in(0,r_{0})$
\begin{equation}
\mathcal{P}_{\hat{\Lambda}}((\omega_{\mathbf{\boldsymbol{\tau}}_{i}}-\frac{\delta_{\Delta}}{K_{1}}\hat{\omega}_{0}+100\delta_{\Delta}\sqrt{-1}\ddbar\phi_{S})^{(r)})\leq1+2\varepsilon,\label{eq:P_Lahat}
\end{equation}
where $\boldsymbol{\tau}_{i}=(t,s_{i})$. Let $\tilde{\omega}_{t}$
be a weak limit a subsequence of $\omega_{\boldsymbol{\tau}_{i}}-\frac{\delta_{\Delta}}{K_{1}}\hat{\omega}_{0}+100\delta_{\Delta}\sqrt{-1}\ddbar\phi_{S}$
as $i\to\infty$. Then $\tilde{\omega}_{t}\in(1-\frac{\delta_{\Delta}}{K_{1}})[\hat{\omega}_{0}]$
and $\tilde{\omega}_{t}\geq\delta_{\Delta}\varpi$ by (\ref{eq:prop song 5.1}).
In each $B_{j,R}$, we may assume that 
\begin{equation}
\omega_{\mathbf{\boldsymbol{\tau}}_{i}}-\frac{\delta_{\Delta}}{K_{1}}\hat{\omega}_{0}+100\delta_{\Delta}\sqrt{-1}\ddbar\phi_{S}=\sqrt{-1}\ddbar\phi_{j,\boldsymbol{\tau}_{i}},\label{eq:inloc}
\end{equation}
for some local PSH function $\phi_{j,\boldsymbol{\tau}_{i}}$. After
passing to a subsequence, in each $B_{j,R}$, define $\tilde{\phi}_{j,t}=\lim_{i\to\infty}\phi_{j,\boldsymbol{\tau}_{i}}$
in $L^{1}$ and 
\begin{equation}
\tilde{\omega}_{t}=\sqrt{-1}\ddbar\tilde{\phi}_{j,t}.\label{eq:tild ome t}
\end{equation}
Then for any $r\in(0,r_{0})$, $\phi_{j,\boldsymbol{\tau}_{i}}^{(r)}$
converges to $\tilde{\phi}_{j,t}^{(r)}$ uniformly in any compact
subset in $B_{j,R}$. Therefore, by (\ref{eq:P_Lahat}), 
\begin{align}
\mathcal{P}_{\hat{\Lambda}}(\sqrt{-1}\ddbar\phi_{j,t}^{(r)}) & =\lim_{i\to\infty}\mathcal{P}_{\hat{\Lambda}}(\sqrt{-1}\ddbar\phi_{j,\boldsymbol{\tau}_{i}}^{(r)})\label{eq:smaller than 1+2ep}\\
 & \leq1+2\varepsilon.\nonumber 
\end{align}
Now we take a sequence $t_{k}\to0$ such that $\tilde{\omega}_{t_{k}}$
converges to a closed positive current $\tilde{\omega}_{0}\in(1-\frac{\delta_{\Delta}}{K_{1}})[\hat{\omega}_{0}]$.
By the same argument, we have 
\begin{equation}
\mathcal{P}_{\Lambda}(\tilde{\omega}_{0}^{(r)})\leq1+2\varepsilon.\label{eq:ag est}
\end{equation}
Let 
\begin{equation}
\Upsilon=\left(1+\frac{\delta_{\Delta}}{K_{1}}\right)\tilde{\omega}_{0}\in\left(1-\left(\frac{\delta_{\Delta}}{K_{1}}\right)^{2}\right)[\omega_{0}].\label{eq:Ups def}
\end{equation}
Assume $\delta_{\Delta}/K_{1}<1$. If 
\begin{equation}
\varepsilon<\frac{\frac{\delta_{\Delta}}{K_{1}}}{3+\frac{\delta_{\Delta}}{K_{1}}}<\frac{\delta_{\Delta}}{4K_{1}},\label{eq:est ep}
\end{equation}
then by (\ref{eq:ag est}), 
\[
\mathcal{P}_{\Lambda}(\Upsilon^{(r)})\le\frac{1+2\varepsilon}{1+\delta_{\Delta}/K_{1}}<1-\varepsilon.
\]
Since $\phi_{S}$ has positive Lelong number along $S_{\hat{Y}}$,
$\Upsilon$ has positive Lelong number along $S_{\hat{Y}}$. 
\end{proof}
\begin{rem}
We remark on various constants introduced before and their dependence.
First, $\delta_{\Delta}$ depends on $\epsilon_{\Delta}$ and $\hat{c}$.
Although Lemma \ref{lem:Song 5.9} is stated for a covering $\mathscr{P}$,
such $\delta_{\Delta}$ is uniform as long as the covering is fine
enough. Second, $K_{1}$ is independent of the choice of covering.
Thus, we may choose $\varepsilon<\frac{\delta_{\Delta}}{4K_{1}}$.
Once we have fixed $\varepsilon$, we pick the covering $\mathscr{P}$
fine enough such that $\epsilon_{\Lambda}\left(1+\frac{1}{\delta_{\Delta}}\right)^{d}<\varepsilon$,
where $\epsilon_{\Lambda}$ is used to control the variation of $\hat{\Lambda}$
in each $B_{j,2R}$. It is important to note that all above constants
may be chosen independent of $t,\hat{\epsilon}$. 
\end{rem}

\section{Completing the induction\label{sec:Completing-the-induction}}

In this section, we complete the induction argument started in the
Section \ref{sec:Set-up-for-the} and finish the proof of Theorem
\ref{thm:cone condition in a neighborhood }. We adopt notations in
the last section.

From Theorem \ref{thm:Mass}, we obtain a $\kah$ current which satisfies
the cone condition after local regularization. Together with the induction
step, we obtain a global $\kah$ current smooth away from singular
points of $Y$ by a gluing process. Near the singular points, this
$\kah$ current has positive Lelong number. Our discussion follows
J. Song's\textbf{ }modification \cite{Song2020NakaiMoishezonCF} of\textbf{
}G. Chen's argument \cite{chen2021j} based on the trick of\textbf{
}Błocki-Kołodziej \cite{blocki2007regularization}. 
\begin{thm}
\label{thm:gluing theorem}Under the same assumption in Theorem \ref{thm:cone condition in a neighborhood },
let $Y$ be a $d$-dimensional analytic subvariety of $M$ and $S_{Y}$
be the singular points of $Y$. Then there is a $\varphi_{Y}\in C^{\infty}(Y\backslash S_{Y})$
such that $\omega_{Y}=\omega_{0}+\sqrt{-1}\ddbar\varphi_{Y}$ is a
Kähler metric on $Y\backslash S_{Y}$ satisfying the cone condition
(\ref{eq:recal kapp}) on $Y\backslash S_{Y}$. Moreover, $\varphi_{Y}$
has positive Lelong number along $S_{Y}$. 
\end{thm}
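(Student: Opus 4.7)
The plan is to patch the local smooth subsolutions provided by Theorem~\ref{thm:Mass} into a single smooth quasi-PSH function on $\hat Y\setminus S_{\hat Y}$, and then transfer the result to $Y\setminus S_Y$ via the biholomorphism induced by $\Phi$.

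First, I invoke Theorem~\ref{thm:Mass} (together with the simplifying reductions of Remark~\ref{rem:reduction}) to extract $\delta>0$, a finite Euclidean ball cover $\mathscr{P}=\{B_{j,3R}\}_{j\in\mathcal{J}}$ of $\hat Y$, constants $\varepsilon,r_0>0$, and a K\"ahler current $\Upsilon\in(1-\delta)[\omega_0|_{\hat Y}]$ with positive Lelong numbers along $S_{\hat Y}$, satisfying $\mathcal{P}_\Lambda(\Upsilon^{(r)})<1-\varepsilon$ on each $B_{j,R}$ for all $r\in(0,r_0)$. Adding $\delta\omega_0|_{\hat Y}$ produces $\tilde\Upsilon:=\Upsilon+\delta\omega_0|_{\hat Y}\in[\omega_0|_{\hat Y}]$; by Lemma~\ref{lem:P_La property}(\ref{enu:-is-decreasing}) and the fact that convolution commutes with adding a smooth form, the inequality $\mathcal{P}_\Lambda(\tilde\Upsilon^{(r)})<1-\varepsilon$ persists, and $\tilde\Upsilon$ retains its positive Lelong number along $S_{\hat Y}$. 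Write $\tilde\Upsilon=\omega_0+\sqrt{-1}\partial\bar\partial\Psi$ globally, and let $\Psi_j^{(r)}$ denote the local mollification on $B_{j,2R}$.

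Second, I patch the collection $\{\Psi_j^{(r)}\}_{j\in\mathcal{J}}$ into a single smooth function via Richberg's regularized max (Corollary~\ref{cor:re max}). The key input is that on overlaps, different mollifications differ by $O(r)$ in $C^0$; after refining the cover $\mathscr{P}$ and choosing parameters $\eta_j$ commensurate, Lemma~\ref{lem:reg max}(\ref{enu:important feature}) ensures that in each $B_{j,R/2}$ only $\Psi_j^{(r)}$ contributes to the regularized max. By Lemma~\ref{lem:reg max}(\ref{enu:reg maximum 1}) and Corollary~\ref{cor:reg max preserves subsolution}, the output $\tilde\Psi$ is smooth, $\omega_0$-PSH, and satisfies $\mathcal{P}_\Lambda(\omega_0+\sqrt{-1}\partial\bar\partial\tilde\Psi)<1-\varepsilon$ on the union of the $B_{j,R/2}$.

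Third, to preserve the positive Lelong number along $S_{\hat Y}$ through the patching, I allow the mollification radius $r=r(p)$ to depend on the distance to $S_{\hat Y}$, tending to zero as $p\to S_{\hat Y}$. Since at each $p\in\hat Y\setminus S_{\hat Y}$ one has $\Psi_j^{(r(p))}(p)\to\Psi(p)$ as $r(p)\to0$, and $\Psi$ has positive Lelong number along $S_{\hat Y}$, the glued $\tilde\Psi$ tends to $-\infty$ along $S_{\hat Y}$ at the corresponding rate. Finally, $\Phi:\hat Y\setminus S_{\hat Y}\to Y\setminus S_Y$ being a biholomorphism, the function $\varphi_Y:=\tilde\Psi\circ\Phi^{-1}\in C^\infty(Y\setminus S_Y)$ yields a K\"ahler metric $\omega_Y=\omega_0+\sqrt{-1}\partial\bar\partial\varphi_Y$ satisfying the cone condition on $Y\setminus S_Y$ with positive Lelong number along $S_Y$.

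The main technical obstacle is the simultaneous preservation of the cone condition (which holds only on the sub-balls $B_{j,R}$, not on the full $B_{j,2R}$ where $\Psi_j^{(r)}$ lives) and the positive Lelong number along $S_{\hat Y}$ (which vanishes for any standard fixed-radius mollification, since convolutions with a smooth kernel are everywhere finite). Coordinating a spatially variable mollification radius $r(p)$ with the uniform cone-condition estimate from Theorem~\ref{thm:Mass} and the Richberg patching parameters, while using the convexity/monotonicity of $\mathcal{P}_\Lambda$ from Lemma~\ref{lem:P_La property} to control the output, is the crux of the argument, and it is the analogue in this setting of the delicate patching used in \cite{demailly2004numerical,chen2021j,Song2020NakaiMoishezonCF}.
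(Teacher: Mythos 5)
There are genuine gaps here. The central one is your claim that on overlaps the local mollifications $\Psi_j^{(r)}$ and $\Psi_k^{(r)}$ differ by $O(r)$ in $C^0$. That is true for potentials with locally bounded complex Hessian, but false for the unbounded quasi-PSH potential of the K\"ahler current $\Upsilon$: near a point $p$ with Lelong number $\nu_\Upsilon(p)=\nu>0$, mollifications taken in two different coordinate charts differ by an amount of order $\nu$ times the chart distortion (plus $O(r)$), which does not tend to zero with $r$. Consequently Lemma \ref{lem:reg max}(\ref{enu:important feature}) cannot be invoked to localize the regularized maximum, and the cone condition --- which Theorem \ref{thm:Mass} guarantees only on the inner balls $B_{j,R}$ --- is uncontrolled on the annuli where a ``foreign'' $\Psi_j^{(r)}$ may still dominate. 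This is exactly the difficulty that the paper's Lemma \ref{lem:using rich} (the B{\l}ocki--Ko{\l}odziej/Chen/Song comparison) is designed to resolve: one introduces the Siu upper-level set $S_{\tilde\epsilon}=\{\nu_\Upsilon\geq\tilde\epsilon\}$, an analytic set of dimension $<d$ which may be strictly larger than $S_{\hat Y}$, invokes the \emph{induction hypothesis} to obtain a genuine smooth subsolution $\varphi_{\hat U}$ on a neighborhood of $\Phi(S_{\tilde\epsilon})$, and inserts $\varphi_{\hat U}+3\tilde\epsilon\log r$ as an extra competitor in the regularized maximum so that the potentials $\varphi_{j,r}$ with $\nu(p,r)\geq4\tilde\epsilon$ are cut off, while those with $\nu\leq2\tilde\epsilon$ glue consistently. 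Your proposal never uses the induction hypothesis and never addresses the points of $\hat Y\setminus S_{\hat Y}$ where $\Upsilon$ has large Lelong number, so the gluing cannot be completed there.

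The second gap is the spatially variable mollification radius $r(p)$. Mollifying a PSH function with a variable radius does not in general produce a PSH function (only $r\mapsto\Psi^{(r)}(p)$ is monotone and $(p,\log r)\mapsto\Psi^{(r)}(p)$ is jointly PSH; substituting $r=r(p)$ introduces extra terms in $\sqrt{-1}\partial\bar{\partial}$ that destroy positivity unless a Kiselman-type argument is supplied), and the estimate $\mathcal{P}_\Lambda(\Upsilon^{(r)})<1-\varepsilon$ of Theorem \ref{thm:Mass} is stated only for fixed $r$, so the cone condition for the variable-radius object is unjustified. The device is also unnecessary: the paper obtains the positive Lelong number along $S_Y$ by the much simpler step (\ref{eq:UP2}), taking the convex combination $\Upsilon_2=(1-\varepsilon_2)\Upsilon_1+\varepsilon_2(\omega_0+\delta'\sqrt{-1}\partial\bar{\partial}\phi_S)$ with the fixed potential $\phi_S$ of the exceptional divisor, which already has positive Lelong number along $S_{\hat Y}$, and then using continuity of $\mathcal{P}_\Lambda$ to retain the cone condition for $\varepsilon_2$ small.
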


We may assume that $Y$ is irreducible for simplicity. Otherwise,
we just apply the same argument to each component of $Y$. Since the
main argument is same as in J. Song's\textbf{ }work \cite{Song2020NakaiMoishezonCF},
we will only state some key lemmas and sketch the proof of Theorem
\ref{thm:gluing theorem}.

We pick a new covering $\mathscr{P}'=\{B_{j,R'}\}_{j\in\mathscr{\mathcal{J}}}$
of $\hat{Y}$ such that $R'<\frac{1}{4}r_{0}<\frac{R}{4}$, and $\{B_{j,4R'}\}$
as a cover is finer than $\mathscr{P}$. Let $\{z^{i}\}$ be the local
coordinate in $B_{j,R'}$ . We require that in each $B_{j,R'}$, 
\begin{equation}
\varpi=\sqrt{-1}\ddbar\phi_{\varpi,j},\ |\phi_{\varpi,j}-\frac{1}{4}|z|^{2}|<(R')^{2},\label{eq:-8}
\end{equation}
\begin{equation}
\omega_{0}=\sqrt{-1}\ddbar\phi_{\omega_{0},j},\ |\nabla\phi_{\omega_{0},j}|<K_{3}R',\ \phi_{\omega_{0},j}(0)=0,\label{eq:-9}
\end{equation}
\begin{equation}
\rho=\sqrt{-1}\ddbar\phi_{\rho,j},\ |\nabla\phi_{\rho,j}|<K_{3}R',\ \phi_{\rho,j}(0)=0.\label{eq:-10}
\end{equation}
Let $\Upsilon$ be the current given in Theorem \ref{thm:Mass}. Denote
\begin{equation}
\Upsilon=(1-\delta)\omega_{0}+\sqrt{-1}\ddbar\phi_{\Upsilon},\label{eq:-11}
\end{equation}
where $\phi_{\Upsilon}\in$PSH$(\hat{Y},(1-\delta)\omega_{0})$. Let
\begin{equation}
\phi_{\Upsilon,j}=(1-\delta)\phi_{\omega_{0},j}+\phi_{\Upsilon}\label{eq:-12}
\end{equation}
be the local potential of $\Upsilon$ in $B_{j,4R'}$. Let 
\begin{equation}
\varphi_{j,r}=\phi_{\Upsilon,j}^{(r)}-(1-\delta)\phi_{\omega_{0},j}-\delta\left(\delta'^{2}\phi_{\varpi,j}-\delta'\phi_{S}\right).\label{eq:-13}
\end{equation}
We choose $\delta'$ small so that $\delta'^{2}\varpi\leq\omega_{0}+\delta'\sqrt{-1}\ddbar\phi_{S}$.
Then in each $B_{j,4R'}$, by (\ref{eq:-13}) 
\begin{equation}
\omega_{0}+\sqrt{-1}\ddbar\varphi_{j,r}\geq\Upsilon^{(r)}.\label{eq:-14}
\end{equation}
Therefore from Theorem \ref{thm:Mass}, there exists $\varepsilon_{0}>0$
so that in each $B_{j,4R'}$,
\begin{equation}
\mathcal{P}_{\Lambda}(\omega_{0}+\sqrt{-1}\ddbar\varphi_{j,r})<1-\varepsilon_{0},\label{eq:-15}
\end{equation}
for $r\in(0,4R')$.

Denote $S_{\tilde{\epsilon}}:=\{p\in\hat{Y}:\nu_{\Upsilon}(p)\geq\tilde{\epsilon}\}$
for some $\tilde{\epsilon}>0$ to be chosen later. Siu's decomposition
theorem \cite{Siu1974AnalyticityOS} shows that $S_{\tilde{\epsilon}}$
is an analytic subvariety of dimension$<d$ of $\hat{Y}$ which contains
$S_{\hat{Y}}$.

By the induction hypothesis, there is an open neighborhood $U$ of
$\Phi(S_{\tilde{\epsilon}})$ in $M$ and a smooth $\kah$ metric
$\omega_{U}$ such that in $U$ 
\begin{equation}
(\exp\omega_{U}\wedge(1-\Lambda))^{[n-1]}>0.\label{eq:ind}
\end{equation}
We may assume that $(\exp\omega_{U}\wedge(1-\varepsilon_{1}-\Lambda))^{[n-1]}>0$
for some $\varepsilon_{1}<\varepsilon_{0}$. Let $\hat{U}=\Phi^{-1}(U)$
and $\omega_{\hat{U}}=\Phi^{*}\omega_{U}=\omega_{0}+\sqrt{-1}\ddbar\varphi_{\hat{U}}$
for some smooth $\varphi_{\hat{U}}$. We have 
\begin{equation}
(\exp\omega_{\hat{U}}\wedge(1-\varepsilon_{1}-\Lambda))^{[n-1]}\geq0\label{eq:more ind}
\end{equation}
on $\hat{U}$ and it is strictly positive in $\hat{U}\backslash S_{\hat{Y}}$.

The following Lemma is almost identical to\textbf{ }Lemma 6.3 of \cite{Song2020NakaiMoishezonCF},
which is a modification of Proposition 4.1 of \cite{chen2021j}. We
skip its proof.
\begin{lem}
\label{lem:using rich}Notations as above. If $\tilde{\epsilon}=\tilde{\epsilon}(\delta,R',d,n,\inf_{p\in S_{\hat{Y}}}\nu_{\Upsilon}(p))$
is chosen small, then there exists a neighborhood $\hat{V}\Subset\hat{U}$
of $S_{\tilde{\epsilon}}$ and $0<r_{1}<R'/2$ depending on $\phi_{\varpi,j},\phi_{\omega_{0},j},\varphi_{\hat{U}},$$\phi_{S}|_{\hat{Y}\backslash\hat{V}},$$\mathscr{P}',K_{3},\tilde{\epsilon}$
such that for any $r<r_{1}$ the following holds. 
\begin{enumerate}
\item If $p\in\hat{Y}\backslash\hat{V}$, then 
\[
\max_{p\in B_{j,3R'}}\nu_{\phi_{\Upsilon,j}}(p)\leq2\tilde{\epsilon},\ \max_{p\in B_{j,3R'}}\varphi_{j,r}(p)>\sup_{\hat{U}}\varphi_{\hat{U}}+3\tilde{\epsilon}\log r+1.
\]
\item If $\max_{p\in B_{j,3R'}}\nu_{\phi_{\Upsilon,j}}(p,r)\geq4\tilde{\epsilon},$
then 
\[
\max_{p\in B_{j,3R'}}\varphi_{j,r}(p)\leq\inf_{\hat{U}}\varphi_{\hat{U}}+3\tilde{\epsilon}\log r-1.
\]
\item If $\max_{p\in B_{j,3R'}}\nu_{\phi_{\Upsilon,j}}(p,r)\leq4\tilde{\epsilon}$,
then 
\[
\max_{p\in B_{i,3R'}\backslash B_{i,2R'}}\varphi_{i,r}(p)<\max_{p\in B_{j,R'}}\varphi_{j,r}(p)-2\tilde{\epsilon}.
\]
\end{enumerate}
\end{lem}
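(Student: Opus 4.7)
The proof will follow the blueprint of Song \cite{Song2020NakaiMoishezonCF} Lemma 6.3 (itself based on G.\ Chen's Proposition 4.1 in \cite{chen2021j} and the Błocki--Kołodziej regularization trick \cite{blocki2007regularization}), with constants chosen in the order $\tilde{\epsilon} \to \hat{V} \to r_1$. First I would pick $\tilde{\epsilon} < \tfrac{1}{2}\inf_{p\in S_{\hat Y}} \nu_\Upsilon(p)$, so that $S_{\hat Y} \subset S_{\tilde{\epsilon}}$; by Siu's semi-continuity theorem, $S_{\tilde{\epsilon}}$ is an analytic subvariety of dimension $\le d-1$, and $S_{\tilde{\epsilon}} \subset \hat U$ by construction of $\hat U$. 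Then I would choose $\hat V \Subset \hat U$ an open neighborhood of $S_{\tilde{\epsilon}}$ thin enough that, by upper semi-continuity of Lelong numbers, $\nu_\Upsilon(q) \le 2\tilde{\epsilon}$ for every $q$ in any ball $B_{j,3R'}$ that lies entirely in $\hat Y \setminus \hat V$. Because $\nu_{\phi_{\Upsilon,j}}(q) = \nu_\Upsilon(q)$, this already establishes the first inequality of (1).

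For the lower bound in (1), I would use the submean inequality $\phi_{\Upsilon,j}^{(r)}(q) \ge \phi_{\Upsilon,j}(q)$ (valid for PSH functions under radial mollification), giving $\max_{B_{j,3R'}} \phi_{\Upsilon,j}^{(r)} \ge \max_{B_{j,3R'}} \phi_{\Upsilon,j} =: M_j$, a finite constant independent of $r$. On $\hat Y \setminus \hat V$, the potentials $\phi_{\omega_0,j}$, $\phi_{\varpi,j}$ are uniformly controlled by \eqref{eq:-9}--\eqref{eq:-10} and $\phi_S$ is locally bounded, so $\max \varphi_{j,r} \ge M_j - C'$. Since $3\tilde{\epsilon}\log r \to -\infty$, any sufficiently small $r_1$ works, with explicit dependence obtained by tracking the above constants.

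For (2), I would exploit the definition of scale-$r$ Lelong number at the point $p^* \in B_{j,3R'}$ realizing $\nu(p^*,r)\ge 4\tilde{\epsilon}$, namely $\bar\phi_{\Upsilon,j}(p^*, r) \le \bar\phi_{\Upsilon,j}(p^*, R') - 4\tilde{\epsilon}(\log R' - \log r)$. Translating via $\phi^{(r)}(q) \le \bar\phi(q,r)$ and comparing $\bar\phi$ at nearby points by the standard Lipschitz-in-radius estimate for sup-averages of PSH functions, one upgrades this pointwise bound at $p^*$ to a uniform upper bound of the form $\max_{B_{j,3R'}} \phi_{\Upsilon,j}^{(r)} \le C + 4\tilde{\epsilon}\log r$; absorbing an extra $\tilde{\epsilon}\log r$ into the constants then yields the desired $3\tilde{\epsilon}\log r$ rate. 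The bounded contributions from $\phi_{\omega_0,j}, \phi_{\varpi,j}$, and $\phi_S|_{\hat Y\setminus \hat V}$ produce the explicit constant $\inf_{\hat U}\varphi_{\hat U} - 1$. For (3), the assumption $\max_{p \in B_{j,3R'}}\nu(p,r) \le 4\tilde{\epsilon}$ limits the drop of $\bar\phi_{\Upsilon,j}(p,\cdot)$ between scales $R'$ and $r$ uniformly to within $4\tilde{\epsilon}\log(R'/r)$. Crucially, the quadratic penalty $-\delta{\delta'}^2\phi_{\varpi,j} \approx -\delta{\delta'}^2|z|^2/4$ in the definition of $\varphi_{j,r}$ penalizes points of the annular shell $B_{i,3R'}\setminus B_{i,2R'}$ by at least a radius-squared amount, which for $r<r_1$ dominates the $2\tilde{\epsilon}$ margin. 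Combined with $|\nabla \phi_{\omega_0,j}| < K_3 R'$, this yields the strict inequality comparing $B_{j,R'}$ with the outer annulus of $B_{i}$.

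The main obstacle is step (2): converting a scale-$r$ Lelong number lower bound at a \emph{single} point $p^*$ into a uniform upper bound for $\max\phi_{\Upsilon,j}^{(r)}$ over the entire ball $B_{j,3R'}$. The key technical ingredient is the quantitative propagation of the sup-average $\bar\phi_{\Upsilon,j}(\cdot,r)$ across points at distance $\lesssim R'$, together with the careful balancing of the constants $4\tilde{\epsilon}$ and $3\tilde{\epsilon}$ via absorbing a controlled $\tilde{\epsilon}\log r$ error. All other steps are standard PSH manipulations; calibrating the dependence of $\hat V$ and $r_1$ on the many parameters $(\delta, R', d, n, \inf_{S_{\hat Y}}\nu_\Upsilon, \phi_{\varpi,j}, \phi_{\omega_0,j}, \varphi_{\hat U}, \phi_S|_{\hat Y \setminus \hat V}, K_3, \tilde{\epsilon})$ is bookkeeping.
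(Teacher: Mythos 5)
The paper does not actually prove this lemma: it explicitly defers to Lemma 6.3 of Song \cite{Song2020NakaiMoishezonCF} (itself a modification of Proposition 4.1 of \cite{chen2021j}) and skips the proof, so there is no in-paper argument to compare yours against step by step. Your treatment of part (1) (Siu semicontinuity to control $S_{\tilde\epsilon}$, the sub-mean-value inequality $\phi_{\Upsilon,j}^{(r)}\geq\phi_{\Upsilon,j}$, and the divergence of $3\tilde\epsilon\log r$) and of part (3) (comparison of mollifications across overlapping charts governed by the scale-$r$ Lelong numbers, plus the quadratic penalty coming from $-\delta\delta'^{2}\phi_{\varpi,j}\approx-\delta\delta'^{2}|z|^{2}/4$ on the annulus) is consistent with the strategy of the cited sources.

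The genuine gap is in part (2), precisely the step you flag as the key technical ingredient. From $\nu_{\phi_{\Upsilon,j}}(p^{*},r)\geq4\tilde\epsilon$ at a single point $p^{*}$ you correctly get $\bar\phi_{\Upsilon,j}(p^{*},r)\leq C+4\tilde\epsilon\log r$, hence an upper bound on $\phi_{\Upsilon,j}^{(r)}$ only at points within distance $O(r)$ of $p^{*}$. You then invoke a ``quantitative propagation of the sup-average across points at distance $\lesssim R'$'' to conclude $\max_{B_{j,3R'}}\phi_{\Upsilon,j}^{(r)}\leq C+3\tilde\epsilon\log r$. No such propagation exists at a fixed distance scale: for $u=\log|z-p^{*}|$ one has $\nu_{u}(p^{*},r)=1$ for every $r$, yet $u^{(r)}(q)$ and $\bar u(q,r)$ remain bounded below, uniformly in $r$, at any $q$ with $|q-p^{*}|\sim R'$. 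Logarithmic convexity of $r\mapsto\bar u(q,r)$ transfers a $\log r$-decay from $p^{*}$ only to points within $O(r)$ of $p^{*}$; at distance $R'$ it yields an $r$-independent constant. So the inequality you assert is false for general PSH potentials, and you identify no special structure of $\phi_{\Upsilon,j}$ that would rescue it. The correct route (as in Song/Chen) has to use the hypothesis pointwise rather than ball-wise: since $\nu(p,r)\downarrow\nu(p)$ as $r\downarrow0$ and $\bar\phi(\cdot,r)$ is continuous, for $r<r_{1}$ the set $\{\nu(\cdot,r)\geq4\tilde\epsilon\}$ is contained in a prescribed neighborhood of $S_{4\tilde\epsilon}\subset\hat V$, and the domination needed for Corollary \ref{cor:re max} is then checked at each point of each overlap using the chart comparison and the quadratic penalty. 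That is the piece missing from your sketch, and without it the gluing in (\ref{eq:-16}) is not justified.
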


For some sufficiently small $\epsilon<\tilde{\epsilon}$, $0<r<r_{1}$,
we define
\begin{align}
\tilde{\varphi}_{\epsilon}(p) & :=\widetilde{\max}_{(\epsilon,\cdots,\epsilon)}\{\varphi_{\hat{U}}+3\tilde{\epsilon}\log r,\varphi_{j,r}:j\in\mathcal{J}\}\nonumber \\
 & =\int_{\R^{|\mathcal{J}|+1}}\max_{j\in\mathcal{J}}\{\varphi_{\hat{U}}+3\tilde{\epsilon}\log r+h_{0},\ \varphi_{j,r}+h_{j}\}\prod_{0\leq j\leq|\mathcal{J}|}\theta\left(\frac{h_{j}}{\epsilon}\right)\frac{dh_{0}\cdots dh_{|\mathcal{J}|}}{\epsilon^{|\mathcal{J}|+1}}.\label{eq:-16}
\end{align}
Then by Lemma \ref{lem:using rich} and Corollary \ref{cor:re max},
$\tilde{\varphi}_{\epsilon}\in\text{PSH}(\hat{Y},\omega_{0})\cap C^{\infty}(\hat{Y})$.
Denote $\Upsilon_{1}=\omega_{0}+\sqrt{-1}\ddbar\tilde{\varphi}_{\epsilon}$.
By (\ref{eq:-15}) and (\ref{eq:more ind}), 
\begin{equation}
\mathcal{P}_{\Lambda}(\Upsilon_{1})<1-\varepsilon_{2}\label{eq:Up1}
\end{equation}
on $\hat{Y}\backslash S_{\hat{Y}}$ for some $0<\varepsilon_{2}<\varepsilon_{1}$.
Let 
\begin{equation}
\Upsilon_{2}=(1-\varepsilon_{2})\Upsilon_{1}+\varepsilon_{2}(\omega_{0}+\delta'\sqrt{-1}\ddbar\phi_{S}).\label{eq:UP2}
\end{equation}
And let 
\[
\sqrt{-1}\ddbar\varphi_{Y}=\Upsilon_{2}-\omega_{0}.
\]
If $\varepsilon_{2}$ is sufficiently small, by (\ref{eq:Up1}), 
\begin{equation}
\mathcal{P}_{\Lambda}(\Upsilon_{2})<1-\varepsilon_{2}/2\label{eq:UP3}
\end{equation}
on $\hat{Y}\backslash S_{\hat{Y}}$. 

We are finally ready to finish proofs of several theorems stated earlier.
\begin{proof}[Proof of Theorem \ref{thm:gluing theorem}]
Since $\Phi(\hat{Y}\backslash S_{\hat{Y}})=Y\backslash S_{Y}$, we
see that $\omega_{Y}=\omega_{0}+\sqrt{-1}\ddbar\varphi_{Y}$ is smooth
on $Y\backslash S_{Y}$ and satisfies the cone condition on $Y\backslash S_{Y}$
by (\ref{eq:more ind}). Moreover, since $\phi_{S}$ has positive
Lelong number along $S_{Y}$, by (\ref{eq:more ind}) $\varphi_{Y}$
also has positive Lelong number on $S_{Y}$. 
\end{proof}
Finally, we complete the induction step and prove Theorem \ref{thm:cone condition in a neighborhood }. 
\begin{proof}[Proof of Theorem \ref{thm:cone condition in a neighborhood }]
Let $Y$ be a $d$-dimensional subvariety of $M$ and $S_{Y}$ be
the singular set of $Y$. By our induction assumption, there is an
open neighborhood $U$ of $S_{Y}$ in $M$ such that there exists
$\varphi_{U}$ in $C^{\infty}(U)\cap\text{PSH}(U,\omega_{0})$ and
$\omega_{U}=\omega_{0}+\sqrt{-1}\ddbar\varphi_{U}$ satisfying 
\[
(\exp\omega_{U}\wedge(1-\Lambda))^{[n-1]}>0.
\]
Let $\omega_{Y}$ be given in Theorem \ref{thm:gluing theorem}. We
will take neighborhoods of $S_{Y}$, $S_{Y}\subset U_{0}\Subset U_{1}\Subset U_{2}\Subset U$.
Subtracting a large number from $\varphi_{U}$, we may assume that
\begin{align}
\varphi_{Y} & >\varphi_{U}+2,\ \text{in}\ Y\backslash U_{2}.\label{eq:varphi}
\end{align}
Since $\varphi_{Y}\to-\infty$ near $S_{Y}$, we may assume 
\begin{equation}
\varphi_{Y}<\varphi_{U}-2,\ \text{in }Y\cap U_{1}.\label{eq:varphi 2}
\end{equation}
Let $W$ be a neighborhood of $Y$ and $\text{pr}_{Y}$ be the projection
form $W$ to $Y$. Using the same notation as in Lemma \ref{lem: construct a local extension},
we may choose $N>>1$ and 
\begin{equation}
\tilde{\varphi}_{Y}=\text{pr}_{Y}^{*}\varphi_{Y}+Nd_{\rho}^{2}\label{eq:-17}
\end{equation}
where $d_{\rho}(p)$ is the distance function to $Y\backslash U_{0}$
with respect to metric $\rho$. Then the similar arguments as in Lemma
\ref{lem: construct a local extension} shows that $\omega_{0}+\sqrt{-1}\ddbar\tilde{\varphi}_{Y}$
satisfies the cone condition (\ref{eq:recal kapp}) in a neighborhood
$\tilde{U}$ of $Y\backslash U_{1}$ in $M$. By (\ref{eq:varphi 2}),
shrinking $\tilde{U}$ if necessary, we may assume that in $\tilde{U}\cap U_{1}$,
\begin{align}
\tilde{\varphi}_{Y} & <\varphi_{U}-1,\label{eq:-18}
\end{align}
and in $\tilde{U}\backslash U_{2}$, by (\ref{eq:varphi}), 
\begin{equation}
\tilde{\varphi}_{Y}>\varphi_{U}+1.\label{eq:-19}
\end{equation}
Let 
\begin{equation}
\tilde{\varphi}=\widetilde{\max}_{(\frac{1}{2},\frac{1}{2})}\left\{ \tilde{\varphi}_{Y},\varphi_{U}\right\} .\label{eq:tilde varphi s}
\end{equation}
Let $U_{Y}=\tilde{U}\cup U_{1}$. Then by Corollary \ref{cor:re max},
$\tilde{\varphi}$ is in $C^{\infty}(U_{Y})\cap\text{PSH}(U_{Y},\omega_{0})$
and equals to $\varphi_{U}$ in $U_{0}$. By Corollary \ref{cor:reg max preserves subsolution},
$\omega_{0}+\sqrt{-1}\ddbar\tilde{\varphi}$ satisfies the cone condition
in $U_{Y}$. This conclude the proof of Theorem \ref{thm:cone condition in a neighborhood }. 
\end{proof}
\begin{proof}[Proof of Theorem \ref{thm:numerical criterion}]
From Theorem \ref{thm:cone condition in a neighborhood }, $([\Lambda],\kappa)$-positivity
implies the existence of a smooth subsolution on $M$. Finally, we
apply Theorem \ref{thm:Analytic theorem} to show the existence of
a smooth solution to (\ref{eq:equation with f}). 
\end{proof}

\section{An application to supercritical dHYM equations\label{sec:Application-to-Hypercritical}}

In this section, we apply Theorem \ref{thm:numerical criterion} to
the deformed Hermitian Yang-Mills equation and prove Theorem \ref{thm:application to dhym}.

Let $\rho$ be a $\kah$ metric on a compact connected $\kah$ manifold
$M$ of dimension $n$. Let $[\omega_{0}]$ be a real $(1,1)$ cohomology
class. The dHYM equation searches for a close $(1,1)$-form $\omega\in[\omega_{0}]$
such that 
\begin{equation}
\text{Re}(\omega+\sqrt{-1}\rho)^{n}=\cot\theta\text{Im}(\omega+\sqrt{-1}\rho)^{n},\label{eq:dHYM eq}
\end{equation}
where $\theta$ is the global phase defined as the argument of complex
number $\int_{M}(\omega+\sqrt{-1}\rho)^{n}$, which depends only on
$[\rho]$ and $[\omega]$. Let $\lambda_{1},\cdots,\lambda_{n}$ be
the eigenvalues of $\omega$ with respect to $\rho$, then locally
dHYM equation can be written as 
\begin{equation}
\sum_{i=1}^{n}\text{arccot}\lambda_{i}=\theta.\label{eq:local dhym}
\end{equation}
We now write the dHYM equation using notations of this paper. Let
$\Lambda_{\theta}:=\sin\theta\cos\rho-\cos\theta\sin\rho$, where
\begin{equation}
\cos\rho=\sum_{k=0}^{\left\lfloor n/2\right\rfloor }(-1)^{2k}\frac{\rho^{2k}}{(2k)!},\ \sin\rho=\sum_{k=0}^{\left\lfloor n/2\right\rfloor }(-1)^{k}\frac{\rho^{2k+1}}{(2k+1)!}.\label{eq:notation for cosrho}
\end{equation}
Then (\ref{eq:dHYM eq}) can be written as
\begin{equation}
\Lambda_{\theta}\wedge\exp\omega=0.\label{eq:another way of dhym}
\end{equation}

If $\theta\in(0,\pi)$ (resp. $(0,\frac{\pi}{2})$), (\ref{eq:dHYM eq})
is called \emph{supercritical} (resp. \emph{hypercritical}). Collins-Jacob-Yau
\cite{collins20201} proved that if there exists a solution to supercritical
dHYM equation then the following numerical condition holds: 
\begin{equation}
\int_{V}\left(\text{Re}(\omega_{0}+\sqrt{-1}\rho)^{d}-\cot\theta\text{Im}(\omega_{0}+\sqrt{-1}\rho)^{d}\right)>0\label{eq:stability}
\end{equation}
for any $d$ dimensional subvariety $V$. Collins-Jacob-Yau then conjectured
that (\ref{eq:dHYM eq}) is also sufficient for the existence of a
solution to supercritical dHYM equation.

G. Chen \cite{chen2021j} has proved Collins-Jacob-Yau conjecture
under some stronger assumption, which can be stated as follows: There
exists $\epsilon>0$ such that 
\begin{equation}
\int_{V}\left(\text{Re}(\omega_{0,t}+\sqrt{-1}\rho)^{d}-\cot\theta\text{Im}(\omega_{0,t}+\sqrt{-1}\rho)^{d}\right)>(n-d)\epsilon\int_{V}\rho^{d},\label{eq:Chen condition for dhym}
\end{equation}
for any test ray $\omega_{0,t}$. Here a test ray is $\omega_{0,t}=\omega_{0}+t\rho'$
for $t\in[0,\infty)$ and some $\kah$ form $\rho'$. Following J.
Song's \cite{Song2020NakaiMoishezonCF} modification of G. Chen's
argument, Chu-Lee-Takahashi \cite{chu2021nakai} remove the constant
$\epsilon$ but their condition requires (\ref{eq:stability}) be
positive for any test ray. In particular, when $M$ is projective,
Chu-Lee-Takahashi confirmed Collins-Jacob-Yau's conjecture. See also
Ballal \cite{Ballal10.1215/00192082-10417484} for an alternative
proof. In dimension 3, Collins-Jacob-Yau's conjecture was also proved
in Datar-Pingali \cite{datar2021numerical} for projective 3-manifolds
with hypercritical phase. In general, Collins-Jacob-Yau's conjecture
has counterexamples, which was pointed out by J. Zhang \cite{zhang2023note}.
The counterexample is explicitly constructed, and involves a blow-up
of a complex torus in dimension 3 with the global phase $\theta>\frac{\pi}{2}$.

We confirm Collins-Jacob-Yau's conjecture for $\kah$ manifolds with
global phase $\theta\in(0,\frac{\pi}{n-1}]$. Notice that in dimension
3, $\theta$ falls in the hypercritical range. Note if $\theta\in(0,\pi)$,
and (\ref{eq:dHYM eq}) has a solution $\omega$, then $\omega-\rho\cot\theta$
is a $\kah$ form. Thus, $[\omega_{0}-\rho\cot\theta]$ is a $\kah$
class. As one direction has been confirmed, we rewrite Theorem \ref{thm:application to dhym}
in the following form. 
\begin{thm}
\label{thm:If-,-thendhym}If $\theta\in(0,\frac{\pi}{n-1}]$ and $[\omega_{0}-\rho\cot\theta]$
is a Kähler class, then the equation (\ref{eq:dHYM eq}) has a smooth
solution if condition (\ref{eq:stability}) holds for any subvariety
$V$. 
\end{thm}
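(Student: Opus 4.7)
The plan is to reduce Theorem \ref{thm:If-,-thendhym} to Theorem \ref{thm:numerical criterion} by a change of variables, after which the dHYM equation becomes a positive linear combination of inverse-$\sigma_k$-type equations satisfying hypothesis \textbf{H1}. The key observation is that the upper bound $\theta \le \pi/(n-1)$ is exactly what makes all the relevant coefficients non-negative.

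First, I would introduce the substitution $\tilde\omega = \omega - \rho\cot\theta$, which by hypothesis lies in a Kähler class $[\tilde\omega_0]=[\omega_0-\rho\cot\theta]$. Using the identity $\omega+\sqrt{-1}\rho = \tilde\omega + \rho e^{\sqrt{-1}\theta}/\sin\theta$, I would expand $(\omega+\sqrt{-1}\rho)^n$ via the binomial theorem and apply the trigonometric identity $\sin\theta\cos(k\theta)-\cos\theta\sin(k\theta) = -\sin((k-1)\theta)$ to rewrite (\ref{eq:dHYM eq-1}) as
\[
\frac{\tilde\omega^n}{n!} = \sum_{k=2}^{n-1} c_k \frac{\rho^k}{k!}\wedge\frac{\tilde\omega^{n-k}}{(n-k)!} + c_n\frac{\rho^n}{n!}, \qquad c_k := \frac{\sin((k-1)\theta)}{\sin^{k+1}\theta}.
\]
The $k=0$ term normalizes the leading coefficient to $1$, and the $k=1$ term vanishes because $\sin 0 = 0$. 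This precisely matches the paper's equation (\ref{eq:equation with f}) with $\kappa=1$, $\Lambda^{[1]}=0$, $\Lambda^{[k]}=c_k\rho^k/k!$ for $2\le k\le n-1$, and $f=c_n$.

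Next I would verify condition \textbf{H1}. For $\theta\in(0,\pi/(n-1)]$ and $2\le k\le n-1$ we have $(k-1)\theta\in(0,(n-2)\pi/(n-1)]\subset(0,\pi)$, so $c_k>0$; for $k=n$ we get $c_n\ge 0$ with equality iff $\theta=\pi/(n-1)$. Taking $k_0=2$ and $m=c_2=1/\sin^2\theta>0$, we get $\mathring\Lambda-m\rho^2/2 = \sum_{k=3}^{n-1}c_k\rho^k/k!\ge 0$ (each summand is a non-negative multiple of a power of $\rho$), so $\mathring\Lambda$ is $2$-uniformly positive. Moreover $\Lambda^{[n]}=c_n\rho^n/n!\ge 0$, hence trivially almost positive, confirming \textbf{H1}.

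Then I would translate the numerical hypothesis (\ref{eq:stability-1}) into $([\Lambda],1)$-positivity of $[\tilde\omega_0]$. Applying the same binomial expansion at the cohomological level in dimension $d$, one obtains the identity
\[
\mathrm{Re}(\omega_0+\sqrt{-1}\rho)^d - \cot\theta\,\mathrm{Im}(\omega_0+\sqrt{-1}\rho)^d \;=\; d!\,\bigl[(1-\Lambda)\wedge\exp\tilde\omega_0\bigr]^{[d]},
\]
so that (\ref{eq:stability-1}) for any subvariety $V$ with $\dim V=d\le n-1$ is precisely the strict inequality required by Definition \ref{def:Lambda-kappa}. The top-degree equality in the definition of $([\Lambda],\kappa)$-positivity holds because the global phase $\theta$ is defined exactly so that $\int_M(\mathrm{Re}-\cot\theta\,\mathrm{Im})(\omega_0+\sqrt{-1}\rho)^n = 0$, which matches the cohomological normalization for $\kappa=1$.

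Finally I would invoke Theorem \ref{thm:numerical criterion} in the Kähler class $[\tilde\omega_0]$ to produce a smooth $\tilde\omega\in[\tilde\omega_0]$ solving the reduced equation; then $\omega:=\tilde\omega+\rho\cot\theta\in[\omega_0]$ solves (\ref{eq:dHYM eq-1}). There is no serious analytic obstacle here — the heavy lifting has been done in Parts 1 and 2 of the paper. The substantive content of the proof is the algebraic recognition that $\theta\le\pi/(n-1)$ is the exact threshold where all $c_k$ for $2\le k\le n-1$ remain strictly positive and $c_n$ stays non-negative, which is precisely what is needed for \textbf{H1} to apply. The sharpness of Zhang's counterexamples in \cite{zhang2023note} reflects the fact that beyond this threshold the sign structure of the $c_k$ degenerates and the reduced equation no longer fits the positivity framework developed in this paper.
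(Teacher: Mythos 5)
Your proposal is correct and follows essentially the same route as the paper: the substitution $\hat{\omega}=\omega-\rho\cot\theta$, the expansion of $(\omega+\sqrt{-1}\rho)^{n}$ via the identity $\sin\theta\cos(k\theta)-\cos\theta\sin(k\theta)=-\sin((k-1)\theta)$, the observation that $\theta\le\pi/(n-1)$ makes all coefficients $\sin((k-1)\theta)/\sin^{k+1}\theta$ non-negative so that \textbf{H1} holds with $k_{0}=2$, and the appeal to Theorem \ref{thm:numerical criterion}. The only difference is presentational: you make explicit the verification of the top-degree inequality in Definition \ref{def:Lambda-kappa} via the normalization of the global phase, which the paper leaves implicit.
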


\begin{proof}
In order to apply Theorem \ref{thm:numerical criterion}, we rewrite
the equation and work on the $\kah$ class $[\omega_{0}-\rho\cot\theta]$.
Let 
\begin{equation}
\hat{\omega}=\omega-\rho\cot\theta\in[\omega_{0}-\rho\cot\theta].\label{eq:hat om}
\end{equation}
We may assume that $\hat{\omega}$ is $\kah$. We have 
\begin{equation}
\omega+\sqrt{-1}\rho=\hat{\omega}+\frac{1}{\sin\theta}e^{\sqrt{-1}\theta}\rho.\label{eq:omegahat rep}
\end{equation}
By (\ref{eq:omegahat rep}), 
\begin{align}
\frac{1}{n!}(\omega+\rho\sqrt{-1})^{n} & =\sum_{k=0}^{n}\frac{1}{k!}\hat{\omega}^{k}\frac{e^{\sqrt{-1}(n-k)\theta}}{(n-k)!}\left(\frac{\rho}{\sin\theta}\right)^{n-k}.\label{eq:sume omega}
\end{align}
Similarly, 
\begin{equation}
\text{Re}\frac{1}{n!}(\omega+\rho\sqrt{-1})^{n}=\sum_{k=0}^{n}\frac{1}{k!}\hat{\omega}^{k}\wedge\frac{\cos\left((n-k)\theta\right)}{(n-k)!}\left(\frac{\rho}{\sin\theta}\right)^{n-k}.\label{eq:RE ome}
\end{equation}
\begin{equation}
\text{Im}\frac{1}{n!}(\omega+\rho\sqrt{-1})^{n}=\sum_{k=0}\frac{1}{k!}\hat{\omega}^{k}\wedge\frac{\sin\left((n-k)\theta\right)}{(n-k)!}\left(\frac{\rho}{\sin\theta}\right)^{n-k}.\label{eq:Im ome}
\end{equation}
By (\ref{eq:RE ome}) and (\ref{eq:Im ome}), we have 
\begin{align}
 & \sin\theta\text{Re}\frac{1}{n!}(\omega+\rho\sqrt{-1})^{n}-\cos\theta\text{Im}\frac{1}{n!}(\omega+\rho\sqrt{-1})^{n}\label{eq:rewrite dhym 1}\\
 & =\left(\exp\hat{\omega}\wedge\left(\sum_{k=0}^{n}\left(\sin\theta\cos\left(k\theta\right)-\cos\theta\sin\left(k\theta\right)\right)\frac{\left(\frac{\rho}{\sin\theta}\right)^{k}}{k!}\right)\right)^{[n]}\nonumber \\
 & =\left(\exp\hat{\omega}\wedge\left(-\sum_{k=0}^{n}\sin\left((k-1)\theta\right)\frac{\left(\frac{\rho}{\sin\theta}\right)^{k}}{k!}\right)\right)^{[n]}.\nonumber 
\end{align}
Then (\ref{eq:dHYM eq}) is the equivalent to 
\begin{equation}
\left(\exp\hat{\omega}\wedge\left(1-\sum_{k=2}^{n}\frac{\sin\left((k-1)\theta\right)}{\sin\theta}\cdot\frac{\left(\frac{\rho}{\sin\theta}\right)^{k}}{k!}\right)\right)^{[n]}=0.\label{eq:rewrite dhym}
\end{equation}
Let 
\begin{equation}
\Lambda=\sum_{k=2}^{n}\frac{\sin\left((k-1)\theta\right)}{\sin\theta}\frac{\left(\frac{\rho}{\sin\theta}\right)^{k}}{k!}.\label{eq:final dhym}
\end{equation}
Then if $\theta\in(0,\frac{\pi}{n-1}]$, $\Lambda$ satisfies \textbf{H1
}with $k_{0}=2$.

Note that (\ref{eq:stability}) is equivalent to $[\hat{\omega}]$
being $([\Lambda],1)$-positive. Therefore, we apply Theorem \ref{thm:numerical criterion}
to equation (\ref{eq:rewrite dhym}) to establish the existence of
a solution to (\ref{eq:dHYM eq}). We have finished the proof. 
\end{proof}

\appendix

\section{\label{sec:Functional-and-uniqueness}Functional and uniqueness}

In this appendix, we introduce a global functional that is closely
related to our PDE (\ref{eq:eq with Berezin integ}). We show that
any solution to (\ref{eq:equation with f}) is a unique minimizer
to this functional.

Let 
\begin{equation}
\omega_{\varphi}=\omega_{0}+\sqrt{-1}\ddbar\varphi.\label{eq:omega_phi}
\end{equation}
Let $\phi(t)$ be a smooth path of $C^{\infty}$ functions on $M$
such that $\omega_{\phi}$ is $\kah$ and $\phi(0)=0$ and $\phi(1)=\varphi$.
Define the following functional 
\begin{equation}
\mathcal{F}_{1}(\varphi):=\int_{0}^{1}\int_{M}\dot{\phi}\Lambda\wedge\exp\omega_{\phi}dt,\label{eq:add5}
\end{equation}
where $\dot{\phi}=\frac{d}{dt}\phi(t)$. We have the following: 
\begin{prop}
\label{prop:add6}Notations as above. The path integral ( \ref{eq:add5})
depends only on $\phi(0)$ and $\phi(1)$. Therefore, $\mathcal{F}_{1}$
is well defined. 
\end{prop}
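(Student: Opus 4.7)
The plan is the standard argument used to show that Mabuchi-type functionals are well-defined: interpret the integrand as a closed one-form on the space of $\kah$ potentials and appeal to Stokes' theorem. Concretely, given two smooth paths $\phi_{0}(t),\phi_{1}(t)$ with common endpoints $0$ and $\varphi$, I would connect them by a smooth two-parameter family $\phi(s,t)$ with $\phi(s,0)\equiv 0$ and $\phi(s,1)\equiv\varphi$ (existence is trivial by convex combination), and show that
\[
\frac{d}{ds}\int_{0}^{1}\!\!\int_{M}\pdv_{t}\phi\;\Lambda\wedge\exp\omega_{\phi}\,dt \;=\; 0.
\]

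The computation proceeds in three steps. First, differentiate under the integral sign, using the identity $\pdv_{t}\exp\omega_{\phi}=\sqrt{-1}\ddbar(\pdv_{t}\phi)\wedge\exp\omega_{\phi}$ (which follows from $\pdv_{t}\omega_{\phi}=\sqrt{-1}\ddbar(\pdv_{t}\phi)$ and the fact that every term in $\exp\omega_{\phi}$ is a polynomial in $\omega_{\phi}$), to obtain
\[
\frac{d}{ds}\mathcal{F}_{1} \;=\; \int_{0}^{1}\!\!\int_{M}\pdv_{s}\pdv_{t}\phi\cdot\Lambda\wedge\exp\omega_{\phi}\,dt \;+\; \int_{0}^{1}\!\!\int_{M}\pdv_{t}\phi\cdot\Lambda\wedge\sqrt{-1}\ddbar(\pdv_{s}\phi)\wedge\exp\omega_{\phi}\,dt.
\]
Second, invoke the symmetric integration-by-parts identity
\[
\int_{M}u\,\Lambda\wedge\sqrt{-1}\ddbar v\wedge\exp\omega_{\phi} \;=\; \int_{M}v\,\Lambda\wedge\sqrt{-1}\ddbar u\wedge\exp\omega_{\phi},
\]
valid because $\Lambda$ and $\exp\omega_{\phi}$ are both closed and $M$ is compact without boundary (both sides equal $-\int_{M}\sqrt{-1}\,\pdv u\wedge\dbar v\wedge\Lambda\wedge\exp\omega_{\phi}$). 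This rewrites the second integrand as $\pdv_{s}\phi\cdot\Lambda\wedge\pdv_{t}\exp\omega_{\phi}$. Third, integrate the first term by parts in the variable $t$, which produces a boundary term $\bigl[\int_{M}\pdv_{s}\phi\,\Lambda\wedge\exp\omega_{\phi}\bigr]_{t=0}^{t=1}$ minus $\int_{0}^{1}\!\int_{M}\pdv_{s}\phi\,\Lambda\wedge\pdv_{t}\exp\omega_{\phi}\,dt$. The interior term cancels the expression produced in Step 2, and the boundary term vanishes because $\pdv_{s}\phi\equiv 0$ at $t=0,1$ by construction.

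There is no serious obstacle. The only thing to verify carefully is the symmetric integration-by-parts identity, but closedness of $\Lambda$ is part of the standing assumption on the differential form, and closedness of $\exp\omega_{\phi}$ is immediate from $d\omega_{\phi}=0$; no positivity or degree-by-degree hypothesis is needed. Note in particular that the argument uses only the closedness of $\Lambda$ and is insensitive to the decomposition $\Lambda=\mathring{\Lambda}+\Lambda^{[n]}$ or the \textbf{H1}/\textbf{H2} hypotheses, so $\mathcal{F}_{1}$ is well defined on the full space of $\kah$ potentials whenever $\Lambda$ is merely a closed real form.
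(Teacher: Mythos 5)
Your proposal is correct and follows essentially the same route as the paper: the paper also reduces to showing that the one-form $\dot{\phi}\mapsto\int_{M}\dot{\phi}\,\Lambda\wedge\exp\omega_{\phi}$ is closed, via exactly your symmetric integration-by-parts identity (using only that $\Lambda$ and $\exp\omega_{\phi}$ are closed and $M$ is compact), and then invokes contractibility of $\text{PSH}(\omega_{0},M)\cap C^{\infty}(M)$ where you instead carry out the fixed-endpoint homotopy and boundary-term cancellation explicitly. The two arguments are the same computation packaged differently, and your closing observation that no positivity or \textbf{H1}/\textbf{H2} hypothesis is needed is consistent with the paper.
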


\begin{proof}
Since the space $\text{PSH}(\omega_{0},M)\cap C^{\infty}(M)$ is contractible
, it is sufficient to show that the one form $\dot{\phi}\mapsto\int_{M}\dot{\phi}\Lambda\wedge\exp\omega_{\phi}$
is closed. Let $(t,s)$ be 2 parameters and $\phi(t,s)$ be a family
of smooth functions. Therefore, it remains to show that 
\begin{equation}
\frac{\pdv}{\pdv s}\int_{M}\frac{\pdv\phi}{\pdv t}\Lambda\wedge\exp\omega_{\phi}-\frac{\pdv}{\pdv t}\int_{M}\frac{\pdv\phi}{\pdv s}\Lambda\wedge\exp\omega_{\phi}=0.\label{eq:show that}
\end{equation}
We have the following computation
\begin{align}
\frac{\pdv}{\pdv s}\int_{M}\frac{\pdv\phi}{\pdv t}\Lambda\wedge\exp\omega_{\phi} & =\int_{M}\frac{\pdv^{2}\phi}{\pdv s\pdv t}\Lambda\wedge\exp\omega_{\phi}+\int_{M}\frac{\pdv\phi}{\pdv t}\Lambda\wedge\exp\omega_{\phi}\wedge\sqrt{-1}\ddbar\frac{\pdv\phi}{\pdv s}\label{eq:complete}\\
 & =\int_{M}\frac{\pdv^{2}\phi}{\pdv s\pdv t}\Lambda\wedge\exp\omega_{\phi}+\int_{M}\frac{\pdv\phi}{\pdv s}\sqrt{-1}\ddbar\frac{\pdv\phi}{\pdv t}\wedge\Lambda\wedge\exp\omega_{\phi}\nonumber \\
 & =\frac{\pdv}{\pdv t}\int_{M}\frac{\pdv\phi}{\pdv s}\Lambda\wedge\exp\omega_{\phi}.\nonumber 
\end{align}
The proof is now complete. 
\end{proof}
According to Proposition \ref{prop:add6}, in order to evaluate $\mathcal{F}_{1}$,
we may choose a simple path $\phi(t)=t\varphi$ for $t\in[0,1]$.
Therefore, 
\begin{align}
\mathcal{F}_{1}(\varphi) & =\int_{0}^{1}\int_{M}\varphi\Lambda\wedge\exp\omega_{t\varphi}dt\label{eq:F_1 functional}\\
 & =\int_{0}^{1}\int_{M}\varphi\Lambda\wedge\exp\omega_{0}\wedge\exp\left(t\sqrt{-1}\ddbar\varphi\right)dt\nonumber \\
 & =\int_{M}\varphi\Lambda\wedge\exp\omega_{0}\wedge\left(\sum_{k=0}^{n}\frac{(\sqrt{-1}\ddbar\varphi)^{k}}{(k+1)!}\right)dt\nonumber \\
 & =\int_{M}\varphi\Lambda\wedge\exp\omega_{0}\wedge\left(\frac{\exp(\sqrt{-1}\ddbar\varphi)-1}{\sqrt{-1}\ddbar\varphi}\right)dt.\nonumber 
\end{align}
Here for any $(1,1)$-form $\alpha$, we use the following notation
\begin{equation}
\frac{\exp\alpha-1}{\alpha}:=\sum_{k=0}^{n}\frac{\alpha^{k}}{(k+1)!}.\label{eq:notation}
\end{equation}
We are now ready to give the following
\begin{defn}
We define the following functional

\begin{equation}
\mathcal{F}(\varphi)=\int_{M}\varphi\left(\Lambda-\kappa\right)\wedge\exp\omega_{0}\wedge\frac{\exp(\sqrt{-1}\ddbar\varphi)-1}{\sqrt{-1}\ddbar\varphi}.\label{eq:global functional}
\end{equation}
\end{defn}

It is straightforward to see that the critical point of $\mathcal{F}$
satisfies 
\begin{equation}
0=\delta\mathcal{F}=\int_{M}\delta\varphi(\Lambda-\kappa)\wedge\exp\omega_{\varphi},\label{eq:Euler-La}
\end{equation}
which is equivalent to a solution of equation (\ref{eq:equation with f}),
i.e. 
\begin{equation}
\left(\kappa\exp\omega_{\varphi}-\Lambda\wedge\exp\omega_{\varphi}\right)^{[n]}=0.\label{eq:equation with f-1}
\end{equation}

The global functional is a generalization of many well-known functionals.
For example (\ref{eq:global functional}) coincides with the energy
functional for complex \MA equation if $\Lambda$ is a smooth volume
form, and Aubin's functional if $\Lambda$ is a $\kah$ form.

We proceed to discuss some basic properties of $\mathcal{F}$. 
\begin{thm}
$\mathcal{F}$ is convex in the set of subsolutions of (\ref{eq:eq with Berezin integ}).
Furthermore, if $\Lambda$ satisfies condition \textbf{H2}, then a
solution to (\ref{eq:equation with f}) is the unique minimizer of
$\mathcal{F}$. 
\end{thm}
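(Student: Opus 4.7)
The plan is to compute the first and second variations of $\mathcal{F}$ along a linear path of potentials, establish that the set of subsolutions is convex (so that such a path stays in that set), and then extract both convexity and uniqueness from the strict cone condition.

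First I would differentiate the definition of $\mathcal{F}$: for any smooth path $\varphi_t$ with $\omega_{\varphi_t}$ Kähler, the computation already carried out in Proposition \ref{prop:add6} yields
\[
\frac{d}{dt}\mathcal{F}(\varphi_t)=\int_M\dot\varphi_t\,(\Lambda-\kappa)\wedge\exp\omega_{\varphi_t}.
\]
Specializing to the linear path $\varphi_t=(1-t)\varphi_0+t\varphi_1$ and writing $u=\varphi_1-\varphi_0$, so $\ddot\varphi_t\equiv 0$, I get
\[
\frac{d^2}{dt^2}\mathcal{F}(\varphi_t)=\int_M u\,\sqrt{-1}\ddbar u\wedge(\Lambda-\kappa)\wedge\exp\omega_{\varphi_t}.
\]
Since $(\Lambda-\kappa)\wedge\exp\omega_{\varphi_t}$ is a closed even-degree form, integration by parts gives the identity $\int_M u\sqrt{-1}\ddbar u\wedge\alpha=-\int_M\sqrt{-1}\partial u\wedge\bar\partial u\wedge\alpha$ for any closed $\alpha$, hence
\[
\frac{d^2}{dt^2}\mathcal{F}(\varphi_t)=\int_M\sqrt{-1}\partial u\wedge\bar\partial u\wedge(\kappa-\Lambda)\wedge\exp\omega_{\varphi_t}.
\]
Only the $(n-1,n-1)$-component of $(\kappa-\Lambda)\wedge\exp\omega_{\varphi_t}$ contributes, and this is precisely what the cone condition controls.

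Next I would verify that the subsolution set is convex in $\varphi$. By Lemma \ref{lem:P_La property}(\ref{enu:-is-a}), $\mathcal{P}_\Lambda$ is convex on Hermitian matrices, so $\{\mathcal{P}_\Lambda(\omega_\varphi)<\kappa\}$ is convex, and therefore $\varphi_t$ remains a subsolution whenever $\varphi_0,\varphi_1$ are. Since $\sqrt{-1}\partial u\wedge\bar\partial u$ is a strongly positive $(1,1)$-form and $((\kappa-\Lambda)\wedge\exp\omega_{\varphi_t})^{[n-1]}>0$ pointwise along the path, the integrand above is pointwise non-negative, giving convexity of $\mathcal{F}$ on subsolutions.

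For uniqueness, suppose $\varphi_0$ and $\varphi_1$ are two smooth solutions of (\ref{eq:equation with f}). By Lemma \ref{lem:Datar-Pingali}, under condition \textbf{H2} each solution is automatically a subsolution, so the linear path connecting them consists of subsolutions. The Euler--Lagrange equation (\ref{eq:Euler-La}) makes $t\mapsto\mathcal{F}(\varphi_t)$ have vanishing first derivative at both endpoints; combined with convexity, this forces the function to be constant, hence the second derivative is identically zero. Strict pointwise positivity of $((\kappa-\Lambda)\wedge\exp\omega_{\varphi_t})^{[n-1]}$ then forces $\sqrt{-1}\partial u\wedge\bar\partial u\equiv 0$, so $\partial u\equiv 0$, and since $u$ is real and $M$ is connected, $u$ is a constant. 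The invariance of $\mathcal{F}$ under $\varphi\mapsto\varphi+c$ (a direct consequence of $\int_M(\Lambda-\kappa)\wedge\exp\omega_\varphi=0$) shows that solutions are indeed unique modulo constants. Finally, for the minimizer statement, given any subsolution $\psi$ I apply the same convexity along the linear path from a solution $\varphi^*$ to $\psi$: the first derivative at $t=0$ vanishes by the Euler--Lagrange equation at $\varphi^*$, so $\mathcal{F}(\psi)\ge\mathcal{F}(\varphi^*)$, with equality only when $\psi-\varphi^*$ is constant by the argument just given.

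The principal obstacle is bookkeeping the sign through integration by parts and then passing from vanishing of the non-negative integrand to the pointwise identity $\partial u\equiv 0$; this requires both the pointwise strict positivity guaranteed by the cone condition and the strong positivity of $\sqrt{-1}\partial u\wedge\bar\partial u$ from Definition \ref{def:An-element-positive forms}. The other subtle technical point is that the cone condition is nonlinear in $\omega$, so the convexity of subsolutions must be extracted from the convexity of $\mathcal{P}_\Lambda$ rather than from any naive linear structure.
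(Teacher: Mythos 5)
Your proposal is correct and follows essentially the same route as the paper: compute the second variation along a linear path, integrate by parts to obtain $\int_M\left(\kappa\exp\omega_{t\varphi}-\Lambda\wedge\exp\omega_{t\varphi}\right)^{[n-1]}\wedge\sqrt{-1}\partial u\wedge\bar\partial u$, invoke the convexity of the subsolution set from Lemma \ref{lem:P_La property} and the fact that solutions are subsolutions (Lemma \ref{lem:Datar-Pingali}), and conclude uniqueness from strict positivity of the cone condition. The only additions beyond the paper's argument are the explicit remark on translation invariance of $\mathcal{F}$ and the slightly more detailed minimization step, both of which are fine.
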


\begin{proof}
Consider the second variation of \emph{$\mathcal{F}$. }Let $\omega_{1}$
and $\omega_{1}+\sqrt{-1}\ddbar\varphi$ be two subsolutions of (
\ref{eq:eq with Berezin integ}), with $\varphi\in\text{PSH}(M,\omega_{1})$.
Let 
\[
\omega_{t\varphi}=\omega_{1}+t\sqrt{-1}\ddbar\varphi.
\]
By Lemma \ref{lem:P_La property} (\ref{enu:-is-a}), we know that
the set of subsolution forms a convex set. Thus, $\omega_{t\varphi}$
is a subsolution for $t\in[0,1]$. Notice that
\begin{align}
\frac{d^{2}}{dt^{2}}\mathcal{F}(t\varphi) & =\frac{d}{dt}\int_{M}\varphi(\Lambda-\kappa)\wedge\exp\omega_{t\varphi}\label{eq:convexity}\\
 & =\int_{M}\varphi\left(\Lambda-\kappa\right)\wedge\exp\omega_{t\varphi}\wedge\sqrt{-1}\ddbar\varphi\nonumber \\
 & =\int_{M}\left(\kappa\exp\omega_{t\varphi}-\Lambda\wedge\exp\omega_{t\varphi}\right)^{[n-1]}\wedge\sqrt{-1}\pdv\varphi\wedge\bar{\pdv}\varphi.\nonumber 
\end{align}
Since $\omega_{1}$ is a subsolution, $\left((\kappa-\Lambda)\wedge\exp\omega_{t\varphi}\right)^{[n-1]}>0$
as an $(n-1,n-1)$-form. Thus, 
\begin{equation}
\int_{M}\left(\kappa\exp\omega-\Lambda\wedge\exp\omega\right)^{[n-1]}\wedge\sqrt{-1}\pdv\varphi\wedge\bar{\pdv}\varphi\geq0,\label{eq:convexity holds}
\end{equation}
and the equality holds only if $\varphi$ is constant. Thus, $\mathcal{F}$
is convex in the set of subsolutions. Furthermore, any critical point
of $\mathcal{F}$ is a local minimum.

If $\Lambda$ satisfies \textbf{H2}, by Lemma \ref{lem:Datar-Pingali},
a solution to (\ref{eq:equation with f}) is aways a subsolution.

If $\omega_{1}$ and $\omega_{2}=\omega_{1}+\sqrt{-1}\ddbar\varphi$
are 2 solutions of (\ref{eq:equation with f}), then $\omega_{s}=\omega_{1}+s\sqrt{-1}\ddbar\varphi$
is a family of subsolutions for $s\in[0,1]$. By the previous argument,
$\mathcal{F}$ is convex along $\omega_{s}$. However, since both
$\omega_{1}$ and $\omega_{2}$ are local minimum of $\mathcal{F}$,
$\mathcal{F}$ is constant along $\omega_{s}$ which implies that
$\varphi$ is constant. Therefore, $\omega_{1}=\omega_{2}$. Hence,
$\omega_{1}$ is the unique minimizer of $\mathcal{F}$. 
\end{proof}

\section{Proof of Proposition \ref{prop: Fang-Lai-Ma1}}

In this appendix, we prove Proposition \ref{prop: Fang-Lai-Ma1},
which follows Guan \cite{Guan2014Second-order}. Our criterion (2)
in Proposition \ref{prop:crite for Sub } for subsolutions follows\textbf{
}Szèkelyhidi's definition in \cite{szekelyhidi2018fully} closely.
In general, this notion of subsolutions differs from that of Guan
in \cite{Guan2014Second-order}. Two definitions of subsolutions have
different analytic and geometric flavors. However, for positive definite
matrices, Proposition \ref{prop: Fang-Lai-Ma1} implies these two
are equivalent. This observation was first made by Fang-Lai-Ma \cite{Fang-Lai-Ma}
for inverse $\sigma_{k}$-equations.

Let $F$ be as in (\ref{eq:definition of F}). To emphasize the dependence
of $p\in M$, we denote 
\begin{equation}
F_{p}(A)=\left.\frac{(\Lambda\wedge\Omega)^{[n]}}{\Omega^{[n]}}\right|_{p}.\label{eq:-21}
\end{equation}
We pick a local coordinate in a ball neighborhood of a point $p$.
We may assume $W\subset\C^{n}$ and trivialize the bundle of hermitian
tensors over $W$ as $W\times\Gamma_{n\times n}$. We define a distance
on $W\times\Gamma_{n\times n}$ by 
\begin{equation}
d((q_{1},A_{1}),(q_{2},A_{2}))^{2}=|q_{1}-q_{2}|^{2}+|A_{1}-A_{2}|^{2}.\label{eq:-22}
\end{equation}

\begin{defn}
For $q\in U\subset W$, denote 
\begin{equation}
\Gamma_{\Lambda}^{\kappa}(q):=\{A\in\Gamma_{n\times n}^{+}:F_{q}(A)<\kappa\},\label{eq:-23}
\end{equation}
and $\Gamma_{\Lambda}^{\kappa}(U)=\{\{q\}\times\Gamma_{\Lambda}^{\kappa}(q):q\in U\}$.
The level set $\pdv\Gamma_{\Lambda}^{\kappa}(q)$ consists of all
$A$ s.t. $F_{q}(A)=\kappa$. Denote 
\begin{equation}
\mathcal{C}_{\Lambda}^{\kappa}(U):=\{\{q\}\times\mathcal{C}_{\Lambda}^{\kappa}(q):q\in U\}.\label{eq:-24}
\end{equation}
\end{defn}

Since $\mathcal{P}_{\Lambda}(A)$ is continuous in $\Lambda$ and
$A$ (Lemma \ref{lem:P_La property} (\ref{enu:-is-a}) and (\ref{enu:-is-continuous})),
$\mathcal{C}_{\Lambda}^{\kappa}(U)$ is open in $U\times\Gamma_{n\times n}$.
By Lemma \ref{lem:Datar-Pingali}, $\Gamma_{\Lambda}^{\kappa}(q)\subset\mathcal{C}_{\Lambda}^{\kappa}(q)$,
and $\Gamma_{\Lambda}^{\kappa}(U)\subset\mathcal{C}_{\Lambda}^{\kappa}(U)$.
\begin{rem}
Assuming \textbf{H2'} in section \ref{sec:Continuity-method},\textbf{
}by Lemma \ref{lem:mononton with f} and Lemma \ref{lem:conv cont path},
$F_{p}(A)$ is strictly decreasing and strictly convex in $\mathcal{C}_{\Lambda}^{\kappa}(p)$.
\end{rem}

\begin{lem}
\label{lem:Guan lem}Let $U\Subset U'\Subset W$ be open sets. Let
$E_{\Lambda}\subset\overline{\Gamma_{\Lambda}^{\kappa}(U')}\cap\mathcal{C}_{\Lambda}^{\kappa}(U)$
be a compact set. There exist constants $N>0$ and $\mu>0$ depending
on $E_{\Lambda}$ s.t. for any $(q,\underline{A})\in E_{\Lambda}$,
and $A$ satisfying $|A|>N$, $F_{q}(A)=\kappa$, we have 
\[
F_{q}^{i\bar{j}}(A)\left(A_{i\bar{j}}-\underline{A}_{i\bar{j}}\right)\geq\mu.
\]
\end{lem}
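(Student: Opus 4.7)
The plan is to argue Lemma \ref{lem:Guan lem} by contradiction, exploiting the convexity of $F_q$ on $\mathcal{C}_\Lambda^\kappa$ (Lemma \ref{lem:conv cont path}) together with the uniform room provided by the subsolution condition across the compact set $E_\Lambda$. Supposing the conclusion fails, I would extract sequences $(q_k, \underline{A}_k) \in E_\Lambda$ and $A_k \in \Gamma_{n\times n}^+$ with $F_{q_k}(A_k) = \kappa$, $|A_k| \to \infty$, and $F_{q_k}^{i\bar j}(A_k)(A_{k, i\bar j} - \underline{A}_{k, i\bar j}) \to 0$. Compactness of $E_\Lambda$ will allow me to pass to a subsequence along which $(q_k, \underline{A}_k) \to (q^*, \underline{A}^*) \in E_\Lambda$. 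Setting $s_k := |A_k - \underline{A}_k|$ and $\hat B_k := (A_k - \underline{A}_k)/s_k$, boundedness of $\underline{A}_k$ combined with $|A_k| \to \infty$ forces $s_k \to \infty$ and $\hat B_k - A_k/|A_k| \to 0$, so any subsequential limit $\hat B^*$ of $\hat B_k$ is semi-positive with $\|\hat B^*\| = 1$.

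Convexity of $F_{q_k}$ will immediately give
\[
F_{q_k}^{i\bar j}(A_k)(A_{k,i\bar j} - \underline{A}_{k,i\bar j}) \;\geq\; F_{q_k}(A_k) - F_{q_k}(\underline{A}_k) \;=\; \kappa - F_{q_k}(\underline{A}_k),
\]
so the contradiction hypothesis forces $F_{q_k}(\underline{A}_k) \to \kappa$, and hence $F_{q^*}(\underline{A}^*) = \kappa$ by continuity of $F$. The subsolution side is where the decisive gap appears: since $E_\Lambda \subset \mathcal{C}_\Lambda^\kappa(U)$ and $\mathcal{P}_\Lambda$ is continuous (Lemma \ref{lem:P_La property}), compactness of $E_\Lambda$ will yield a uniform constant $\delta > 0$ with $\mathcal{P}_\Lambda(\underline{A}) \leq \kappa - 2\delta$ throughout $E_\Lambda$. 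I can then fix a finite $s_0$ with $F_{q^*}(\underline{A}^* + s_0 \hat B^*) \leq \kappa - \delta$, and continuity will give $F_{q_k}(\underline{A}_k + s_0 \hat B^*) \leq \kappa - \delta/2$ for large $k$; strict monotonicity of $F$ along the semi-positive direction $\hat B^*$ (Lemma \ref{lem:mononton with f}) will then propagate this bound to every $s \geq s_0$.

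The main obstacle will be transferring this upper bound along $\hat B^*$ back to the actual level-set point $A_k = \underline{A}_k + s_k \hat B_k$, since $\hat B_k$ is only semi-positive in the limit, not for each $k$. My plan is to introduce the vanishing correction $\epsilon_k := |\hat B_k - \hat B^*|$ so that $\hat B_k + \epsilon_k I \geq 0$, then apply monotonicity along this genuinely positive direction to bound
\[
F_{q_k}(A_k + s_k \epsilon_k I) \;=\; F_{q_k}(\underline{A}_k + s_k(\hat B_k + \epsilon_k I)) \;\leq\; F_{q_k}(\underline{A}_k + s_0(\hat B_k + \epsilon_k I)),
\]
whose limit as $k \to \infty$ is $F_{q^*}(\underline{A}^* + s_0 \hat B^*) \leq \kappa - \delta$. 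The delicate point is controlling the product $s_k \epsilon_k$, which in principle could be arbitrarily large; I would handle this either by a diagonal subsequence making $\hat B_k \to \hat B^*$ fast enough that $s_k \epsilon_k$ stays bounded, or by combining the quantitative strict monotonicity in Lemma \ref{lem:mononton with f} with the strict convexity of $F_q$ on $\mathcal{C}_\Lambda^\kappa$ (Lemma \ref{lem:conv cont path}) to interpolate between $A_k$ and a reference point near $\underline{A}^* + s_0 \hat B^*$ inside $\mathcal{C}_\Lambda^\kappa$. Either route should force $F_{q_k}(A_k) \leq \kappa - \delta/4$ for large $k$, contradicting $F_{q_k}(A_k) = \kappa$.
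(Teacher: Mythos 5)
Your setup is sound and rests on the same convexity geometry as the paper's proof (which packages it into the quantity $\varrho_{R}$ of (\ref{eq:varrho}) and its monotonicity in $R$, rather than arguing by contradiction): the convexity inequality $F_{q_k}^{i\bar j}(A_k)(A_{k}-\underline{A}_{k})_{i\bar j}\geq\kappa-F_{q_k}(\underline{A}_k)\geq0$, the uniform gap $\mathcal{P}_{\Lambda}(\underline{A})\leq\kappa-2\delta$ on the compact set $E_{\Lambda}$, and the choice of $s_0$ with $F_{q^*}(\underline{A}^*+s_0\hat B^*)\leq\kappa-\delta$ are all correct. The gap is in the final transfer. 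Your chain of inequalities produces an upper bound on $F_{q_k}(A_k+s_k\epsilon_k I)$, but monotonicity gives $F_{q_k}(A_k)\geq F_{q_k}(A_k+s_k\epsilon_k I)$ --- the wrong direction --- so no contradiction with $F_{q_k}(A_k)=\kappa$ follows unless you can also show $F_{q_k}(A_k)-F_{q_k}(A_k+s_k\epsilon_k I)\to0$. That would require controlling $s_k\epsilon_k$ against $-\sum_iF_{q_k}^{i\bar i}(A_k)$, and neither of your proposed fixes achieves this: the sequence $(q_k,\underline{A}_k,A_k)$ is dictated by the contradiction hypothesis, so no subsequence can make $s_k\epsilon_k$ bounded (consider $\epsilon_k=s_k^{-1/2}$), and the quantitative constants in Lemmas \ref{lem:mononton with f} and \ref{lem:conv cont path} degenerate as $|A_k|\to\infty$ (the cone condition bounds $\det A_k^{-1}$ and certain block symmetric functions of $A_k^{-1}$, not all of its eigenvalues).

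The repair stays within your framework but should not go out to $A_k$ along a perturbed positive direction. Instead, use joint continuity of $F$ at the \emph{fixed} parameter $s_0$ along the \emph{actual} directions: $\underline{A}_k+s_0\hat B_k\to\underline{A}^*+s_0\hat B^*\in\Gamma^+_{n\times n}$, hence $F_{q_k}(\underline{A}_k+s_0\hat B_k)\leq\kappa-\delta/2$ for large $k$, with no positivity of $\hat B_k$ required. Now set $g_k(t)=F_{q_k}(\underline{A}_k+t\hat B_k)$; since both endpoints of the segment from $\underline{A}_k$ to $A_k$ lie in the convex set $\mathcal{C}_{\Lambda}^{\kappa}(q_k)$, $g_k$ is convex on $[0,s_k]$, and $g_k(s_k)=\kappa$ together with $g_k(s_0)\leq\kappa-\delta/2$ gives
\[
F_{q_k}^{i\bar j}(A_k)\left(A_{k}-\underline{A}_{k}\right)_{i\bar j}=s_k\,g_k'(s_k)\geq\frac{s_k}{s_k-s_0}\bigl(g_k(s_k)-g_k(s_0)\bigr)\geq\frac{\delta}{2},
\]
contradicting your hypothesis that this pairing tends to $0$. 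This one-variable convexity step is exactly the content of the paper's bound $\varrho_{|A|}\leq\varrho_R<-\mu$ combined with its final gradient inequality.
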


\begin{proof}
Let $\mathcal{B}_{R}(0)$ be all Hermitian matrices with norm smaller
than $R$. For $R>|\underline{A}|$, we define 
\begin{equation}
\varrho_{R}(q,\underline{A})=\sup_{A\in\pdv\Gamma_{\Lambda}^{\kappa}(q)\cap\pdv\mathcal{B}_{R}(0)}\min_{t\in[0,1]}F_{q}(t\underline{A}+(1-t)A)-\kappa.\label{eq:varrho}
\end{equation}
$\varrho_{R}\leq0$ since $F_{q}$ is convex in the set $\mathcal{C}_{\Lambda}^{\kappa}(q)$.
Since $\pdv\Gamma_{\Lambda}^{\kappa}(q)\cap\pdv\mathcal{B}_{R}(0)$
is a compact set and $F_{q}$ is continuous, $\varrho_{R}$ is a continuous
function defined on $\mathcal{C}_{\Lambda}^{\kappa}(W)$.

We claim that $\varrho_{R}$ is non-increasing with respect to $R$
for $R>|\underline{A}|$. In fact, suppose $R'>R$. Let $\kappa'=\varrho_{R'}(q,\underline{A})+\kappa$.
We may choose $A'\in\pdv\Gamma_{\Lambda}^{\kappa}(q)\cap\pdv\mathcal{B}_{R'}(0)$
and $B$ in the segment $\{t\underline{A}+(1-t)A':t\in[0,1]\}$ s.t.
\[
\kappa'=\varrho_{R'}(q,\underline{A})+\kappa=F_{q}(B).
\]
Let $\mathbf{g}(x,y)=F_{q}(x\underline{A}+yA')$ for $(x,y)\in\mathbb{R}^{+}\times\mathbb{R}^{+}$.
Then $\mathbf{g}$ is also a strictly decreasing and convex function.
By convexity and monotonicity, $\mathbf{g}(x,y)\geq\kappa$ in $\{(x,y):x+y\geq1;x,y>0\}$.
Then, $\gamma=\mathbf{g}^{-1}(\kappa)$ is a continuous convex curve
in $\R^{+}\times\R^{+}$ and satisfies 
\begin{equation}
\gamma\subset\{(x,y):x+y\leq1;x,y>0\}.\label{eq:curve}
\end{equation}
Pick $x_{0},y_{0}>0$ such that $\mathbf{g}(x_{0},y_{0})=\kappa$
and $|x_{0}\underline{A}+y_{0}A'|=R$. Denote $A=x_{0}\underline{A}+y_{0}A'$.
Since $\gamma$ is continuous, $|\underline{A}|<R$, and $|A'|>R'$,
by the mean value theorem, such $(x_{0},y_{0})$ exists. By (\ref{eq:curve}),
$x_{0}+y_{0}\leq1$. By the convexity of $\mathbf{g}$, $\{x+y=1\}$
separates $\mathbf{g}^{-1}((-\infty,\kappa'))$ and $(x_{0},y_{0})$.
Thus,
\begin{align*}
\kappa' & \leq\min_{t\in[0,1]}\mathbf{g}((1-t)x_{0}+t,(1-t)y_{0})\\
 & =\min_{t\in[0,1]}F_{q}(t\underline{A}+(1-t)A')\\
 & \leq\varrho_{R}(q,\underline{A})+\kappa
\end{align*}
Hence $\varrho_{R}$ is non-increasing in $R$.

For $\underline{A}\in\overline{\Gamma_{\Lambda}^{\kappa}}(q)$, by
the strict convexity of $F_{q}$, we see that for large $R$, $\varrho_{R}(q,\underline{A})<0$.
Since $\varrho_{R}$ is continuous and $E_{\Lambda}$ is a compact,
for large $R>R(E_{\Lambda})$, $\varrho_{R}(q,\underline{A})<-\mu<0$
in $E_{\Lambda}$ for some $\mu>0$. Then, we have for any $|A|>R$
and $F_{q}(A)=\kappa$, there exists $A'=t'\underline{A}+(1-t')A,t'\in(0,1]$,
s.t. 
\begin{align*}
-\mu & >\varrho_{R}(q,\underline{A})>\varrho_{|A|}(q,\underline{A})\\
 & =F_{q}(A')-F_{q}(A)\\
 & \geq F_{q}^{i\bar{j}}(A)t'\left(\underline{A}_{i\bar{j}}-A_{i\bar{j}}\right).
\end{align*}
Thus, $F_{q}^{i\bar{j}}(A)(A_{i\bar{j}}-\underline{A}_{i\bar{j}})\geq\mu>0$. 
\end{proof}
\begin{lem}
\label{lem:claim}Let $U\Subset U'\Subset W$ be open sets. Let $E_{\Lambda}\subset\mathcal{C}_{\Lambda}^{\kappa}(U)\backslash\overline{\Gamma_{\Lambda}^{\kappa}(U')}$
be a compact subset. There exists a constant $N(E_{\Lambda})>0$ depending
on $E_{\Lambda}$ s.t. for any $\{q\}\times\underline{A}\in E_{\Lambda}$,
and $A$ satisfying $|A|>N$, $F_{q}(A)=\kappa$, we have 
\[
F_{q}^{i\bar{j}}(A)(A_{i\bar{j}}-\underline{A}_{i\bar{j}})\geq0.
\]
\end{lem}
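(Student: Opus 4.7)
The strategy is to reduce Lemma \ref{lem:claim} to Lemma \ref{lem:Guan lem} by a uniform positive shift of $\underline{A}$ into $\overline{\Gamma_\Lambda^\kappa(U')}$. Since $(q,\underline{A}) \in E_\Lambda$ is a subsolution with $F_q(\underline{A}) > \kappa$, and since $F_q$ is strictly decreasing along positive directions within $\mathcal{C}_\Lambda^\kappa$ by Lemma \ref{lem:mononton with f} while $\lim_{s\to\infty} F_q(\underline{A} + sI) \le \mathcal{P}_\Lambda(\underline{A}) < \kappa$, there exists a unique $s_0 = s_0(q,\underline{A}) > 0$ satisfying $F_q(\underline{A} + s_0 I) = \kappa$. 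The compactness of $E_\Lambda$ together with the continuity of $F_q$ and $\mathcal{P}_\Lambda$ in $(q,\underline{A})$ (Lemma \ref{lem:P_La property}) guarantees $s_0 \in [s_{\min},s_{\max}] \subset (0,\infty)$ uniformly across $E_\Lambda$.

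Setting $\underline{A}' := \underline{A} + s_0 I$, the family $E_\Lambda' := \{(q,\underline{A}'(q,\underline{A})) : (q,\underline{A}) \in E_\Lambda\}$ is a compact subset of $\overline{\Gamma_\Lambda^\kappa(U')} \cap \mathcal{C}_\Lambda^\kappa(U)$, where membership in $\mathcal{C}_\Lambda^\kappa$ uses the monotonicity of $\mathcal{P}_\Lambda$ (Lemma \ref{lem:P_La property}). Applying Lemma \ref{lem:Guan lem} to $E_\Lambda'$ yields constants $N_1, \mu > 0$ such that $F_q^{i\bar j}(A)(A - \underline{A}')_{i\bar j} \ge \mu$ whenever $|A| > N_1$ and $F_q(A) = \kappa$. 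Decomposing $A - \underline{A} = (A - \underline{A}') + s_0 I$ then gives
\[
F_q^{i\bar j}(A)(A - \underline{A})_{i\bar j} \ge \mu + s_0 \sum_i F_q^{i\bar i}(A).
\]

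The principal obstacle is the negative correction term $s_0 \sum_i F_q^{i\bar i}(A)$, which appears because $F^{i\bar j}(A)$ is strictly negative definite on $\mathcal{C}_\Lambda^\kappa$ by Lemma \ref{lem:mononton with f}. To absorb this term into $\mu$, I plan to revisit the $\varrho_R$ construction in the proof of Lemma \ref{lem:Guan lem} and track how $\mu$ depends quantitatively on the uniform spectral gap $\mathcal{P}_\Lambda(\underline{A}) \le \kappa - \delta$ provided by compactness of $E_\Lambda$; this should yield $\mu \ge s_0\,\bigl|\sum_i F_q^{i\bar i}(A)\bigr|$ once $|A|$ is taken sufficiently large, producing the desired conclusion $F_q^{i\bar j}(A)(A - \underline{A})_{i\bar j} \ge 0$. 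As a backup strategy, should translation by $s_0 I$ not balance cleanly, I would replace it by a shift $s_0 B_*$ with $B_* \ge 0$ chosen adaptively so that $F_q^{i\bar j}(A) B_{*,i\bar j}$ tightly cancels against the Lemma \ref{lem:Guan lem} bound; the strict convexity of $F_q$ on $\mathcal{C}_\Lambda^\kappa$ (Lemma \ref{lem:conv cont path}) together with the compactness of $E_\Lambda$ should preserve a uniform estimate under this optimization.
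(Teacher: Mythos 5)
Your reduction to Lemma \ref{lem:Guan lem} has a genuine gap at exactly the step you flag as the ``principal obstacle,'' and that step cannot be repaired in the way you propose. After the shift you obtain
\[
F_q^{i\bar j}(A)\left(A_{i\bar j}-\underline{A}_{i\bar j}\right)\;\geq\;\mu+s_0\sum_i F_q^{i\bar i}(A),
\]
and by Lemma \ref{lem:mononton with f} the trace $\sum_i F_q^{i\bar i}(A)$ is strictly negative, so the correction always works against you; moreover $s_0\geq s_{\min}>0$ uniformly because $E_\Lambda$ is compactly disjoint from $\overline{\Gamma_\Lambda^\kappa(U')}$. The constant $\mu$ produced by Lemma \ref{lem:Guan lem} is a single number attached to the compact set $E_\Lambda'$, whereas $\bigl|\sum_i F_q^{i\bar i}(A)\bigr|$ is not uniformly bounded on the level set $\{F_q(A)=\kappa,\ |A|>N\}$ in general: for instance when $k_0\geq 2$ (say $\Lambda=\rho^{2}/2$, so $F(A)=\sigma_2(A^{-1})$), one eigenvalue of $A$ may tend to $0$ while another tends to $\infty$ keeping $\sigma_2(A^{-1})=\kappa$, and then $-\sum_iF^{i\bar i}(A)\to\infty$. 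So no amount of ``tracking the dependence of $\mu$'' in the $\varrho_R$ construction can give $\mu\geq s_0\bigl|\sum_iF_q^{i\bar i}(A)\bigr|$. This is also visible in the paper's own Lemma \ref{lem:fanglaima}: when the same shift-by-$\delta\mathrm{Id}$ trick is used there, the resulting term $-\delta\sum_iF^{i\bar i}(A)$ is \emph{kept} on the right-hand side of the final estimate (\ref{eq:-28}) rather than absorbed, which is precisely why Proposition \ref{prop: Fang-Lai-Ma1} concludes with $\mu(1-\sum_iF^{i\bar i})$ and not with a constant. Your backup strategy of an adaptive shift $s_0B_*$ does not escape this either, since any $B_*\geq0$ gives $s_0F_q^{i\bar j}(A)(B_*)_{i\bar j}\leq0$, and Lemma \ref{lem:Guan lem} must be applied to a compact set fixed in advance, not one depending on $A$.

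The paper proves Lemma \ref{lem:claim} by an entirely soft compactness-and-contradiction argument that avoids all quantitative constants (which is why the conclusion is only $\geq0$): assuming failure, one extracts sequences $(q_i,\underline{A}_i)\to(q_\infty,\underline{A}_\infty)\in E_\Lambda$, $A_i=\underline{A}_i+t_iB_i$ with $t_i\to\infty$, $|B_i|=1$, $B_i\to B_\infty$; the separating-hyperplane property of the convex level set forces $F_{q_\infty}(\underline{A}_\infty+tB_\infty)\geq\kappa$ for all $t>0$, which contradicts either the positivity of the $A_i$ (if $B_\infty\notin\overline{\Gamma_{n\times n}^{+}}$) or the defining property $\mathcal{P}_\Lambda(\underline{A}_\infty)<\kappa$ of $\mathcal{C}_\Lambda^\kappa$ (if $B_\infty\in\overline{\Gamma_{n\times n}^{+}}$). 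I recommend you adopt that route; note also that Lemma \ref{lem:claim} is itself an input to the paper's Lemma \ref{lem:fanglaima} (it bounds the intersection point $A'$ in Case 2), so deriving it from a quantitative shift argument of the Lemma \ref{lem:Guan lem} type is the wrong direction of dependence.
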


\begin{proof}
We argue by contradiction. If the claim is false, there exist 
\begin{enumerate}
\item a sequence $(q_{i},\underline{A}_{i})\in E_{\Lambda}$ , 
\item a sequence of real numbers $t_{i}\in(0,\infty)$ and $t_{i}\to\infty$, 
\item a sequence of positive Hermitian matrices $A_{i}=\underline{A}_{i}+t_{i}B_{i}$
with $|B_{i}|=1$ and $F_{q_{i}}(A_{i})=\kappa$, 
\end{enumerate}
such that 
\begin{equation}
F_{q_{i}}^{k\bar{j}}(A_{i})(\left(A_{i}\right)_{k\bar{j}}-\left(\underline{A_{i}}\right)_{k\bar{j}})<0.\label{eq:add1}
\end{equation}
Geometrically, (\ref{eq:add1}) implies that the tangent plane of
level set $\pdv\Gamma_{\Lambda}^{\kappa}(q_{i})$ at $A_{i}$ separates
the level set $\pdv\Gamma_{\Lambda}^{\kappa}(q_{i})$ and $\underline{A}_{i}$.
Therefore, for any $t\in(0,t_{i})$, 
\begin{equation}
F_{q_{i}}(\underline{A}_{i}+tB_{i})>\kappa.\label{eq: no inter to level set-1}
\end{equation}
Since the unit sphere in the space of Hermitian matrices is compact,
we may find subsequences such that

\[
(q_{i},\underline{A}_{i})\to(q_{\infty},\underline{A}_{\infty})\in E_{\Lambda},\ B_{i}\to B_{\infty}\in\mathcal{\pdv}\mathcal{B}_{1}(0).
\]
Since $F_{q}$ is continuous in $q$ and upper semicontinuous in $A$
(by convexity), $F_{q_{\infty}}(\underline{A}_{\infty}+tB_{\infty})\geq\kappa$
for all $t\in(0,\infty)$. From the strict convexity of $F_{q}$,
$F_{q_{\infty}}(\underline{A}_{\infty}+tB_{\infty})>\kappa$ for all
$t\in(0,\infty)$. Also, as $\overline{\Gamma_{n\times n}^{+}}$ is
closed, $\underline{A}_{\infty}+tB_{\infty}\in\overline{\Gamma_{n\times n}^{+}}$
for all $t\in[0,\infty)$. We have either of the following two cases.

Case 1. $B_{\infty}\not\in\overline{\Gamma_{n\times n}^{+}}$. Then,
$B_{\infty}$ has a negative eigenvalue. For large enough $t$, $\underline{A}_{\infty}+tB_{\infty}\not\in\overline{\Gamma_{n\times n}^{+}}$.
Here we reach a contradiction.

Case 2. $B_{\infty}\in\overline{\Gamma_{n\times n}^{+}}$. Then, by
the definition of $\underline{A}_{\infty}\in\mathcal{C}_{\Lambda}^{\kappa}(q_{\infty})$,
\[
\lim_{t\to\infty}F_{q_{\infty}}(\underline{A}_{\infty}+tB_{\infty})<\kappa.
\]
It is clearly a contradiction to (\ref{eq: no inter to level set-1}).

Therefore, we have proved the lemma.
\end{proof}
\begin{lem}
\label{lem:fanglaima}Let $U\Subset U'\Subset W$ be open sets. Let
$E_{\Lambda}\subset\mathcal{C}_{\Lambda}^{\kappa}(U)$ be a compact
set. There exist constants $N>0$ and $\mu>0$ depending on $E_{\Lambda}$
s.t. for any $(q,\underline{A})\in E_{\Lambda}$, and $A$ satisfying
$|A|>N$, $F_{q}(A)=\kappa$, it holds 
\begin{equation}
F_{q}^{i\bar{j}}(A)\left(A_{i\bar{j}}-\underline{A}_{i\bar{j}}\right)\geq\mu\left(1-\sum_{i}F_{q}^{i\bar{i}}(A)\right).\label{eq:-28}
\end{equation}
\end{lem}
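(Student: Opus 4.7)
The plan is to combine Lemmas \ref{lem:Guan lem} and \ref{lem:claim} through Guan's shift trick. By openness of $\mathcal{C}_\Lambda^\kappa(U)$ and compactness of $E_\Lambda$, I first fix a uniform $\delta>0$ such that $\underline{A}' := \underline{A} - \delta I$ lies in $\mathcal{C}_\Lambda^\kappa(U)$ for every $(q,\underline{A}) \in E_\Lambda$ (this also forces $\underline{A}' \in \Gamma_{n\times n}^+$, since eigenvalues of $\underline{A}$ are uniformly bounded below on the compact set $E_\Lambda$). The algebraic identity
\[
F_q^{i\bar{j}}(A)(A_{i\bar{j}} - \underline{A}_{i\bar{j}}) = F_q^{i\bar{j}}(A)(A_{i\bar{j}} - \underline{A}'_{i\bar{j}}) + \delta\Bigl(-\sum_i F_q^{i\bar{i}}(A)\Bigr)
\]
is the key ingredient: the monotonicity of $F$ from Lemma \ref{lem:mononton with f} gives $-\sum_i F^{i\bar{i}}(A) \geq 0$, and the shift supplies precisely the $\sum F^{i\bar i}$-dependent contribution demanded on the right-hand side of the claim.

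Next, I partition $E_\Lambda$ into two compact subsets according to whether $\underline{A}'$ lies inside $\overline{\Gamma_\Lambda^\kappa(U')}$ or in the complement, using a small buffer $\epsilon>0$ in the conditions $F_q(\underline{A}') \leq \kappa - \epsilon$ versus $F_q(\underline{A}') \geq \kappa + \epsilon$ to guarantee strict separation as well as compactness of each piece. On the first piece, Lemma \ref{lem:Guan lem} applied to $\underline{A}'$ yields $F^{i\bar{j}}(A)(A_{i\bar{j}} - \underline{A}'_{i\bar{j}}) \geq \mu_1 > 0$ whenever $|A|>N_1$, and combining with the shift gives
\[
F^{i\bar{j}}(A)(A_{i\bar{j}} - \underline{A}_{i\bar{j}}) \geq \mu_1 + \delta\Bigl(-\sum F^{i\bar{i}}\Bigr) \geq \min(\mu_1,\delta)\Bigl(1 - \sum F^{i\bar{i}}\Bigr),
\]
which is already the desired bound. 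On the second piece, Lemma \ref{lem:claim} gives only $F^{i\bar{j}}(A)(A_{i\bar{j}} - \underline{A}'_{i\bar{j}}) \geq 0$ for $|A|>N_2$, so we are left with the weaker estimate $F^{i\bar{j}}(A)(A_{i\bar{j}} - \underline{A}_{i\bar{j}}) \geq \delta(-\sum F^{i\bar{i}})$.

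The hard part will be closing out this second case: I must promote $\delta(-\sum F^{i\bar{i}})$ to $\mu(1 - \sum F^{i\bar{i}}) = \mu + \mu(-\sum F^{i\bar{i}})$, which requires a uniform positive lower bound $-\sum F_q^{i\bar{i}}(A) \geq c$ on the level set $\{F_q(A)=\kappa\}$ for $|A|>N$. I expect such a $c$ from an eigenvalue analysis at infinity: since $F_q(A) \to 0$ when every eigenvalue of $A$ diverges, on $\{F_q = \kappa,\, |A|>N\}$ at least one eigenvalue of $A$ must stay uniformly bounded above, and the pointwise formula (\ref{eq:lem of comp}) for $F^{i\bar{j}}$ then forces the corresponding $-F^{i\bar{i}}$ to remain bounded below away from zero. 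A matching upper bound $-\sum F^{i\bar{i}} \leq C$, coming from the cone condition on $\underline{A}$ via Lemma \ref{lem: induced cone} (which keeps the small eigenvalues of $A$ uniformly positive on the level set), then yields the inequality with $\mu := \min\bigl(\mu_1,\,\delta c/(1+C)\bigr)$. This quantitative control of $F^{i\bar{j}}$ in all directions at infinity along the level set—essentially a compact rephrasing of the subsolution hypothesis ``the level set is bounded modulo its asymptotic cone''—is the substantive technical step.
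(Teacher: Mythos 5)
Your Case 1 and the overall shift-and-split strategy coincide with the paper's proof, and that half is fine. The gap is in Case 2, which you yourself flag as ``the hard part'' and then only sketch. The paper avoids the issue entirely: for $\underline{A}^{\delta}\notin\overline{\Gamma_{\Lambda}^{\kappa}(U')}$ it shifts once more to $\underline{A}^{2\delta}$, lets $A'$ be the point where the segment from $\underline{A}^{2\delta}$ to $A$ meets the level set, shows via Lemma \ref{lem:claim} that these $A'$ stay in a compact subset $\tilde{E}_{\Lambda}$ of $\pdv\Gamma_{\Lambda}^{\kappa}(U')\cap\mathcal{C}_{\Lambda}^{\kappa}(U)$, and applies Lemma \ref{lem:Guan lem} to $\tilde{E}_{\Lambda}$ to get $F_q^{i\bar j}(A)(A_{i\bar j}-\underline{A}^{2\delta}_{i\bar j})\geq\mu(\tilde{E}_{\Lambda})>0$; the constant term in (\ref{eq:-28}) then comes from this $\mu(\tilde{E}_{\Lambda})$, not from any lower bound on $-\sum_iF^{i\bar i}$.

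Your route instead requires a uniform bound $-\sum_iF_q^{i\bar i}(A)\geq c>0$ on $\{F_q(A)=\kappa,\ |A|>N\}$, and this is where the proposal is incomplete. First, the heuristic you give does not suffice when $k_0\geq2$: one bounded eigenvalue of $A$ gives $-F^{i_0\bar i_0}\geq C^{-1}\lambda_{i_0}^{-2}\langle\mathring{\Lambda},e_{i_0}\wedge\exp\chi\rangle$, and the pairing on the right still degenerates if all the remaining eigenvalues blow up; what one really needs is that $\sigma_{k_0}(A^{-1})$ stays bounded below on the level set (forced by $F_q(A)=\kappa$ together with $\Lambda^{[k]}=0$ for $k<k_0$), followed by an inequality of the type $\sum_i\mu_i^2\sigma_{k_0-1}(\mu|i)\geq c(n,k_0)\,\sigma_{k_0}(\mu)^{1+1/k_0}$ for $\mu=$ eigenvalues of $A^{-1}$. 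This can be made to work under \textbf{H1}, but it has to be written out, and under the \textbf{H2'} hypothesis actually in force in Proposition \ref{prop: Fang-Lai-Ma1} it must be redone with the splitting data. Second, the upper bound $-\sum_iF^{i\bar i}\leq C$ you invoke is false: for $\Lambda=\rho^2$ in dimension $3$ and $A^{-1}=\text{diag}(T,\epsilon,\epsilon)$ with $2T\epsilon+\epsilon^2=\kappa$, the cone condition holds yet $-\sum_iF^{i\bar i}\approx T\kappa\to\infty$. The upper bound is fortunately unnecessary (take $\mu=\delta c/(1+c)$, since $s\mapsto s/(1+s)$ is increasing on $[c,\infty)$), but as stated your $\mu=\delta c/(1+C)$ is vacuous. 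Finally, a smaller point: the two pieces $\{F_q(\underline{A}')\leq\kappa-\epsilon\}$ and $\{F_q(\underline{A}')\geq\kappa+\epsilon\}$ do not cover $E_{\Lambda}$; you need overlapping closed pieces (or the paper's exact dichotomy with a second shift) for the compactness argument to run.
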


\begin{proof}
Since $E_{\Lambda}$ is compact, we pick $\delta<\text{dist}(E_{\Lambda},(\mathcal{C}_{\Lambda}^{\kappa}(U))^{c})/(4n)$.
Let 
\begin{equation}
E_{\Lambda}^{\delta}:=\{(q,B-\delta\text{Id}):(q,B)\in E_{\Lambda}\}\subset\mathcal{C}_{\Lambda}^{\kappa}(U).\label{eq:-25}
\end{equation}
For $(q,\underline{A})\in E_{\Lambda}$, let $\underline{A}^{\delta}=\underline{A}-\delta\text{Id}$.
We discuss the following two cases:

Case 1. $(q,\underline{A}^{\delta})\in E_{\Lambda}^{\delta}\cap\overline{\Gamma_{\Lambda}^{\kappa}(U')}$.
If $F_{q}(A)=\kappa$, the line $l=\{t\underline{A}^{\delta}+(1-t)A:t\in[0,1]\}$
lies entirely in $\overline{\Gamma_{\Lambda}^{\kappa}(q)}$. From
Lemma \ref{lem:Guan lem}, if $R>R(E_{\Lambda}^{\delta})$, $|A|>R$,
$F_{q}(A)=\kappa$, then for some $\mu_{0}>0$ depending on $E_{\Lambda}^{\delta}$
\begin{equation}
F_{q}^{i\bar{j}}(A)(A_{i\bar{j}}-\underline{A}_{i\bar{j}}^{\delta})\geq\mu_{0}>0.\label{eq:-26}
\end{equation}
(\ref{eq:-26}) then implies 
\begin{equation}
F_{q}^{i\bar{j}}(A)\left(A_{i\bar{j}}-\underline{A}_{i\bar{j}}\right)\geq\mu_{0}-\delta\sum_{i=1}^{n}F^{i\bar{i}}(A).\label{eq:-27}
\end{equation}
Then we obtain (\ref{eq:-28}) by taking $\mu=\min\{\mu_{0},\delta\}$
in (\ref{eq:-27}). 

Case 2. $(q,\underline{A}^{\delta})\in E_{\Lambda}^{\delta}\backslash\overline{\Gamma_{\Lambda}^{\kappa}(U')}$.
We denote $G_{\Lambda}^{\delta}:=\{(q,B-\delta\text{Id}):(q,B)\in\overline{E_{\Lambda}^{\delta}\backslash\overline{\Gamma_{\Lambda}^{\kappa}(U')}}\}$.
Then $G_{\Lambda}^{\delta}$ is a compact subset in $\mathcal{C}_{\Lambda}^{\kappa}(U)\backslash\overline{\Gamma_{\Lambda}^{\kappa}(U')}$
and $(q,\underline{A}^{2\delta})\in G_{\Lambda}^{\delta}$. From Lemma
\ref{lem:claim}, for large enough $R>R(G_{\Lambda}^{\delta})$, $|A|>R$,
$F_{q}(A)=\kappa$, the line $l=\{t\underline{A}^{2\delta}+(1-t)A:t\in[0,1]\}$
intersect $\Gamma_{\Lambda}^{\kappa}(q)$ at 
\begin{equation}
A'=t_{0}\underline{A}^{2\delta}+(1-t_{0})A\label{eq:-32}
\end{equation}
for some $t_{0}\in(0,1)$. Since $F_{q}(\underline{A}^{2\delta})>\kappa$,
the tangent plane of $\pdv\Gamma_{\Lambda}^{\kappa}(q)$ at $A'$
must separate $\underline{A}^{2\delta}$ and $\Gamma_{\Lambda}^{\kappa}(q)$
which indicates
\begin{equation}
F_{q}^{i\bar{j}}(A')(A'_{i\bar{j}}-\underline{A}_{i\bar{j}}^{2\delta})<0.\label{eq:add2}
\end{equation}
From Lemma \ref{lem:claim}, $|A'|\leq N(G_{\Lambda}^{\delta})$.
Therefore, all such pairs $(q,A')$ lie in a compact subset $\tilde{E}{}_{\Lambda}$
of $\pdv\Gamma_{\Lambda}^{\kappa}(U')\cap\mathcal{C}_{\Lambda}^{\kappa}(U)$,
which depends on $G_{\Lambda}^{\delta}$. We apply Lemma \ref{lem:Guan lem}
to $\tilde{E}_{\Lambda}$ to get 
\begin{equation}
F_{q}^{i\bar{j}}(A)(A_{i\bar{j}}-A'_{i\bar{j}})>\mu(\tilde{E}_{\Lambda}),\label{eq:-31}
\end{equation}
if $|A|>R(\tilde{E}_{\Lambda})$. From (\ref{eq:-31}),
\begin{align}
F_{q}^{i\bar{j}}\left(A\right)\left(A_{i\bar{j}}-\underline{A}_{i\bar{j}}^{2\delta}\right) & =t_{0}^{-1}F_{q}^{i\bar{j}}(A)(A_{i\bar{j}}-A'_{i\bar{j}})\label{eq:-29}\\
 & >\mu(\tilde{E}_{\Lambda}).\nonumber 
\end{align}
Thus, 
\begin{equation}
F_{q}^{i\bar{j}}(A)\left(A_{i\bar{j}}-\underline{A}_{i\bar{j}}\right)\geq\mu(\tilde{E}_{\Lambda})-2\delta\sum_{i=1}^{n}F_{q}^{i\bar{i}}(A).\label{eq:-30}
\end{equation}
Again, we may choose $\mu=\min\{\mu(\tilde{E}_{\Lambda}),2\delta\}$.
We have finished the proof. 
\end{proof}
Finally, we prove Proposition \ref{prop: Fang-Lai-Ma1}. 
\begin{proof}[Proof of \ref{prop: Fang-Lai-Ma1}]
We pick a finite coordinate ball covering $\mathscr{P}=\{B_{j,4R}(q_{j})\}_{j=1}^{l}$
such that $\{B_{j,R}(q_{j})\}_{j=1}^{l}$ also covers $M$. Denote
\[
\omega_{\text{sub}}(p)=\frac{\sqrt{-1}}{2}\left(\underline{A}_{j}(p)\right)_{i\bar{k}}dz^{i}\wedge d\bar{z}^{k},
\]
in $B_{j,4R}(q_{j})$. Then, $\{(p,\underline{A}_{j}(p)):p\in\overline{B_{j,R}(q_{j})}\}$
forms a compact subset in $\mathcal{C}_{\Lambda}^{\kappa}(B_{j,2R}(q_{j}))$.
Proposition \ref{prop: Fang-Lai-Ma1} then follows by applying Lemma
\ref{lem:fanglaima} to each $\{(p,\underline{A}_{j}(p)):p\in\overline{B_{j,R}(q_{j})}\}$. 
\end{proof}

\section{List of notations}
\begin{itemize}
\item $\Omega=\exp\omega$, $P=\exp\rho$. 
\item $\kappa:$ positive constant such that $\int_{M}(\kappa-\Lambda)\wedge\Omega=0$. 
\item $m$: positive constant in the definition of uniform positivity. 
\item $k_{0}$ : degree in \textbf{H1}. 
\item $\mathring{\Lambda}$: $\Lambda$ minus $(n,n)$-component. 
\item $\mathcal{C}_{\Lambda}^{\kappa}$ :set of $\kah$ forms satisfying
the cone condition (\ref{eq:cone condition}) on $M$. 
\item $\Gamma_{n\times n},\Gamma_{n\times n}^{+},\overline{\Gamma_{n\times n}^{+}}$
: the set of Hermitian matrices, positive definite Hermitian matrices,
and non-negative Hermitian matrices, respectively. 
\item $\mathcal{O}$: a labeled orthogonal splitting structure on $M$.
Definition \ref{def:A-labeled-orthogonal}. 
\item $\mathcal{T}M$: holomorphic tangent space. 
\item $\sigma_{k}(A)$: $k$-th symmetric function of eigenvalues of a Hermitian
matrix$A$. 
\item $T_{k-1}(A)$: linearized operator of $\sigma_{k}(A)$. 
\item $F(A),F_{k}(A)$: local functionals defined in (\ref{eq:definition of F})
and (\ref{eq:F_k}). 
\item $F_{\Lambda}(A:B)$: defined in (\ref{eq:F(A:B)}). 
\item $\mathcal{P}_{\Lambda}(A)$: defined in (\ref{eq:P(A)}). 
\item $\chi$: defined in Definition \ref{def:We-define-a}. 
\item \emph{$\mathcal{F}(\varphi):$ }the global functional defined in (\ref{eq:global functional}). 
\item $\widetilde{\max}_{\eta}(\cdot,\cdots,\cdot):$ regularized maximum
with parameter $\eta$. 
\item $\mathcal{M}=M_{1}\times M_{2}$: the product manifold of $M_{1}=M_{2}=M$. 
\item $\Phi:M'\to M,$ desingularization map. 
\item $Y\subset M$, $\hat{Y}\subset M'$ are subvarieties and $\hat{Y}$
is the strict transform of $Y$. 
\item $\mathcal{Y}=\hat{Y}_{1}\times\hat{Y}_{2}$ where $\hat{Y}_{1}\simeq\hat{Y}_{2}\simeq\hat{Y}$. 
\item $\Delta=\{(y,y):y\in\hat{Y}\}$: the diagonal in $\mathcal{Y}$. 
\item $\hat{\epsilon}$: perturbation parameter. 
\item $K,K_{1}:$ constants for comparing $\varpi$, $\omega_{0}$, $\hat{\omega}_{0}$
in (\ref{eq:choice of K}) and (\ref{eq:K_2}). 
\item $c_{t,\hat{\epsilon}}$, (\ref{eq:eq for omega_t}); $c_{t,\hat{\epsilon},\delta_{\rho}}$,
(\ref{eq:f_s,t,tau}); $\hat{c}$ , (\ref{eq:hat c}). 
\item $\epsilon_{\Lambda}$: the parameter that controls the covering. 
\item $S$: the exceptional locus of $\Phi$. 
\item $\varpi$: a metric on $M'$. 
\item $\hat{\rho}$, $\hat{P}$: $\hat{\rho}$ is the perturbed reference
metric on $\hat{Y}$, (\ref{eq:perturbed rho}), $\hat{P}=\exp\hat{\rho}$. 
\item $\Lambda_{x},\Lambda_{y}$: lifted $\Lambda$ and $\frac{1}{d}\hat{\rho}$
on $\hat{Y}_{1}$ and $\hat{Y}_{2}$, (\ref{eq:Define La_xy}). 
\item $\boldsymbol{\Lambda},\boldsymbol{\rho},\boldsymbol{\varpi}:$ lifted
forms on $\mathcal{Y}$, (\ref{eq:lifted forms}). 
\item $\boldsymbol{\omega}_{x},\boldsymbol{\omega}_{y},\boldsymbol{\omega}_{m}$:
(\ref{eq:omega _x}), (\ref{eq:omega_y}), (\ref{eq:omega_mix}). 
\item $\mathbf{F},F_{1},F_{2},\mathcal{P}_{\mathbf{\Lambda}},\mathcal{P}_{1},\mathcal{P}_{2}$
: (\ref{eq:Bold F A})-(\ref{eq:P2}). 
\item $\mathbf{I}_{\hat{\epsilon}}$: the solution interval for equation
(\ref{eq:equation on hat Y}). 
\item $\Upsilon^{(r)}$: local regularization of a $(1,1)$-current $\Upsilon$
with scale $r$. 
\item $\nu_{\phi}(p),\nu_{\Upsilon}(p)$: Lelong number of $\phi,\Upsilon$
at $p$. 
\item $\delta_{\Delta}$: constant that controls the mass concentration. 
\item $\epsilon_{\Delta}$: mass concentration on $\Delta$. 
\end{itemize}
\bibliographystyle{plain}
\bibliography{General_inverse_sigma}

\end{document}